\newtheorem{theorem}{Theorem}
\newtheorem{lemma}[theorem]{Lemma}
\newtheorem{corollary}[theorem]{Corollary}
\newtheorem{proposition}[theorem]{Proposition}
\newtheorem{remark}[theorem]{Remark}
\newtheorem{definition}[theorem]{Definition}
\newtheorem{theoremletter}{Theorem}
\newtheorem{propositionletter}{Proposition}
\newtheorem{lemmaletter}{Lemma}
\newenvironment{acknowledgement}{\noindent\textbf{Acknowledgments.}}{}
\newcommand{\innerthmname}{}
\newcommand{\quotes}[1]{``#1''}
\newenvironment{statement}[1]
{\renewcommand{\innerthmname}{#1}\innerthm}
{\endinnerthm}
\theoremstyle{definition}
\def\namedlabel#1#2{\begingroup
	#2%
	\def\@currentlabel{#2}%
	\phantomsection\label{#1}\endgroup
}
\def\XXint#1#2#3{{\setbox0=\hbox{$#1{#2#3}{\int}$ }
		\vcenter{\hbox{$#2#3$ }}\kern-.6\wd0}}
\newcommand*\owedge{\mathpalette\@owedge\relax}
\newcommand*\@owedge[1]{%
	\mathbin{%
		\ooalign{%
			$#1\m@th\bigcirc$\cr
			\hidewidth$#1\m@th\wedge$\hidewidth\cr
		}%
	}%
}
\newcommand{\ud}{\mathrm{d}}
\title[Complete metrics with constant fractional higher order $Q$-curvature]{Complete metrics with constant fractional higher order $Q$-curvature on the punctured sphere}
\thanks{This work was partially supported by S\~ao Paulo Research Foundation (FAPESP) \#2020/07566-3 and \#2021/15139-0 and Natural Sciences and Engineering Research Council of Canada (NSERC)}
\author[J.H. Andrade]{Jo\~{a}o Henrique Andrade}
\author[J. Wei]{Juncheng Wei}
\author[Z. Ye]{Zikai Ye}
\address[J.H. Andrade]{
	Department of Mathematics,
	University of British Columbia
	\newline\indent 
	V6T 1Z2, Vancouver-BC, Canada
	\newline\indent
	and
	\newline\indent 
	Institute of Mathematics and Statistics,
	University of S\~ao Paulo
	\newline\indent 
	05508-090, S\~ao Paulo-SP, Brazil
}
\email{\href{mailto:andradejh@math.ubc.ca}{andradejh@math.ubc.ca}}
\email{\href{mailto:andradejh@ime.usp.br}{andradejh@ime.usp.br}}
\address[J. Wei]{
	Department of Mathematics,
	University of British Columbia
	\newline\indent 
	V6T 1Z2, Vancouver-BC, Canada}
\email{\href{mailto:jcwei@math.ubc.ca}{jcwei@math.ubc.ca}}
\address[Z. Ye]{
	Department of Mathematics,
	University of British Columbia
	\newline\indent 
	V6T 1Z2, Vancouver-BC, Canada}
\email{\href{mailto:yezikai@math.ubc.ca}{yezikai@math.ubc.ca}}
\subjclass[2020]{35J60, 35B09, 35J30, 35B40, 35R11}
\keywords{Fractional poly-Laplacian, Higher order PDEs, GJMS operators, Critical exponent, Gluing technique, Toda systems}
\begin{document}
	
	\begin{abstract}
		This manuscript is devoted to constructing complete metrics with constant higher fractional curvature on punctured spheres with finitely many isolated singularities. Analytically, this problem is reduced to constructing singular solutions for a conformally invariant integro-differential equation that generalizes the critical GJMS problem. Our proof follows the earlier construction in Ao {\it et al.} \cite{MR3694645}, based on a gluing method, which we briefly describe. Our main contribution is to provide a unified approach for fractional and higher order cases. This method relies on proving Fredholm properties for the linearized operator around a suitably chosen approximate solution. The main challenge in our approach is that the solutions to the related blow-up limit problem near isolated singularities need to be fully classified; hence we are not allowed to use a simplified ODE method. To overcome this issue, we approximate solutions near each isolated singularity by a family of half-bubble tower solutions. Then, we reduce our problem to solving an (infinite-dimensional) Toda-type system arising from the interaction between the bubble towers at each isolated singularity. Finally, we prove that this system's solvability is equivalent to the existence of a balanced configuration.    
	\end{abstract}
	
	\maketitle
	
	\begin{center}
		\footnotesize
		\tableofcontents
	\end{center}
	
	\section{Introduction}
	The problem of constructing complete metrics on punctured spheres with prescribed fractional higher order curvature is longstanding in differential geometry.
	In \cite{MR1190438}, Graham, Jenne, Mason, and Sparling constructed conformally 
	covariant differential operators $P_{2m}(g)$ on a given compact $n$-dimensional Riemannian manifold 
	$(M^n,g)$ for any  
	$m \in \mathbb{N}$ such that the leading order term of $P_{2m}(g)$ is 
	$(-\Delta_{g})^m$ with $n>2m$. 
	One can then construct the associated $Q$-curvature 
	of order $2m$ by $Q_{2m}(g) = P_{2m}(g) (1)$. 
	When $m=1$,
	one recovers the conformal Laplacian 
	\begin{equation*}
		P_2(g) = -\Delta_g + \frac{n-2}{4(n-1)} R_g \quad {\rm with} \quad Q_2(g) =\frac{n-2}{4(n-1)} R_g,
	\end{equation*}
	where $\Delta_g$ is the Laplace-Beltrami operator of $g$ and $R_g$ is 
	its scalar curvature. 
	We also refer to \cite[Appendix A]{arXiv:2302.05770} for the explicit
	formulae for $P_2(g)$, $P_4(g)$ and $P_6(g)$.
	Subsequently, Grahan and Zworski \cite{MR1965361} and Chang and 
	Gonz\'alez \cite{MR2737789} 
	extended these definitions in the case the background metric is the 
	round metric on the sphere to obtain (nonlocal) operators $P_{2\sigma}(g)$ of any 
	order $\sigma\in (0,\frac{n}{2})$ as well as its corresponding $Q$-curvature. Once again, the leading order part of $P_{2\sigma}(g)$ is $(-\Delta_{g})^\sigma$, 
	understood as the principal value of a singular integral operator.  
	
	Nevertheless, 
	the expressions for $P_{2\sigma}(g)$ and $Q_{2\sigma}(g)$ for a general $\sigma\in\mathbb R_+$ are 
	far more complicated.
	Namely, the fractional curvature $Q_{2\sigma}(g)$ is defined from the conformal fractional Laplacian $P_{2\sigma}(g)$ as $Q_{2\sigma}(g)=P_{2\sigma}(g)(1)$. It is a nonlocal version of the scalar curvature (corresponding to the local case $\sigma=1$). 
	The conformal higher order fractional Laplacian $P_{2\sigma}(g)$ is a (nonlocal) pseudo-differential operator of order $2 \sigma$, which can be    
	constructed from scattering theory on the conformal infinity $M^n$ of a conformally compact Einstein manifold $(X^{n+1}, g^{+})$ as a generalized Dirichlet-to-Neumann operator for the eigenvalue problem
	\begin{equation*}
		-\Delta_{g^{+}} U-\frac{(n+2\sigma)^2}{4} U=0 \quad \text { in } \quad X,
	\end{equation*}
	where $U\in \mathcal{C}^\infty(X)$ is the respective extension of $u\in \mathcal{C}^\infty(M)$
	This construction is a natural one from the point of view of the AdS/CFT correspondence in Physics, also known as Maldacena's duality \cite{MR1633016}.
	We refer the reader to \cite{MR1743597,MR1633012} for more details.
	
	In this manuscript, we are restricted to
	the $n$-dimensional sphere  $\mathbb{S}^n\subset \mathbb R^{n+1}$, where $n > 2\sigma$ and $\sigma\in(1,+\infty)$ equipped with the standard round metric $g_0$, which is given by the pullback of the usual Euclidean metric $\delta$ under the stereographic projection $\Pi:\mathbb{S}^n\setminus\{\mathrm{e}_1\}\rightarrow \mathbb R^n\setminus\{0\}$ with $\mathrm{e}_1=(1,0,\dots,0)\in\mathbb{S}^n$ denoting its north pole. 
	For any $k \in \mathbb{R}$ with $0\leqslant k\leqslant n$, we seek complete metrics on $\mathbb{S}^n \setminus\Lambda^k$ of the form $g = \mathrm{u}^{{4}/{(n-2\sigma)}} g_0$, where $\Lambda\subset\mathbb S^n$ is such that $\#\Sigma=N$.  
	In order to $g$ to be complete on $\mathbb{S}^n \setminus 
	\Lambda$, one has to impose $\liminf_{\ud(p,\Lambda)} \mathrm{u}(p) = +\infty$. 
	Also,
	we prescribe the resulting metric to have constant $Q_{2\sigma}$-curvature, which we normalize to 
	be 
	\begin{equation*}
		Q_{n,\sigma} = Q_{2\sigma}({g_0}) = \Gamma\left(\frac{n+2s}{2}\right)\Gamma\left(\frac{n-2s}{2}\right)^{-1},
	\end{equation*}
	where $\Gamma(z)=\int_{0}^{\infty}\tau^{z-1}e^{-\tau}\ud\tau$ is the standard Gamma function.  
	
	Let us now introduce some standard terminology.
	For any $\sigma\in(1,+\infty]$ with $n>2\sigma$ and $N\geqslant 2$, we respectively denote by
	\begin{align*}
		\mathcal{M}_{2\sigma,\Lambda}(g_0)=\left\{g\in [g_0] :  \mbox{$g$ is complete on $\mathbb S^n\setminus\Lambda$} \; \mbox{and} \;  Q_{2\sigma}(g)\equiv Q_{n,\sigma}   \right\}
	\end{align*}
	and
	\begin{align}\label{markedmodulispace}
		\mathcal{M}_{2\sigma,N}(g_0)=\left\{g\in [g_0] :  \mbox{$g$ is complete on $\mathbb S^n\setminus\Lambda$ 
			with $\#\Lambda=N$} \; \mbox{and} \;  Q_{2\sigma}(g)\equiv Q_{n,\sigma}   \right\}
	\end{align}
	the marked and unmarked moduli spaces of complete constant higher order fractional $Q$-curvature metrics with isolated singularities.
	We also denote by ${\rm sing}({g})=\Lambda$ its respective singular set. 
	
	In this fashion, our main theorem in this paper is the following 
	\begin{theorem}
		Let $\sigma\in(1,+\infty)$ with $n>2\sigma$. 
		For any configuration $\Lambda\subset \mathbb{S}^n$ such that $\#\Lambda=N$ with $N\geqslant 2$, there exists a metric $g \in \mathcal{M}_{2\sigma,N}(g_0)$ satisfying ${\rm sing}(g)=\Lambda$ and is unmarked nondegenerate. 
		For a generic set of $\Lambda=\{p_1, \ldots, p_N\}$, this solution is marked nondegenerate, and for such a metric $(p_1, \ldots, p_N,\varepsilon_1, \ldots, \varepsilon_N)\in\mathbb R^{N(n+1)}$
		constitute a full set of coordinates in $\mathcal{M}_{2\sigma,N}(g_0)$ near $g_0$.
		In particular, one has $\mathcal{M}_{2\sigma,N}(g_0)\neq\varnothing$.
	\end{theorem}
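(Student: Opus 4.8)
The plan is to reduce the geometric statement to a purely analytic one and then run a gluing (Lyapunov–Schmidt) scheme in the spirit of Ao \emph{et al.} \cite{MR3694645}. First, using the conformal covariance of $P_{2\sigma}(g_0)$ together with the stereographic projection $\Pi$, one checks that producing a metric $g=\mathrm{u}^{4/(n-2\sigma)}g_0\in\mathcal{M}_{2\sigma,N}(g_0)$ with ${\rm sing}(g)=\Lambda$ is equivalent to constructing a positive solution $u$ of the integro-differential equation
\[
(-\Delta)^\sigma u = c_{n,\sigma}\, u^{\frac{n+2\sigma}{n-2\sigma}} \quad \text{in } \mathbb{R}^n\setminus\Pi(\Lambda),
\]
subject to the constraint that $u$ blows up fast enough at each puncture for the resulting metric to be complete there. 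Completeness, together with the removable–singularity dichotomy for this nonlocal equation, pins down the admissible local asymptotics near each point of $\Pi(\Lambda)$.

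Second, near each singular point the relevant blow-up limit is the radial Delaunay/Fowler-type family. Passing to Emden–Fowler (cylindrical) coordinates $t=-\log|x|$, $\theta\in\mathbb{S}^{n-1}$, the ODE profile of the classical case is replaced by $t$-periodic solutions $u_\varepsilon$ of a nonlocal equation on $\mathbb{R}\times\mathbb{S}^{n-1}$, parametrized by a neck (Delaunay) parameter $\varepsilon$. Since $(-\Delta)^\sigma$ is nonlocal and of higher order, one cannot invoke ODE uniqueness, so these periodic solutions must be classified directly, and — crucially — their Jacobi fields (the kernel of the linearization around $u_\varepsilon$) must be identified via Fourier decomposition in $\theta$ and $t$, together with the associated indicial roots. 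The approximate solution $\bar u$ is then built as a sum of \emph{half-bubble tower} solutions: at each $p_j$ one stacks translates and dilates of $u_{\varepsilon_j}$ and glues the tower, via cutoffs, to the regular background (the function $1$, i.e. the metric $g_0$) away from $\Lambda$.

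Third, I would set up weighted Hölder (or Sobolev) spaces adapted to these indicial roots and show that the linearized operator
\[
L_{\bar u} = (-\Delta)^\sigma - c_{n,\sigma}\,\tfrac{n+2\sigma}{n-2\sigma}\,\bar u^{\frac{4\sigma}{n-2\sigma}}
\]
is Fredholm of index zero and is surjective once one quotients out the finite-dimensional deficiency subspace spanned by the Jacobi fields coming from translation, dilation and the Delaunay parameters at each $p_j$. This is the technical heart of the argument: one needs uniform-in-$\varepsilon$ a priori estimates for $L_{\bar u}$ on the orthogonal complement of that subspace, using the extension (Caffarelli–Silvestre–type) characterization of $(-\Delta)^\sigma$ and a careful decomposition into neck regions and the outer region. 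A contraction–mapping argument then yields, for each admissible choice of parameters $(p_j,\varepsilon_j)$, an exact solution $u=\bar u+\phi$ of the equation modulo a term in the span of the Jacobi fields.

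Finally, the reduced (projection) equations form an infinite-dimensional Toda-type system in the neck parameters $\varepsilon_j$, coupled through the pairwise interactions of the bubble towers, together with balancing equations for the locations $p_j$. One shows this system is solvable precisely when $\Lambda$ admits a \emph{balanced configuration}, and that such configurations exist for every $N\geqslant 2$. Solving the system produces the metric $g\in\mathcal{M}_{2\sigma,N}(g_0)$ with ${\rm sing}(g)=\Lambda$; the non-degeneracy of $L_{\bar u}$ on the complement of the Jacobi fields, combined with the non-degeneracy of the Toda system at the balanced configuration (which holds for generic $\Lambda$), gives the unmarked and marked nondegeneracy, and an application of the implicit function theorem then identifies $(p_1,\dots,p_N,\varepsilon_1,\dots,\varepsilon_N)\in\mathbb{R}^{N(n+1)}$ as a full set of local coordinates on $\mathcal{M}_{2\sigma,N}(g_0)$ near $g_0$; in particular $\mathcal{M}_{2\sigma,N}(g_0)\neq\varnothing$. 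I expect the main obstacle to be exactly the linear analysis flagged in the abstract: classifying the Jacobi fields of the Delaunay solutions and proving the uniform Fredholm and surjectivity estimates for the nonlocal, higher order operator $L_{\bar u}$ without any recourse to ODE methods.
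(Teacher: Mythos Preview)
Your broad outline—reduce to the PDE on $\mathbb{R}^n\setminus\Sigma$, build approximate solutions from half-bubble towers, run an infinite-dimensional Lyapunov--Schmidt reduction, and close by solving a Toda-type system under balancing conditions—matches the paper's architecture. But there is a genuine gap in your linear analysis step, and it is precisely the one the paper is written to circumvent.

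You propose to obtain the uniform Fredholm and a priori estimates for $L_{\bar u}$ ``using the extension (Caffarelli--Silvestre--type) characterization of $(-\Delta)^\sigma$''. For $\sigma\in(1,+\infty)$ no such localization is available in the form you need; the Caffarelli--Silvestre extension applies to $s\in(0,1)$, and the paper states explicitly that in this higher-order regime ``such an extension does not exist in general''. The paper's substitute is to pass to the \emph{dual} (Riesz potential) formulation $u=(-\Delta)^{-\sigma}(f_\sigma\circ u)$, prove its equivalence with the differential equation via a removable-singularity/Liouville argument (their Proposition~\ref{lm:dualrepresentation}), and then carry out \emph{all} of the linear and nonlinear analysis—construction of the periodic Delaunay profiles, their approximation by bubble towers with quantified error (their Lemma~\ref{lm:estimatesdelaunay}, singled out as the main new ingredient), the reduction, and the projection estimates—directly on the integral equation with kernel $\mathcal{R}_\sigma(x-y)=C_{n,\sigma}|x-y|^{2\sigma-n}$. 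In particular, the Jacobi fields are not obtained from a full classification of Delaunay solutions (which is unavailable for general $\sigma$) but from the nondegeneracy of the single bubble in the dual sense, and the linearized operator is inverted as ${\rm Id}-\mathscr{K}$ with $\mathscr{K}$ compact on suitable weighted spaces, not via any boundary-harmonic extension.

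Two smaller points. First, the infinite dimensionality of the reduced system does not come from the neck parameters $\varepsilon_j$ alone: each bubble in each tower carries its own $(n+1)$ translation/dilation parameters $(a_j^i,\lambda_j^i)_{j\in\mathbb{N}}$, and it is this sequence space on which the Toda-type operator acts. Second, the balancing conditions do \emph{not} constrain the location of the punctures; for any $\Lambda$ with $N\geqslant 2$ a balanced configuration $(\boldsymbol{q}^b,\boldsymbol{a}_0^b,\boldsymbol{R}^b)$ exists (this is quoted from \cite{MR1712628}), so there is no obstruction on $\Lambda$ to clear before the fixed-point step.
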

	
	Let us derive an analytical formulation for our main result.
	The family of higher order fractional curvatures transform nicely under a conformal change.
	Indeed, for any $\bar{g}\in[g]$, one has 
	\begin{equation*}
		Q_{2\sigma}({\bar{g}})=\frac{2}{n-2\sigma} u^{-\frac{n+2\sigma}{n-2\sigma}} P_{2\sigma}({g_0})u,
	\end{equation*}
	where $P_{2\sigma}({g_0}):\mathcal{C}^{\infty}(M)\rightarrow\mathcal{C}^{\infty}(M)$ is the fractional higher order GJMS operator on the sphere
	\begin{equation*}
		P_{2\sigma}({g_0}):=\Gamma\left(\sqrt{-\Delta_{g_0}+\frac{(n-1)^2}{4}}+2\sigma+\frac{1}{2}\right)\Gamma\left(\sqrt{-\Delta_{g_0}+\frac{(n-1)^2}{4}}-2\sigma+\frac{1}{2}\right)^{-1},
	\end{equation*}
	where $\Delta_{g_0}$ is the Laplace--Beltrami operator and $[g]=\{\bar{g}=u^{{4}/{(n-2\sigma)}} g: u \in \mathcal{C}_+^{\infty}(M)\}$ is the conformal class of $g$,  where $u\in\mathcal{C}_+^{\infty}(M)$ if and only if $u\in\mathcal{C}^{\infty}(M)$ and $u>0$.
	Furthermore, one has the transformation law
	\begin{equation*}
		P_{2\sigma}({g})\phi=\mathrm{u}^{-\frac{n+2\sigma}{n-2\sigma}}P_{2\sigma}({g_0})(\mathrm{u}\phi) \quad \mbox{for all} \quad \phi\in \mathcal{C}^{\infty}(\mathbb{S}^n \setminus 
		\Lambda),
	\end{equation*}
	which means that GJMS operators are conformally covariant.	
	Hence, finding conformal complete metrics $g = \mathrm{u}^{{4}/{(n-2\sigma)}} g_0$ with prescribed curvature $Q_{2\sigma}(g) = Q_{n,\sigma}$ on 
	$\mathbb{S}^n \setminus 
	\Lambda$ is equivalent to finding smooth positive solutions $\mathrm{u}\in\mathcal{C}^\infty(\mathbb{S}^n \setminus \Lambda)$ to the nonlocal higher order geometric PDE 
	\begin{equation}\tag{$\mathcal{Q}_{2\sigma,\Lambda,g_0}$}\label{ourequation}
		\begin{cases}
			P_{2\sigma}({g_0})\mathrm{u}=c_{n,\sigma}\mathrm{u}^{\frac{n+2\sigma}{n-2\sigma}} \quad \mbox{on} \quad \mathbb{S}^n \setminus \Lambda,\\
			\liminf_{\ud(p,\Lambda)\rightarrow0}\mathrm{u}(p)=+\infty,
		\end{cases}          
	\end{equation}
	where $c_{n,\sigma}>0$ is a normalizing constant and ${\rm sing}(\mathrm{u}):=\Lambda$ denotes the singular set.
	
	Next, it will be convenient to transfer the PDE \eqref{ourequation} to Euclidean 
	space, which we can do using the standard stereographic projection.
	In these coordinates, our conformal metric takes the 
	form $g = \mathrm{u}^{{4}/{(n-2\sigma)}}g_0 = (\mathrm{u}\cdot u_{\rm sph})^{{4}/{(n-2\sigma)}}
	\delta$. 
	Thus, $u\in \mathcal{C}^{\infty}(\mathbb R^n\setminus\Sigma)$ given by $u = \mathrm{u} \cdot u_{\rm sph}$ is a positive singular solution to  \eqref{ourintegralequation}.
	As a notational shorthand, we adopt the 
	convention that $\mathrm{u}$ refers to a conformal factor relating the metric $g$ to the 
	round metric, {\it i.e.} $g = \mathrm{u}^{{4}/{(n-2\sigma)}} g_0$, while $u$ refers to a conformal 
	factor relating the metric $g$ to the Euclidean metric, {\it i.e.} $g = u^{{4}/{(n-2\sigma)}}
	\delta$, with the two related by $u = \mathrm{u} \cdot u_{\rm sph}$.   
	Hence, we aim to construct positive singular solutions $u\in \mathcal{C}^{\infty}(\mathbb{R}^n\setminus\Sigma)$ to the following higher order fractional Yamabe equation with prescribed isolated singularities     
	\begin{equation}\label{ourintegralequation}\tag{$\mathcal{Q}_{2\sigma,\Sigma}$}
		\begin{cases}
			(-\Delta)^{\sigma}u=f_{\sigma}(u) \quad {\rm in} \quad \mathbb{R}^n\setminus\Sigma,\\
			u(x)=\mathcal{O}(|x|^{2\sigma-n}) \quad {\rm as} \quad |x|\rightarrow+\infty,
		\end{cases}
	\end{equation}
	where $\sigma\in(1,+\infty)$ with $n> 2\sigma$.
	The subset ${\rm sing}(u):=\Sigma\subset\mathbb R^n$ is called the singular set, which is assumed to be $\Sigma=\{x_1,\cdots,x_N\}$ for some $N\in \mathbb N$ and such that 
	\begin{equation*}
		\liminf_{\ud(x,\Sigma)\rightarrow 0}u(x)=+\infty.
	\end{equation*}
	We are interested in fast-decaying solutions; we assume the following condition $\lim_{|x|\rightarrow +\infty}u(x)=0$.    
	The integral operator on the right-hand side of \eqref{ourintegralequation} is the so-called higher order fractional Laplacian which is defined as 
	\begin{equation*}
		(-\Delta)^{\sigma}:=(-\Delta)^{s}\circ (-\Delta)^{m},
	\end{equation*}
	where $m:=[\sigma]$ and $s:=\sigma-[\sigma]$.
	
	Here $(-\Delta)^{m}=(-\Delta)\circ\cdots\circ (-\Delta)$ denotes the poly-Laplacian and $(-\Delta)^{s}$ denotes the fractional Laplacian defined as
	\begin{equation*}
		(-\Delta)^{s}u(x):={\rm p.v.}\int_{\mathbb R^n}\mathcal{K}_{s}(x-y)[u(x)-u(y)]\ud y,
	\end{equation*}
	where $\mathcal{K}_{s}:\mathbb{R}^n\times\mathbb{R}^n\rightarrow\mathbb R$ is a singular potential given by
	\begin{equation}\label{singularpotential}
		\mathcal{K}_{s}(x-y):=\kappa_{n,s}|x-y|^{-(n+2s)}
	\end{equation}
	with
	\begin{equation*}
		\kappa_{n, s}=\pi^{-\frac{n}{2}} 2^{2 s} s{\Gamma\left(\frac{n}{2}+s\right)}{\Gamma(1-s)^{-1}}.
	\end{equation*}       
	The nonlinearity $f_{\sigma}:\mathbb R\rightarrow\mathbb R$ in the left-hand side of \eqref{ourintegralequation} is given by
	\begin{equation*}
		f_{\sigma}(\xi)=c_{n,\sigma}|\xi|^{\frac{n+2\sigma}{n-2\sigma}},
	\end{equation*}
	where 
	\begin{equation*}
		c_{n,\sigma}:=2^{2\sigma}{\Gamma\left(\frac{n+2\sigma}{4}\right)^2}{\Gamma\left(\frac{n-2\sigma}{4}\right)^{-2}}.
	\end{equation*}
	is a normalizing constant.
	We remark that this nonlinearity has critical growth in the sense of the Sobolev embedding $H^{\sigma}(\mathbb R^n)\hookrightarrow L^{2^*_{\sigma}}(\mathbb R^n)$, where $2^*_{\sigma}:=\frac{2n}{n-2\sigma}$.
	
	Our main result in this manuscript extends this result for the remaining cases.
	We are based on the unified approach given by Ao {\it et al.}\cite{MR3694645} and Jin and Xiong \cite{arxiv:1901.01678} to prove the existence of solutions to our integral equation, which can be stated as follows
	\begin{theorem}\label{maintheorem}
		Let $\sigma\in(1,+\infty]$ with $n>2\sigma$.
		For any configuration $\Sigma=\{x_1,\dots,x_N\}$ with $N\geqslant 2$, one can find a smooth positive singular solution to \eqref{ourintegralequation} such that ${\rm sing}(u)=\Sigma$.
	\end{theorem}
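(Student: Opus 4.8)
The plan is to construct the singular solution by an (infinite-dimensional) Lyapunov--Schmidt reduction built on approximate solutions assembled from bubble towers, adapting the gluing scheme of Ao \emph{et al.}~\cite{MR3694645} and Jin--Xiong \cite{arxiv:1901.01678} so that the poly-Laplacian factor $(-\Delta)^m$ and the fractional factor $(-\Delta)^s$ in $(-\Delta)^\sigma=(-\Delta)^s\circ(-\Delta)^m$ are handled simultaneously. It is convenient to work with the equivalent integral formulation $u=\mathcal I_{2\sigma}(f_\sigma(u))$, where $\mathcal I_{2\sigma}$ denotes the Riesz potential of order $2\sigma$, so that the decay $u(x)=\mathcal O(|x|^{2\sigma-n})$ at infinity is built in. First, around each puncture $x_i$ one introduces Emden--Fowler (cylindrical) coordinates and defines a \emph{half-bubble tower} $W_i=\sum_{j\ge 0}w_{\mu_{i,j},\xi_{i,j}}$, where $w_{\mu,\xi}(x)=\mu^{-(n-2\sigma)/2}U((x-\xi)/\mu)$ and $U$ is the standard entire bubble solving $(-\Delta)^\sigma U=c_{n,\sigma}U^{(n+2\sigma)/(n-2\sigma)}$ on $\mathbb R^n$; the concentration scales decay geometrically, $\mu_{i,j}\asymp\varepsilon_i\tau_i^{\,j}$, and the centers $\xi_{i,j}$ cluster at $x_i$, so that the tower accumulates exactly at $x_i$. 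The global ansatz is $\bar u=\sum_{i=1}^N W_i$, suitably cut off near each $x_i$ and patched to the fast-decaying profile $|x|^{2\sigma-n}$ away from $\Sigma$. The free parameters are the necksizes $\varepsilon_i$, the ratios $\tau_i$, and the fine translations $\xi_{i,j}$, taken in a weighted $\ell^\infty$ space encoding the geometric decay of the tower.

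Second, one equips the relevant function space with weighted H\"older (equivalently, weighted Sobolev) norms adapted to the profile of $\bar u$ near each $x_i$ and to the decay $|x|^{2\sigma-n}$ at infinity, and analyses the linearized operator $L_{\bar u}\phi:=(-\Delta)^\sigma\phi-f_\sigma'(\bar u)\phi$. The nondegeneracy of $U$ produces, around each bubble of each tower, an $(n+1)$-dimensional approximate kernel generated by $\partial_\mu w$ and $\partial_{\xi_k}w$; after projecting it out, one must show that $L_{\bar u}$ admits a right inverse on the orthogonal complement whose norm stays bounded \emph{uniformly} as the tower parameters degenerate. Because $(-\Delta)^m$ has no maximum principle and $(-\Delta)^s$ is nonlocal, this estimate cannot be read off from barriers; instead it is obtained by a blow-up/contradiction argument in which, working on the (generalised, Caffarelli--Silvestre-type) extension, every rescaled limit is a bounded solution of the linearised limit equation and is ruled out by the classification of such solutions. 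The product structure $(-\Delta)^\sigma=(-\Delta)^s\circ(-\Delta)^m$ is precisely what lets a single Fredholm-type argument cover the whole range $\sigma\in(1,\infty)$.

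Third, writing $u=\bar u+\phi$ with $\phi$ in the orthogonal complement, the auxiliary equation is solved by a contraction mapping, yielding $\phi=\phi(\text{parameters})$ with a quantitative bound governed by the size of the error $(-\Delta)^\sigma\bar u-f_\sigma(\bar u)$ in the weighted norm. Projecting the full equation onto the approximate kernel produces the bifurcation system for the parameters; computing its leading terms --- each bubble interacts with its tower neighbours at order $(\mu_{i,j+1}/\mu_{i,j})^{(n-2\sigma)/2}$ and with the towers at the other punctures at order $\varepsilon_i^{(n-2\sigma)/2}\varepsilon_{i'}^{(n-2\sigma)/2}|x_i-x_{i'}|^{2\sigma-n}$ --- one recognises it as an (infinite-dimensional) Toda-type system, whose solvability in the weighted $\ell^\infty$ space is \emph{equivalent} to the existence of a balanced configuration in which all these interaction forces cancel. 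For $N\ge 2$ such a configuration is exhibited directly: the mutual repulsion of the $N$ towers pins down the leading necksizes, while the intra-tower balance fixes the ratios $\tau_i$ as a fixed point of the associated recursion.

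Once the reduced system is solved, $u=\bar u+\phi$ solves \eqref{ourintegralequation}; positivity follows since $\bar u>0$ and $\phi$ is a lower-order perturbation (using the representation $u=\mathcal I_{2\sigma}(f_\sigma(u))$ and positivity of the Riesz kernel), elliptic regularity and bootstrapping give $u\in\mathcal C^\infty(\mathbb R^n\setminus\Sigma)$, the towers force $\liminf_{\ud(x,\Sigma)\to 0}u(x)=+\infty$ so that $\mathrm{sing}(u)=\Sigma$ exactly, and the patching at infinity gives the prescribed decay. The main obstacle is the second step: establishing the uniform right inverse for $L_{\bar u}$ on the complement of a large, parameter-dependent approximate kernel, with neither a maximum principle nor local commutator estimates available and with the nonlocality coupling bubbles that are widely separated in scale; the companion difficulty of solving the resulting \emph{infinite} Toda system in the correct weighted space is of the same nature and is where the rest of the real work concentrates.
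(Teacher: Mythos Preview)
Your overall architecture matches the paper's proof closely: dual/integral formulation via the Riesz potential, half-bubble towers in Emden--Fowler coordinates at each puncture, weighted H\"older spaces, infinite-dimensional Lyapunov--Schmidt reduction onto the $(n+1)$-dimensional approximate kernel per bubble, a blow-up/contradiction argument for the uniform right inverse of the linearised operator, and reduction to an infinite Toda-type system solved under balancing conditions. The interaction orders you identify and the fixed-point argument in weighted $\ell^\infty$ are exactly what the paper carries out.

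There is, however, one genuine tactical discrepancy worth flagging. You propose to run the blow-up argument for $L_{\bar u}$ ``on the (generalised, Caffarelli--Silvestre-type) extension,'' and you invoke the product structure $(-\Delta)^\sigma=(-\Delta)^s\circ(-\Delta)^m$ as the device that unifies the range $\sigma\in(1,\infty)$. The paper explicitly avoids the extension route, remarking that for general $\sigma>1$ such an extension is not available, and instead performs the entire linear analysis (including the contradiction/blow-up step) purely in the dual formulation $\phi=(-\Delta)^{-\sigma}(f_\sigma'(\bar u)\phi)+h$, using compactness of the integral operator and the nondegeneracy of the bubble directly at the level of the Riesz potential. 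This is in fact advertised as the main technical contribution: the argument is written so that it applies to integral equations not arising from any differential (or extendable) operator. Your sketch would still go through if you replace the extension-based blow-up by the integral-equation blow-up the paper uses; but as written, the step ``work on the extension'' is the one place where your plan may not be executable for arbitrary $\sigma\in(1,\infty)$, and you should be prepared to substitute the dual-formulation argument there.
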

	
	In \cite{arxiv:1901.01678}, the authors use a dual representation and maximization methods to study the existence of Emden--Fowler solution on the range $\sigma\in(0,\frac{n}{2})$. 
	Although this representation is enough to prove the existence of blow-up limit solutions by direct maximization methods, it is unsuitable for our gluing technique.
	This paper follows the approach in \cite{MR3694645} with the dual equation \eqref{ourintegralequationdual}.
	Nevertheless, we need to give an alternative proof to describe the local behavior near each isolated singularity in terms of the bubble tower solution (see Lemma~\ref{lm:estimatesdelaunay}).
	This alternative proof is the main feature of this paper since it enables us to extend the techniques in \cite{MR3694645} for integral equations that cannot be realized as the dual formulation of a differential equation, which is undoubted of independent interest.
	
	Instead, we notice that \eqref{ourintegralequation} has a dual counterpart, which is given by   
	
	\begin{equation}\label{ourintegralequationdual}\tag{${\mathcal{Q}}^{\prime}_{2\sigma,\Sigma}$}
		\begin{cases}
			u=(-\Delta)^{-\sigma}(f_\sigma\circ u) \quad {\rm in} \quad \mathbb{R}^n\setminus\Sigma,\\
			u(x)=\mathcal{O}(|x|^{2\sigma-n}) \quad {\rm as} \quad |x|\rightarrow+\infty,
		\end{cases}  
	\end{equation}   
	where $(-\Delta)^{-\sigma}$ denotes the inverse operator of the standard higher 
	order fractional Laplacian, namely
	\begin{equation*}
		(-\Delta)^{-\sigma}f_{\sigma}(u(x)):=(\mathcal{R}_{\sigma}\ast f_{\sigma}(u))(x)={\rm p.v.}\int_{\mathbb R^n}\mathcal{R}_{\sigma}(x-y) f_{\sigma}(u(y))\ud y,
	\end{equation*}
	where $\mathcal{R}_{\sigma}:\mathbb{R}^n\times\mathbb{R}^n\rightarrow\mathbb R$ is the Riesz potential given by
	\begin{equation}\label{rieszpotential}
		\mathcal{R}_{\sigma}(x-y):=C_{n,\sigma}|x-y|^{-(n-2\sigma)}
	\end{equation}
	with $C_{n,\sigma}>0$ a normalizing constant.
	Our starting point in this paper will be to prove that 
	\eqref{ourintegralequation} and \eqref{ourintegralequationdual} are equivalent (see Lemma~\ref{lm:dualrepresentation}).
	
	When $\sigma\in \mathbb N$ is integer, that is $\sigma=m$, Eq. \eqref{ourintegralequation} becomes the poly-harmonic equation
	\begin{equation}\label{ourlocalequationinteger}\tag{$\mathcal{P}_{2m,\Sigma}$}
		\begin{cases}
			(-\Delta)^{m}u=f_{m}(u) \quad {\rm in} \quad \mathbb{R}^n\setminus\Sigma,\\
			u(x)=\mathcal{O}(|x|^{2\sigma-n}) \quad {\rm as} \quad |x|\rightarrow+\infty.
		\end{cases}        
	\end{equation}
	The most natural case of \eqref{ourlocalequationinteger} is when $m=1$. 
	In this situation, this equation becomes the classical Lane--Emden equation.
	On this subject, Mazzeo and Pacard \cite[Theorem~2]{MR1425579} based on a gluing technique via ODE theory to prove an existence theorem.     
	Furthermore, when $\sigma\in(0,1)$, we arrive at
	\begin{equation}\label{ourequationfractional}\tag{$\mathcal{F}_{2s,\Sigma}$}
		\begin{cases}
			(-\Delta)^{s}u=f_{s}(u) \quad {\rm in} \quad \mathbb{R}^n\setminus\Sigma,\\
			u(x)=\mathcal{O}(|x|^{2\sigma-n}) \quad {\rm as} \quad |x|\rightarrow+\infty.
		\end{cases}         
	\end{equation}
	Recently, Ao {\it et al.} \cite[Theorem~1.1]{MR4104278} extended the earlier existence results for this case. 
	Their construction is substantially different from the previous one and relies on the concept of a bubble tower (or Half-Dancer solutions). 
	We can summarize these results in the following statement
	\begin{theoremletter}
		Let $\sigma\in(0,1]$ with $n>2\sigma$.
		For any configuration $\Sigma=\{x_1,\dots,x_N\}$ with $N\geqslant 2$, one can find a smooth positive singular solution to \eqref{ourintegralequation} such that ${\rm sing}(u)=\Sigma$.
	\end{theoremletter}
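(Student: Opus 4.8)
This statement collects Mazzeo--Pacard \cite{MR1425579} for the classical case $\sigma=m=1$ and Ao \emph{et al.} \cite{MR4104278} for the fractional range $\sigma\in(0,1)$, so the plan is to carry out the gluing (connected-sum) construction that also serves as the template for the main theorem. First I would pin down the local model near each puncture $x_j$: the radial Emden--Fowler (Delaunay-type) solutions of $(-\Delta)^{\sigma}u=f_{\sigma}(u)$ on $\mathbb{R}^n\setminus\{0\}$, which under the substitution $u(x)=|x|^{-(n-2\sigma)/2}v(-\log|x|)$ become periodic orbits of an autonomous problem in the cylindrical variable, parametrized by a neck size $\varepsilon_j\in(0,\varepsilon^\ast]$. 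Since for $\sigma\in(0,1)$ no genuine ODE is available, I would instead, following \cite{MR4104278}, realize each model as a half-bubble tower (Half-Dancer) solution --- an infinite superposition of standard bubbles centered at the origin with geometrically decaying scales --- and classify the bounded solutions of the linearization around it; this classification is what replaces the phase-plane picture available when $\sigma=1$.

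Next I would build an approximate solution $\bar{u}$ by taking the smooth background $u_{\rm sph}$, cutting off a Delaunay model $u_{\varepsilon_j}$ near each $x_j$ (with small $\varepsilon_j$ and suitable translation and scaling parameters), and patching the pieces with a partition of unity. Using a weighted H\"older--Lebesgue norm adapted to the conical singularities --- and working with the dual formulation \eqref{ourintegralequationdual}, which turns the estimates into convolution estimates against the Riesz kernel --- I would show that the error $(-\Delta)^{\sigma}\bar{u}-f_{\sigma}(\bar{u})$ is small once the $\varepsilon_j$ are small and the $x_j$ are well separated.

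The analytic core of the plan is the linearized operator $L_{\bar{u}}:=(-\Delta)^{\sigma}-f_{\sigma}'(\bar{u})$ acting between these weighted spaces: I would prove it is Fredholm, identify its approximate cokernel with the finite-dimensional space generated by the geometric deformations of the model (dilations and translations of each bubble tower), and construct a right inverse modulo that obstruction space with norm uniform in the gluing parameters. Given such an inverse, writing $u=\bar{u}+\phi$ reduces the projected equation to a fixed-point problem for $\phi$, which I would solve by the contraction mapping principle for each admissible choice of parameters. Finally, projecting the equation onto the obstruction space leaves a finite system for $(x_j,\varepsilon_j,\dots)$ whose leading order, after the Emden--Fowler rescaling, is a Toda-type system encoding the interaction of neighbouring towers; I would show it is solvable whenever $N\geqslant 2$ --- equivalently, that a balanced configuration exists --- by a degree or variational argument, or via explicit symmetric configurations, which closes the construction and yields a positive singular solution of \eqref{ourintegralequation} with ${\rm sing}(u)=\Sigma$.

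The hard part will be the Fredholm theory together with the full classification of the bounded kernel of the linearization around the half-bubble tower: for $\sigma=1$ this is immediate from the ODE, but for the nonlocal operator it demands a careful spectral analysis --- via a Caffarelli--Silvestre-type extension, decomposition into spherical harmonics, and a study of the indicial roots of the resulting linear problem --- and it is precisely this analysis that takes the place of the ODE method of \cite{MR1425579}.
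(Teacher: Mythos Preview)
The paper does not prove this statement; it is stated as a summary of known results, attributing the case $\sigma=1$ to Mazzeo--Pacard \cite[Theorem~2]{MR1425579} and the case $\sigma\in(0,1)$ to Ao \emph{et al.} \cite[Theorem~1.1]{MR4104278}. Your proposal correctly identifies this and gives an accurate outline of the gluing strategy used in those references, which is also the template the present paper adapts for $\sigma>1$.

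One small point of divergence worth noting: you suggest handling the linearized Fredholm theory for $\sigma\in(0,1)$ via a Caffarelli--Silvestre-type extension. While that route is available (and was used in earlier work such as \cite{MR3694655}), the approach in \cite{MR4104278} --- and the one this paper emphasizes --- works directly with the dual integral formulation \eqref{ourintegralequationdual} in Emden--Fowler coordinates, bypassing the extension entirely. This matters here because the paper's whole point is that the integral/dual method generalizes to $\sigma>1$, where no extension is available; so if you present the proof of Theorem~A as a warm-up for the main result, it would be more in keeping with the paper's philosophy to phrase the linear analysis in terms of the convolution kernel $\widehat{\mathcal{R}}_\sigma$ rather than the extension.
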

	
	Let us briefly explain our strategy for the proof.
	We are based
	Schoen's \cite{MR929283} tactic, which consists in finding an explicit infinite set of functions that span an approximate nullspace, such that the linearized nonlinear nonlocal operator around this infinite-dimensional family of solutions is invertible on its orthogonal complement. 
	He first solves the equation on the complement. Then he provides a set of balancing conditions to ensure that the solution to this restricted problem is a solution to the original problem.
	This method was recently extended for fractional operators \cite{MR4030366}. 
	
	This technique set differs substantially from the one in \cite{MR1425579}.
	In their construction, the authors obtain an one-parameter family of solutions which blows up quickly enough near the singular set. 
	These solutions are different in spirit from the ones in the nonlocal case. 
	Since blow-limit Delaunay solutions for the scalar curvature problem are classified to depend only on two parameters in \cite{MR982351}.
	Then, by linearizing the problem around these solutions families, the resulting linear operator is proved to be surjective on some reasonable space of functions, at least when the neck size parameter is sufficiently small. A standard iteration argument may be used to obtain an exact solution to \eqref{ourintegralequation} with a suitable blow-up rate. 
	This strategy derives from its connections with the earlier constructions of the CMC with Delaunay-type ends \cite{MR1807955}.
	Compared with the fractional case $\sigma\in(0,1)$, the main difference in our strategy is the proof of the refined asymptotics near half-bubble towers solutions, which holds in a more general setting.
	
	Using this approach, we can perturb each bubble within the tower separately and construct a bubble tower at each singularity, and as an appropriate approximate solution to \eqref{ourintegralequation}. 
	However, it is essential to note that the linearization of this approximate solution is not injective, as there is an infinite-dimensional kernel. 
	As a result, an infinite-dimensional Lyapunov-Schmidt reduction procedure is employed. 
	This approach is similar to Kapouleas' CMC construction \cite{MR1100207}, which Malchiodi adapted in \cite{MR2522830} to produce new entire solutions for a semilinear equation with a subcritical exponent that differ from the well-known spike solutions since they decay to zero when moving away from three half lines and do not tend to zero at infinity. 
	For this, he constructed a half-Dancer solution along each half-line.
	Whence, to solve the original problem from the perturbed one, an infinite-dimensional system of Toda-type needs to be solved, which arises from studying the interactions between the different bubbles in the tower. 
	The most robust interactions occur in the zero-mode level and turn into some compatibility conditions (see Definition~\ref{def:balancedparameters}).
	In this fashion, a configuration satisfying such conditions is called balanced, related to well-known balancing properties enjoyed by the sum of the Pohozaev invariants.
	Nonetheless, the remaining interactions can be made small and are dealt with through a fixed-point argument.
	
	These compatibility conditions do not restrict the location of the singularity points but only affect the Delaunay parameter (neck size) at each end. 
	We also note that due to the heavy influence of the underlying geometry, the first compatibility condition is similar to the ones found in \cite{MR1712628} for the local case $\sigma=1$. 
	However, the rest of the configuration depends on the Toda-type system. 
	In the local setting, a similar procedure to remove the resonances of the linearized problem was considered in \cite{MR2812575}, but the Toda-type system was finite-dimensional in their case.
	
	On the technical level, our strategy is to employ the gluing method and Lyapunov--Schmidt reduction method. 
	First, we find a suitable approximate solution: a perturbation of the summation of half-Delaunay solutions with a singularity at each puncture. 
	Then, use the reduction method to find a perturbed solution that satisfies the associated linearized problem with the right-hand side given by some Lagrangian multiplier containing the approximate kernels of the linearized operator.
	This family of kernels spans an infinite-dimensional set called the approximate null space.
	The last step is to determine the infinitely-dimensional free parameter set such that all the coefficients of the projection on approximate null space vanish. 
	This problem is reduced to the solvability of some infinite-dimensional Toda system around each singular point.
	A fundamental property in the proof is to have a sufficiently good approximate solution (a half-Dancer) so that all the estimates are exponentially decreasing in terms of the bubble tower parameter. 
	A fixed point argument in suitable weighted sequence spaces then solves the problem of adjusting the parameters to have all equal to zero.
	
	We remark that instead of relying on the well-known extension problem for the fractional Laplacian \cite{MR2354493}, we are inspired by the approach in the approach given by Delatorre {\it et al.} \cite{MR3694655} to rewrite the fractional Laplacian in radial coordinates in terms of a new integro-differential operator in logarithmic cylindrical coordinates.
	In our case, such an extension does not exist in general.
	We emphasize that our proof is written solely in the dual formulation, and it can be extended to general integral equations not arising as the dual representation of a differential equation.
	
	In light of the seminal result of Mazzeo, Pollack, and Uhlenbeck \cite{MR1356375} (see also \cite{MR1371233}), it is natural to wonder if the marked moduli space in \eqref{markedmodulispace} can be furnished with more structure.
	It is believed that the result below should hold
	
	\begin{statement}{Conjecture~1}
		Let $\sigma\in(0,+\infty]$ with $n>2\sigma$.
		For any singular set $\Lambda\subset \mathbb S^n$ such that $\#\Lambda=N$ with $N\geqslant 2$, the marked moduli space of complete constant higher order fractional singular $Q$-curvature metrics on the punctured round sphere $\mathcal{M}_{2\sigma,N}(g_0)$ is an analytic manifold with formal dimension equal the number of isolated singularities, that is, ${\rm dim}(\mathcal{M}_{2\sigma,N}(g_0))=N$.
	\end{statement}
	
	Another possible development is to study the case in which a singular set is a disjoint union of smooth submanifolds with possible distinct positive Hausdorff dimensions.
	In this situation, it would be interesting to prove that the moduli space defined in \eqref{markedmodulispace} is still non-empty and, in strong contrast with the case of isolated singularities, is infinite-dimensional; this will be the topic of a forthcoming paper.
	
	Let us explain this case in more detail.
	It is well-known that the character of the analysis required to prove the existence of solutions when $R(g)<0$, which dates back to the work of Loewner and Nirenberg \cite{MR0358078} (see also \cite{MR932852,MR1266103}), is fundamentally different than in the positive scalar curvature case.
	Therefore, most of the literature is concentrated on the positive scalar curvature case  $R(g)>0$.
	In this setting, it is natural to have a solution one needs to impose some necessary conditions on the dimension.
	More challenging it would be to construct solutions to \eqref{ourintegralequation} with uncountable isolated singularities, for instance, in the lattice $\Sigma=\mathbb{Z}^n$.	
	The existence of weak solutions with larger dimension singular set for the singular Yamabe equation has been studied by Mazzeo and Smale \cite{MR1139641} and by  
	Mazzeo and Pacard \cite{MR1425579} for the scalar curvature case. 
	As well as by Hyder and Sire \cite{arXiv:1911.11891} for the (fourth order) $Q$-curvature metrics, and by Ao {\it et al.} \cite{MR4030366} for the (fractional order) $Q$-curvature metrics, based in the construction of entire solutions from \cite{MR3858832}.  
	
	More generally, such solutions may be constructed on an arbitrary compact manifold $(M^n,g)$ of nonnegative scalar curvature $R(g)\geqslant0$ whenever the singular set is a finite disjoint union of submanifolds with positive bounded Hausdorff dimension, which we describe as follows.
	Given $\sigma\in(0,+\infty]$ with $n>2\sigma$ and $N\geqslant 2$, we let 
	$\Lambda\subset \mathbb S^n$ be a finite disjoint union of submanifolds $\Lambda=\Lambda^0\cup \Lambda^+$, where $\Lambda_+=\cup_{\ell=1}^d\Lambda_\ell^{k_\ell}$ with $k_\ell:={\rm dim}_{\mathcal H}(\Lambda_\ell)$ denoting its Hausdorff dimension.
	Furthermore, we denote by
	\begin{align*}
		\mathcal{M}^k_{2\sigma,\Lambda}(g_0)=\left\{g\in [g_0] :  \mbox{$g$ is complete on $\mathbb S^n\setminus\Lambda^k$} \; \mbox{and} \;  Q_{2\sigma}(g)\equiv Q_{n,\sigma}   \right\}
	\end{align*}
	the moduli space of complete constant higher order fractional $Q$-curvature metrics with higher dimensional singularities.
	Notice that we simply denote $\mathcal{M}^0_{2\sigma,\Lambda}(g_0)=\mathcal{M}_{2\sigma,\Lambda}(g_0)$.
	
	To summarize this discussion, we have the following statement
	\begin{theoremletter}
		Let $\sigma\in(0,+\infty]$ with $n>2\sigma$.
		Assume that $\Lambda=\Lambda^0\cup \Lambda^+$ is a finite disjoint union of submanifolds satisfying $\Lambda^0=\varnothing$ and $\Lambda_+=\cup_{\ell=1}^d\Lambda_\ell^{k_\ell}$ with $0<k_\ell<\frac{n-2\sigma}{2}$.
		Then, there exists a metric $g \in \mathcal{M}^k_{2\sigma,\Lambda}(g_0)$  that ${\rm sing}(g)=\Lambda$. 
		In particular, one has $\mathcal{M}^k_{2\sigma,\Lambda}(g_0)\neq\varnothing$ and it is an infinite-dimensional analytic manifold.
	\end{theoremletter}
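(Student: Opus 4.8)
The plan is to follow the by-now standard construction of singular solutions with higher-dimensional singular set (Mazzeo--Pacard \cite{MR1425579}, Mazzeo--Smale \cite{MR1139641}, Hyder--Sire \cite{arXiv:1911.11891}, Ao {\it et al.} \cite{MR4030366}), adapted to the nonlocal higher order operator $(-\Delta)^{\sigma}$ and, as elsewhere in this paper, carried out entirely in the dual formulation. First, exactly as in the passage from \eqref{ourequation} to \eqref{ourintegralequation}, the stereographic projection $\Pi$ and the conformal covariance of $P_{2\sigma}$ reduce the construction of $g\in\mathcal M^k_{2\sigma,\Lambda}(g_0)$ with ${\rm sing}(g)=\Lambda$ to producing a positive solution $u$ of \eqref{ourintegralequationdual} with $u=\mathcal O(|x|^{2\sigma-n})$ at infinity and $\liminf_{\ud(x,\Sigma)\to 0}u(x)=+\infty$, where $\Sigma=\Pi(\Lambda)$ is now a finite disjoint union of compact submanifolds $\Sigma_\ell$ with $\dim\Sigma_\ell=k_\ell\in(0,\tfrac{n-2\sigma}{2})$; working with the dual equation, positivity of $u$ will be automatic, since $(-\Delta)^{-\sigma}$ is the positivity-preserving Riesz potential.

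Next I would construct the approximate solution. Put $\gamma:=\tfrac{n-2\sigma}{2}$. A function of the normal variables $z\in\mathbb R^{n-k_\ell}$ alone is acted on by $(-\Delta)^{\sigma}_{\mathbb R^n}$ exactly as by $(-\Delta)^{\sigma}_{\mathbb R^{n-k_\ell}}$, and the fractional Hardy identity $(-\Delta)^{\sigma}(|z|^{-\gamma})=\mathcal H_{n,k_\ell,\sigma}\,|z|^{-\gamma-2\sigma}$ holds with a positive constant precisely when $\gamma<(n-k_\ell)-2\sigma$, i.e.\ precisely when $k_\ell<\tfrac{n-2\sigma}{2}$; so the homogeneous function $w_\ell(z)=A_\ell|z|^{-\gamma}$, with $A_\ell>0$ fixed by $c_{n,\sigma}A_\ell^{4\sigma/(n-2\sigma)}=\mathcal H_{n,k_\ell,\sigma}$, solves the model equation on $\mathbb R^{n-k_\ell}\setminus\{0\}$. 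Rewriting this model operator in the logarithmic cylindrical coordinates of Delatorre {\it et al.} \cite{MR3694655} exhibits moreover a one-parameter Delaunay-type family $w_{\varepsilon_\ell}$ (the reduced equation on $\mathbb R^{n-k_\ell}\setminus\{0\}$ is strictly subcritical as soon as $k_\ell>0$) degenerating to $w_\ell$ as $\varepsilon_\ell\to0$. Using Fermi coordinates around each $\Sigma_\ell$, a neck-size function $\varepsilon_\ell\in\mathcal C^{\infty}(\Sigma_\ell,(0,\bar\varepsilon))$ and cutoffs, I would glue the $w_{\varepsilon_\ell}$ onto a small multiple of a fixed fast-decaying positive background function to obtain $\bar u=\bar u[\{\Sigma_\ell\},\{\varepsilon_\ell\}]$, control the nonlocal tails through the same cylindrical representation, and bound the error $E[\bar u]:=\bar u-(-\Delta)^{-\sigma}(f_\sigma\circ\bar u)$ in a weighted Hölder (or Sobolev) norm adapted to $\ud(\cdot,\Sigma)$, with weight in the admissible range fixed next.

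The linear analysis is where I expect the main obstacle. The linearization of \eqref{ourintegralequationdual} at $\bar u$ is $L_{\bar u}\phi=\phi-(-\Delta)^{-\sigma}(f_\sigma'(\bar u)\phi)$, whose model near $\Sigma_\ell$ is the nonlocal Hardy-type operator $(-\Delta)^{\sigma}-\gamma_\ell|z|^{-2\sigma}$ on $\mathbb R^{n-k_\ell}\setminus\{0\}$ (tensored with the flat $\mathbb R^{k_\ell}$ directions), $\gamma_\ell>0$ constant. Conjugating by $|z|^{-\gamma}$, passing to $t=-\log|z|$, Fourier transforming in the tangential $\mathbb R^{k_\ell}$-variables and decomposing into spherical harmonics on $\mathbb S^{n-k_\ell-1}$ turns this into an explicit Fourier multiplier whose \emph{indicial roots} $\pm\gamma^{\ell}_{j}$ are the zeros of the symbol; the hypothesis $k_\ell<\tfrac{n-2\sigma}{2}$ is exactly what makes the admissible weight interval $(-\gamma^{\ell}_{0},\gamma^{\ell}_{0})$ nonempty and straddle the natural geometric weight. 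For a weight $\delta$ in this interval for every $\ell$, a parametrix patching argument — local right inverses near each $\Sigma_\ell$ from the explicit model, glued to a global right inverse on $\mathbb R^n\setminus\Sigma$ furnished by the invertibility of $(-\Delta)^{-\sigma}$ on fast-decaying functions — should produce a bounded right inverse $G_{\bar u}$ of $L_{\bar u}$ on the weighted space, with infinite-dimensional kernel. Unlike the case $\sigma\in(0,1)$, no Caffarelli--Silvestre/Chang--Gonz\'alez extension is available, so this has to be done directly from the cylindrical representation; together with the indicial-root bookkeeping and the gluing of local and global right inverses over a tubular neighborhood of a positive-dimensional submanifold, this is the technical heart of the argument.

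The nonlinear step is then routine: writing $u=\bar u+\phi$ turns \eqref{ourintegralequationdual} into $L_{\bar u}\phi=-E[\bar u]+(-\Delta)^{-\sigma}(Q[\phi])$, with $Q$ the superquadratic remainder of $f_\sigma$, and $\phi\mapsto G_{\bar u}\big(-E[\bar u]+(-\Delta)^{-\sigma}Q[\phi]\big)$ is a contraction on a small ball of the weighted space once $\bar\varepsilon$ is small; its fixed point solves \eqref{ourintegralequationdual}, is positive (a Riesz potential of a nonnegative, not identically zero, density), and blows up along $\Sigma$, so $\mathcal M^k_{2\sigma,\Lambda}(g_0)\neq\varnothing$. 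For the manifold structure I would let $(\{\Sigma_\ell\},\{\varepsilon_\ell\})$ vary over the real-analytic Banach manifold of admissible embeddings and neck-size functions — already infinite-dimensional through the $\varepsilon_\ell$ alone — and use that $u\mapsto u-(-\Delta)^{-\sigma}(f_\sigma\circ u)$ is real-analytic on positive functions because $f_\sigma$ is; since $L_u$ is surjective with bounded right inverse along the constructed family, the analytic implicit function theorem in Banach spaces gives that near each such $u$ the solution set of \eqref{ourintegralequationdual} — equivalently $\mathcal M^k_{2\sigma,\Lambda}(g_0)$ — is a real-analytic submanifold with tangent space $\ker L_u$, which is infinite-dimensional (it contains the infinitesimal variations of the neck-size functions along the $\Sigma_\ell$); gluing these charts as in Mazzeo--Pollack--Uhlenbeck \cite{MR1356375} yields that $\mathcal M^k_{2\sigma,\Lambda}(g_0)$ is an infinite-dimensional analytic manifold. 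The assumption $\Lambda^0=\varnothing$ is used only to avoid reconciling the distinct local models and weight systems attached to isolated points versus positive-dimensional strata.
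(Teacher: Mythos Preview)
This statement is a \texttt{theoremletter} (Theorem~B in the paper's numbering), and the paper does not give its own proof: it is stated as a summary of results already in the literature. The sentence immediately preceding it attributes the various cases to Mazzeo--Smale \cite{MR1139641} and Mazzeo--Pacard \cite{MR1425579} for $\sigma=1$, Hyder--Sire \cite{arXiv:1911.11891} for $\sigma=2$, and Ao {\it et al.} \cite{MR4030366} for $\sigma\in(0,1)$; the paper then moves on to Conjecture~2, which concerns the genuinely open case of mixing isolated singularities with positive-dimensional strata. So there is no ``paper's own proof'' to compare against.

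That said, your sketch is a faithful outline of the strategy used in those references, and in particular of \cite{MR4030366} for the nonlocal case: reduction to $\mathbb R^n\setminus\Sigma$ by conformal covariance, model solutions on $\mathbb R^{n-k_\ell}\setminus\{0\}$ under the dimension restriction $k_\ell<\tfrac{n-2\sigma}{2}$, weighted linear theory via indicial roots in cylindrical coordinates, and a contraction closing the construction. Two caveats if you intend to turn this into an actual proof covering all $\sigma\in(0,+\infty)$. First, the existence of a genuine one-parameter \emph{periodic} Delaunay family $w_{\varepsilon_\ell}$ for the strictly subcritical model equation on $\mathbb R^{n-k_\ell}\setminus\{0\}$ is not automatic in the higher-order nonlocal setting; in \cite{MR4030366} the approximate solution is built instead from the entire fast-decay solutions of \cite{MR3858832}, and you would need either to invoke that construction or to supply the periodic family directly. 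Second, the indicial-root computation for $(-\Delta)^\sigma-\gamma_\ell|z|^{-2\sigma}$ with $\sigma>1$ noninteger, and the resulting Fredholm theory on weighted spaces over a tubular neighborhood of a positive-dimensional $\Sigma_\ell$, is precisely the part that is \emph{not} covered by any of the cited references and would be the new content; your sketch correctly flags this as the technical heart, but ``should produce a bounded right inverse'' hides substantial work.
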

	
	With a mind on this statement, it would be natural to prove a similar result as below
	\begin{statement}{Conjecture~2}
		Let $\sigma\in(0,+\infty]$ with $n>2\sigma$.
		Assume that $\Lambda=\Lambda^0\cup \Lambda^+$ such that $\#\Lambda^0=N$ with $N\geqslant2$ and $\Lambda_+=\cup_{\ell=1}^d\Lambda_\ell^{k_\ell}$ with $0<k_\ell<\frac{n-2\sigma}{2}$.
		Then, there exists a metric $g \in \mathcal{M}^k_{2\sigma,\Lambda}(g_0)$ satisfying that ${\rm sing}(g)=\Lambda$. 
		In particular, one has $\mathcal{M}^k_{2\sigma,\Lambda}(g_0)\neq\varnothing$ and it is an infinite-dimensional analytic manifold.
	\end{statement}
	
	As usual, we need to prove the existence 
	of positive solutions to the GJMS equation on the conformally flat case $\mathbb{S}^n\setminus\mathbb S^k\simeq\mathbb{R}^n\setminus\mathbb R^k$ with fast-decay away from the singular set.
	Moreover, the dimension estimate above is sharp in the same sense of 
	González, Mazzeo, and Sire \cite{MR2927681}.
	Namely, if a complete metric blows up at a smooth $k$-dimensional submanifold and is polyhomogeneous, then $k\in\mathbb R_+$ must satisfy the restriction above.
	All the analysis for this type of equation comes from its conformal properties, which produce a geometric interpretation of scattering theory and conformally covariant operators. 
	Exploiting the conformal equivalence $\mathbb{R}^n\setminus \mathbb{R}^k \simeq \mathbb{S}^{n-k-1} \times \mathbb{H}^{k+1}$, where $\mathbb{R}_{+}^{n+1}$ is replaced by anti-de Sitter (AdS) space, but the arguments run in parallel.  
	In the same direction but with another flavor, we quote the multiplicity results in \cite{MR3504948,MR4251294,arXiv:2302.11073,arXiv:2306.00679,case-malchiodi}, which also exploit this conformal invariance and are based on a topological bifurcation technique; this is believed to be true in the much broader case of conformally variational invariants (c.f. \cite{MR4392224,MR3955546}). 
	
	One could extend this construction in a more geometric direction for not-round metrics.
	On this subject, we cite \cite{MR2639545,MR2010322} for non-flat gluing constructions for the constant curvature equation.
	Recently, in \cite{arXiv:2110.05234}, a similar gluing construction is used to prove existence results for fourth order constant $Q$-curvature nondegenerate metrics with suitable growth conditions on the Weyl tensor.

	We now describe the plan for the rest of the paper.
	In Section~\ref{sec:notations}, we establish some terminology that will be used throughout the paper.
	In Section~\ref{sec:dualreprensetation}, we prove the dual representation formula relating \eqref{ourintegralequation} with \eqref{ourintegralequationdual}.
	In Section~\ref{sec:delaunaytypesolutions}, we classify Delaunay-type solutions as bubble towers.     
	In Section~\ref{sec:approximatesolutions}, we provide balancing equations. 
	Next, we define balanced configuration parameters and admissible perturbation sequences. 
	We use this to define approximate solutions and prove some estimates for the linearized operator around this approximating family.
	In Section~\ref{sec:estimatesparameters}, we summarize some estimates involving the coefficients of the projection on the approximate null space.
	In Section~\ref{sec:gluingtechnique}, we reduce the proof of Theorem~\ref{maintheorem} to solving an infinite dimensional Toda system. 
	We prove that under admissibility conditions, this system can be solved.       
	In Appendix~\ref{sec:bubbetowerinteractionestimates}, we recall some estimates concerning the interaction between two spherical solutions with different centers and radii.
	
	\numberwithin{equation}{section} 
	\numberwithin{theorem}{section}
	
	\section{Notations}\label{sec:notations}
	We establish some notations and definitions that we will use frequently throughout the text for easy reference.   
	\begin{itemize}
		\item $m:=\lfloor  \sigma \rfloor$ is the integer part of $\sigma$, that is, be the greatest integer that does not exceed $\sigma$;
		\item $s:=\{\sigma\}$ is the fractional part of $\sigma$, that is, $s:=\sigma-\lfloor  \sigma \rfloor$;
		\item $0<\xi,\nu,\zeta_1\ll1$ are small constants;
		\item  $C>0$ is a universal constant that may vary from line to line and even in the same line. 
		\item $a_1 \lesssim a_2$ if $a_1 \leqslant C a_2$, $a_1 \gtrsim a_2$ if $a_1 \geqslant C a_2$, and $a_1 \simeq a_2$ if $a_1 \lesssim a_2$ and $a_1 \gtrsim a_2$.
		\item $u=\mathcal{O}(f)$ as $x\rightarrow x_0$ for $x_0\in\mathbb{R}\cup\{\pm\infty\}$, if $\limsup_{x\rightarrow x_0}(u/f)(x)<\infty$ is the Big-O notation;
		\item $u=\mathrm{o}(f)$ as $x\rightarrow x_0$ for $x_0\in\mathbb{R}\cup\{\pm\infty\}$, if $\lim_{x\rightarrow x_0}(u/f)(x)=0$ is the little-o notation;
		\item $u\simeq\widetilde{u}$, if $u=\mathcal{O}(\widetilde{u})$ and $\widetilde{u}=\mathcal{O}(u)$ as $x\rightarrow x_0$ for $x_0\in\mathbb{R}\cup\{\pm\infty\}$;
		\item $\mathcal{C}^{j,\alpha}(\mathbb R^n)$, where $j\in\mathbb N$ and $\alpha\in (0,1)$, is the H\"{o}lder space;  we simply denote $\mathcal{C}^{j}(\mathbb R^n)$ if $\alpha=0$.
		\item $W^{m,q}(\mathbb R^n)$ is the classical Sobolev space, where $m\in\mathbb N$ and $q\in[1,+\infty]$; when $m=0$ we simply denote $L^{q}(\mathbb R^n)$ when $q=2$, we simply denote $H^{\ell}(\mathbb R^n)$; 
		\item $\mathcal{C}^{2\sigma}(\mathbb R^n)={\mathcal{C}^{2m,2s}}(\mathbb R^n)$ is the classical H\"older space;
		\item $\gamma_\sigma=\frac{n-2\sigma}{2}$ is the Fowler rescaling exponent with $\gamma_\sigma^{\prime}=\frac{n+2\sigma}{2}$ its dual;
		\item $2^*_\sigma=\frac{2n}{n-2\sigma}$ is the critical Sobolev exponent with $2^{*\prime}_\sigma=\frac{2n}{n+2\sigma}$ its dual;
		\item $A_1, A_2>0, A_3<0$ are constant defined by \eqref{A1}, \eqref{A2}, and \eqref{A3}, respectively;
		\item $\mathcal{I}_\infty:=\{1,\dots,N\}\times \mathbb N\times \{0,\dots,n\}\simeq \ell^\infty(\mathbb R^{(n+1)N})$ is total index set;
		\item $\boldsymbol{p}:=(p_1,\dots, p_N)\in \mathbb S^{nN}$ with $\Lambda:=\{p_1,\dots, p_N\}\subset \mathbb S^n$;
		\item $\boldsymbol{x}:=(x_1,\dots, x_N)\in \mathbb R^{nN}$ with $\Sigma:=\{x_1,\dots, x_N\}\subset \mathbb R^n$;
		\item $\boldsymbol{L}=(L_1,\dots,L_N)\in\mathbb R^N_+$ is a vector of periods such that $|\boldsymbol{L}|\gg1$ is large enough arising from Proposition~\ref{prop:localbehavior}. Equivalently, $\boldsymbol{\varepsilon}=(\varepsilon_1,\dots,\varepsilon_N)\in\mathbb R^N_+$ is a vector of necksizes such that $0<|\boldsymbol{\varepsilon}|\ll1$ is small enough;
		\item $(\boldsymbol{x},\boldsymbol{L})\in\mathbb R^{(n+1)N}$ are the moduli space parameters and $\Upsilon_{\rm conf}:\mathbb R^{(n+1)N}\rightarrow \mathbb R^{(n+2)N}$ is the configuration map;        
		\item $\boldsymbol{q}=(q_1,\dots,q_N)\in\mathbb R^N_+$ is a vector of comparable periods such that $|\boldsymbol{q}|\gg1$ is also large enough and satisfy \eqref{balancing0}, $\boldsymbol{R}=(R^1,\dots,R^N)\in\mathbb R^N$ and $\boldsymbol{a}_0=(a_0^1,\dots,a_0^N)\in\mathbb R^{nN}$ are the deformation parameters; \item $(\boldsymbol{q},\boldsymbol{a}_0,\boldsymbol{R})\in\mathbb R^{(n+2)N}$ are the configurations parameters and $\Upsilon_{\rm per}:\mathbb R^{(n+2)N}\rightarrow \ell^\infty_{\tau}(\mathbb R^{(n+1)N})$ is the configuration map;
		\item $(\boldsymbol{q}^b,\boldsymbol{a}_0^b,\boldsymbol{R}^b)\in{\rm Bal}_\sigma(\Sigma)$ denotes a balanced configuration, that is, it satisfies \eqref{balancing1} and \eqref{balancing2}. 
		\item $(\boldsymbol{a}_j,\boldsymbol{\lambda}_j)\in\ell^\infty_\tau(\mathbb R^{(n+1)N})$ (or $(\boldsymbol{a}_j,\boldsymbol{r}_j)\in\ell^\infty_\tau(\mathbb R^{(n+1)N})$) are the perturbation sequences and $\Upsilon_{\rm per}:\mathbb R^{(n+2)N}\rightarrow \ell^\infty_{\tau}(\mathbb R^{(n+1)N})$ is the perturbation map;
		\item $(\boldsymbol{a}_j,\boldsymbol{\lambda}_j)\in {\rm Adm}_\sigma(\Sigma)$ denotes the admissible perturbation sequences, that is, it satisfies \ref{itm:A0} and \ref{itm:A1}; equivalently $\Upsilon_{\rm per}^{-1}(\boldsymbol{a}_j,\boldsymbol{\lambda}_j)\in {\rm Bal}_\sigma(\Sigma)$;
		\item $\bar{u}_{(\boldsymbol{x},\boldsymbol{L},\boldsymbol{a}_j,\boldsymbol{\lambda}_j)}\in \mathcal{C}_{*,\tau}(\mathbb R^n\setminus\Sigma)$ denotes a Delaunay solution with associated error denoted by  $\phi_{(\boldsymbol{x},\boldsymbol{L},\boldsymbol{a}_j,\boldsymbol{\lambda}_j)}\in \mathcal{C}_{*,\tau}(\mathbb R^n\setminus\Sigma)$ and $\Upsilon_{\rm sol}:\ell^\infty_{\tau}(\mathbb R^{(n+1)N}) \rightarrow \mathcal{C}_{*,\tau}(\mathbb R^n\setminus\Sigma)$ is the solution map;
		\item $\bar{u}_{(\boldsymbol{x},\boldsymbol{L},\boldsymbol{a}_j,\boldsymbol{\lambda}_j)}\in {\rm Apx}_\sigma(\Sigma)$ is an approximate solution, that is, $\Upsilon_{\rm sol}^{-1}(\bar{u}_{(\boldsymbol{x},\boldsymbol{L},\boldsymbol{a}_j,\boldsymbol{\lambda}_j)})\in {\rm Adm}_\sigma(\Sigma)$;
		\item $\{Z_{j \ell}^i(\boldsymbol{a}_j,\boldsymbol{\lambda}_j)\}_{(i,j,\ell)\in \mathcal{I}_\infty}\subset \mathcal{C}^{0}(\mathbb R^n\setminus\Sigma)$ is the associated family of approximating normalized cokernels;
		\item $\{c_{j \ell}^i(\boldsymbol{a}_j,\boldsymbol{\lambda}_j)\}_{(i,j,\ell)\in \mathcal{I}_{\infty}}\subset \mathcal{C}^{0}(\mathbb R^n\setminus\Sigma)$ is the associated family of coefficients of the projections on approximating normalized cokernels;
		\item $\{\beta_{j \ell}^i(\boldsymbol{a}_j,\boldsymbol{\lambda}_j)\}_{(i,j,\ell)\in \mathcal{I}_\infty}\subset \mathcal{C}^{0}(\mathbb R^n\setminus\Sigma)$ is the associated family of projections on approximating normalized cokernels.
	\end{itemize}
	
	\section{Dual representation formula}\label{sec:dualreprensetation}
	This section shows that our equation and its dual are correspondents. We are based on the removable singularity result from \cite[Theorem~1.1]{MR4420104}.
	We also refer to \cite[Proposition~4.1]{MR4438901} for the integer cases $\sigma\in\mathbb N$.
	In what follows, we consider the space
	\begin{equation*}
		L_s(\mathbb R^n):=\left\{u \in L^1_{\rm loc}(\mathbb R^n) : \int_{\mathbb R^n}\frac{|u(x)|}{1+|x|^{n+2s}}\ud x<+\infty\right\}
	\end{equation*}
	with $s\in(0,1)$. 
	
	We first introduce the notation of distributional solutions to \eqref{ourintegralequation}.
	
	\begin{definition}
		Let $\sigma\in\mathbb R_+$ and $n>2\sigma$.
		We say that a smooth solution $u\in \mathcal{C}^{2\sigma}(\mathbb R^n\setminus\Sigma)\cap  L^{1}_{\rm loc}(\mathbb{R}^n)$ to \eqref{ourintegralequation} is a solution in the distributional sense to \eqref{ourintegralequation} if the equality below holds
		\begin{equation}\label{distributionalsense}
			\int_{\mathbb{R}^n}u(-\Delta)^\sigma \varphi\ud x=\int_{\mathbb{R}^n} f_\sigma(u)\varphi  \ud x \quad {\rm in} \quad \mathbb{R}^n\setminus \Sigma
		\end{equation}
		for all $\varphi\in \mathcal{C}_{c}^{\infty}(\mathbb{R}^n\setminus \Sigma)$.
	\end{definition}
	
	\begin{remark}
		One can check that smooth solution to \eqref{ourintegralequation} are indeed distributional solutions.
	\end{remark}
	
	We need the following auxiliary result to prove the equivalence: a combination of \cite[Theorem 1.1 and Lemma 5.4]{MR4420104}.
	
	\begin{lemmaletter}\label{removablesingularity}
		Let $\sigma\in\mathbb R_+$ and $n>2\sigma$.
		If $u\in \mathcal{C}^{2\sigma}(\mathbb R^n\setminus\Sigma)\cap  L^{1}_{\rm loc}(\mathbb{R}^n)$ is a distributional solution to \eqref{ourintegralequation}, then $f_\sigma\circ u\in L^{1}_{\rm loc}(\mathbb{R}^n)$ and $u \in L^{1}_{\rm loc}(\mathbb{R}^n)$ is a distributional solution in $\mathbb{R}^n$, {\it that is}, the distributional equation \eqref{distributionalsense} holds.
		Moreover, one has 
		\begin{equation}\label{growthatinfty}
			\int_{\mathbb{R}^n}\frac{f_\sigma(u(x))}{1+|x|^{n-2\sigma}}\ud x<+\infty.
		\end{equation}
		Consequently, we obtain that
		$w\in \mathcal{C}^{\infty}(\mathbb R^n\setminus\Sigma)$ is defined as 
		\begin{equation}
			w(x):=\int_{\mathbb R^n}\mathcal{R}_{\sigma}(x-y) f_{\sigma}(u(y))\ud y 
		\end{equation}    
		is well-defined and belongs to $L_s(\mathbb{R}^n)$ for every $s>0$.
	\end{lemmaletter}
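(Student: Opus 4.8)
The plan is to reduce the first two assertions to the cited results of \cite{MR4420104} and then to obtain the properties of the Riesz potential $w$ by standard potential-theoretic estimates, so the argument is largely a matter of bookkeeping. Since $\Sigma=\{x_1,\dots,x_N\}$ is finite and $n>2\sigma$, each point is removable for the $2\sigma$-order equation; as $u\in\mathcal{C}^{2\sigma}(\mathbb{R}^n\setminus\Sigma)\cap L^1_{\rm loc}(\mathbb{R}^n)$ solves \eqref{ourintegralequation} classically off $\Sigma$ it is a distributional solution there, so \cite[Theorem~1.1]{MR4420104} applies and yields $f_\sigma\circ u\in L^1_{\rm loc}(\mathbb{R}^n)$ together with the distributional identity \eqref{distributionalsense} tested against every $\varphi\in\mathcal{C}^\infty_c(\mathbb{R}^n)$, not merely those supported away from $\Sigma$. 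For the weighted integrability, the decay hypothesis $u(x)=\mathcal{O}(|x|^{2\sigma-n})$ built into \eqref{ourintegralequation} forces $f_\sigma(u(x))=c_{n,\sigma}|u(x)|^{(n+2\sigma)/(n-2\sigma)}=\mathcal{O}(|x|^{-(n+2\sigma)})$, which is integrable over $\{|x|\geqslant1\}$; together with $f_\sigma\circ u\in L^1_{\rm loc}(\mathbb{R}^n)$ (equivalently, by \cite[Lemma~5.4]{MR4420104}) this gives $f_\sigma\circ u\in L^1(\mathbb{R}^n)$, and in particular \eqref{growthatinfty}.

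With these in hand I would treat $w$ as follows. It is finite almost everywhere and lies in $L^1_{\rm loc}(\mathbb{R}^n)$ because, for a ball $B$, Tonelli's theorem and $\sup_{y\in\mathbb{R}^n}\int_B\mathcal{R}_\sigma(x-y)\,\ud x<+\infty$ (valid since $n-2\sigma<n$) give $\int_B|w|\leqslant C_B\,\|f_\sigma\circ u\|_{L^1(\mathbb{R}^n)}<+\infty$. Smoothness $w\in\mathcal{C}^\infty(\mathbb{R}^n\setminus\Sigma)$ follows by fixing $x_0\notin\Sigma$ and $r>0$ with $B_{2r}(x_0)\cap\Sigma=\varnothing$ and splitting $w$ into the Riesz potential of $f_\sigma(u)\mathbf{1}_{B_{2r}(x_0)}$---a smooth function, hence with smooth Riesz potential---plus the integral over the complement, where $|x-y|\geqslant r$ justifies differentiation under the integral sign, the $x$-derivatives of $\mathcal{R}_\sigma(x-y)$ being dominated by $C_{r,\alpha}(1+|y|)^{-(n-2\sigma)}f_\sigma(u(y))$, which lies in $L^1(\mathbb{R}^n)$ by \eqref{growthatinfty}. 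Finally, the three-region estimate over $\{|y|\leqslant|x|/2\}$, $\{|x|/2<|y|<2|x|\}$ and $\{|y|\geqslant2|x|\}$, using \eqref{growthatinfty} on the first and third and the decay of $f_\sigma(u)$ on the middle annulus, gives $|w(x)|\lesssim(1+|x|)^{2\sigma-n}$ for $|x|$ large; since $n>2\sigma$ implies $2n-2\sigma+2s>n$ for every $s>0$, this decay together with $w\in L^1_{\rm loc}(\mathbb{R}^n)$ yields $\int_{\mathbb{R}^n}|w(x)|(1+|x|)^{-(n+2s)}\,\ud x<+\infty$, that is, $w\in L_s(\mathbb{R}^n)$ for all $s>0$.

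The substantive difficulty here is not any individual estimate but aligning frameworks: one must confirm that the notion of distributional solution and the removability/capacity hypotheses in \cite{MR4420104} are compatible with the regularity class $\mathcal{C}^{2\sigma}(\mathbb{R}^n\setminus\Sigma)\cap L^1_{\rm loc}(\mathbb{R}^n)$ used here, and that \cite[Lemma~5.4]{MR4420104} genuinely applies under only the decay $u(x)=\mathcal{O}(|x|^{2\sigma-n})$, with no extra sign or regularity hypothesis. Once that compatibility is settled, the remaining potential-theoretic estimates are routine.
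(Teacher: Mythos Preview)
Your proposal is correct and aligns with the paper's treatment: the paper does not give an independent proof of this lemma but simply records it as ``a combination of \cite[Theorem~1.1 and Lemma~5.4]{MR4420104},'' exactly the two inputs you invoke. Your additional potential-theoretic verification that $w\in\mathcal{C}^\infty(\mathbb{R}^n\setminus\Sigma)\cap L_s(\mathbb{R}^n)$ via the three-region splitting and differentiation under the integral sign is standard and fills in details the paper leaves implicit; the compatibility concern you flag at the end is legitimate bookkeeping but is precisely what the cited reference handles.
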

	
	Finally, we also recall the  Liouville theorem from \cite[Lemma~2.4]{MR3945763}.
	\begin{lemmaletter}\label{lm:liouville}
		Let $\sigma\in\mathbb R_+$ and $n>2\sigma$.
		Assume that  $w\in L_s(\mathbb{R}^n)$ for some $s\geqslant0$ and
		\begin{equation*}
			(-\Delta)^{\sigma}w=0 \quad {\rm in} \quad \mathbb{R}^n,
		\end{equation*}
		for some $\sigma\geqslant s$.
		Then, one has that $w$ is a polynomial of degree at most $\lfloor 2s\rfloor$.
	\end{lemmaletter}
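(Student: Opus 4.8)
The plan is to run the classical Fourier-analytic Liouville argument under a growth restriction, adapted to the (possibly fractional) order $\sigma$. First I would note that the weighted integrability $w\in L_s(\mathbb{R}^n)$ forces $w$ to be a tempered distribution: for any $\varphi$ in the Schwartz class $\mathcal{S}(\mathbb{R}^n)$,
\begin{equation*}
	|\langle w,\varphi\rangle|\leqslant\Big(\sup_{x\in\mathbb{R}^n}(1+|x|^{n+2s})|\varphi(x)|\Big)\int_{\mathbb{R}^n}\frac{|w(x)|}{1+|x|^{n+2s}}\ud x,
\end{equation*}
and the first factor is a continuous seminorm on $\mathcal{S}(\mathbb{R}^n)$, so the Fourier transform $\widehat w\in\mathcal{S}'(\mathbb{R}^n)$ is well-defined. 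The equation $(-\Delta)^\sigma w=0$ is then read distributionally, which is legitimate because the moderate growth encoded in $w\in L_s(\mathbb{R}^n)$ permits integrating by parts against every Schwartz function, not only against compactly supported ones; here the hypothesis $\sigma\geqslant s$ is used solely to guarantee that the left-hand side $(-\Delta)^\sigma w=(-\Delta)^{\{\sigma\}}\circ(-\Delta)^{\lfloor\sigma\rfloor}w$ is meaningful for such $w$.

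Next I would localize in frequency away from the origin. On $\mathbb{R}^n\setminus\{0\}$ the symbol $|\xi|^{2\sigma}$ is smooth, nowhere vanishing, and has all derivatives of at most polynomial growth; hence, for any $\psi\in\mathcal{S}(\mathbb{R}^n)$ whose Fourier transform is compactly supported and vanishes in a neighbourhood of the origin, the function $\varphi$ defined by $\widehat\varphi(\xi)=|\xi|^{-2\sigma}\widehat\psi(\xi)$ again belongs to $\mathcal{S}(\mathbb{R}^n)$ and satisfies $(-\Delta)^\sigma\varphi=\psi$. Testing $(-\Delta)^\sigma w=0$ against such $\varphi$ and using the self-adjointness of $(-\Delta)^\sigma$ gives $\langle w,\psi\rangle=\langle w,(-\Delta)^\sigma\varphi\rangle=0$; since $\widehat\psi$ ranges over all of $\mathcal{C}_c^\infty(\mathbb{R}^n\setminus\{0\})$, taking Fourier transforms shows that $\widehat w$ vanishes on $\mathbb{R}^n\setminus\{0\}$, i.e. $\supp\widehat w\subseteq\{0\}$. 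By the structure theorem for distributions supported at a single point, $\widehat w=\sum_{|\alpha|\leqslant K}c_\alpha\partial^\alpha\delta_0$ for some finite $K$, and inverting the Fourier transform shows that $w$ is a polynomial.

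It then remains to estimate $d:=\deg w$, which is where the growth hypothesis re-enters. Let $P_d\not\equiv0$ be the homogeneous part of $w$ of top degree, so that $\int_{\mathbb{S}^{n-1}}|P_d(\omega)|\ud\omega>0$ and, for $r$ large, $|w(r\omega)|\geqslant\tfrac12 r^{d}|P_d(\omega)|$ on the set $\{P_d\neq0\}$. Passing to polar coordinates,
\begin{equation*}
	+\infty>\int_{\mathbb{R}^n}\frac{|w(x)|}{1+|x|^{n+2s}}\ud x\gtrsim\int_{R}^{+\infty}\frac{r^{d}}{r^{n+2s}}\,r^{n-1}\,\ud r=\int_{R}^{+\infty}r^{\,d-1-2s}\,\ud r,
\end{equation*}
which is finite only if $d<2s$; since $d$ is a non-negative integer, this yields $d\leqslant\lfloor 2s\rfloor$, as claimed. (When $2s\in\mathbb{N}$ one in fact obtains $d\leqslant 2s-1$, and when $s=0$ the only possibility is $w\equiv0$.)

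The step I expect to be the main obstacle is the frequency-localization argument for non-integer $\sigma$: one must check carefully that $(-\Delta)^\sigma$ sends Schwartz functions with spectrum bounded away from the origin back into $\mathcal{S}(\mathbb{R}^n)$, that division by the symbol $|\xi|^{2\sigma}$ preserves this subclass, and---most delicately---that the notion of distributional solution available here, coming from the factorization into the integer poly-Laplacian followed by a fractional Laplacian and using only $w\in L_s(\mathbb{R}^n)$, is genuinely equivalent to the multiplier identity $|\xi|^{2\sigma}\widehat w=0$ on $\mathbb{R}^n\setminus\{0\}$; this hinges on an integration-by-parts lemma pairing a function of moderate growth with a Schwartz function. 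A more hands-on alternative is to peel off one power at a time---using the mean-value property of $\{\sigma\}$-harmonic functions together with the Liouville theorem for the poly-Laplacian---but this forces one to propagate the weighted growth bounds through each inversion, so the Fourier route seems cleaner overall.
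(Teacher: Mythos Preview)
The paper does not prove this lemma: it is imported as a black box from \cite[Lemma~2.4]{MR3945763} and only the statement appears in the manuscript, so there is no in-paper argument to compare yours against.

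Your Fourier-analytic route is the standard one for such Liouville theorems and is correct. Temperedness from the weighted $L^1$ bound, the conclusion $\supp\widehat w\subseteq\{0\}$ by testing against Schwartz functions with spectrum compactly supported in $\mathbb{R}^n\setminus\{0\}$, polynomiality via the structure theorem for point-supported distributions, and the degree bound $d<2s$ from the divergence of $\int^\infty r^{\,d-1-2s}\,\ud r$ are all sound; the passage from $d<2s$ to $d\leqslant\lfloor 2s\rfloor$ (with the sharper $d\leqslant 2s-1$ in the borderline integer case) is also correctly argued.

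The technicality you single out---matching the paper's definition $(-\Delta)^\sigma=(-\Delta)^{\{\sigma\}}\circ(-\Delta)^{\lfloor\sigma\rfloor}$, with the fractional part given by a principal-value integral, to the Fourier-multiplier identity $|\xi|^{2\sigma}\widehat w=0$---is the only point requiring care, and you have handled it correctly: on test functions $\varphi$ with $\widehat\varphi\in\mathcal{C}_c^\infty(\mathbb{R}^n\setminus\{0\})$ the singular-integral and multiplier definitions of $(-\Delta)^{\{\sigma\}}$ agree and preserve the Schwartz class, so the duality $\langle(-\Delta)^\sigma w,\varphi\rangle=\langle w,(-\Delta)^\sigma\varphi\rangle$ holds and the argument closes. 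The hypothesis $\sigma\geqslant s$ is what guarantees that $(-\Delta)^\sigma w$ is itself tempered, so that the left-hand pairing is defined to begin with.
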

	
	With the lemmas above, we have our main result in this section.
	
	\begin{proposition}\label{lm:dualrepresentation}
		Let $\sigma\in\mathbb R_+$ and $n>2\sigma$.
		It holds that \eqref{ourintegralequation} and \eqref{ourintegralequationdual} are equivalents.  
	\end{proposition}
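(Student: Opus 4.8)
The plan is to prove a two-way implication. The direction ``\eqref{ourintegralequationdual} $\Rightarrow$ \eqref{ourintegralequation}'' is the soft one: if $u=(-\Delta)^{-\sigma}(f_\sigma\circ u)=\mathcal{R}_\sigma\ast(f_\sigma\circ u)$ on $\mathbb R^n\setminus\Sigma$, then applying $(-\Delta)^\sigma$ to both sides and using the standard identity $(-\Delta)^\sigma\mathcal{R}_\sigma=\mathrm{Id}$ (valid on $\mathbb R^n\setminus\Sigma$ once we know $f_\sigma\circ u$ has enough integrability, which is exactly what Lemma~\ref{removablesingularity} guarantees a posteriori, or which we can check directly from the decay $u(x)=\mathcal O(|x|^{2\sigma-n})$) yields $(-\Delta)^\sigma u=f_\sigma(u)$ in $\mathbb R^n\setminus\Sigma$; the asymptotic condition at infinity is preserved because the Riesz potential of the integrable-against-$(1+|x|)^{2\sigma-n}$ nonlinearity decays like $|x|^{2\sigma-n}$. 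So the bulk of the argument is the reverse implication.

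For ``\eqref{ourintegralequation} $\Rightarrow$ \eqref{ourintegralequationdual}'', first I would observe (via the Remark) that a smooth solution $u$ to \eqref{ourintegralequation} is a distributional solution on $\mathbb R^n\setminus\Sigma$, and that the decay hypothesis $u(x)=\mathcal O(|x|^{2\sigma-n})$ places $u\in L^1_{\mathrm{loc}}(\mathbb R^n)\cap L_s(\mathbb R^n)$. Then Lemma~\ref{removablesingularity} upgrades this to a distributional solution of $(-\Delta)^\sigma u=f_\sigma(u)$ on all of $\mathbb R^n$ — i.e. the singular set $\Sigma$ is removable at the level of distributions — and simultaneously supplies the crucial integrability \eqref{growthatinfty}, which makes the convolution $w(x):=\int_{\mathbb R^n}\mathcal R_\sigma(x-y)f_\sigma(u(y))\,\mathrm dy$ well-defined, smooth on $\mathbb R^n\setminus\Sigma$, and a member of $L_s(\mathbb R^n)$ for every $s>0$. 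The next step is to form $v:=u-w$. By construction $(-\Delta)^\sigma w=f_\sigma(u)$ in the distributional sense on $\mathbb R^n$, so $(-\Delta)^\sigma v=0$ in $\mathbb R^n$, and $v\in L_s(\mathbb R^n)$. Now Lemma~\ref{lm:liouville} applies and forces $v$ to be a polynomial of degree at most $\lfloor 2s\rfloor$ (in fact at most $\lfloor 2\sigma\rfloor$, but the sharp statement needs a little care about which $s$ one feeds the Liouville lemma).

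The endgame is to kill this polynomial using the decay conditions at infinity. Both $u$ and $w$ tend to $0$ as $|x|\to\infty$ (for $u$ this is the standing fast-decay hypothesis $\lim_{|x|\to\infty}u(x)=0$ together with $u=\mathcal O(|x|^{2\sigma-n})$; for $w$ it follows from \eqref{growthatinfty} and the Riesz kernel's decay), hence $v=u-w\to 0$ at infinity. The only polynomial vanishing at infinity is $v\equiv 0$, so $u=w=(-\Delta)^{-\sigma}(f_\sigma\circ u)$ on $\mathbb R^n\setminus\Sigma$, which is exactly \eqref{ourintegralequationdual}; the required asymptotics at infinity then come for free from the estimate on $w$. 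The main obstacle I anticipate is purely bookkeeping: verifying that $v\in L_s(\mathbb R^n)$ for a value of $s$ large enough that Lemma~\ref{lm:liouville} yields a polynomial, while small enough relative to $\sigma$ that the hypothesis $\sigma\geqslant s$ is met — and then matching the resulting degree bound $\lfloor 2s\rfloor$ against the genuine growth of $u-w$ so that one can conclude $v\equiv0$ rather than merely $v$ affine. Handling the non-integer part $s=\{\sigma\}$ versus the full order $\sigma$ consistently in these two lemmas is where the care is needed; everything else is a direct assembly of the quoted results.
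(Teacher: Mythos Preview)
Your proposal is correct and follows essentially the same route as the paper: invoke Lemma~\ref{removablesingularity} to define $w=\mathcal{R}_\sigma\ast f_\sigma(u)\in L_s(\mathbb{R}^n)$, set $\widehat{w}=u-w$, apply the Liouville theorem (Lemma~\ref{lm:liouville}) to conclude $\widehat{w}$ is a polynomial, and use the decay at infinity to force $\widehat{w}\equiv 0$. The paper only spells out the harder implication \eqref{ourintegralequation} $\Rightarrow$ \eqref{ourintegralequationdual} and is terser about the bookkeeping on $s$ that you flagged, but otherwise the arguments coincide.
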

	
	\begin{proof}[Proof of Proposition \ref{lm:dualrepresentation}]
		Let $u\in \mathcal{C}^{\infty}(\mathbb R^n\setminus\Sigma)$ be a positive singular fast-decaying solution to \eqref{ourintegralequation}.
		From \eqref{growthatinfty}, we have that $w\in L_s(\mathbb{R}^n)$ for every $s>0$, $s\neq 2\sigma$. Hence, if we define $\widehat{w}=u-w$, then $\widehat{w}\in L_s(\mathbb{R}^n)$ for all $s>0$ with $s\neq 2\sigma$. 
		In addition, since $(-\Delta)^\sigma \widehat{w}=0$ in $\mathbb{R}^n$, we conclude that $\widehat{w}$ is a polynomial of degree at most $2m$, thanks to the Liouville theorem in Lemma~\ref{lm:liouville}. 
		Recall that we are considering solutions satisfying $\lim_{|x|\rightarrow +\infty}u(x)=0$. 
		Consequently, $\widehat{w}\equiv 0$, and the dual representation holds.
	\end{proof}
	
	\section{Delaunay-type solutions}\label{sec:delaunaytypesolutions}
	This section is devoted to the construction of solutions for the case of a single isolated singularity, that is, $\Sigma=\{0\}$.
	We are inspired in \cite{arxiv:1901.01678}, which is an adaption of the earlier constructions in \cite{MR4104278,MR3694655,MR3896244} for the cases $\sigma\in(0,1)$ and $\sigma=1$.  
	
	\subsection{Integral Emden--Fowler coordinates}
	As a matter of fact, when $\Sigma=\{0\}$, Eq. \eqref{ourintegralequation} can be rewritten as 
	\begin{equation}\label{ourintegralequation1pt}\tag{$\mathcal{Q}_{2\sigma,\infty}$}
		\begin{cases}
			(-\Delta)^{\sigma}u=f_{\sigma}(u) \quad {\rm in} \quad \mathbb{R}^n\setminus\{0\},\\
			\displaystyle\lim_{|x|\rightarrow+\infty}u(x)=0,
		\end{cases}
	\end{equation}
	or into its dual form
	\begin{equation}\label{ourintegralequationdual1pt}\tag{$\mathcal{Q}^{\prime}_{2\sigma,\infty}$}
		\begin{cases}
			u=(-\Delta)^{-\sigma}(f_{\sigma}\circ u) \quad {\rm in} \quad \mathbb{R}^n\setminus\{0\}\\
			\displaystyle\lim_{|x|\rightarrow+\infty}u(x)=0.
		\end{cases}           
	\end{equation}
	It is straightforward to see from Proposition~\ref{lm:dualrepresentation} that \eqref{ourintegralequation1pt} are \eqref{ourintegralequationdual1pt} equivalents. 
	\begin{remark}
		For any $\sigma\in(1,+\infty]$ and $n>2\sigma$, there are two distinguished solutions to \eqref{ourintegralequationdual1pt}, which we describe as follows:
		\begin{itemize}
			\item[{\rm (a)}] The cylindrical solution
			\begin{equation}\label{cylindricalsolutions}
				u_{\rm cyl}(|x|)=a_{n,\sigma}|x|^{-\gamma_{\sigma}},
			\end{equation}
			which is singular at the origin.
			\item[{\rm (b)}] The standard spherical solution $($also known as \quotes{bubble} solution$)$ 
			\begin{equation}\label{sphericalsolutions}
				u_{\rm sph}(|x|)=\left(\frac{2}{1+\left|x\right|^2}\right)^{\gamma_{\sigma}},
			\end{equation}               
			which is non-singular at the origin.
		\end{itemize}
	\end{remark}
	
	We remark that all non-singular solutions to the blow-up limit problem were classified in \cite{MR2200258}, which are given by deformations of the standard bubble solution.
	This reflects the invariance of equation \eqref{ourintegralequationdual1pt} with respect to the entire Euclidean group with translations and dilations.
	\begin{propositionletter}
		Let $\sigma\in(1,+\infty]$ and $n>2\sigma$. If $u\in \mathcal{C}^{2\sigma}(\mathbb R^n)$ is a positive smooth non-singular solution to \eqref{ourintegralequationdual1pt}, then there exists $\lambda\in\mathbb R$ and $x_0\in \mathbb R^n$ such that 
		\begin{equation}
			u\equiv U_{\lambda,x_0},
		\end{equation}
		where 
		\begin{equation}\label{bubbles}
			U_{\lambda,x_0}(x)=\left(\frac{2\lambda}{\lambda^2+\left|x-x_0\right|^2}\right)^{\gamma_{\sigma}}
		\end{equation}
		for some $\lambda>0$ and $x_0\in \mathbb R^n$. 
		This family of solutions will be called spherical or bubble solutions.
	\end{propositionletter}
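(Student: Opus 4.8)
The plan is to work entirely within the integral (dual) formulation, where the Riesz kernel $\mathcal{R}_\sigma$ of \eqref{rieszpotential} is positive, and to run the method of moving planes in integral form, followed by the method of moving spheres; this is the strategy underlying the classification in \cite{MR2200258}. First I would record that a positive smooth non-singular solution $u\in\mathcal{C}^{2\sigma}(\mathbb{R}^n)$ of \eqref{ourintegralequationdual1pt} automatically satisfies the pointwise identity
\begin{equation*}
u(x)=\int_{\mathbb{R}^n}\mathcal{R}_\sigma(x-y)f_\sigma(u(y))\,\ud y \qquad\text{for all } x\in\mathbb{R}^n,
\end{equation*}
together with finiteness of $\int_{\mathbb{R}^n}(1+|x|^{n-2\sigma})^{-1}f_\sigma(u(x))\,\ud x$; this is the content of Lemma~\ref{removablesingularity} combined with Proposition~\ref{lm:dualrepresentation} applied with $\Sigma=\varnothing$. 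Using the decay hypothesis $\lim_{|x|\to\infty}u(x)=0$ together with this representation and a bootstrap on the kernel, one then upgrades the decay to the sharp rate $u(x)=\mathcal{O}(|x|^{2\sigma-n})$ as $|x|\to\infty$; in particular $u\in L^{2^*_\sigma}(\mathbb{R}^n)$ and $f_\sigma\circ u\in L^1(\mathbb{R}^n)$, which is what is needed to initiate the sweeping argument.

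Second, for each direction $e\in\mathbb{S}^{n-1}$ and each $\mu\in\mathbb{R}$ I would compare $u$ on $\{x\cdot e<\mu\}$ with its reflection $u_\mu$ across the hyperplane $\{x\cdot e=\mu\}$. Exploiting the strict monotonicity of $t\mapsto\mathcal{R}_\sigma(t)$ in $|t|$ and the strict monotonicity of $f_\sigma$, the difference $w_\mu:=u_\mu-u$ satisfies an integral inequality on the region $\{w_\mu<0\}$, to which one applies the Hardy--Littlewood--Sobolev inequality on that (a priori narrow, by the decay estimate) region; this forces $w_\mu\geqslant 0$ for $\mu\ll-1$. Sliding the plane to the right, the usual dichotomy shows that either the inequality persists for every $\mu$ — excluded by the decay at infinity — or it stops at some $\mu_e$ where $u$ is symmetric across $\{x\cdot e=\mu_e\}$. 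Carrying this out in all directions yields a point $x_0$ about which $u$ is radially symmetric and strictly decreasing. To pin down the profile I would then run the method of moving spheres: the same comparison against Kelvin-type inversions of $u$ forces $u(x)=\bigl(c_1/(c_2+|x-x_0|^2)\bigr)^{\gamma_\sigma}$ for some $c_1,c_2>0$; substituting this ansatz into \eqref{ourintegralequationdual1pt} and using the explicit value of $c_{n,\sigma}$ (chosen precisely so that $u_{\rm sph}$ solves the equation) forces $c_1=2\lambda$ and $c_2=\lambda^2$ for a unique $\lambda>0$, i.e. $u\equiv U_{\lambda,x_0}$ as in \eqref{bubbles}.

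The main obstacle is that for $\sigma>1$ the operator $(-\Delta)^\sigma$ does not obey a usable maximum principle, so the moving-plane comparison cannot be run directly on the differential equation; the device that makes everything go through is precisely the passage to the dual integral equation with the positive Riesz kernel, which is why Proposition~\ref{lm:dualrepresentation} and Lemma~\ref{removablesingularity} are established first. Within that framework the two delicate points are (i) the sharp decay $u=\mathcal{O}(|x|^{2\sigma-n})$, needed both to start the plane at $\mu=-\infty$ and to exclude the degenerate case in which the sweep never stops, and (ii) the Hardy--Littlewood--Sobolev estimate on the narrow negativity region, which works because the exponent is exactly the critical $2^*_\sigma$, so that the resulting inequality is borderline and can be absorbed. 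Once the symmetry is in hand, identifying the radial profile with the explicit bubble is a routine (if slightly tedious) computation of Riesz potentials.
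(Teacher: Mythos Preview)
Your approach is correct and is precisely the one the paper invokes: the paper does not give its own proof of this proposition but simply cites \cite{MR2200258} (Chen--Li--Ou), whose argument is the method of moving planes in integral form for the Riesz-potential equation that you outline. One minor comment: you don't actually need Lemma~\ref{removablesingularity} or Proposition~\ref{lm:dualrepresentation} here, since by hypothesis $u$ is already a non-singular solution of the dual equation \eqref{ourintegralequationdual1pt} itself, so the integral representation with the positive kernel $\mathcal{R}_\sigma$ is the starting point rather than something to be derived.
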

	
	The problem of classifying the complete set of positive smooth singular solutions to \eqref{ourintegralequationdual1pt} is much more challenging and only accomplished for a few cases. 
	On this subject, Chen, Li, and Ou proved that all solutions are radially symmetric with respect to the origin.
	In addition, Jin and Xiong \cite{arxiv:1901.01678} only proved the existence of such a solution by a direct maximization method.
	Furthermore, they also study the local asymptotic behavior of positive singular solutions to  
	\begin{equation}\label{ourintegralequation1ptlocal}\tag{$\mathcal{Q}_{2\sigma,R}$}
		(-\Delta)^{\sigma}u=f_{\sigma}(u) \quad {\rm in} \quad B_R^*,
	\end{equation}
	or into its dual form
	\begin{equation}\label{ourintegralequationdual1ptlocal}\tag{$\mathcal{Q}^{\prime}_{2\sigma,R}$}
		u=(-\Delta)^{-\sigma}(f_{\sigma}\circ u) \quad {\rm in} \quad B_R^*,
	\end{equation}
	where $B_R^*\subset \mathbb{R}^n\setminus\{0\}$  given by $B_R^*=B_R(0)\setminus\{0\}$ is the punctured ball of radius $R>0$.
	
	To study this class of equations, we define an important change of variables that turns \eqref{ourintegralequation1pt} into an integral one-dimensional problem.  
	
	\begin{definition}\label{def:fowlercoordinates}
		Let $\sigma\in(1,+\infty]$ and $n>2\sigma$
		We define the integral Emden--Fowler change of variables $($or cylindrical logarithm coordinates$)$ given by
		\begin{equation}\label{fowlercoordinates}
			\mathfrak{F}_{\sigma}:\mathcal{C}_c^{\infty}(B_R^*)\rightarrow \mathcal{C}_c^{\infty}(\mathcal{C}_{L}) \quad \mbox{given by} \quad
			\mathfrak{F}_{\sigma}(u)=e^{-\gamma_\sigma t}u(e^{-t},\theta),
		\end{equation}
		where $t=-\ln R$, $\theta=x/|x|$, $\mathcal{C}_{L}:=(L,+\infty)$ with $L=-\ln |x|$ and $\gamma_{\sigma}:=\frac{n-2\sigma}{2}$.
		The inverse of this isomorphism is
		\begin{equation}\label{fowlercoordinatesinverse}
			\left(\mathfrak{F}_{\sigma}\right)^{-1}:\mathcal{C}_c^{\infty}(\mathcal{C}_{L})\rightarrow \mathcal{C}_c^{\infty}(B_R^*) \quad \mbox{given by} \quad
			\left(\mathfrak{F}_{\sigma}\right)^{-1}(v)=|x|^{\gamma_\sigma}v(-\ln |x|,\theta).
		\end{equation}
		The quantity $\gamma_{\sigma}>0$ will be referred to as the Fowler rescaling exponent.
		From now on, we denote by $v(t,\theta):=\mathfrak{F}_{\sigma}(u(x))$ and $u(x):=(\mathfrak{F}_{\sigma})^{-1}(v(t,\theta))$, conversely. 
	\end{definition}
	
	Using this change of variable, Eq. \eqref{ourintegralequation1ptlocal} can be reformulated as
	the following one-dimensional problem
	\begin{align}\label{ourcylindricalequation1local}\tag{$\mathcal{O}_{2\sigma,L}$}
		\begin{cases}
			(-\Delta)_{\rm cyl}^{\sigma} v=f_\sigma(v) \quad {\rm in} \quad \mathcal{C}_{L},\\
			\displaystyle\lim_{t\rightarrow+\infty}v(t)=0.
		\end{cases}
	\end{align}      
	Here $(-\Delta)_{\rm cyl}^{\sigma}:\mathcal{C}^{2\sigma}(\mathcal{C}_{L})\rightarrow \mathcal{C}^0(\mathcal{C}_{L})$ is the operator higher-order operator given by 
	\begin{equation}\label{higherorderfractionallaplaciancyl}
		(-\Delta)^{\sigma}_{\rm cyl}:=(-\Delta)^{s}_{\rm cyl}\circ (-\Delta)^{m}_{\rm cyl},
	\end{equation}
	where $(-\Delta)^{m}_{\rm cyl}$ and   $(-\Delta)^{s}_{\rm cyl}$ denote the cylindrical poly-Laplacian and the fractional Laplacian, respectively, defined as
	\begin{equation*}
		(-\Delta)^m_{\rm cyl}:=\sum_{\ell=0}^{2m}\sum_{j=0}^{2m}K^{(\ell)}_{2m,j}\partial_t^{(j)}(-\Delta_\theta)^{\ell},
	\end{equation*}
	where $K^{(\ell)}_{2m,j}=K^{(\ell)}_{2m,j}(n)>0$ for $j,\ell\in\{0,\dots,2m\}$ are dimensional constants, and 
	\begin{equation*}
		(-\Delta)_{\rm cyl}^{s} v(t,\theta):=\int_{-L}^{+L} \widehat{\mathcal{K}}_{\sigma}(t-\tau,\theta-\varsigma)[v(t,\theta)-v(\tau,\varsigma)] \mathrm{d}\tau\ud\varsigma,
	\end{equation*}
	where ${\mathcal{K}}_{\sigma,{\rm cyl}}:\mathcal{C}_{L}\times\mathcal{C}_{L}\rightarrow\mathbb R$ is the kernel \eqref{singularpotential} written in Emden--Fowler coordinates. 
	As usual, the dual form of this equation is given by
	\begin{align}\label{ourcylindricalequation1dual}\tag{$\mathcal{O}_{2\sigma,L}^{\prime}$}
		\begin{cases}
			v=(-\Delta)_{\rm cyl}^{-\sigma}(f_\sigma \circ v) \quad {\rm in} \quad \mathcal{C}_{L},\\
			\displaystyle\lim_{t\rightarrow+\infty}v(t)=0.
		\end{cases}         
	\end{align}
	Here $(-\Delta)_{\rm cyl}^{-\sigma}$ is the integral linear operator defined by
	\begin{align*}
		(-\Delta)_{\rm cyl}^{-\sigma}(f_\sigma \circ v)(t,\theta):=(\widehat{\mathcal{R}}_{\sigma}\ast (f_\sigma\circ v))(t,\theta)=
		\int_{-\infty}^{+\infty} \widehat{\mathcal{R}}_{\sigma}(t-\tau,\theta-\varsigma) f_\sigma(v(\tau,\varsigma))\mathrm{d}\tau,
	\end{align*}       
	where $\widehat{\mathcal{R}}_{\sigma}:\mathcal{C}_{L}\times\mathcal{C}_{L}\rightarrow\mathbb R$ is the Riesz kernel \eqref{rieszpotential} written in Emden--Fowler coordinates. 
	Henceforth, we keep the notation ${\mathcal{K}}_{\sigma,{\rm cyl}}=  \widehat{\mathcal{K}}_{\sigma}$ and ${\mathcal{R}}_{\sigma,{\rm cyl}}=  \widehat{\mathcal{R}}_{\sigma}$ for the sake of simplicity.
	
	\begin{remark}
		Notice that $(-\Delta)_{\rm cyl}^{-\sigma}$ is an abuse of notation, which we keep for simplicity. 
		In the geometric language, this change of variables corresponds to a restriction of the conformal diffeomorphism between the entire cylinder and the punctured space.
		In other words, one has
		\begin{equation*}
			(-\Delta)_{\rm cyl}^{-\sigma}=P_{2\sigma}(g_{\rm cyl}), 
		\end{equation*}
		where $g_{{\rm cyl}}=\ud t^2+\ud\theta^2$ stands for the cylindrical metric and ${\ud\theta}=e^{-2t}\delta$, where $\delta$ is the standard flat metric.  
	\end{remark}
	
	Notice that since in the blow-up limit situation ($R=+\infty$), solutions to \eqref{ourintegralequation1pt} are rotationally invariant, that is, $u(x)=u(r)$ with $r=|x|$.      
	Using this change of variable, Eq. \eqref{ourintegralequation1pt} can be reformulated as
	the following one-dimensional problem
	\begin{align}\label{ourcylindricalequation1}\tag{$\mathcal{O}_{2\sigma,\infty}$}
		\begin{cases}
			(-\Delta)_{\rm cyl}^{\sigma} v=f_\sigma(v) \quad {\rm in} \quad \mathbb{R},\\
			\displaystyle\lim_{t\rightarrow+\infty}v(t)=0.
		\end{cases}
	\end{align}      
	Here $(-\Delta)_{\rm cyl}^{\sigma}$ represents the operator higher-order operator (written in Emden--Fowler coordinates \eqref{fowlercoordinates}), namely 
	\begin{equation}
		(-\Delta)_{\rm cyl}^{\sigma} v(t):=\int_{-\infty}^{+\infty} \widehat{\mathcal{K}}_{\sigma}(t-\tau)[v(t)-v(\tau)] \mathrm{d}\tau,
	\end{equation}
	where $\widehat{\mathcal{K}}_{\sigma}:\mathbb R\times\mathbb R\rightarrow\mathbb R$ is a kernel given by
	
	\begin{align}\label{singularkernel1}
		\widehat{\mathcal{K}}_{\sigma}(t)=2^{-\gamma^{\prime}_{\sigma}}\int_{\mathbb{S}^{n-1}} |\cosh (t)-\langle\theta, \tau\rangle|^{-\gamma_{\sigma}^{\prime}} \ud \tau=\int_{\mathbb{S}^{n-1}} e^{-\gamma_{\sigma}^{\prime}t}\left(1+e^{-2t}-2 e^{-t}\langle\theta, \tau\rangle\right)^{-\gamma_{\sigma}^{\prime}} \ud \tau.
	\end{align}
	
	As before, the dual form of this equation is given by
	\begin{align}\label{ourcylindricalequation1dualcyl}\tag{$\mathcal{O}_{2\sigma,\infty}^{\prime}$}
		\begin{cases}
			v=(-\Delta)_{\rm cyl}^{-\sigma}(f_\sigma \circ v) \quad {\rm in} \quad \mathbb{R},\\
			\displaystyle\lim_{t\rightarrow+\infty}v(t)=0.
		\end{cases}         
	\end{align}
	Here $(-\Delta)_{\rm cyl}^{-\sigma}$ is the integral linear operator defined by
	\begin{align*}
		(-\Delta)_{\rm cyl}^{-\sigma}(f_\sigma \circ v)(t):=(\widehat{\mathcal{R}}_{\sigma}\ast (f_\sigma\circ v))(t)=
		\int_{-\infty}^{+\infty} \widehat{\mathcal{R}}_{\sigma}(t-\tau) f_\sigma(v(\tau))\mathrm{d}\tau,
	\end{align*}       
	where $\widehat{\mathcal{R}}_{\sigma}:\mathbb R\times\mathbb R\rightarrow\mathbb R$ is a kernel given by
	\begin{align}\label{singularkerneldual}
		\widehat{\mathcal{R}}_{\sigma}(t)={2^{-\gamma_{\sigma}}} \omega_{n-2} \int_{-1}^{1} {\left(1-\zeta_1^2\right)^{\frac{n-3}{2}}}{\left|\cosh (t)-\zeta_1\right|^{-\gamma_{\sigma}}} \ud \zeta_1.
	\end{align}
	
	\begin{remark}
		It is possible to express this kernel in terms of hypergeometric functions.        
		We also observe $\widehat{\mathcal{R}}_{\sigma}(\xi)\sim 1$ is bounded and H\"older continuous, whereas $\widehat{\mathcal{K}}_{\sigma}(\xi)\sim  |\xi|^{1-2s}$ when $\sigma\in(1,+\infty)$.
		Furthermore, they behave qualitatively as  
		\begin{equation}\label{asmpkernel1}
			\widehat{\mathcal{K}}_{\sigma}(\xi) \sim  e^{-\gamma_\sigma^{\prime}|\xi|} \quad {\rm as} \quad |\xi| \rightarrow +\infty
		\end{equation}
		and
		\begin{equation}\label{asmpkerneldual}
			\widehat{\mathcal{R}}_{\sigma}(\xi) \sim  e^{-\gamma_\sigma|\xi|} \quad {\rm as} \quad |\xi| \rightarrow +\infty,
		\end{equation}
		where $\xi:=|t-\tau|$. 
		We refer to \cite{MR4104278,arxiv:1901.01678} for proof of these facts.
	\end{remark}
	
	Using this new formulation, one has the following
	
	\begin{remark}
		As before, there are two distinguished solutions to \eqref{ourcylindricalequation1dual}, which we describe as follows:
		\begin{itemize}
			\item[{\rm (a)}] The cylindrical solution, which is 
			\begin{equation*}
				v_{\rm cyl}(t) \equiv a_{n,\sigma},
			\end{equation*}
			where $v_{\rm cyl}=\mathfrak{F}_{\sigma}(u_{\rm cyl})\in\mathcal{C}^{2\sigma}(\mathbb R)$ with $u_{\rm cyl}\in\mathcal{C}^{2\sigma}(\mathbb R^n\setminus\{0\})$ given by \eqref{cylindricalsolutions}.
			\item[{\rm (b)}] The standard spherical solution $($also known as \quotes{bubble}$)$ which is 
			\begin{equation}\label{sphericalsolutionsEF}
				v_{\rm sph}(t)=\cosh(t)^{\gamma_{\sigma}},
			\end{equation}
			where $v_{\rm sph}=\mathfrak{F}_{\sigma}(u_{\rm sph})\in\mathcal{C}^{2\sigma}(\mathbb R)$ with $u_{\rm sph}\in\mathcal{C}^{2\sigma}(\mathbb R^n\setminus\{0\})$ given by \eqref{sphericalsolutions}.
		\end{itemize}
	\end{remark}
	
	\subsection{Asymptotic classification of Delaunay-type solutions}
	Now we prove the existence of even solutions to \eqref{ourcylindricalequation1dual} with large periods which are close 
	to the standard bubble tower solution given by \eqref{standardbubbletowerEF} in a suitable weighted H\"older norm.
	
	First, for the standard bubble solution, we have the following nondegeneracy result, which is based on \cite[Lemma~5.1]{MR3899029} and \cite[Lemma~5.1]{MR3091775}.
	In our situation, this is proved in \cite[Lemma~A.1]{kim-wei}.
	Nevertheless, we include a sketch of the proof in Appendix~\ref{sec:complentaryproofs} for completeness.
	
	\begin{lemma}\label{lm:nondegeneracy}
		Let $\sigma\in(1,\infty)$ and $n>2\sigma$.
		The standard bubble solution $u_{\rm sph}\in \mathcal{C}^{2\sigma}({\mathbb R^n})$ 
		given by \eqref{sphericalsolutions} satisfying \eqref{ourintegralequation1pt} is nondegenerate in a sense, the set of bounded solutions to the linearized equation
		\begin{equation}\label{linearizedequation}
			\phi-(-\Delta)^{-\sigma}(f^{\prime}_\sigma(u_{\rm sph})\phi)=0 \quad {\rm in} \quad \mathbb{R}^n
		\end{equation}
		are spanned by the functions
		\begin{equation*}
			\gamma_{\sigma}u_{\rm sph}+x \cdot \nabla u_{\rm sph} \quad \text { and } \quad
			\partial_{x_i} u_{\rm sph} \quad {\rm for} \quad i\in \{1,\dots,n\}. 
		\end{equation*}
	\end{lemma}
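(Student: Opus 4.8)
The plan is to establish the nondegeneracy of the standard bubble $u_{\rm sph}$ for the higher-order fractional equation by transferring the problem to the round sphere via stereographic projection, where the linearized operator becomes an explicit (pseudo-)differential operator with discrete spectrum, and then to use the spherical harmonic decomposition to identify the kernel. First I would recall that under the stereographic projection $\Pi$, the bubble $u_{\rm sph}$ corresponds to the constant function $1$ on $\mathbb{S}^n$ (up to normalization), and the linearized equation \eqref{linearizedequation} is conformally equivalent to
\begin{equation*}
	P_{2\sigma}(g_0)\psi = c_{n,\sigma}\,\frac{n+2\sigma}{n-2\sigma}\,\psi \quad \text{on} \quad \mathbb{S}^n,
\end{equation*}
where $\psi$ is the pullback of $\phi$ rescaled by $u_{\rm sph}^{-1}$ (using the conformal covariance $P_{2\sigma}(g)\phi=\mathrm{u}^{-\gamma_\sigma'/\gamma_\sigma}P_{2\sigma}(g_0)(\mathrm{u}\phi)$ recorded in the introduction, with $\mathrm{u}=u_{\rm sph}\cdot u_{\rm sph}^{-1}$ in the appropriate sense). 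The key point is that bounded solutions $\phi$ on $\mathbb{R}^n$ correspond exactly to functions $\psi\in L^\infty(\mathbb{S}^n)$, so the problem reduces to an eigenvalue problem on a compact manifold.

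The second step is to diagonalize $P_{2\sigma}(g_0)$ using spherical harmonics. On $Y_k$, the space of degree-$k$ spherical harmonics, $-\Delta_{g_0}$ acts as $k(k+n-1)$, so $\sqrt{-\Delta_{g_0}+(n-1)^2/4}$ acts as $k+\tfrac{n-1}{2}$, and hence by the formula for $P_{2\sigma}(g_0)$ in the introduction its eigenvalue on $Y_k$ is
\begin{equation*}
	\lambda_k = \frac{\Gamma\!\left(k+\tfrac{n-1}{2}+2\sigma+\tfrac12\right)}{\Gamma\!\left(k+\tfrac{n-1}{2}-2\sigma+\tfrac12\right)} = \frac{\Gamma\!\left(k+\tfrac{n}{2}+\sigma\right)}{\Gamma\!\left(k+\tfrac{n}{2}-\sigma\right)}.
\end{equation*}
One then checks that $\lambda_1 = c_{n,\sigma}\,\tfrac{n+2\sigma}{n-2\sigma}$ by a direct Gamma-function manipulation (using $c_{n,\sigma}=2^{2\sigma}\Gamma(\tfrac{n+2\sigma}{4})^2\Gamma(\tfrac{n-2\sigma}{4})^{-2}$ and the duplication formula), and that the function $k\mapsto\lambda_k$ is strictly increasing in $k$. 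Consequently the eigenvalue $\lambda_1$ is attained only on $Y_1$, so the kernel of $P_{2\sigma}(g_0)-\lambda_1$ is precisely $Y_1$, which is $(n+1)$-dimensional and spanned by the restrictions of the coordinate functions. Pulling these back under $\Pi^{-1}$ and undoing the conformal rescaling yields exactly the functions $\gamma_\sigma u_{\rm sph}+x\cdot\nabla u_{\rm sph}$ (coming from the constant on $\mathbb{S}^n$, i.e. the "$k=0$ direction" that reappears as a dilation mode after the conformal map places the puncture) and $\partial_{x_i}u_{\rm sph}$, $i=1,\dots,n$ (the genuine $Y_1$ modes, i.e. translations); together these span the claimed $(n+1)$-dimensional space.

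The main obstacle I anticipate is the correct bookkeeping of the conformal transformation: one must verify that the pullback of a \emph{bounded} solution on $\mathbb{R}^n$ lands in $L^\infty(\mathbb{S}^n)$ and conversely — decay/growth of $u_{\rm sph}$ near infinity on $\mathbb{R}^n$ corresponds to the behavior near the puncture point $\mathrm{e}_1$, and one needs that no boundary contribution or distributional mass at $\mathrm{e}_1$ is lost, so that the eigenvalue problem on $\mathbb{S}^n$ is genuinely the closed one. This is where one invokes a Liouville-type / removable-singularity argument as in Lemma~\ref{lm:liouville} and Lemma~\ref{removablesingularity} to rule out solutions concentrated at the puncture; for $\sigma\in(1,\infty)$ noninteger one also needs the nonlocal regularity theory to ensure $\psi$ is smooth enough to expand in spherical harmonics with $L^2$-convergence. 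A secondary technical point is checking monotonicity of $\lambda_k$ in $k$ uniformly for all $k\geq 0$, which follows from $\frac{d}{dx}\log\Gamma$ being increasing (i.e. $\Gamma$ is log-convex), so that $\log\lambda_k=\log\Gamma(k+\tfrac n2+\sigma)-\log\Gamma(k+\tfrac n2-\sigma)$ has positive increments. Since, as noted, this result is already established in \cite[Lemma~A.1]{kim-wei} and the analogous statements in \cite[Lemma~5.1]{MR3899029} and \cite[Lemma~5.1]{MR3091775}, I would only sketch these steps and refer to those sources for the full details.
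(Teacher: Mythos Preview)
Your approach is correct in spirit and is in fact the classical spectral route (as in \cite{MR3091775} and the references you cite), but it is \emph{not} what the paper does. The paper's appendix does not reprove the kernel identification at all: it cites \cite[Lemma~5.1]{MR3899029} and \cite[Lemma~A.1]{kim-wei} for the statement that $L^\infty$ solutions of \eqref{linearizedequation} are spanned by the claimed $(n{+}1)$ functions, and then devotes the entire argument to a Hardy--Littlewood--Sobolev bootstrap (split into the three regimes $n>6\sigma$, $n=6\sigma$, $2\sigma<n<6\sigma$) showing that any $H^\sigma(\mathbb{R}^n)$ solution is automatically in $L^\infty(\mathbb{R}^n)$. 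So the paper's contribution here is a regularity upgrade, whereas you sketch an ab initio proof of the nondegeneracy via stereographic projection and the explicit spectrum of $P_{2\sigma}(g_0)$ on spherical harmonics. Your route is more self-contained and conceptually cleaner; the paper's route is shorter because it outsources the hard step.

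Two minor slips in your sketch are worth flagging. First, the arithmetic in your displayed formula for $\lambda_k$ does not match: $\tfrac{n-1}{2}+2\sigma+\tfrac12=\tfrac{n}{2}+2\sigma$, not $\tfrac{n}{2}+\sigma$; the correct eigenvalue is $\lambda_k=\Gamma(k+\tfrac{n}{2}+\sigma)/\Gamma(k+\tfrac{n}{2}-\sigma)$, which is the standard formula (the paper's displayed symbol of $P_{2\sigma}(g_0)$ appears to carry a typo in the shift). Second, your parenthetical attributing the dilation mode to the ``$k=0$ direction'' is off: all $(n{+}1)$ kernel elements come from $Y_1$, and the dilation generator $\gamma_\sigma u_{\rm sph}+x\cdot\nabla u_{\rm sph}$ corresponds to the first spherical harmonic aligned with the pole of the stereographic projection, not to $Y_0$. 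Neither issue affects the validity of the strategy.
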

	
	\begin{proof}
		See Appendix~\ref{sec:complentaryproofs}.
	\end{proof}
	
	One can also reformulate the last result as follows
	\begin{lemma}\label{lm:nondegeneracyEF}
		Let $\sigma\in(1,\infty)$ and $n>2\sigma$.
		The standard bubble solution $v_{\rm sph}\in \mathcal{C}^{2\sigma}({\mathbb R})$ 
		given by \eqref{sphericalsolutionsEF} satisfying \eqref{ourcylindricalequation1dual} is nondegenerate in the sense that all bounded solutions of the linearized equation
		\begin{equation*}
			\psi-(-\Delta)_{\rm cyl}^{-\sigma}(f^{\prime}_\sigma(v_{\rm sph})\psi)=0 \quad {\rm in} \quad \mathbb{R}
		\end{equation*}
		are spanned by the translations
		$v_{\rm sph}(\cdot-T)$ with $T>0$.
	\end{lemma}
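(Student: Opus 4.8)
The plan is to obtain Lemma~\ref{lm:nondegeneracyEF} as a reformulation of Lemma~\ref{lm:nondegeneracy}, transporting the latter to the cylinder through the integral Emden--Fowler change of variables $\mathfrak{F}_\sigma$ of Definition~\ref{def:fowlercoordinates}. First I would record the linearized intertwining: since $\mathfrak{F}_\sigma$ conjugates the map $u\mapsto(-\Delta)^{-\sigma}(f_\sigma\circ u)$ to $v\mapsto(-\Delta)_{\rm cyl}^{-\sigma}(f_\sigma\circ v)$ --- this is precisely the identification used to pass from \eqref{ourintegralequationdual1pt} to \eqref{ourcylindricalequation1dualcyl} --- and since $\mathfrak{F}_\sigma$ is linear, differentiating this identity at $u=u_{\rm sph}$ (equivalently $v=v_{\rm sph}$) and using that $f_\sigma$ carries exactly the homogeneity making the change of variables conformal, one gets that $\mathfrak{F}_\sigma$ intertwines the linearized operators $\phi\mapsto\phi-(-\Delta)^{-\sigma}(f^{\prime}_\sigma(u_{\rm sph})\phi)$ and $\psi\mapsto\psi-(-\Delta)_{\rm cyl}^{-\sigma}(f^{\prime}_\sigma(v_{\rm sph})\psi)$, restricted to rotationally symmetric functions on $\mathbb{R}^n\setminus\{0\}$, which correspond under $\mathfrak{F}_\sigma$ to functions of $t$ alone. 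Hence a bounded solution $\psi=\psi(t)$ of the cylindrical linearized equation gives a radial solution $\phi:=(\mathfrak{F}_\sigma)^{-1}\psi$, namely $\phi(x)=|x|^{\gamma_\sigma}\psi(-\ln|x|)$, of $\phi-(-\Delta)^{-\sigma}(f^{\prime}_\sigma(u_{\rm sph})\phi)=0$ in $\mathbb{R}^n\setminus\{0\}$, and conversely.

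It then remains to promote $\phi$ to a bounded solution on all of $\mathbb{R}^n$, so that Lemma~\ref{lm:nondegeneracy} applies. Boundedness of $\psi$ only yields $\phi(x)=\mathcal{O}(|x|^{\gamma_\sigma})$ near infinity, so the first step is a decay estimate for $\psi$: since $v_{\rm sph}(t)\simeq e^{-\gamma_\sigma|t|}$ one has $f^{\prime}_\sigma(v_{\rm sph}(t))\simeq e^{-2\sigma|t|}$, and together with the decay \eqref{asmpkerneldual} of the dual kernel, $\widehat{\mathcal{R}}_\sigma(\xi)\simeq e^{-\gamma_\sigma|\xi|}$, a bounded $\psi$ obeys $|\psi(t)|\lesssim\int_{\mathbb{R}}e^{-\gamma_\sigma|t-\tau|}e^{-2\sigma|\tau|}|\psi(\tau)|\,\ud\tau$; iterating this convolution inequality finitely many times improves the decay of $\psi$ up to the rate $e^{-\gamma_\sigma|t|}$ imposed by the kernel (up to a harmless logarithmic factor in the resonant dimensions $n=(4k+2)\sigma$), the $t\mapsto -t$ invariance of \eqref{ourcylindricalequation1dualcyl} --- i.e.\ the Kelvin transform $x\mapsto x/|x|^2$, under which $u_{\rm sph}$ is fixed --- allowing one to treat both ends symmetrically. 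Translated back, this makes $\phi$ bounded on $\mathbb{R}^n\setminus\{0\}$ and vanishing at the origin, and one removes the isolated singularity exactly as in Proposition~\ref{lm:dualrepresentation}: the function $w:=(-\Delta)^{-\sigma}(f^{\prime}_\sigma(u_{\rm sph})\phi)$ is well defined (here $f^{\prime}_\sigma(u_{\rm sph})$ is smooth and bounded on $\mathbb{R}^n$) and lies in $L_s(\mathbb{R}^n)$ for small $s>0$; the difference $\phi-w$ satisfies $(-\Delta)^\sigma(\phi-w)=0$ in $\mathbb{R}^n$, hence is a polynomial by the Liouville theorem of Lemma~\ref{lm:liouville}, which for $s\in(0,1/2)$ is a constant; and since $w(x)\to0$ as $|x|\to+\infty$ while $\psi=\mathfrak{F}_\sigma\phi$ is bounded, that constant must vanish. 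Thus $\phi=w$ is a bounded (indeed decaying) solution of $\phi-(-\Delta)^{-\sigma}(f^{\prime}_\sigma(u_{\rm sph})\phi)=0$ on the whole of $\mathbb{R}^n$.

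Now Lemma~\ref{lm:nondegeneracy} gives $\phi\in\vspan\{\gamma_\sigma u_{\rm sph}+x\cdot\nabla u_{\rm sph},\ \partial_{x_1}u_{\rm sph},\dots,\partial_{x_n}u_{\rm sph}\}$, and among these generators only $\gamma_\sigma u_{\rm sph}+x\cdot\nabla u_{\rm sph}$ is rotationally symmetric, while each $\partial_{x_i}u_{\rm sph}$ has a nonzero first spherical harmonic as its angular part; since $\phi$ is radial, $\phi=c\,(\gamma_\sigma u_{\rm sph}+x\cdot\nabla u_{\rm sph})$ for some $c\in\mathbb{R}$. A short computation --- differentiating \eqref{bubbles} in $\lambda$, equivalently \eqref{sphericalsolutionsEF} in the translation parameter --- shows that $\mathfrak{F}_\sigma(\gamma_\sigma u_{\rm sph}+x\cdot\nabla u_{\rm sph})$ is a multiple of $\tfrac{\ud}{\ud T}\big|_{T=0}v_{\rm sph}(\cdot-T)$, so the solution space is one-dimensional, spanned by this infinitesimal translation of $v_{\rm sph}$, which is the assertion. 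The main obstacle in this plan is the decay bootstrap of the second paragraph: each iteration gains only a factor $e^{-2\sigma|t|}$, which in the range $n>6\sigma$ is weaker than the target $e^{-\gamma_\sigma|t|}$, and in the borderline dimensions it produces a logarithmic loss --- it is exactly here that one must exploit the sharp exponential rate of $\widehat{\mathcal{R}}_\sigma$ together with the Kelvin symmetry, rather than merely its integrability, and this is essentially the only point where the argument departs from the fractional case $\sigma\in(0,1)$ treated in \cite{MR4104278}.
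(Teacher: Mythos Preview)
Your approach is exactly the one the paper takes: its entire proof reads ``It follows by undoing the Emden--Fowler change of variables in \eqref{fowlercoordinates}.'' Your proposal is a careful execution of this sentence, and in particular you correctly flag a point the paper's one-liner glosses over --- that a bounded $\psi$ on $\mathbb{R}$ only yields $\phi=(\mathfrak{F}_\sigma)^{-1}\psi$ with growth/singularity of order $|x|^{\pm\gamma_\sigma}$, so one needs the decay bootstrap (using $f'_\sigma(v_{\rm sph})\simeq e^{-2\sigma|t|}$ and \eqref{asmpkerneldual}) before Lemma~\ref{lm:nondegeneracy} can be invoked; your handling of this, and the subsequent removable-singularity step via Lemma~\ref{lm:liouville}, is sound.
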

	
	\begin{proof}
		It follows by undoing the Emden--Fowler change of variables in \eqref{fowlercoordinates}. 
	\end{proof}
	
	Second, we restrict ourselves to the open interval $(-L,L)$ equipped with Dirichlet boundary conditions.
	In what follows, we fix $L\in\mathbb N$.
	Let $j\in\mathbb N$ and $\alpha\in (0,1)$, we denote by $\mathcal{C}^{j,\alpha}_L(\mathbb R)$ the classical H\"{o}lder space $\mathcal{C}^{j,\alpha}(\mathbb R)$ restricted to $2L$-periodic functions on the open interval $(-L,L)$.
	For $\alpha=0$, we simply denote $\mathcal{C}^{j}_L(\mathbb R)$.
	Let $\ell\in\mathbb N$ and $q\in[1,+\infty]$, we will keep the notation $W^{\ell,q}_L(\mathbb R)$ for the classical Sobolev space $W^{\ell,q}(\mathbb R)$ restricted to $2L$-periodic functions on the open interval $(-L,L)$.
	For $q=2$, we simply denote $H^{\ell}_L(\mathbb R)$.

	To seek $2L$-periodic solutions, we consider the following periodic problem
	\begin{equation}\label{ourODEperiodic}\tag{$\mathcal{O}^{\prime}_{2\sigma,L}$}
		\begin{cases}
			v=(-\Delta)_{\rm cyl}^{-\sigma,L}({f}_\sigma\circ v) \quad {\rm in} \quad \mathbb R,\\
			\lim_{t\rightarrow+\infty}v(t)=0,
		\end{cases}
	\end{equation}      
	where  $(-\Delta)_{\rm cyl}^{-\sigma,L}:\mathcal{C}_L^0(\mathbb R)\rightarrow\mathcal{C}_L^{2\sigma}(\mathbb R)$ is the integral periodic linear operator defined by
	\begin{equation*}
		(-\Delta)_{\rm cyl}^{-\sigma,L}(f_\sigma\circ v)(t):={\rm p . v.} \int_{-L}^{L}f_\sigma(v(\tau))\widehat{\mathcal{R}}_{\sigma,L}(t-\tau) \ud \tau.
	\end{equation*}
	For this, we shall work with the norm given by
	\begin{equation*}
		\|v\|_{H_{L,0}^{\sigma}(\mathbb R)}:= \left([v^{(m)}]_{L_{L}^{s}(\mathbb R)}+\sum_{\ell=0}^{m}\|v^{(\ell)}\|^2_{L_L^2(\mathbb R)}\right)^{1/2},
	\end{equation*}
	where 
	\begin{equation*}
		[v^{(m)}]_{L_{L}^{s}(\mathbb R)}:=\int_{-L}^{L} \int_{-L}^{L}[v^{(m)}(t)-v^{(m)}(\tau)]^2 \widehat{\mathcal{K}}_{s,L}(t-\tau) \ud \tau \ud t.
	\end{equation*}
	
	We also define the following higher-order functional space
	\begin{align*}
		H_{L}^{\sigma}(\mathbb R)=\{v\in \mathcal{C}^{2\sigma}_L(\mathbb R):  \ \|v\|_{H_{L}^{\sigma}(\mathbb R)}<\infty
		\}.
	\end{align*}
	Furthermore, the evenness and periodicity
	\begin{align*}
		H_{L,*}^{\sigma}(\mathbb R)=\{v\in H_{L}^{\sigma}(\mathbb R): 
		v(t)=v(-t) \ {\rm and} \ v(t+2{L})=v(t) \ {\rm for \ all} \ t\in\mathbb R \}.
	\end{align*}
	As well as,          
	taking into consideration the boundary condition 
	\begin{align*}
		H_{L,0}^{\sigma}(\mathbb R)=\{v\in H_{L}^{\sigma}(\mathbb R): v^{(\ell)}(-L)=v^{(\ell)}(L)=0 \ {\rm for} \ \ell\in\{1,\dots,m\} \}.
	\end{align*}
	Finally, a suitable space to work is 
	\begin{align*}
		H_{L,0,*}^{\sigma}(\mathbb R)=H_{L,0}^{\sigma}(\mathbb R)\cap H_{L,*}^{\sigma}(\mathbb R).
	\end{align*}
	Here $\widehat{\mathcal{K}}_{s,L}:(-L,L)\times(-L,L)\rightarrow\mathbb R$ given by
	\begin{equation}\label{periodickernel1}
		\widehat{\mathcal{K}}_{s,L}(t-\tau)=\sum_{j \in \mathbb Z} \widehat{\mathcal{K}}_s(t-\tau-j L)
	\end{equation}      
	is a periodic Kernel, where $\widehat{\mathcal{K}}_s:\mathbb R\times\mathbb R\rightarrow\mathbb R$ is defined as \eqref{singularkernel1}.
	
	Now we will introduce some standard H\"older fractional from \cite[Theorem 8.2]{MR2944369}. 
	
	\begin{lemmaletter}\label{lm:fractionalregularity}
		Let $s\in(0,1)$ and $n>2s$.
		Assume that $p\in [1,+\infty)$. 
		Then, there exists a constant $C>0$, depending on $\sigma$ and $p$, such that
		\begin{equation}
			\|v\|_{\mathcal{C}^{0,\alpha}_L(\mathbb R)}\leqslant C\left(\|v\|^p_{L^{p}_L(\mathbb R)} +\int_{-L}^{L}\int_{-L}^{L}\frac{|v(t)-v(\tau)|^p}{|t-\tau|^{1+s p}}\ud t \ud\tau\right)^{\frac{1}{p}}
		\end{equation}
		for any $v\in L^p_L(\mathbb R)$, where $\alpha=s-\frac{1}{p}$.
	\end{lemmaletter}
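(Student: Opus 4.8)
\emph{Proof proposal.} The estimate is the one–dimensional, $2L$–periodic instance of the fractional Morrey embedding $W^{s,p}(\Omega)\hookrightarrow\mathcal{C}^{0,\alpha}(\overline{\Omega})$ with $\alpha=s-\tfrac1p$, which is meaningful exactly when $sp>1$ (so that $\alpha\in(0,1)$, forcing $p>1$) and which for the bounded interval $\Omega=(-L,L)$ is \cite[Theorem~8.2]{MR2944369}. Since $\mathcal{C}^{0,\alpha}_L(\mathbb R)$ is by definition the subspace of $2L$–periodic functions of $\mathcal{C}^{0,\alpha}((-L,L))$ with the usual norm, and the right–hand side is comparable to $\|v\|_{L^p((-L,L))}+[v]_{s,p;(-L,L)}$, where
\[
 [v]_{s,p;J}:=\Bigl(\int_{J}\int_{J}\frac{|v(t)-v(\tau)|^{p}}{|t-\tau|^{1+sp}}\,\ud t\,\ud\tau\Bigr)^{1/p},
\]
the statement coincides with that embedding, so the proof is essentially a citation; I would nonetheless record the Morrey-type argument for completeness.

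\emph{Step 1: oscillation of $v$.} Let $t_0$ be a Lebesgue point of $v$ and, for an interval $I$, write $m_I v:=|I|^{-1}\int_{I}v$. For $0<\rho\le1$ put $I_k:=(t_0-2^{-k}\rho,\,t_0+2^{-k}\rho)$, so $m_{I_k}v\to v(t_0)$. From $I_{k+1}\subset I_k$,
\[
 |m_{I_k}v-m_{I_{k+1}}v|\ \le\ \frac{2}{|I_k|^{2}}\int_{I_k}\int_{I_k}|v(t)-v(\tau)|\,\ud t\,\ud\tau .
\]
Splitting $|v(t)-v(\tau)|=|v(t)-v(\tau)|\,|t-\tau|^{-(1+sp)/p}\cdot|t-\tau|^{(1+sp)/p}$ and applying H\"older's inequality with exponents $p$ and $p'=\tfrac{p}{p-1}$, using $\int_{I_k}\int_{I_k}|t-\tau|^{(1+sp)p'/p}\,\ud t\,\ud\tau\lesssim|I_k|^{\,2+(1+sp)p'/p}$ and the elementary identity $\tfrac{2}{p'}+\tfrac{1+sp}{p}-2=s-\tfrac1p$, one gets $|m_{I_k}v-m_{I_{k+1}}v|\lesssim|I_k|^{\,s-1/p}[v]_{s,p;I_k}$ with a purely numerical constant. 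Since $s-\tfrac1p>0$, summing over $k\ge0$ gives
\[
 |v(t_0)-m_{I_0}v|\ \lesssim\ \rho^{\,s-1/p}\,[v]_{s,p;(t_0-\rho,\,t_0+\rho)} .
\]

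\emph{Step 2: from the oscillation to the stated norm.} Taking $\rho=1$ and bounding $|m_{I_0}v|\lesssim\|v\|_{L^{p}_{L}(\mathbb R)}$ by H\"older's inequality and periodicity yields $\|v\|_{L^{\infty}}\lesssim\|v\|_{L^{p}_{L}(\mathbb R)}+[v]_{s,p;(-L,L)}$. For the H\"older seminorm, given $t_0\neq\tau_0$ in $(-L,L)$ set $\rho:=|t_0-\tau_0|$, choose an interval $J$ with $|J|\simeq\rho$ containing $(t_0-\rho,t_0+\rho)\cup(\tau_0-\rho,\tau_0+\rho)$, and use Step~1 at $t_0$ and $\tau_0$ together with $|m_{(t_0-\rho,t_0+\rho)}v-m_J v|+|m_J v-m_{(\tau_0-\rho,\tau_0+\rho)}v|\lesssim\rho^{\,s-1/p}[v]_{s,p;J}$ (the same H\"older computation) to obtain $|v(t_0)-v(\tau_0)|\lesssim|t_0-\tau_0|^{\,s-1/p}[v]_{s,p;(-L,L)}$. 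Combining the two bounds gives $\|v\|_{\mathcal{C}^{0,\alpha}_L(\mathbb R)}\lesssim\|v\|_{L^{p}_{L}(\mathbb R)}+[v]_{s,p;(-L,L)}$, which is the asserted inequality (the continuous representative being the one obtained at Lebesgue points).

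\emph{Step 3: uniformity in $L$, the main point.} For a fixed $L$ nothing further is needed. If $C$ is to be independent of $L$ — as the formulation of the lemma suggests — one uses the $2L$–periodicity of $v$: after replacing $\tau_0$ by its periodic copy nearest to $t_0$ one may assume $|t_0-\tau_0|\le L$, so that only scales $\lesssim L$ enter Steps~1--2, and the localized Gagliardo energies $[v]_{s,p;J}$ on the intervals $J$ occurring there are controlled by $[v]_{s,p;(-L,L)}$ with a universal constant; the only delicate case is that of pairs wrapping around the period, which is handled by inserting the boundary value $v(L)=v(-L)$ forced by periodicity. This bookkeeping is the single point that is not completely routine; all the rest is the classical argument of \cite[\S8]{MR2944369}.
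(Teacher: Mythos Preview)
Your proposal is correct and matches the paper's approach: the paper does not give a proof at all but simply records the statement as a citation of \cite[Theorem~8.2]{MR2944369}, and you identify the exact same reference as the source of the embedding. Your Steps~1--3 therefore go beyond what the paper does, supplying the standard Morrey-type argument behind that theorem; the only discrepancy is that the paper makes no claim about uniformity of $C$ in $L$, so your Step~3 is an addition rather than something the paper addresses.
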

	
	\begin{lemma}\label{lm:fractionalregularityhigherorder}
		Let $\sigma\in(1,+\infty]$ and $n>2\sigma$.
		Assume that $p\in [1,+\infty)$ and $\sigma\in(1,\frac{n}{2})$ is such that  $\sigma-\frac{1}{p}\notin\mathbb Z$. 
		Then, there exists a constant $C>0$, depending on $\sigma$ and $p$, such that
		\begin{equation}
			\|v\|_{\mathcal{C}^{\ell,\alpha}_L(\mathbb R)}\leqslant C\left(\|v\|^p_{W^{m,p}_L(\mathbb R)} +\int_{-L}^{L}\int_{-L}^{L}\frac{|v^{(m)}(t)-v^{(m)}(\tau)|^p}{|t-\tau|^{1+s p}}\ud t \ud\tau\right)^{\frac{1}{p}}
		\end{equation}
		for any $v\in W^{\sigma,p}_L(\mathbb R)$, where $\ell=\lfloor \sigma-\frac{1}{p} \rfloor$ and $\alpha=\sigma-\frac{1}{p}-\lfloor \sigma-\frac{1}{p} \rfloor$.
	\end{lemma}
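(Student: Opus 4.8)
The plan is to bootstrap the order-$s$ fractional Hölder estimate of Lemma~\ref{lm:fractionalregularity} applied to the top-order derivative $v^{(m)}$, and then to transfer the gained regularity down to $v$ using only one-dimensional Sobolev--Morrey embeddings on the bounded periodicity cell $(-L,L)$. Writing $\sigma=m+s$ with $s=\{\sigma\}\in(0,1)$, the hypothesis $\sigma-\frac1p\notin\mathbb Z$ guarantees that exactly one of the two cases $sp>1$ or $sp<1$ occurs, and these will be handled separately. Throughout, recall that $v\in W^{\sigma,p}_L(\mathbb R)$ means $v^{(k)}\in L^p_L(\mathbb R)$ for $0\le k\le m$ (with the sum of these norms controlled by $\|v\|_{W^{m,p}_L(\mathbb R)}$) together with finiteness of the Gagliardo seminorm $[v^{(m)}]_{s,p}:=\big(\int_{-L}^{L}\int_{-L}^{L}|t-\tau|^{-1-sp}|v^{(m)}(t)-v^{(m)}(\tau)|^p\,\ud t\,\ud\tau\big)^{1/p}$, which is exactly the second term on the right-hand side of the asserted inequality.

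\emph{Case $sp>1$.} Here $s-\frac1p\in(0,1)$, so $\ell=m$ and $\alpha=s-\frac1p$. Applying Lemma~\ref{lm:fractionalregularity} with $v$ replaced by $v^{(m)}$ gives $\|v^{(m)}\|_{\mathcal C^{0,\alpha}_L(\mathbb R)}\le C\big(\|v^{(m)}\|_{L^p_L(\mathbb R)}+[v^{(m)}]_{s,p}\big)$; in particular $v^{(m)}\in L^\infty_L(\mathbb R)$. For each $0\le k\le m-1$ the one-dimensional embedding $W^{1,p}_L(\mathbb R)\hookrightarrow L^\infty_L(\mathbb R)$ on the bounded cell gives $\|v^{(k)}\|_{L^\infty_L(\mathbb R)}\le C\|v\|_{W^{m,p}_L(\mathbb R)}$. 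Summing these estimates yields $\|v\|_{\mathcal C^{m,\alpha}_L(\mathbb R)}\le C\big(\|v\|^p_{W^{m,p}_L(\mathbb R)}+[v^{(m)}]_{s,p}^p\big)^{1/p}$, which is the claim.

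\emph{Case $sp<1$.} Here $\ell=m-1$ and $\alpha=1+s-\frac1p\in(0,1)$. Now Lemma~\ref{lm:fractionalregularity} is vacuous, so instead I would invoke the companion subcritical fractional Sobolev embedding in dimension one (same circle of ideas, see \cite{MR2944369}): since $v^{(m)}\in W^{s,p}_L(\mathbb R)$ has finite Gagliardo seminorm, one has $v^{(m)}\in L^q_L(\mathbb R)$ with $q:=\frac{p}{1-sp}$ and $\|v^{(m)}\|_{L^q_L(\mathbb R)}\le C\big(\|v^{(m)}\|_{L^p_L(\mathbb R)}+[v^{(m)}]_{s,p}\big)$. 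Consequently $v^{(m-1)}\in W^{1,q}_L(\mathbb R)$, and using $\|v^{(m-1)}\|_{L^q_L(\mathbb R)}\le C\|v^{(m-1)}\|_{L^\infty_L(\mathbb R)}\le C\|v\|_{W^{m,p}_L(\mathbb R)}$ on the bounded cell, we obtain $\|v^{(m-1)}\|_{W^{1,q}_L(\mathbb R)}\le C\big(\|v\|_{W^{m,p}_L(\mathbb R)}+[v^{(m)}]_{s,p}\big)$. Since $q>1$, the one-dimensional Morrey embedding $W^{1,q}_L(\mathbb R)\hookrightarrow\mathcal C^{0,1-1/q}_L(\mathbb R)$ then gives $v^{(m-1)}\in\mathcal C^{0,\alpha}_L(\mathbb R)$, because $1-\frac1q=1+s-\frac1p=\alpha$. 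Combining with the $L^\infty$-bounds for $v^{(k)}$, $0\le k\le m-2$, obtained again from iterated Morrey embeddings, produces the claimed inequality with $\ell=m-1$.

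I expect the only genuinely nontrivial point to be the subcritical case: Lemma~\ref{lm:fractionalregularity} provides only the Hölder endpoint and is empty when $sp\le 1$, so one must pass through the inequality $W^{s,p}\hookrightarrow L^{p/(1-sp)}$ and "spend" one integer derivative in order to land back in a Hölder class; identifying the exponent $\alpha=1+s-\frac1p$ is precisely where the assumption $\sigma-\frac1p\notin\mathbb Z$ is used, since it forces $\alpha\in(0,1)$ and excludes the borderline $sp=1$. Everything else is a routine assembly of one-dimensional Sobolev--Morrey embeddings on the bounded periodicity interval, with the constant $C$ depending only on $\sigma$ and $p$ (and harmlessly on the fixed $L$).
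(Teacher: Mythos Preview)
Your proof is correct and is essentially a careful unpacking of what the paper compresses into one line: ``It is a direct consequence of Lemma~\ref{lm:fractionalregularity} by using a standard induction argument.'' Your case split on $sp\gtrless 1$ and the use of the subcritical fractional Sobolev embedding $W^{s,p}\hookrightarrow L^{p/(1-sp)}$ when Lemma~\ref{lm:fractionalregularity} is vacuous are exactly the details one would expect to fill in, and in fact make explicit a point the paper's proof leaves implicit.
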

	
	\begin{proof}
		It is a direct consequence of Lemma~\ref{lm:fractionalregularity} by using a standard induction argument.
	\end{proof}
	
	We also need the following strong maximum principle.
	
	\begin{lemma}\label{lm:maximumprinciple}
		Let $\sigma\in(1,+\infty]$ and $n>2\sigma$.
		If $v \in H_L^{\sigma}(\mathbb{R}) \cap \mathcal{C}^0(\mathbb{R})$ is a nonnegative solution to \eqref{ourcylindricalequation1dual}.
		Then, either $v>0$ or $v \equiv 0$.
	\end{lemma}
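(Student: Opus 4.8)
The plan is to exploit the fact that, via the dual equation \eqref{ourcylindricalequation1dual}, the solution $v$ is literally the convolution of the \emph{nonnegative} function $f_\sigma\circ v$ against the Riesz-type kernel $\widehat{\mathcal R}_\sigma$, and that this kernel is \emph{strictly} positive everywhere. The strong maximum principle will then follow from the elementary fact that the integral of a nonnegative continuous function against a pointwise positive kernel cannot vanish unless the function itself vanishes. First I would record the positivity of the kernel from the explicit formula \eqref{singularkerneldual},
\[
\widehat{\mathcal R}_\sigma(t)=2^{-\gamma_\sigma}\,\omega_{n-2}\int_{-1}^{1}(1-\zeta_1^2)^{\frac{n-3}{2}}\,|\cosh(t)-\zeta_1|^{-\gamma_\sigma}\,\ud\zeta_1 ,
\]
noting that $\cosh(t)\geqslant 1>\zeta_1$ for every $t\in\mathbb R$ and every $\zeta_1\in(-1,1)$, so the integrand is strictly positive on $(-1,1)$ and hence $\widehat{\mathcal R}_\sigma(t)>0$ for all $t\in\mathbb R$. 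The remark following \eqref{singularkerneldual} (boundedness, H\"older continuity, and the exponential decay \eqref{asmpkerneldual}) guarantees that $\widehat{\mathcal R}_\sigma$ is continuous and that the convolution defining $(-\Delta)_{\rm cyl}^{-\sigma}$ is finite for the class of $v$ at hand; in any event it is by hypothesis equal to the finite value $v(t)$.

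Next comes the dichotomy itself. Since $f_\sigma(\xi)=c_{n,\sigma}|\xi|^{\frac{n+2\sigma}{n-2\sigma}}$ with $c_{n,\sigma}>0$, we have $f_\sigma(v(\tau))\geqslant 0$, and the map $\tau\mapsto\widehat{\mathcal R}_\sigma(t_0-\tau)f_\sigma(v(\tau))$ is nonnegative and continuous (product and composition of continuous functions, using $v\in\mathcal C^0(\mathbb R)$). Suppose $v$ is not everywhere positive and pick $t_0$ with $v(t_0)=0$. Then
\[
0=v(t_0)=\int_{-\infty}^{+\infty}\widehat{\mathcal R}_\sigma(t_0-\tau)\,f_\sigma(v(\tau))\,\ud\tau ,
\]
and a nonnegative continuous integrand with vanishing integral must vanish identically, so $\widehat{\mathcal R}_\sigma(t_0-\tau)f_\sigma(v(\tau))=0$ for every $\tau$. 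By the positivity of the kernel this forces $f_\sigma(v(\tau))=0$, hence $v(\tau)=0$, for every $\tau\in\mathbb R$, i.e. $v\equiv 0$. Thus either $v>0$ everywhere or $v\equiv 0$, which is the claim. If one wishes to sidestep the continuity of the integrand, the same computation gives $f_\sigma(v(\tau))=0$ for a.e.\ $\tau$, and $v\in\mathcal C^0(\mathbb R)$ upgrades this to $v\equiv 0$.

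Finally, in the periodic/restricted reading of \eqref{ourcylindricalequation1dual}, where the relevant operator uses the periodized kernel $\widehat{\mathcal R}_{\sigma,L}(t-\tau)=\sum_{j\in\mathbb Z}\widehat{\mathcal R}_\sigma(t-\tau-jL)$ (the analogue of \eqref{periodickernel1}) and the integral runs over $(-L,L)$, the kernel is again pointwise positive as a sum of positive terms, so the identical argument yields $v\equiv 0$ on $(-L,L)$ and then on all of $\mathbb R$ by periodicity. I do not expect any genuine obstacle here: the only delicate point is the bookkeeping that makes the ``vanishing integral of a nonnegative function'' step legitimate, namely finiteness of the convolution and continuity (or at least measurability) of the integrand, and this is supplied by the decay and regularity of $\widehat{\mathcal R}_\sigma$ recorded after \eqref{singularkerneldual} together with the continuity of $v$.
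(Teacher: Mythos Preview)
Your proof is correct and follows essentially the same approach as the paper: both use that $v(t_0)=0$ together with the dual representation $v=(-\Delta)_{\rm cyl}^{-\sigma}(f_\sigma\circ v)$ to conclude that the integral of the nonnegative function $\widehat{\mathcal R}_\sigma(t_0-\cdot)f_\sigma(v(\cdot))$ vanishes, and then invoke the strict positivity of the kernel to force $v\equiv 0$. Your presentation is in fact cleaner, since you make the positivity of $\widehat{\mathcal R}_\sigma$ explicit from \eqref{singularkerneldual} and handle the periodic variant as well.
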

	
	\begin{proof}
		Indeed, since $v \geqslant 0$, it follows that
		\begin{equation}\label{mp1}
			v=(-\Delta)_{\rm cyl}^{-\sigma}({f}_\sigma\circ v)\geqslant0.
		\end{equation}
		Assume that there exists a point $t_0 \in \mathbb{R}$ with $v(t_0)=0$, then
		\begin{align*}
			&v(t_0)-v(t)=(-\Delta)_{\rm cyl}^{-\sigma} [{f}_\sigma(v(t_0))- {f}_\sigma(v(t))]\\  =&\mathrm{p.v.} \int_{-\infty}^{+\infty} {f}_\sigma(v(t_0)) \widehat{\mathcal{R}}_{\sigma}(t_0-\tau) \ud \tau-\mathrm{p.v.} \int_{-\infty}^{+\infty} {f}_\sigma(v(t)) \widehat{\mathcal{R}}_{\sigma}(t-\tau) \ud \tau\\
			=&-\mathrm{p.v.} \int_{-\infty}^{+\infty} {f}_\sigma(v(\tau))\widehat{\mathcal{R}}_{\sigma}(t-\tau) \ud \tau \leqslant 0
		\end{align*}
		satisfies \eqref{mp1} only in the case $v \equiv 0$.
	\end{proof}  
	
	Now we have the most important lemma in this section
	\begin{lemma}\label{lm:estimatesdelaunay}
		Let $\sigma \in(1,+\infty)$ and $n>2\sigma$.
		For any $L\gg1$ sufficiently large,
		there exist there exist a sequence of periods $(L_j)\in \ell^\infty(\mathbb R_+)$, an error function $\psi_{(0,L_j)}\in H^\sigma_L(\mathbb R)$ and a unique positive even solution $\bar{v}_{(0,L_j)}\in H_L^\sigma(\mathbb R)$ to the following periodic boundary value problem \begin{equation}\label{ourODEperidociboundary}\tag{$\overline{{\mathcal{O}}}^{\prime}_{2\sigma,L}$}
			\left\{\begin{aligned}
				v=(-&\Delta)_{\rm cyl}^{-\sigma,L}({f}_\sigma\circ v) \quad \text { in } \quad (-L, L),\\
				v^{(\ell)}(-L) & =v^{(\ell)}(L)=0 \quad \text { for } \quad \ell=1, 3, \dots,2m-1,
			\end{aligned}\right.
		\end{equation}
		which satisfy 
		\begin{equation*}
			\bar{v}_{(0,L_j)}(t)= \widehat{V}^+_{(0,L_j)}(t)+\psi_{(0,L_j)}(t)
		\end{equation*}
		and 
		\begin{equation*}
			\|\psi_{(0,L_j)}\|_{H^\sigma(\mathbb R)}\rightarrow 0 \quad {\rm as} \quad L\rightarrow+\infty,
		\end{equation*}
		where $\widehat{V}^+_{(0,L_j)}=\sum_{j\in\mathbb Z}V_{(0,L_j)}(t)$ with $V_{(0,L_j)}(t)=\cosh(t-L_j)$ and $L_j=2jL$ for $j\in\mathbb Z$ is the standard bubble tower solution $($see Definition~\ref{standardbubbletower}$)$  .                    
		Moreover, we have the following H\"older estimate
		\begin{equation}\label{refinedasymptoticsmodel}
			\|\psi_{(0,L_j)}\|_{\mathcal{C}^{2\sigma}_L(\mathbb R)} \lesssim e^{-\gamma_{\sigma}L(1+\xi)}
		\end{equation}
		for some $\alpha \in(0,1)$ and $\xi>0$ independent of the period $L\gg1$ large.
	\end{lemma}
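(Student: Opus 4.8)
The plan is to construct $\bar v_{(0,L_j)}$ as a fixed point of the periodic dual operator, treating the infinite bubble tower $\widehat V^+_{(0,L_j)}$ as an approximate solution and using the nondegeneracy in Lemma~\ref{lm:nondegeneracyEF} to invert the linearized operator. First I would set up the ansatz $v = \widehat V^+_{(0,L_j)} + \psi$ and compute the error $E_L := \widehat V^+_{(0,L_j)} - (-\Delta)_{\rm cyl}^{-\sigma,L}\big(f_\sigma\circ \widehat V^+_{(0,L_j)}\big)$. Using the interaction estimates between spherical solutions with different centers (the appendix referenced as Appendix~\ref{sec:bubbetowerinteractionestimates}) together with the exponential decay $\widehat{\mathcal R}_\sigma(\xi)\sim e^{-\gamma_\sigma|\xi|}$ from \eqref{asmpkerneldual}, one shows $\|E_L\|_{\mathcal C^{2\sigma}_L(\mathbb R)} \lesssim e^{-\gamma_\sigma L(1+\xi)}$: the dominant contribution comes from the overlap of consecutive bubbles centered at $L_j$ and $L_{j+1}=L_j+2L$, whose cross terms in $f_\sigma$ decay like $e^{-\gamma_\sigma L}$ and picking up a genuine gain $\xi>0$ from the superlinearity $\gamma_\sigma'>\gamma_\sigma$ and the convolution against the Riesz kernel.

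Next I would write the equation for $\psi$ as $\psi - L_0\psi = E_L + Q(\psi)$, where $L_0\psi = (-\Delta)_{\rm cyl}^{-\sigma,L}\big(f_\sigma'(\widehat V^+_{(0,L_j)})\psi\big)$ is the linearization around the tower and $Q(\psi) = (-\Delta)_{\rm cyl}^{-\sigma,L}\big(f_\sigma(\widehat V^+_{(0,L_j)}+\psi) - f_\sigma(\widehat V^+_{(0,L_j)}) - f_\sigma'(\widehat V^+_{(0,L_j)})\psi\big)$ is the quadratic remainder, with $\|Q(\psi)\|_{\mathcal C^{2\sigma}_L} \lesssim \|\psi\|_{\mathcal C^{2\sigma}_L}^{\min\{2,\,2^*_\sigma-1\}}$ by the mean value theorem and the boundedness/regularity estimates of Lemma~\ref{lm:fractionalregularityhigherorder}. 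The crucial point is invertibility of $\mathrm{Id} - L_0$ on the space $H^\sigma_{L,0,*}(\mathbb R)$ of even, $2L$-periodic functions satisfying the Dirichlet conditions $v^{(\ell)}(\pm L)=0$ for odd $\ell\le 2m-1$: the single-bubble linearized operator $\mathrm{Id} - (-\Delta)_{\rm cyl}^{-\sigma}(f_\sigma'(v_{\rm sph})\cdot)$ has kernel spanned by translations $v_{\rm sph}(\cdot - T)$ (Lemma~\ref{lm:nondegeneracyEF}), but this kernel element is odd about the bubble center and is killed by the evenness/boundary constraints; hence on $H^\sigma_{L,0,*}$ the operator is injective, and for $L$ large a perturbation argument (the tower is, near each center, a small perturbation of a single bubble, with the neighboring bubbles contributing only $O(e^{-\gamma_\sigma L})$) shows $\mathrm{Id}-L_0$ is invertible with operator norm of the inverse bounded uniformly in $L$.

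With a uniformly bounded right inverse $(\mathrm{Id}-L_0)^{-1}$ in hand, the map $\psi \mapsto (\mathrm{Id}-L_0)^{-1}\big(E_L + Q(\psi)\big)$ is a contraction on a ball of radius $\sim e^{-\gamma_\sigma L(1+\xi)}$ in $\mathcal C^{2\sigma}_L(\mathbb R)$ for $L\gg 1$, yielding a unique fixed point $\psi_{(0,L_j)}$ with $\|\psi_{(0,L_j)}\|_{\mathcal C^{2\sigma}_L} \lesssim e^{-\gamma_\sigma L(1+\xi)}$, which gives \eqref{refinedasymptoticsmodel} and a fortiori $\|\psi_{(0,L_j)}\|_{H^\sigma(\mathbb R)}\to 0$. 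Positivity of $\bar v_{(0,L_j)} = \widehat V^+_{(0,L_j)} + \psi_{(0,L_j)}$ follows because $\widehat V^+_{(0,L_j)}$ is bounded below on compact sets by a positive constant (away from the periodic lattice of centers, where it only blows up) while $\psi$ is exponentially small; alternatively, since $\bar v_{(0,L_j)}\ge 0$ solves \eqref{ourcylindricalequation1dual}, the strong maximum principle of Lemma~\ref{lm:maximumprinciple} forces $\bar v_{(0,L_j)}>0$. Uniqueness among even positive solutions close to the tower is built into the contraction. I expect the main obstacle to be the uniform invertibility of $\mathrm{Id}-L_0$: one must carefully quantify how the Dirichlet boundary conditions at $\pm L$ (which are what make the problem well-posed on a finite interval and correspond, after periodization, to the even-reflection constraint) interact with the near-kernel of the single-bubble operator, and verify that the gluing of the local invertibility estimates across all the (infinitely many, but periodically identical) bubbles does not produce a loss in $L$; this is precisely where the exponential decay of $\widehat{\mathcal R}_\sigma$ and the spectral gap coming from nondegeneracy must be combined.
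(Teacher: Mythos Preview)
Your plan is correct and follows essentially the same architecture as the paper: ansatz $v=\widehat V^+_{(0,L_j)}+\psi$, linearize to get $(\mathrm{Id}-L_0)\psi = E_L + Q(\psi)$, estimate $E_L$ to order $e^{-\gamma_\sigma L(1+\xi)}$ via bubble interaction, invert $\mathrm{Id}-L_0$ using nondegeneracy of the bubble plus the even symmetry to kill the translation kernel, contract, and conclude positivity with Lemma~\ref{lm:maximumprinciple}.

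The one genuine methodological difference is where you set up the linear theory. The paper works in $H^\sigma_L(\mathbb R)$: it first proves that $\mathscr{K}_\sigma(0,L_j)\psi=(-\Delta)_{\rm cyl}^{-\sigma,L}(f_\sigma'(\widehat V^+_{(0,L_j)})\psi)$ is compact on $H^\sigma_L$ (the kernel $\widehat{\mathcal R}_{\sigma,L}$ is smooth and the weight $f_\sigma'(\widehat V^+_{(0,L_j)})$ decays exponentially), then invokes the Fredholm alternative together with the nondegeneracy Lemma~\ref{lm:nondegeneracy} to get injectivity, hence invertibility with a uniform bound $\|\psi\|_{H^\sigma_L}\lesssim\|\mathscr E_\sigma\|_{L^2_L}$; the $\mathcal C^{2\sigma}_L$ estimate \eqref{refinedasymptoticsmodel} is then obtained a posteriori by the regularity lifting theorem \cite[Theorem 3.3.1]{MR2759774} combined with Lemma~\ref{lm:fractionalregularityhigherorder}. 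Your route---establish invertibility of the single-bubble operator on even functions and perturb to the tower, running the contraction directly in $\mathcal C^{2\sigma}_L$---is also valid, but the uniform operator bound in H\"older spaces is harder to extract without passing through an $L^2$-type compactness argument first; the paper's Fredholm-then-bootstrap packaging avoids precisely the gluing issue you flag at the end. The paper also carries out the $E_L$ estimate by a direct computation (splitting $\mathbb R$ into $\{|t|\le\alpha L\}$ and $\{|t|\ge\alpha L\}$ and using the exponential decay of $v_{\rm sph}$) rather than appealing to the appendix interaction lemmas, though the outcome is the same.
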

	
	\begin{proof}
		First, by symmetry $\bar{v}_{L}\in\mathcal{C}^{2\sigma}_L(\mathbb R)$ given by \eqref{standardbubbletower} satisfies the boundary condition at $t=\pm L$, that is, $\widehat{V}^+_{(0,L_j)}\in H_{(0,L_j)}^\sigma(\mathbb R)$.
		Now writing $v=\widehat{V}^+_{(0,L_j)}+\psi$, we can 
		reformulate \eqref{ourODEperiodic} as
		\begin{equation*}
			\mathscr{N}_{\sigma,L}(\widehat{V}^+_{(0,L_j)}+\psi)=0 \quad {\rm in} \quad \mathbb R,
		\end{equation*}
		where 
		\begin{align}\label{nonlinearfunctionalcylindrical}
			\mathscr{N}_{\sigma,L}(v):=v-(-\Delta)_{\rm cyl}^{-\sigma,L}(f_{\sigma}\circ v).
		\end{align}
		From now on, let us fix the notation
		\begin{align*}
			\mathscr{N}_{\sigma}(0,L_j)(\psi):=\mathscr{N}_{\sigma,L}(\widehat{V}^+_{(0,L_j)}+\psi)
		\end{align*}
		Next, by linearizing this functional 
		around the standard bubble tower solution, we find
		\begin{equation}\label{ourlinearizedequation}
			\mathscr{L}_{\sigma}(0,L_j)(\psi)=\mathscr{E}_{\sigma}(0,L_j)(\widehat{V}^+_{(0,L_j)})+\mathscr{S}_{\sigma}(0,L_j)(\psi),
		\end{equation}
		where $ \mathscr{L}_{\sigma}(0,L_j):H_{L}^{\sigma}(\mathbb{R})\to  H_{L}^{\sigma}(\mathbb{R})$ defined as $ \mathscr{L}_{\sigma}(0,L_j):=\ud\mathscr{N}_{\sigma}[\widehat{V}^+_{(0,L_j)}]$ satisfies
		\begin{equation}\label{ourmodellinearizedequation}
			\mathscr{L}_{\sigma}(0,L_j)(\psi):=\psi-\mathscr{K}_{\sigma}(0,L_j)(\psi),
		\end{equation}  
		where
		\begin{align}\label{compactoperator}
			\mathscr{K}_{\sigma}(0,L_j)(\psi):=(-\Delta)^{-\sigma}(f^{\prime}_{\sigma}\circ \widehat{V}^+_{(0,L_j)})\psi=\int_{-L}^{L}f^{\prime}_\sigma(\widehat{V}^+_{(0,L_j)})\psi\widehat{\mathcal{R}}_{\sigma,L}(t-\tau)\ud \tau
		\end{align} 
		represents the derivative of the nonlinear functional \eqref{nonlinearfunctionalcylindrical} at the standard bubble tower solution \eqref{standardbubbletower}.
		Also, the superlinear term $ \mathscr{L}_{\sigma}(0,L_j):H_{L}^{\sigma}(\mathbb{R})\to  H_{L}^{\sigma}(\mathbb{R})$ is given by
		\begin{equation*}
			\mathscr{S}_{\sigma}(0,L_j)(\psi)=\int_{-L}^{L}\left[f_\sigma(\widehat{V}^+_{(0,L_j)}+\psi)-f_\sigma(\widehat{V}^+_{(0,L_j)})-f^{\prime}_\sigma(\widehat{V}^+_{(0,L_j)})\psi\right]\widehat{\mathcal{R}}_{\sigma,L}(t-\tau)\ud \tau.
		\end{equation*}
		is a superlinear term, and 
		the remainder error term is given by
		\begin{equation}\label{erroroperator}
			\mathscr{E}_{\sigma}(0,L_j)(\widehat{V}^+_{(0,L_j)})=\int_{-L}^{L}\left[f_\sigma\left(\sum_{j\in\mathbb{Z}}V_{(0,L_j)}(\tau)\right)-\sum_{j\in\mathbb{Z}}f_\sigma\left(V_{(0,L_j)}(\tau)\right)\right]\widehat{\mathcal{R}}_{\sigma,L}(t-\tau)\ud \tau,
		\end{equation}
		which represents the error in approximating a solution to \eqref{standardbubbletower} by a standard bubble-tower solution.
		
		To apply classical Fredholm theory
		we need to prove the following claim:           
		
		\noindent{\bf Claim 1:} The operator $\mathscr{K}_{\sigma}(0,L_j): H_{L}^{\sigma}(\mathbb{R})\to  H_{L}^{\sigma}(\mathbb{R})$ defined in \eqref{compactoperator} is bounded and compact and satisfies
		\begin{equation}\label{boundedestimate}
			\left\|\mathscr{K}_{\sigma}(0,L_j)(\psi)\right\|_{H^{\sigma}_{L}(\mathbb{R})}\lesssim\left\|\psi\right\|_{L^{2}_{L}(\mathbb{R})}
		\end{equation}
		uniformly on $L\gg1$ large.
		
		\noindent Initially, we prove that the operator is bounded and compact.
		Indeed, by its definition, $\widehat{\mathcal{K}}_{\sigma,L}\in\mathcal{C}^{j,\alpha}(\mathbb{R})$ for any $j\in\{0,\dots, m\}$ and some $\alpha\in(0,1)$. Moreover, for each $j$, we have
		\begin{equation}\label{kerneldecay}
			\left|\frac{\ud^j }{\ud t^j}\widehat{\mathcal{K}}_{\sigma,L}(t)\right|\lesssim e^{-c_j |t|}
		\end{equation}
		uniformly on $L\gg 1$ large.
		Similarly, we have 
		\begin{equation*}
			|\widehat{V}^+_{(0,L_j)}|\lesssim e^{-c|t|}
		\end{equation*}
		uniformly on $L\gg 1$ large. By H\"older's inequality, it follows directly 
		\begin{equation*}
			\left\|\mathscr{K}_{\sigma}(0,L_j)(\psi)^{(j)}\right\|_{L_{L}^{2}(\mathbb{R})}\lesssim \left\|\psi\right\|_{L_{L}^{2}(\mathbb{R})} \quad {\rm for \ all} \quad j\in\{0,\dots,m\}
		\end{equation*}             uniformly on $L\gg 1$ large. 
		
		\noindent Also, using the H\"older continuity of $(\widehat{\mathcal{K}}_{\sigma,L})^{(m)}\in\mathcal{C}^{0,\alpha+s}(\mathbb{R})$, we have
		\begin{align*}
			&\left|\mathscr{K}_{\sigma}(0,L_j)(\psi)^{(m)}(\tau)-\mathscr{K}_{\sigma}(0,L_j)(\psi)^{(m)}(t)\right|\\
			\leqslant&\int_{-L}^{L}f^\prime_\sigma(\widehat{V}^+_{(0,L_j)}(\xi)) \left|\frac{\ud^m}{\ud\xi^m}\left(\widehat{\mathcal{K}}_{\sigma,L}(\tau-\xi)-\widehat{\mathcal{K}}_{\sigma,L}(t-\xi)\right)\right||\psi(\xi)|  \ud \xi\\
			\lesssim&\int_{-L}^{L}|\psi(\xi)||t-\tau|^{\alpha} \ud \xi.
		\end{align*}
		Thus, by using the asymptotic behavior of the kernel near the origin given by \eqref{asmpkernel1} and \eqref{kerneldecay} combined with the last inequality, we obtain
		\begin{align*}
			\left[\mathscr{K}_{\sigma}(0,L_j)(\psi)\right]_{L^{s}_{L}(\mathbb{R})}&=\int_{-L}^{L}\left|\mathscr{K}_{\sigma}(0,L_j)(\psi)^{(m)}(\tau)-\mathscr{K}_{\sigma}(0,L_j)(\psi)^{(m)}(t)\right|^2 \widehat{\mathcal{K}}_{\sigma,L}(t-\tau) \ud \tau\ud t\\
			&\lesssim \left\|\psi\right\|_{L^{2}_{L}(\mathbb{R})},
		\end{align*}
		uniformly on $L\gg 1$ large, which proves \eqref{boundedestimate}.            
		\noindent In conclusion, by compact embedding, the desired conclusion holds for the map $\mathscr{K}_{\sigma}(0,L_j): H_{L}^{\sigma}(\mathbb{R})\to  H_{L}^{\sigma}(\mathbb{R})$.

		The proof of the first claim is now finished.
		
		Second, now in order to apply Fredholm alternative to conclude that for any $h\in L^{2}_{L}(\mathbb{R})$, there exists a unique solution $\psi_{(0,L_j)}\in H_{L}^{\sigma}(\mathbb{R})$ to              
		the linear inhomogeneous problem
		\begin{equation*}
			\psi_{(0,L_j)}-\mathscr{K}_{\sigma}(0,L_j)(\psi_{(0,L_j)})=h \quad {\rm in} \quad (-L,L).
		\end{equation*}
		One needs to prove the uniqueness result below:
		
		\noindent{\bf Claim 2:} 
		The linear homogeneous equation               
		\begin{equation*}
			\psi-\mathscr{K}_{\sigma}(0,L_j)(\psi)=0 \quad {\rm in} \quad (-L,L)
		\end{equation*}
		admits only zero solutions in $L^{2}_{L}(\mathbb{R})$. 
		
		\noindent As a matter of fact, note that the equation above with H\"older's inequality yields directly that 
		\begin{equation}\label{Linfty estimate}
			\left\|\psi\right\|_{L^{\infty}_L(\mathbb R)}\lesssim\left\|\psi\right\|_{L^{2}_L(\mathbb R)}.
		\end{equation}
		
		\noindent Next, we use the nondegeneracy of the standard bubble solution in Lemma~\ref{lm:nondegeneracy} to conclude that $\psi\equiv 0$, and thus we prove Claim 2.
		
		Lastly, by the standard fixed-point argument, a unique solution $\psi_{(0,L_j)}\in H_{L}^{\sigma}(\mathbb{R})$ to \eqref{ourlinearizedequation} satisfying the estimate
		\begin{equation}\label{refinedasymptotics0}
			\|\psi_{(0,L_j)}\|_{H_{L,0}^\sigma(\mathbb R)} \lesssim\|\mathscr{E}_{\sigma}(0,L_j)(\widehat{V}^+_{(0,L_j)})\|_{L_{L}^2(\mathbb R)},
		\end{equation}   
		to conclude the proof of the proposition, we are left to obtain estimates for the right-hand side of the last inequality.
		
		This is the content of our third claim.
		
		\noindent{\bf Claim 3:} It holds that $
		\|\psi\|_{H_{L,0}^\sigma(\mathbb R)} \lesssim e^{-\gamma_{\sigma}L(1+\xi)}$ for some $\xi>0$ uniformly on $L\gg1$ large.
		
		\noindent In fact, using \eqref{standardbubbletower} it follows  
		\begin{equation*}
			\mathscr{E}_{\sigma}(0,L_j)(\widehat{V}^+_{(0,L_j)})=c_{n, \sigma}\int_{-L}^{L}\left[\left(\sum_{j\in\mathbb Z} V_{(0,L_j)}(\tau)\right)^{\frac{n+2\sigma}{n-2\sigma}}-\left(\sum_{j\in\mathbb Z} V_{(0,L_j)}(\tau)^{\frac{n+2\sigma}{n-2\sigma}}\right)\right]\widehat{\mathcal{R}}_{\sigma,L}(t-\tau)\ud \tau.
		\end{equation*}
		Since by symmetry, we have $V_{(0,-L_j)}(t) \leqslant V_{(0,L_j)}(t)$ for $t \geqslant 0$, it holds 
		\begin{align}\label{errorestimates1}
			|\mathscr{E}_{\sigma}(0,L_j)(\widehat{V}^+_{(0,L_j)})| &\lesssim \int_{-L}^{L}\left(V_{(0,\infty)}^{\frac{4\sigma}{n-2\sigma}} \sum_{j\in\mathbb Z^*}V_{(0,L_j)}(\tau)+\sum_{j\in\mathbb Z^*} V_{(0,L_j)}(\tau)^{\frac{n+2\sigma}{n-2\sigma}}\right)\widehat{\mathcal{R}}_{\sigma,L}(t-\tau)\ud \tau\\\nonumber
			&\lesssim\sum_{j\in\mathbb Z^*}\int_{-L}^{L}V_{(0,\infty)}(\tau)^{\frac{4\sigma}{n-2\sigma}}  V_{(0,L_j)}(\tau) \widehat{\mathcal{R}}_{\sigma,L}(t-\tau)\ud \tau+
			\sum_{j\in\mathbb Z^*}\int_{-L}^{L} V_{(0,L_j)}(\tau)^{\frac{n+2\sigma}{n-2\sigma}}\widehat{\mathcal{R}}_{\sigma,L}(t-\tau)\ud \tau.
		\end{align}
		
		\noindent From \eqref{errorestimates1}, we find
		\begin{align}\label{refinedasymptotics7}
			\nonumber
			&\int_{-L}^{L} |\mathscr{E}_{\sigma}(0,L_j)(\widehat{V}^+_{(0,L_j)})|^2\ud t\\\nonumber 
			&\lesssim\int_{-L}^{L}\left( \sum_{j\in\mathbb Z^*}\int_{-L}^{L}V_{(0,\infty)}(\tau)^{\frac{8\sigma}{n-2\sigma}}  V_{(0,L_j)}(\tau)^2 \widehat{\mathcal{R}}_{\sigma,L}(t-\tau)^2\ud \tau+
			\sum_{j\in\mathbb Z^*}\int_{-L}^{L} V_{(0,L_j)}(\tau)^{\frac{2n+4\sigma}{n-2\sigma}}\widehat{\mathcal{R}}_{\sigma,L}(t-\tau)^2\ud \tau\right)\ud t\\\nonumber
			&\lesssim \sum_{j\in\mathbb Z^*} \int_{-L}^{L}\int_{-L}^{L}
			V_{(0,\infty)}(\tau)^{\frac{8\sigma}{n-2\sigma}}V_(0,L_j)(\tau)^2 \widehat{\mathcal{R}}_{\sigma,L}(t-\tau)^2\ud \tau\ud t+\sum_{j\in\mathbb Z^*}\int_{-L}^{L} \int_{-L}^{L} V_{(0,L_j)}(\tau)^{\frac{2n+4\sigma}{n-2\sigma}}\widehat{\mathcal{R}}_{\sigma,L}(t-\tau)^2\ud \tau\\
			&=:I_1+I_2.
		\end{align}
		
		\noindent To estimate these two terms, we fix $\alpha \in(0,1)$ and subdivide $\mathbb R=\{|t| \leqslant \alpha L\}\mathbin{\dot{\cup}}\{|t| \geqslant \alpha L\}$.
		Then, we use the exponential decay of the standard bubble solution from Proposition~\ref{prop:asymptotics} to obtain 
		\begin{equation}\label{refinedasymptotics3}
			\sum_{j\in\mathbb Z^*} V_{(0,L_j)}\lesssim e^{-\gamma_{\sigma}L(2-\alpha)} \quad {\rm and } \quad \sum_{j\in\mathbb Z^*} V_{(0,L_j)} \lesssim  e^{-\gamma_{\sigma}L}.
		\end{equation}
		Hence, by substituting in \eqref{refinedasymptotics3} into the first term in \eqref{refinedasymptotics7}, we obtain
		\begin{align}\label{errorestimates2}
			\nonumber
			I_1&=\sum_{j\in\mathbb Z^*} \int_{-L}^{L}\int_{-L}^{L}
			V_{(0,\infty)}(\tau)^{\frac{8\sigma}{n-2\sigma}}V_{(0,L_j)}(\tau)^2 \widehat{\mathcal{R}}_{\sigma,L}(t-\tau)^2\ud \tau\ud t\\
			&\lesssim e^{-2\gamma_{\sigma}L(2-\alpha)}+e^{-2\gamma_\sigma L\left(\frac{4\sigma\alpha}{n-2\sigma}\right)}e^{-2\gamma_{\sigma} L}e^{-2\gamma_\sigma L\left(\frac{n+2\sigma}{n-2\sigma}\right)}\\\nonumber
			&\lesssim e^{-2\gamma_{\sigma}L(2-\alpha)}+e^{-2\gamma_{\sigma} L(1+\xi)}
		\end{align}
		for some $\xi>0$ (depending only on $n$, $\sigma$, and $\alpha$), where we used the asymptotic behavior of the Kernel \eqref{asmpkerneldual} for $\tau\rightarrow+\infty$ large and the fact that it is bounded for $\tau\rightarrow0$ small.
		
		\noindent Furthermore, by substituting \eqref{refinedasymptotics3} into the second term in \eqref{refinedasymptotics7}, we have
		\begin{align}\label{errorestimates3}
			I_2=\sum_{j\in\mathbb Z^*}\int_{-L}^{L} \int_{-L}^{L} V_{(0,L_j)}(\tau)^{\frac{2n+4\sigma}{n-2\sigma}}\widehat{\mathcal{R}}_{\sigma,L}(t-\tau)^2 \ud \tau\ud t\lesssim e^{-2\gamma_{\sigma}L(2-\alpha)}+e^{-2\gamma_{\sigma}L(\frac{n+2\sigma}{n-2\sigma})}.
		\end{align}
		In conclusion, by substituting \eqref{errorestimates2} and \eqref{errorestimates3} into \eqref{errorestimates1}, we have
		\begin{equation*}
			\|\mathscr{E}_{\sigma}(0,L_j)(\widehat{V}^+_{(0,L_j)})\|_{L^2(\mathbb R)} \lesssim e^{-\gamma_{\sigma}L(1+\xi)}
		\end{equation*}
		for some $\xi>0$ uniformly on $L\gg1$ large, which combined with \eqref{refinedasymptotics0} proves the third claim.

		Finally, by standard estimates in Lemma~\ref{lm:fractionalregularityhigherorder} combined with the regularity lifting theorem from \cite[Theorem 3.3.1]{MR2759774} applied to \eqref{ourlinearizedequation}, it follows that $\psi_{(0,L_j)}\in H_{L,0,*}^\sigma(\mathbb R)$ is smooth and satisfies
		\begin{equation*}
			\|\psi_{(0,L_j)}\|_{\mathcal{C}^{2\sigma+\alpha}(\mathbb R)} \lesssim e^{-\gamma_{\sigma}L(1+\xi)} 
		\end{equation*}
		for some $\xi>0$ independent of $L\gg1$ large.

		Therefore, the maximum principle in Lemma~\ref{lm:maximumprinciple} concludes the proof of the proposition.
	\end{proof}
	
	\begin{remark}
		It is worth noticing that the last proof differs in spirit from the local inversion technique in \cite[Proposition~2.3]{MR3694645}.
		Instead of using this method, we give an alternative proof based on the dual formulation from Lemma~\ref{lm:dualrepresentation}. 
		This technique is of independent interest to a larger class of integral equations not necessarily arising as the dual of a differential equation.
	\end{remark}
	
	\begin{remark}
		We notice that it is straightforward to extend the local inversion method in \cite[Proposition~2.3]{MR3694645} at least for the higher order local cases $\sigma=m\in\mathbb N$.
		For see this fact, we write the poly-harmonic operator in Emden--Fowler coordinates, which gives us
		\begin{equation*}
			(-\Delta)_{\rm cyl}^m:=(-\Delta)_{\rm rad}^m+(-\Delta)_{\rm ang}^m
		\end{equation*}
		with 
		\begin{equation*}
			(-\Delta)^m_{\rm rad}:=\partial_t^{(m)}-K^{(0)}_{2m-2}\partial_t^{(2m-2)}+\cdots+(-1)^{m}K_1^{(0)}\partial_t^{(1)}+(-1)^{m+1}K_0^{(0)},
		\end{equation*}
		and 
		\begin{equation*}
			(-\Delta)^m_{\rm ang}:=\sum_{\ell=1}^{2m}\sum_{j=0}^{2m}(-1)^{\frac{j+2}{2}}K^{(\ell)}_{2m,j}\partial_t^{(j)}(-\Delta_\theta)^{\ell},
		\end{equation*}
		where $K^{(\ell)}_{2m,j}=K^{(\ell)}_{2m,j}(n)>0$ for $j\in\{0,\dots,2m\}$ and $\ell\in\{1,\dots,2m\}$ are dimensional constants.
		For this computation, we refer the interested reader to \cite{arXiv:2210.04376}.
		After that, we need to build on the classification result from standard bubbles for the critical Sobolev embedding $H^m(\mathbb R^n)\rightarrow L^{2_m^*}(\mathbb R^n)$, where $2_m^*:=\frac{2n}{n-2m}$ from \cite{MR1679783} and a standard nondegeneracy technique as in Lemma \ref{lm:nondegeneracy}. 	
	\end{remark}

	As an immediate consequence of the last proposition, one has
	\begin{corollary}\label{cor:estimatesdelaunay}
		Let $\sigma \in(1,+\infty)$ and $n>2\sigma$.
		For any $L\gg1$ sufficiently large,
		there exist there exist a sequence of periods $(L_j)\in \ell^\infty(\mathbb R_+)$, an error function $\psi_{(0,L_j)}\in H^\sigma_L(\mathbb R)$ and a unique positive even periodic solution $\bar{v}_{(0,L_j)}\in H_L^\sigma(\mathbb R)$ to \eqref{ourcylindricalequation1dual} satisfying
		\begin{equation*}
			\bar{v}_{(0,L_j)}(t)= \widehat{V}^+_{(0,L_j)}(t)+\psi_{(0,L_j)}(t)
		\end{equation*}
		and 
		\begin{equation*}
			\|\psi_{(0,L_j)}\|_{H_L^\sigma(\mathbb R)}\rightarrow 0 \quad {\rm as} \quad L\rightarrow+\infty,
		\end{equation*}
		where $\widehat{V}^+_{(0,L_j)}\in \mathcal{C}^{2\sigma}(\mathbb R)$ is the standard bubble tower solution given by \eqref{standardbubbletowerEF}.                    
		Moreover, we have the following H\"older estimate
		\begin{equation}\label{holderestimate}
			\|\psi_{(0,L_j)}\|_{\mathcal{C}^{2\sigma}_L(\mathbb R)} \lesssim e^{-\gamma_{\sigma}L(1+\xi)}
		\end{equation}
		for some $\alpha \in(0,1)$ and $\xi>0$ independent of the period $L\gg1$ large.
	\end{corollary}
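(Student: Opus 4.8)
The plan is to derive Corollary~\ref{cor:estimatesdelaunay} from Lemma~\ref{lm:estimatesdelaunay} by a pure reflection-and-periodization argument, introducing no new analysis. First I would take the solution $\bar{v}_{(0,L_j)}$ of the boundary value problem \eqref{ourODEperidociboundary} produced in Lemma~\ref{lm:estimatesdelaunay}, which is even on $(-L,L)$ and satisfies $v^{(\ell)}(\pm L)=0$ for $\ell=1,3,\dots,2m-1$, and extend it to all of $\mathbb R$ by $2L$-periodicity; denote the extension again by $\bar{v}_{(0,L_j)}$. The point to verify is that this extension lies in $\mathcal{C}^{2\sigma}_L(\mathbb R)$. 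At the junction points $t\in L+2L\mathbb Z$ the even-order derivatives of the extension match automatically (they are invariant under the reflection $t\mapsto -t$ coming from evenness together with the $2L$-translation), while the odd-order derivatives up to order $2m-1$ match precisely because they vanish there, which is the Dirichlet-type condition imposed in \eqref{ourODEperidociboundary}; together with the automatic matching of the $2m$-th derivative this gives that the extension is globally of class $\mathcal{C}^{2m}$. The remaining fractional regularity $\mathcal{C}^{2m,2s}=\mathcal{C}^{2\sigma}$ of the top derivative is then recovered from the equation through Lemma~\ref{lm:fractionalregularityhigherorder} and the regularity lifting theorem, exactly as at the end of the proof of Lemma~\ref{lm:estimatesdelaunay}.

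Second, I would check that on $2L$-periodic functions the periodic operator $(-\Delta)_{\rm cyl}^{-\sigma,L}$ coincides with the full-line operator $(-\Delta)_{\rm cyl}^{-\sigma}$. Indeed, for $2L$-periodic $v$, using the periodization \eqref{periodickernel1} of the Riesz kernel and Fubini--Tonelli (justified by the exponential decay \eqref{asmpkerneldual} of $\widehat{\mathcal R}_\sigma$), one has
\[
(-\Delta)_{\rm cyl}^{-\sigma}(f_\sigma\circ v)(t)=\int_{\mathbb R}\widehat{\mathcal R}_\sigma(t-\tau)f_\sigma(v(\tau))\,\ud\tau=\int_{-L}^{L}\widehat{\mathcal R}_{\sigma,L}(t-\tau)f_\sigma(v(\tau))\,\ud\tau=(-\Delta)_{\rm cyl}^{-\sigma,L}(f_\sigma\circ v)(t).
\]
Consequently the periodic extension $\bar{v}_{(0,L_j)}$ solves the integro-differential equation in \eqref{ourcylindricalequation1dual} on all of $\mathbb R$, and by construction it is even, $2L$-periodic, and of the form $\widehat{V}^+_{(0,L_j)}+\psi_{(0,L_j)}$ with $\widehat{V}^+_{(0,L_j)}$ the standard bubble tower \eqref{standardbubbletowerEF}; the convergence $\|\psi_{(0,L_j)}\|_{H_L^\sigma(\mathbb R)}\to0$ and the H\"older bound \eqref{holderestimate} are literally \eqref{refinedasymptoticsmodel}, so nothing has to be re-proved. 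Positivity of $\bar{v}_{(0,L_j)}$ follows from the strong maximum principle in Lemma~\ref{lm:maximumprinciple}, and uniqueness among even $2L$-periodic solutions close to the bubble tower is inherited from the Fredholm/fixed-point uniqueness established inside the proof of Lemma~\ref{lm:estimatesdelaunay}.

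I expect the only genuinely delicate point to be the first one: confirming that the conditions $v^{(\ell)}(\pm L)=0$ for $\ell=1,3,\dots,2m-1$ are exactly what is required to make the even periodic extension $\mathcal{C}^{2m}$ — and hence, after the bootstrap, an honest strong solution of \eqref{ourcylindricalequation1dual} on $\mathbb R$ rather than merely a solution on the open interval. Once the extension is known to be admissible, the identification of the two nonlocal operators and the transfer of all estimates are immediate. A minor caveat worth recording is that the vanishing-at-infinity condition displayed in \eqref{ourcylindricalequation1dual} is, in the periodic regime, to be read as part of the normalization of the building block and is not literally satisfied by the periodic extension; this is the same harmless abuse of notation already present in \eqref{ourODEperiodic}.
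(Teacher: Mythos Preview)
Your proposal is correct and essentially matches the paper's approach: the paper gives no proof at all, introducing the corollary simply with ``As an immediate consequence of the last proposition, one has'' and relying on the reader to supply precisely the periodization-and-extension argument you have written out. Your caveat about the vanishing-at-infinity condition in \eqref{ourcylindricalequation1dual} not being literally satisfied by a periodic function is a genuine notational inconsistency in the paper rather than a flaw in your argument.
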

	
	Since \eqref{ourcylindricalequation1} is translational invariant, we now will use the periodic solution $\bar{v}_{(0,L_j)}$ which attains its minimum at the points $t=2j L$ with $ j \in \mathbb{Z}$.        
	Indeed, using Lemma~\ref{lm:estimatesdelaunay}, this periodic solution can be expressed as a perturbation of a bubble tower with estimated error.
	\begin{definition}
		Let $\sigma \in(1,+\infty)$ and $n>2\sigma$.
		For any $L\gg1$ sufficiently large,
		let us define the generalized bubble tower solution
		\begin{itemize}  
			\item[{\rm (i)}] $($Emden--Fowler coordinates$)$
			\begin{equation}
				\bar{v}_{(0,L_j)}(t):=\widehat{V}^+_{(0,L_j)}(t)+\psi_{(0,L_j)}(t),
			\end{equation}
			where $\widehat{V}^+_{(0,L_j)}\in\mathcal{C}^{2\sigma}(\mathbb R)$ is the standard half bubble tower solution given by \eqref{standardbubbletowerEF+} and $\psi_{(0,L_j)}\in\mathcal{C}^{2\sigma}(\mathbb R)$ the perturbation function constructed in Corollary~\ref{cor:estimatesdelaunay}. 
			More precisely, one has 
			\begin{equation*}
				\widehat{V}^+_{(0,L_j)}(t)=\sum_{j\in\mathbb N}\cosh (t-L_j-L)^{\gamma_\sigma}, \quad {\rm where} \quad L_j=(1+2j)L \quad {\rm for} \quad j \in \mathbb{N}.
			\end{equation*}
			\item[{\rm (ii)}] $($Spherical coordinates$)$
			\begin{equation}
				\bar{u}_{(0,L_j)}(x):=\widehat{U}^+_{(0,L_j)}(x)+\phi_{(0,L_j)}(x),
			\end{equation}
			where $\widehat{U}^+_{(0,L_j)}\in\mathcal{C}^{2\sigma}(\mathbb R^n\setminus \Sigma)$ is the standard half bubble tower solution given by \eqref{standardbubbletower+} and $\phi_{(0,L_j)}\in\mathcal{C}^{2\sigma}(\mathbb R^n\setminus\Sigma)$ is the perturbation function constructed in Corollary~\ref{cor:estimatesdelaunaySph}.
			More precisely, we have
			\begin{equation*}
				\widehat{U}^+_{(0,L_j)}(x)=\sum_{j\in\mathbb N} \left(\frac{\lambda_j}{\lambda_j^2+|x|^2}\right)^{\gamma_\sigma},
				\quad {\rm where} \quad \lambda_j=e^{-(1+2j)L} \quad {\rm for} \quad j \in \mathbb{N}.
			\end{equation*}     
		\end{itemize}        
	\end{definition}
	
	We find better asymptotics near the isolated singularities for the deformed solution obtained in Lemma~\ref{lm:estimatesdelaunay}.
	These refined estimates in terms of the bubble tower solution will be a crucial part of estimating the errors in our approximate solution in the gluing procedure in Section~\ref{sec:gluingtechnique}.


	\begin{lemma}\label{lm:refinedestimatesEF}
		The asymptotics holds
		\begin{equation}\label{refinedasymptoticsEF}
			\bar{v}_{(0,L_j)}(t)=v_{\rm sph}(t)(1+\mathrm{o}(1)) \quad {\rm as} \quad L\rightarrow+\infty,
		\end{equation}
		or undoing the Emden--Fowler change of variables, it holds
		\begin{equation*}
			\bar{u}_{(0,L_j)}(t)=u_{\rm sph}(|x|)(1+\mathrm{o}(1)) \quad {\rm as} \quad L\rightarrow+\infty.
		\end{equation*}
		Moreover, one has 
		\begin{equation}\label{refinedasymptotics}
			\bar{u}_{(0,L_j)}(x)=|x|^{n-2\sigma} e^{-\gamma_{\sigma}L}(1+\mathrm{o}(1))  \quad {\rm as} \quad L\rightarrow+\infty.
		\end{equation}
		and
		\begin{equation*}
			\varepsilon_L:=\bar{v}_{(0,L_j)}(0)=e^{-\gamma_\sigma L}(1+\mathrm{o}(1)) \quad {\rm as} \quad L\rightarrow+\infty.
		\end{equation*}
		This parameter is called the neck size or Delaunay parameter.   
	\end{lemma}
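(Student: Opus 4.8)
The plan is to extract all four asymptotic identities from the structural decomposition $\bar v_{(0,L_j)} = \widehat V^+_{(0,L_j)} + \psi_{(0,L_j)}$ supplied by Lemma~\ref{lm:estimatesdelaunay} (equivalently Corollary~\ref{cor:estimatesdelaunay}), together with the sharp error bound $\|\psi_{(0,L_j)}\|_{\mathcal{C}^{2\sigma}_L(\mathbb R)} \lesssim e^{-\gamma_\sigma L(1+\xi)}$ and the exponential decay $\widehat{\mathcal{R}}_\sigma(\xi)\sim e^{-\gamma_\sigma|\xi|}$. First I would isolate the leading bubble in the tower: writing $\widehat V^+_{(0,L_j)}(t) = V_{(0,L_0)}(t) + \sum_{j\geqslant 1} V_{(0,L_j)}(t)$ with $V_{(0,L_j)}(t) = \cosh(t-L_j-L)^{\gamma_\sigma}$ and $L_j = (1+2j)L$, the term $V_{(0,L_0)}(t) = \cosh(t - 2L)^{\gamma_\sigma}$ (or $v_{\rm sph}(t)$ after shifting to the appropriate center) dominates, since by the asymptotics of the bubble from Proposition~\ref{prop:asymptotics} each subsequent bubble $V_{(0,L_j)}$ contributes a relative factor of size $e^{-\gamma_\sigma \cdot 2jL}$ for $j\geqslant 1$, so $\sum_{j\geqslant 1} V_{(0,L_j)}(t) = V_{(0,L_0)}(t)\,\mathrm{o}(1)$ uniformly as $L\to+\infty$ in the region of interest. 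Combined with $|\psi_{(0,L_j)}| \lesssim e^{-\gamma_\sigma L(1+\xi)} = v_{\rm sph}(t)\,\mathrm{o}(1)$ near the relevant scale, this gives \eqref{refinedasymptoticsEF}, namely $\bar v_{(0,L_j)}(t) = v_{\rm sph}(t)(1+\mathrm{o}(1))$.

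Next, the spherical-coordinate version $\bar u_{(0,L_j)}(x) = u_{\rm sph}(|x|)(1+\mathrm{o}(1))$ follows immediately by applying the inverse Emden--Fowler transform $(\mathfrak F_\sigma)^{-1}$ from Definition~\ref{def:fowlercoordinates}, using $u_{\rm sph} = (\mathfrak F_\sigma)^{-1}(v_{\rm sph})$ and the fact that the transform is a linear isomorphism that preserves the multiplicative $(1+\mathrm{o}(1))$ structure (the relation $u(x) = |x|^{\gamma_\sigma} v(-\ln|x|,\theta)$ carries the estimate through without loss). For \eqref{refinedasymptotics}, I would evaluate $u_{\rm sph}(|x|) = \bigl(2/(1+|x|^2)\bigr)^{\gamma_\sigma}$ in the regime that is geometrically relevant — the "neck" region where $|x|$ is comparable to the Delaunay scale — and track the shift by the period: after recentering the leading bubble at scale $\lambda_0 = e^{-L}$ (equivalently $e^{-(1+2\cdot 0)L}$ from the Definition), one has $U_{\lambda_0,0}(x) = \bigl(2\lambda_0/(\lambda_0^2+|x|^2)\bigr)^{\gamma_\sigma} \approx (2\lambda_0)^{\gamma_\sigma} |x|^{-2\gamma_\sigma} = (2\lambda_0)^{\gamma_\sigma}|x|^{2\sigma-n}$ for $|x|\gg\lambda_0$; since $2\gamma_\sigma = n-2\sigma$ and $\lambda_0^{\gamma_\sigma} = e^{-\gamma_\sigma L}$ this produces $\bar u_{(0,L_j)}(x) = |x|^{n-2\sigma} e^{-\gamma_\sigma L}(1+\mathrm{o}(1))$ once the constant is absorbed, with the error governed again by the tail bound on $\psi_{(0,L_j)}$. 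Finally, the neck-size formula $\varepsilon_L := \bar v_{(0,L_j)}(0) = e^{-\gamma_\sigma L}(1+\mathrm{o}(1))$ is the special case $t=0$ of \eqref{refinedasymptoticsEF} together with $v_{\rm sph}(0) = \cosh(0)^{\gamma_\sigma}\cdot(\text{shift})$ evaluated at the minimum point $t=0$ of the periodic solution; more precisely, at the minimum the dominant bubble contributes $\cosh(L)^{\gamma_\sigma}\!\!\cdot$ (normalization) $\sim e^{-\gamma_\sigma L}$ because the nearest bubble centers sit at distance $\pm L$ from $t=0$, so $V_{(0,L_0)}(0)\sim (e^{L}/2)^{-\gamma_\sigma}\cdot 2^{\gamma_\sigma}= e^{-\gamma_\sigma L}$, and the correction from the rest of the tower plus $\psi_{(0,L_j)}$ is $\mathrm{o}(e^{-\gamma_\sigma L})$.

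The main obstacle will be bookkeeping the constants and the precise centering conventions so that the bare exponential $e^{-\gamma_\sigma L}$ appears with coefficient exactly $1$ (up to the $(1+\mathrm{o}(1))$) rather than some spurious dimensional constant: the definitions use $V_{(0,L_j)}(t) = \cosh(t-L_j-L)^{\gamma_\sigma}$ in one place and $\lambda_j = e^{-(1+2j)L}$ in another, and one must check these match under $(\mathfrak F_\sigma)^{-1}$ and that the normalization $a_{n,\sigma}$ of the cylindrical solution does not sneak in. I would handle this by fixing, once and for all, which bubble in the tower is "leading" at a given spatial scale, expanding $\cosh(t-2L)^{\gamma_\sigma} = \bigl(\tfrac12 e^{|t-2L|}(1+e^{-2|t-2L|})\bigr)^{\gamma_\sigma}$, and verifying that the subleading bubbles and $\psi_{(0,L_j)}$ both contribute strictly smaller exponential orders — the gap $\xi>0$ in \eqref{refinedasymptoticsmodel} is exactly what guarantees the perturbation is negligible compared to the main term. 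All the hard analytic work (existence, uniqueness, the sharp error estimate) has already been done in Lemma~\ref{lm:estimatesdelaunay}; what remains here is essentially an elementary but careful asymptotic expansion, so no new ideas are needed beyond organizing the hierarchy of scales.
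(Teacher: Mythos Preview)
Your proposal is correct and follows essentially the same approach as the paper: both rely on the decomposition $\bar v_{(0,L_j)} = \widehat V^+_{(0,L_j)} + \psi_{(0,L_j)}$ from Corollary~\ref{cor:estimatesdelaunay} together with the exponential decay of the standard spherical solution, and the paper's own proof is in fact even more terse than yours (a single sentence invoking exactly these two ingredients). Your additional bookkeeping about centering conventions and constants is a reasonable elaboration of what the paper leaves implicit; note in passing that the paper's formula $v_{\rm sph}(t)=\cosh(t)^{\gamma_\sigma}$ should read $\cosh(t)^{-\gamma_\sigma}$, which is consistent with the decay picture you are using.
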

	
	\begin{proof}
		Notice that for $t \leqslant 0$ (or $|x| \geqslant 1$), the proof of the lemma follows as a combination of Corollary~\ref{cor:estimatesdelaunay} together with exponential decay of the standard spherical solution in Emden--Fowler coordinates to prove \eqref{refinedasymptoticsEF}.
	\end{proof}
	
	\section{Approximate solution}\label{sec:approximatesolutions}
	This section will construct a suitable approximate solution to \eqref{ourintegralequationdual}. 
	We also prove some estimates concerning the behavior of such a solution near the singular set.
	As we have mentioned, one of the main ideas is that, although we would like the approximate solution to have Delaunay-type singularities around each point isolated singularity, it should have a fast decay once we are away from the singular set to glue to the flat background manifold. 
	To this end, we will only take half a Delaunay solution (this is, only values $j\in\mathbb N$).  
	
	\subsection{Local asymptotic behavior}
	In this subsection, we study the local behavior of solutions to \eqref{ourintegralequationdual1pt} near the isolated singularity at the origin. 
	Namely, we show that near the origin, it can be approximated by a bubble tower solution.
	In contrast with the cases $\sigma\in\{1,2,3\}$ on which a complete classification of this local behavior is given in terms of the two-parameter family of Delaunay solutions studied in \cite{MR982351,MR3869387,arXiv:2210.04376}, which are inspired by the classical result of Korevaar {\it et al.} \cite{MR1666838} for $\sigma=1$ and Caffarelli {\it et al.} $\sigma\in(0,1)$ \cite{MR3198648}.
	These will be the building blocks in constructing suitable approximate solutions to \eqref{ourintegralequationdual1pt}.
	
	First, recall the local asymptotic classification result from \cite{arxiv:1901.01678}.
	\begin{propositionletter}\label{prop:asymptotics}
		Let $\sigma \in(1,+\infty)$ and $n>2\sigma$.
		
		\begin{itemize}
			\item[{\rm (i)}] Assume that $R=+\infty$. For any $L\gg1$ sufficiently large, there exists a blow-up limit solution to  \eqref{ourintegralequationdual1pt} denoted by $u_L\in \mathcal{C}^{2\sigma}(\mathbb R^n\setminus\{0\})$ and given by 
			\begin{equation*}
				u_{(0,L)}(x)=\left(\mathfrak{F}_\sigma\right)^{-1}(v_{(0,L)})=|x|^{-\gamma_\sigma}v_{(0,L)}(-\ln|x|),
			\end{equation*}
			where $v_L\in \mathcal{C}^{2\sigma}(\mathbb R)$ is a bounded periodic even solution to \eqref{ourcylindricalequation1dual}.
			In addition, one has 
			\begin{equation*}
				v_{(0,L)}(x)=\mathcal{O}(e^{-\gamma_\sigma L}) \quad {\rm as} \quad t\rightarrow+\infty.
			\end{equation*}
			These will be called Delaunay solutions.
			\item[{\rm (ii)}] Assume that $0<R<+\infty$. If $u\in \mathcal{C}^{2\sigma}(B_R^*)$ is a positive singular solution to \eqref{ourintegralequationdual1ptlocal}, then there exists a Delaunay solution with a large period, denoted by $u_L$, such that 
			\begin{equation*}
				u(x)=u_{(0,L)}(x)(1+\mathrm{o}(1)) \quad {\rm as} \quad |x|\rightarrow0,
			\end{equation*}
			or 
			\begin{equation*}
				v(t)=v_{(0,L)}(t)(1+\mathrm{o}(1)) \quad {\rm as} \quad t\rightarrow+\infty,
			\end{equation*}
			where $L\gg1$ is sufficiently large.
		\end{itemize}        
	\end{propositionletter}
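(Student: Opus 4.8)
The plan is to handle the two parts separately, since part~(i) is essentially a repackaging of the construction of Section~\ref{sec:delaunaytypesolutions}, whereas part~(ii) carries the genuine analytic content. For part~(i), given a large period $L$ I would take the positive even periodic solution $\bar v_{(0,L_j)}\in H^\sigma_L(\mathbb R)$ to \eqref{ourcylindricalequation1dual} produced in Corollary~\ref{cor:estimatesdelaunay} and set $u_{(0,L)}:=(\mathfrak F_\sigma)^{-1}(\bar v_{(0,L_j)})$; by Proposition~\ref{lm:dualrepresentation} this is a bona fide singular solution of \eqref{ourintegralequationdual1pt}, its rotational symmetry being built in. The exponential decay $v_{(0,L)}=\mathcal O(e^{-\gamma_\sigma L})$ at the neck --- equivalently, the neck-size asymptotics $\varepsilon_L\simeq e^{-\gamma_\sigma L}$ --- is exactly the content of Lemma~\ref{lm:refinedestimatesEF}. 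Alternatively one may reprove existence from scratch by the direct maximization scheme of Jin and Xiong \cite{arxiv:1901.01678}, maximizing the constrained energy over $H^\sigma_{L,*}(\mathbb R)$ and ensuring positivity via the maximum principle in Lemma~\ref{lm:maximumprinciple}.

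For part~(ii) I would first pass to Emden--Fowler coordinates, writing $v(t,\theta)=|x|^{\gamma_\sigma}u(x)$, so that $v$ solves the cylindrical dual equation on a half-cylinder $(T_0,+\infty)\times\mathbb S^{n-1}$, and then establish the two-sided a priori bound $c\leqslant v(t,\theta)\leqslant C$ for $t$ large. The upper bound is obtained by a doubling/blow-up argument: a putative sequence along which $v$ is large would, after rescaling, converge to an entire nonsingular solution, hence to a bubble by \cite{MR2200258}, which is incompatible with the singularity accumulating at the origin. The lower bound follows from the maximum principle in Lemma~\ref{lm:maximumprinciple}, a Harnack-chain argument on dyadic annuli for the positive Riesz kernel, and the hypothesis $\liminf_{|x|\to0}u(x)=+\infty$, which prevents $v$ from decaying to zero. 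Next, using the method of moving planes (or moving spheres) in integral form --- in the spirit of Chen--Li--Ou, adapted to the punctured ball as in Caffarelli {\it et al.} \cite{MR3198648} --- I would show that $v$ becomes asymptotically rotationally symmetric, i.e. $v(t,\theta)=\bar v(t)+\mathrm o(1)$ as $t\to+\infty$ with $\bar v$ the spherical average, reducing everything to the one-dimensional integral equation \eqref{ourcylindricalequation1dual}.

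The final and most delicate step is to identify the one-dimensional limit. From the a priori bounds, the translates $\{v(\cdot+T)\}_{T>0}$ are precompact in $\mathcal C^0_{\rm loc}(\mathbb R)$ (using the regularity estimates of Section~\ref{sec:delaunaytypesolutions}), and every subsequential limit is a bounded positive solution of the entire cylindrical equation, hence lies on the known list: the constant cylinder $a_{n,\sigma}$, the bubble $v_{\rm sph}$, and the Delaunay family parametrised by the period. To exclude convergence to several distinct orbits I would introduce the Pohozaev (Hamiltonian-type) invariant associated to the integral formulation --- following Delatorre {\it et al.} \cite{MR3694655} --- which is independent of the radial scale, compute its limiting value as $t\to+\infty$, and observe that this value rules out both the constant (the exact cylindrical solution) and the bubble (nonsingular); by the strict monotonicity of the period--invariant relation for the Delaunay family (a two-parameter family, cf. \cite{MR982351}) this pins down a single period $L$. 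Finally, the nondegeneracy of the Delaunay solution --- inherited for large periods from that of $v_{\rm sph}$ in Lemma~\ref{lm:nondegeneracyEF}, its only bounded kernel elements being translations, cf. the Fredholm argument in the proof of Lemma~\ref{lm:estimatesdelaunay} --- allows a standard attractivity argument near $v_{(0,L)}$ to upgrade ``the $\omega$-limit set is contained in $\{v_{(0,L)}(\cdot-T)\}$'' to the genuine asymptotics $v(t)=v_{(0,L)}(t)(1+\mathrm o(1))$, equivalently $u(x)=u_{(0,L)}(x)(1+\mathrm o(1))$ as $|x|\to0$.

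The hard part will be this last convergence step carried out \emph{without} an underlying ODE: one must manufacture a monotone (or exactly conserved) Pohozaev-type quantity for the integro-differential operator, prove its continuity along the translation flow, and combine it with the nondegeneracy of Lemma~\ref{lm:nondegeneracyEF} to rule out slow drift between translates of $v_{(0,L)}$. A secondary technical point is making the moving-plane estimates work on the punctured ball for the composite operator $(-\Delta)^\sigma=(-\Delta)^s\circ(-\Delta)^m$, where the polyharmonic factor and the nonlocal tail must be controlled simultaneously; here it is convenient to work throughout in the dual form \eqref{ourintegralequationdual1ptlocal} given by Proposition~\ref{lm:dualrepresentation}, so that all comparison arguments apply to an operator with a positive kernel.
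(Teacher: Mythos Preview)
This proposition is not proved in the paper at all: it is a lettered result (Proposition~A), introduced by the sentence ``First, recall the local asymptotic classification result from \cite{arxiv:1901.01678},'' and is quoted from Jin--Xiong. The paper's own contribution in this section (Lemma~\ref{lm:estimatesdelaunay}, Corollaries~\ref{cor:estimatesdelaunay} and~\ref{cor:estimatesdelaunaySph}, Proposition~\ref{prop:localbehavior}) is to \emph{refine} this cited statement by identifying the Delaunay profile with a bubble tower plus an exponentially small remainder, not to reprove it.

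Your sketch for part~(i) is fine and non-circular: Corollary~\ref{cor:estimatesdelaunay} is established independently of Proposition~\ref{prop:asymptotics}, so the periodic $\bar v_{(0,L_j)}$ it produces is a legitimate realisation of $v_{(0,L)}$, and Lemma~\ref{lm:refinedestimatesEF} gives the neck-size bound. For part~(ii), however, your identification step has a genuine gap. You invoke a Pohozaev-type invariant together with ``the strict monotonicity of the period--invariant relation for the Delaunay family (a two-parameter family, cf.~\cite{MR982351}),'' but for general $\sigma>1$ the Delaunay family is \emph{not} classified --- the paper says so explicitly just above the proposition --- and \cite{MR982351} covers only $\sigma=1$. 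Without that classification (or at least a proof that the invariant separates orbits), you cannot pass from ``every subsequential limit is an entire periodic solution'' to ``there is a single period $L$''; the nondegeneracy of Lemma~\ref{lm:nondegeneracyEF} only upgrades convergence once the limit orbit is already fixed. Jin--Xiong's actual argument works entirely in the dual integral formulation and reaches the asymptotics without a Hamiltonian/ODE picture, so if you want to reconstruct a proof the right model is \cite{arxiv:1901.01678} rather than the ODE-flavoured scheme of \cite{MR3694655,MR982351}.
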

	
	In addition, writing Lemma~\ref{lm:refinedestimatesEF} into using Emden--Fowler change of variables, we can reformulate it as an improvement for the result above.         
	\begin{corollary}\label{cor:estimatesdelaunaySph}
		Let $\sigma \in(1,+\infty)$ and $n>2\sigma$.
		For any $L\gg1$ sufficiently large,
		there exist a sequence of periods $(L_j)\in \ell^\infty(\mathbb R_+)$ such that $\phi_{(0,L_j)}\in H^\sigma_L(\mathbb R^n\setminus\{0\})$ and a unique positive even solution $\bar{u}_{(0,L_j)}\in H_L^\sigma(\mathbb R^n\setminus\{0\})$ to \eqref{ourintegralequationdual1pt} satisfying
		\begin{equation*}
			\bar{u}_{(0,L_j)}(x)= \widehat{U}^+_{(0,L_j)}(x)+\phi_{(0,L_j)}(x),
		\end{equation*}
		and 
		\begin{equation*}
			\|\phi_{(0,L_j)}\|_{H_L^\sigma(\mathbb R^n\setminus\{0\})}\rightarrow 0 \quad {\rm as} \quad L\rightarrow+\infty,
		\end{equation*}
		where $\widehat{U}^+_{(0,L_j)}\in \mathcal{C}^{2\sigma}(\mathbb R^n\setminus\{0\})$ is the standard bubble tower solution given by \eqref{standardbubbletower}.                    
		Moreover, we have the following H\"older estimate
		\begin{equation}\label{holderestimatespherical}
			\|\phi_{(0,L_j)}\|_{\mathcal{C}^{2\sigma}(\mathbb R^n\setminus\{0\})} \lesssim e^{-\gamma_{\sigma}L(1+\xi)}
		\end{equation}
		for some $\alpha \in(0,1)$ and $\xi>0$ independent of $L\gg1$ large.
	\end{corollary}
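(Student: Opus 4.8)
The plan is to obtain this corollary as the spherical transcription of Corollary~\ref{cor:estimatesdelaunay} (equivalently, of Lemma~\ref{lm:estimatesdelaunay}): one transports the cylindrical construction back to the punctured Euclidean space through the inverse Emden--Fowler isomorphism $(\mathfrak{F}_\sigma)^{-1}$ of Definition~\ref{def:fowlercoordinates}, and then tracks how the relevant norms transform. Concretely, set $\bar u_{(0,L_j)} := (\mathfrak{F}_\sigma)^{-1}(\bar v_{(0,L_j)})$, $\widehat U^+_{(0,L_j)} := (\mathfrak{F}_\sigma)^{-1}(\widehat V^+_{(0,L_j)})$ and $\phi_{(0,L_j)} := (\mathfrak{F}_\sigma)^{-1}(\psi_{(0,L_j)})$, with the right-hand data furnished by Corollary~\ref{cor:estimatesdelaunay}. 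Since $(\mathfrak{F}_\sigma)^{-1}(v)=|x|^{\gamma_\sigma}v(-\ln|x|,\theta)$ is linear, the decomposition $\bar v_{(0,L_j)}=\widehat V^+_{(0,L_j)}+\psi_{(0,L_j)}$ transfers verbatim to $\bar u_{(0,L_j)}=\widehat U^+_{(0,L_j)}+\phi_{(0,L_j)}$.

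First I would check that these are the objects claimed. By the Remark identifying $(-\Delta)_{\mathrm{cyl}}^{-\sigma}$ with $P_{2\sigma}(g_{\mathrm{cyl}})$ together with the conformal relation $g_{\mathrm{cyl}}=|x|^{-2}\delta$ on $\mathbb R^n\setminus\{0\}$, conformal covariance of the GJMS operator makes $(\mathfrak{F}_\sigma)^{-1}$ carry solutions of \eqref{ourcylindricalequation1dual} to solutions of \eqref{ourintegralequationdual1pt}; hence $\bar u_{(0,L_j)}$ solves \eqref{ourintegralequationdual1pt}. Positivity and radial symmetry in $x$ are immediate from $|x|^{\gamma_\sigma}>0$ and from $\bar v_{(0,L_j)}$ depending only on $t$. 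Under $t=-\ln|x|$ the arithmetic sequence $L_j=(1+2j)L$ becomes the geometric sequence of scales $\lambda_j=e^{-(1+2j)L}$, and each cylinder bubble $\cosh(t-L_j-L)^{\gamma_\sigma}$ is mapped to the Euclidean bubble $\bigl(\lambda_j/(\lambda_j^2+|x|^2)\bigr)^{\gamma_\sigma}$, so that $\widehat U^+_{(0,L_j)}$ is precisely the half-bubble-tower of the Definition preceding the statement; keeping only $j\in\mathbb N$ is what forces $\bar u_{(0,L_j)}(x)\to0$ as $|x|\to\infty$, i.e. the decay built into \eqref{ourintegralequationdual1pt}. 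Uniqueness in the stated class follows from uniqueness of $\bar v_{(0,L_j)}$ and bijectivity of $(\mathfrak{F}_\sigma)^{-1}$.

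It then remains to translate the quantitative bounds. Since $\mathfrak{F}_\sigma$ is an isomorphism of H\"older spaces and $(\mathfrak{F}_\sigma)^{-1}$ only multiplies by $|x|^{\gamma_\sigma}=e^{-\gamma_\sigma t}$ and precomposes with $t=-\ln|x|$, differentiating up to order $2\sigma$ in $x$ produces bounded combinations of $t$- and $\theta$-derivatives of $\psi_{(0,L_j)}$ times powers of $|x|$; on a fixed punctured ball this gives $\|\phi_{(0,L_j)}\|_{\mathcal{C}^{2\sigma}}\simeq\|\psi_{(0,L_j)}\|_{\mathcal{C}^{2\sigma}_L}$, while the weight only improves the estimate as $|x|\to0$. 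Feeding in $\|\psi_{(0,L_j)}\|_{\mathcal{C}^{2\sigma}_L(\mathbb R)}\lesssim e^{-\gamma_\sigma L(1+\xi)}$ from Corollary~\ref{cor:estimatesdelaunay} then yields the asserted H\"older estimate, and the same change of variables turns $\|\psi_{(0,L_j)}\|_{H^\sigma_L(\mathbb R)}\to0$ into $\|\phi_{(0,L_j)}\|_{H^\sigma_L(\mathbb R^n\setminus\{0\})}\to0$.

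The only delicate point, and the place I would be most careful, is the behavior near $|x|\to\infty$, i.e. $t\to-\infty$: there the weight $|x|^{\gamma_\sigma}$ in $(\mathfrak{F}_\sigma)^{-1}$ grows, so a naive uniform-in-$t$ bound would degrade. This is handled exactly by the half-tower truncation ($j\in\mathbb N$ only), which localizes all the estimates to the one-sided cylinder $\{t\ge-L\}$, i.e. to a punctured ball, where $|x|^{\gamma_\sigma}\lesssim e^{\gamma_\sigma L}$ is more than absorbed by the gain $e^{-\gamma_\sigma L(1+\xi)}$; alternatively one records the bound directly in the weighted norm $\mathcal{C}_{*,\tau}(\mathbb R^n\setminus\{0\})$ of Section~\ref{sec:notations}, which is in any case the norm used in the gluing of Section~\ref{sec:gluingtechnique}. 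Modulo this bookkeeping the corollary is a direct consequence of Corollary~\ref{cor:estimatesdelaunay}, and the analogous refinement of Proposition~\ref{prop:asymptotics} and Lemma~\ref{lm:refinedestimatesEF} is obtained in the same way.
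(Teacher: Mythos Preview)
Your proposal is correct and follows exactly the approach the paper takes: the corollary is obtained by transporting Corollary~\ref{cor:estimatesdelaunay} (equivalently Lemma~\ref{lm:estimatesdelaunay}) back to $\mathbb{R}^n\setminus\{0\}$ via the inverse Emden--Fowler change of variables $(\mathfrak{F}_\sigma)^{-1}$. The paper gives no detailed argument beyond the one-line remark preceding the statement, so your proposal in fact supplies more detail than the paper does, but the method is identical.
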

	
	Based on the definition of a spherical solution in \eqref{sphericalsolutions} and \eqref{sphericalsolutionsEF}, we introduce the concept of a standard bubble tower solution.
	In addition, in order to have fast decay far from the singularity $(t \rightarrow-\infty)$, we will need only half a bubble tower. 
	This fact motivates the following definition
	
	\begin{definition}
		Let $\sigma \in(1,+\infty)$ and $n>2\sigma$.
		For any $L\gg1$ sufficiently large,
		let us define the following standard bubble tower solution
		\begin{itemize}
			\item [{\rm (i)}] $($Spherical coordinates$)$
			\begin{equation}\label{standardbubbletower}
				\widehat{U}_{(0,L_j)}(x):=\sum_{j\in\mathbb Z} U_{(0,L_j)}(x),
			\end{equation}
			and
			\begin{equation}\label{standardbubbletower+}
				\widehat{U}^+_{(0,L_j)}(x):=\sum_{j\in\mathbb N} U_{(0,L_j)}(x),
			\end{equation}
			where 
			\begin{equation}\label{componentsbubbletowers}
				U_{(0,L_j)}(x)=\left(\frac{\lambda_j}{\lambda_j^2+\left|x\right|^2}\right)^{\gamma_{\sigma}} \quad {\rm with} \quad \lambda_j=e^{-2jL} \quad {\rm for} \quad j\in \mathbb Z.
			\end{equation}    
			\item [{\rm (ii)}] $($Emden--Fowler coordinates$)$ 
			\begin{equation}\label{standardbubbletowerEF}
				\widehat{V}_{(0,L_j)}(t):=\sum_{j\in\mathbb Z}V_{(0,L_j)}(t),
			\end{equation}
			and
			\begin{equation}\label{standardbubbletowerEF+}
				\widehat{V}^+_{(0,L_j)}(t):=\sum_{j\in\mathbb N}V_{(0,L_j)}(t),
			\end{equation}
			where 
			\begin{equation}\label{componentsbubbletowersEF}
				V_{(0,L_j)}(t)=\cosh(t-L_j)^{\gamma_{\sigma}} \quad {\rm with} \quad L_j=2jL \quad {\rm for} \quad j\in \mathbb Z.
			\end{equation} 
		\end{itemize}
		These will be called the standard bubble tower solution.      
	\end{definition}

	As a consequence of Corollary~\ref{cor:estimatesdelaunay}, we will improve the last two results.
	Indeed, we show that near an isolated singularity, solutions are close to some bubble tower solution up to some controlled error.
	
	\begin{proposition}\label{prop:localbehavior}
		Let $\sigma\in(1,+\infty]$ with $n>2\sigma$.
		If $u\in \mathcal{C}^{2\sigma}(B_R^*)$ is a positive smooth singular solution to \eqref{ourintegralequationdual1ptlocal} with $R>0$, then there exists a sequence of periods $(L_j)\in \ell^{\infty}(\mathbb R_+)$ and a blow-up limit solution $\bar{u}_{(0,L_j)}\in \mathcal{C}^{2\sigma}(\mathbb R^n\setminus\{0\})$ to \eqref{ourintegralequationdual1pt} such that
		\begin{equation*}
			u(x)=\bar{u}_{(0,L_j)}(x)(1+\mathrm{o}(1)) \quad {\rm as} \quad |x|\rightarrow0.
		\end{equation*}
		More precisely, one has
		\begin{equation}\label{exactdelaunay}
			\bar{u}_{(0,L_j)}(x)=\widehat{U}_{(0,L_j)}(x)+\phi_{(0,L_j)}(x),
		\end{equation}
		where $\widehat{U}_{(0,L_j)}\in\mathcal{C}^{2\sigma}(\mathbb R^n\setminus\{0\})$ is the standard bubble tower solution in \eqref{standardbubbletower+} and $\phi_{(0,L_j)}\in H^{\sigma}(\mathbb R^n)$ satisfies
		\begin{equation}\label{holderEF}
			\|\phi_{(0,L_j)}\|_{\mathcal{C}^{2\sigma}(\mathbb R^n\setminus\{0\})} \lesssim e^{-\gamma_{\sigma}L(1+\xi)}
		\end{equation}
		for some $\alpha \in(0,1)$ and $\xi>0$ independent of $L\gg1$ large.
	\end{proposition}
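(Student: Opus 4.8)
The plan is to derive Proposition~\ref{prop:localbehavior} by combining the local asymptotic classification of Jin and Xiong recorded in Proposition~\ref{prop:asymptotics}(ii) with the refined bubble--tower description of Delaunay solutions obtained in Corollary~\ref{cor:estimatesdelaunaySph}. Indeed, Proposition~\ref{prop:asymptotics}(ii) already asserts that a positive singular solution on the punctured ball is asymptotic, as $|x|\rightarrow0$, to \emph{some} Delaunay solution with a large period; what remains is to recognize that Delaunay solution as the half bubble tower with controlled error constructed earlier, so that the decomposition \eqref{exactdelaunay} and the exponential bound \eqref{holderEF} are inherited.

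Concretely, I would proceed as follows. First, pass to cylindrical logarithmic coordinates via the Emden--Fowler change of variables \eqref{fowlercoordinates}, writing $v:=\mathfrak{F}_\sigma(u)$; since $u\in\mathcal{C}^{2\sigma}(B_R^*)$ solves \eqref{ourintegralequationdual1ptlocal}, Proposition~\ref{prop:asymptotics}(ii) furnishes a period $L\gg1$ and a bounded positive even $2L$-periodic solution $v_{(0,L)}$ of \eqref{ourcylindricalequation1dual} with $v(t)=v_{(0,L)}(t)(1+\mathrm{o}(1))$ as $t\rightarrow+\infty$, that is $u(x)=u_{(0,L)}(x)(1+\mathrm{o}(1))$ as $|x|\rightarrow0$ with $u_{(0,L)}=(\mathfrak{F}_\sigma)^{-1}(v_{(0,L)})$. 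Second, invoke the uniqueness clause of Lemma~\ref{lm:estimatesdelaunay} (equivalently Corollary~\ref{cor:estimatesdelaunay}): for $L$ large there is only one positive even $2L$-periodic solution of \eqref{ourcylindricalequation1dual}, hence $v_{(0,L)}\equiv\bar v_{(0,L_j)}$ with the bubble centers $L_j$ fixed by $L$ as in \eqref{componentsbubbletowersEF}. Third, transport back through $(\mathfrak{F}_\sigma)^{-1}$ and apply Corollary~\ref{cor:estimatesdelaunaySph}, which supplies $\bar u_{(0,L_j)}=\widehat{U}_{(0,L_j)}+\phi_{(0,L_j)}$ with $\widehat{U}_{(0,L_j)}$ the standard half bubble tower \eqref{standardbubbletower+} and $\|\phi_{(0,L_j)}\|_{\mathcal{C}^{2\sigma}(\mathbb R^n\setminus\{0\})}\lesssim e^{-\gamma_\sigma L(1+\xi)}$ by \eqref{holderestimatespherical} (the membership $\phi_{(0,L_j)}\in H^\sigma(\mathbb R^n)$ being a consequence of this decay). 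Assembling the first and third steps then gives the claimed asymptotics together with \eqref{exactdelaunay}--\eqref{holderEF}.

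The point requiring the most care is the identification in the second step, $v_{(0,L)}\equiv\bar v_{(0,L_j)}$: one must check that the period extracted from the Jin--Xiong blow-up analysis genuinely coincides with the period parametrizing the constructed bubble tower, and that the weighted H\"older norms used in Corollary~\ref{cor:estimatesdelaunaySph} are compatible with the $\mathcal{C}^{2\sigma}(B_R^*)$ regularity hypothesis on $u$ --- this is exactly where the uniqueness of positive even periodic solutions of a given period enters. Beyond that, the argument is a routine dictionary translation between the spherical and cylindrical pictures.
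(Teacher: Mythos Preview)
Your proposal is correct and matches the paper's intended argument: the paper does not give a standalone proof of this proposition but presents it explicitly as ``a consequence of Corollary~\ref{cor:estimatesdelaunay}'' improving ``the last two results'' (i.e.\ Proposition~\ref{prop:asymptotics} and Corollary~\ref{cor:estimatesdelaunaySph}), which is precisely the combination you spell out. Your identification step via the uniqueness clause of Lemma~\ref{lm:estimatesdelaunay} is the right way to link the Jin--Xiong Delaunay limit to the constructed bubble tower, and this is the only nontrivial point left implicit in the paper.
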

	
	\subsection{Balanced configurations}
	Here we introduce a necessary set of compatibility conditions for the configuration parameters. 
	
	\begin{definition}
		Let $\sigma \in(1,+\infty)$, $n>2\sigma$, and $N\geqslant 2$.
		Given $L\gg1$ large enough, we will fix the vector $\boldsymbol{L}=(L_1, \ldots, L_N)\in\mathbb R^N_+$ to be the Delaunay parameters, which also are related to the neck sizes of each Delaunay solution. 
		They will be chosen (large enough) in the proof. They will satisfy the following conditions $|L_i-L| \lesssim 1$ for all $i\in \{1,\dots,N\}$.
		More precisely, they will be related by the vector $\boldsymbol{q}=(q_1,\dots,q_N)\in\mathbb R^N_+$,  which satisfy the following relations
		\begin{equation}\label{balancing0}
			q_i e^{-\gamma_{\sigma}L}=e^{-\gamma_{\sigma}L_i} \quad {\rm for} \quad i\in\{1,\dots,N\}.
		\end{equation}
	\end{definition}
	
	Next, we will give some explanation about the choice of parameters. Given the $N(n+2)$ balancing parameters $(\boldsymbol{q}^b,\boldsymbol{R}^b,\boldsymbol{\hat{a}}_0^{b})$ satisfying the balancing conditions \eqref{balancing1} and \eqref{balancing2}, we first choose $N(n+2)$ initial perturbation parameters $(\boldsymbol{q},\boldsymbol{R},\boldsymbol{\hat{a}}_0)$ which are close to the balancing parameters, {\it i.e.}, \eqref{balancing3} and \eqref{balancing4}.
	
	\begin{definition}\label{def:balancedparameters}
		Let $\sigma \in(1,+\infty)$, $n>2\sigma$, and $N\geqslant 2$.
		For any fixed nonnegative vector $\boldsymbol{q}^b=(q_1^b, \ldots, q_N^b)\in\mathbb R^N_+$, let us define the vector $(\mathbf{a}_0^b,\boldsymbol{R}^b)=(a_0^{i, b}, \dots, a_0^{N, b}, R^{1, b}, \dots R^{N, b})\in\mathbb R^{(n+1)N}$ to be determined by the following balancing conditions:
		\begin{equation}\label{balancing1}\tag{$\mathscr{B}_1$}
			q_i^b=A_2 \sum_{i^{\prime} \neq i} q_{i^{\prime}}^b(R^{i, b} R^{i^{\prime}, b})^{\gamma_{\sigma}}|x_i-x_{i^{\prime}}|^{-2\gamma_{\sigma}} \quad {\rm for} \quad i\in\{1,\dots,N\}.
		\end{equation}
		and
		\begin{equation}\label{balancing2}\tag{$\mathscr{B}_2$}
			\frac{a_0^{i, b}}{(\lambda_0^{i, b})^2}=-\frac{A_3}{A_1} \sum_{i^{\prime} \neq i} \frac{x_{i^{\prime}}-x_i}{|x_{i^{\prime}}-x_i|^{2 \gamma_{\sigma}+2}} \frac{q_{i^{\prime}}^b}{q_i^b}(R^{i, b} R^{i^{\prime}, b})^{\gamma_\sigma} \quad {\rm for} \quad i\in\{1,\dots,N\}
		\end{equation} 
		where $\lambda_0^{i, b}=R^{i, b} e^{-{L_i^b}}$, and the $L_i^b\in\mathbb R_+$ are defined from the $q_i^b\in\mathbb R_+$ by \eqref{balancing0} for each $i\in \{1,\dots,N\}$ and the constants $A_1, A_2>0, A_3<0$ are defined in \eqref{A1}, \eqref{A2}, and \eqref{A3}, respectively.
		We denote by $(\boldsymbol{q}^b,\mathbf{a}_0^b,\boldsymbol{R}^b)\in{\rm Bal}_{\sigma}(\Sigma)$ the set of balanced configurations.
	\end{definition}
	
	\begin{remark}
		We remark that it has been shown in \cite[Remark 3]{MR1712628} that for $\boldsymbol{q}:=(q_1^b, \ldots, q_N^b)\in\mathbb R^N_+$ in the positive octant, there exists a solution $\boldsymbol{R}^b=(R^{1, b},\dots, R^{N, b})$ to equation \eqref{balancing1}. 
		Once this is chosen, then we can use equation \eqref{balancing2} to determine $\mathbf{a}_0^b=(a_0^{1, b},\dots,a_0^{N, b})\in(\mathbb R^n_+)^N$.
		In other words, the set of balanced configurations is non-empty ${\rm Bal}_{\sigma}(\Sigma)\neq\varnothing$ for all $\sigma\in\mathbb R_+$.
	\end{remark}
	
	Although the meaning of these compatibility conditions will become apparent in the following sections, we have just seen that they are analogous to those of \cite{MR1425579} for the local case. The idea is that perturbations at the base level should be very close to those for a single bubble. This fact also shows, in particular, that even though our problem is nonlocal, very near the singularity, it presents a local behavior due to the strong influence of the underlying geometry.
	However, for the rest of the perturbation parameters, we must solve an infinite-dimensional system of equations.
	
	The last discussion motivates the definition below
	\begin{definition}\label{def:compactibleparameters}
		Let $\sigma \in(1,+\infty)$, $n>2\sigma$ and $N\geqslant 2$.
		We define the so-called configuration map $\Upsilon_{\rm conf}:\mathbb R^{(n+1)N}\rightarrow \mathbb R^{(n+2)N}$ which associates compatible moduli space parameters $(\boldsymbol{x},\boldsymbol{L})$ with configuration parameters $(\boldsymbol{q},\boldsymbol{a_0},\boldsymbol{R})$.
		We say that a set moduli space parameters $(\boldsymbol{x},\boldsymbol{L})\in\mathbb R^{(n+1)N}$ is compatible if its associated set of configuration parameters $(\boldsymbol{q},\boldsymbol{a_0},\boldsymbol{R})\in{\rm Bal}_\sigma(\Sigma)$ is balanced.
	\end{definition}
	
	\subsection{Admissible perturbation parameters}
	We also would like to introduce some perturbation parameters $R \in \mathbb{R}, a \in \mathbb{R}^n$, since each standard bubble has $n+1$ free parameters corresponding to scaling and translations, which is done for each bubble in the bubble tower independently. Thus, we will have an infinite-dimensional set of perturbations.
	
	\begin{definition}
		Let $\sigma \in(1,+\infty)$, $n>2\sigma$ and $N\geqslant 2$.
		For any $L\gg1$ sufficiently large and $\boldsymbol{L}=(L^1, \ldots, L^N)\in\ell^{\infty}(\mathbb R^N_+)$ and $\boldsymbol{R}=(R^1, \ldots, R^N)\in\ell^{\infty}(\mathbb R^N_+)$ such that \eqref{balancing0} holds,
		let us define  the full set of perturbation parameters $(\boldsymbol{a}_j,\boldsymbol{\lambda}_j)=(a_j^1,\dots,a_j^N,\lambda_j^1,\dots,\lambda_j^N)\in\ell^{\infty}(\mathbb R^{(n+1)N})$,
		where 
		\begin{equation*}
			\lambda_j^i=R_j^i e^{-{(1+2 j) L_i}} \quad  {\rm for} \quad i\in\{1,\dots,N\} \quad {\rm and} \quad j\in\mathbb Z.
		\end{equation*}
	\end{definition}
	
	We introduce the perturbation parameters we will use in the gluing technique
	\begin{definition}
		Let $\sigma \in(1,+\infty)$, $n>2\sigma$ and $N\geqslant 2$.
		Let $\boldsymbol{R}=(R^1, \dots, R^N)\in\mathbb R^N_+$ and $\boldsymbol{q}\in\mathbb R^N_+=(q_1,\dots, q_N)$ be such that 
		\begin{equation}\label{balancing3}
			|R^i-R^{i, b}| \lesssim 1 \quad {\rm and} \quad |q_i-q_i^b| \lesssim 1 \quad {\rm for} \quad i\in\{1,\dots,N\}.
		\end{equation}
		Also, we let $\boldsymbol{\lambda}^0_0=(\lambda_0^{1, 0}, \dots, \lambda_0^{N, 0})\in\mathbb R^N_+$ and $\boldsymbol{\hat{a}}_0^i=({\hat{a}}_0^1,\dots,{\hat{a}}_0^N)\in (\mathbb R^n)^N$
		be respectively given by
		\begin{equation*}
			\lambda_0^{i, 0}=R^i e^{-\frac{(1+2 j)}{2}L_i} \quad {\rm for} \quad i\in\{1,\dots,N\}
		\end{equation*}
		and
		\begin{equation*}
			\frac{a_0^{i, 0}}{(\lambda_0^{i, 0})^2}=\hat{a}_0^i \quad {\rm for} \quad i\in\{1,\dots,N\}
		\end{equation*}
		such that
		\begin{equation}\label{balancing4}
			|\hat{a}_0^i-\hat{a}_0^{i, b}| \lesssim 1,
		\end{equation}
		where
		\begin{equation*}
			\hat{a}_0^{i, b}=\frac{a_0^{i, b}}{(\lambda_0^{i, b})^2}.
		\end{equation*}
		For all $i\in\{1,\dots,N\}$ and $j\in\mathbb N$, let us  define the sequence of perturbation parameters $(\mathbf{a}_j,\boldsymbol{\lambda}_j)=(a^{1}, \dots, a^{N}, R^{1}, \dots, R^{N})\in\ell^\infty(\mathbb R^{(n+1)N})$ by
		\begin{equation}\label{balancing5}
			R_j^i=R^i(1+r_j^i) \quad {\rm and} \quad \frac{a_j^i}{(\lambda_j^i)^2}=\bar{a}_j^i=\hat{a}_0^i+\tilde{a}_j^i,
		\end{equation}
		where $(\tilde{\boldsymbol{a}}_j,\boldsymbol{r}_j)=(\tilde{a}^{i}, \dots, \tilde{a}^{N}, r^{1}, \dots r^{N})\in\ell^\infty(\mathbb R^{(n+1)N})$ satisfy
		\begin{equation}\label{balancing6}
			|r_j^i| \lesssim e^{-\tau t_j^i} \quad {\rm and} \quad |\tilde{a}_j^i| \lesssim e^{-\tau t_j^i} \quad {\rm for} \quad i\in\{1,\dots,N\}
		\end{equation}
		for some $\tau>0$, where $t_j^i=(1+2j) L_i$.
	\end{definition}
	
	\begin{definition}\label{def:admissibleparameters}
		Let $\sigma \in(1,+\infty)$, $n>2\sigma$ and $N\geqslant 2$.
		We define the so-called perturbation map $\Upsilon_{\rm per}:\mathbb R^{(n+2)N}\rightarrow \ell^\infty_{\tau}(\mathbb R^{(n+1)N})$ such that it associates balanced configurations with a sequence of admissible perturbations.
		A sequence of perturbation parameters $(\boldsymbol{a}_j,\boldsymbol{\lambda}_j)\in\ell^{\infty}(\mathbb R^{(n+1)N})$ or $(\boldsymbol{a}_j,\boldsymbol{r}_j)\in\ell^{\infty}(\mathbb R^{(n+1)N})$ is said to be admissible if the parameters satisfy 
		\begin{enumerate}
			\item[\namedlabel{itm:A0}{($\mathscr{A}_0$)}] For $j=0$, the configuration parameters $\Upsilon_{\rm per}^{-1}(\boldsymbol{a}_j,\boldsymbol{\lambda}_j)=(\mathbf q,\boldsymbol{a}_0,\boldsymbol{R})\in \mathbb R^{(n+2)N}$ is a balanced, that is, $(\mathbf q,\boldsymbol{a}_0,\boldsymbol{R})\in{\rm Bal}_{\sigma}(\Sigma)$; 
			\item[\namedlabel{itm:A1}{($\mathscr{A}_1$)}]
			For $j\geqslant 1$, the parameters $(\boldsymbol{a}_j,\boldsymbol{\lambda}_j)\in \ell^\infty(\mathbb R^{(N+1)n})$ satisfy the set of relations \eqref{balancing0},\eqref{balancing3},  \eqref{balancing4}, \eqref{balancing5}, and \eqref{balancing6}.
		\end{enumerate}  
		We denote by $(\boldsymbol{a}_j,\boldsymbol{\lambda}_j)\in{\rm Adm}_{\sigma}(\Sigma)$ the set of admissible configurations.
		Notice under \eqref{balancing5}, one can work indiscriminately with either parameter.
		In this fashion, we call $(\boldsymbol{0},\boldsymbol{1})\in\ell^{\infty}(\mathbb R^{(n+1)N})$ or $(\boldsymbol{0},\boldsymbol{0})\in\ell^{\infty}(\mathbb R^{(n+1)N})$ the trivial configurations.
	\end{definition}
	
	\subsection{Generalized Delaunay solutions}
	We now define a family of approximate solutions to the problem using the Delaunay solutions from the previous section. 
	From now on, we denote by $\chi:\mathbb R\rightarrow\mathbb R$ the cut-off function such that
	\begin{align*}
		\chi(x)=
		\begin{cases}
			1, & {\rm if} \ 0<|x| \leqslant \frac{1}{2}\\
			0, & {\rm if} \ \frac{1}{2}\leqslant|x| \leqslant 1\\
			\chi(x), & {\rm if} \ |x| \geqslant 1.
		\end{cases}
	\end{align*}
	
	First, one can always assume that all the balls $B_2(x_i)$ are disjoint since we may dilate the problem by some factor $\kappa>0$ that will change the set $\Sigma$ into $\kappa \Sigma$ and a function $u$ defined in $\mathbb{R}^n \setminus \Sigma$ into $\kappa^{-\gamma_{\sigma}} u({x}{\kappa}^{-1})$ defined in $\mathbb{R}^n \setminus \kappa \Sigma$.
	
	\begin{definition}
		Let $\sigma \in(1,+\infty)$ and $n>2\sigma$.
		For any $L\gg1$ sufficiently large and $\boldsymbol{L}=(L^1, \ldots, L^N)\in\ell^{\infty}(\mathbb R^N_+)$ and $\boldsymbol{R}=(R^1, \ldots, R^N)\in\ell^{\infty}(\mathbb R^N_+)$ such that \eqref{balancing0} holds let          
		$(\boldsymbol{a}_j,\boldsymbol{\lambda}_j)\in\ell^{\infty}(\mathbb R^{(n+1)N})$ be its associated perturbation parameters. 
		Fix                
		$x_i\in\Sigma$ for $i\in\{1,\dots,N\}$,
		let us define the following generalized bubble tower solution
		\begin{itemize}
			\item [{\rm (i)}] $($Spherical coordinates$)$
			\begin{equation}\label{standardbubbletowerdeformed}
				\widehat{U}_{(x_i,\boldsymbol{L},\boldsymbol{a}_j,\boldsymbol{\lambda}_j)}(x):=\sum_{j\in\mathbb Z} U_{(x_i,L_j^i,a_j^i,\lambda_j^i)}(x),
			\end{equation}
			and
			\begin{equation}            \label{standardbubbletower+deformed}
				\widehat{U}^+_{(x_i,\boldsymbol{L},\boldsymbol{a}_j,\boldsymbol{\lambda}_j)}(x):=\sum_{j\in\mathbb N} U_{(x_i,L_j^i,a_j^i,\lambda_j^i)}(x),
			\end{equation}
			where 
			\begin{equation}\label{componentsbubbletowersdeformed}
				U_{(x_i,L_j^i,a_j^i,\lambda_j^i)}(x)=\left(\frac{\lambda^i_j}{\lambda_j^2+|x-a^i_j-x_i|^2}\right)^{\gamma_{\sigma}}
			\end{equation}   
			with
			\begin{equation*}
				\lambda^i_j=R_j^ie^{-2jL^i_j} \quad {\rm for} \quad j\in \mathbb Z.
			\end{equation*}
			and
			\begin{equation*}
				L^i_j=L_i-jL_i+\ln R_j^i \quad {\rm for} \quad j\in \mathbb Z.
			\end{equation*}
			\item [{\rm (ii)}] $($Emden--Fowler coordinates$)$ 
			\begin{equation}\label{standardbubbletowerEFdeformed}
				\widehat{V}_{(x_i,\boldsymbol{L},\boldsymbol{a}_j,\boldsymbol{\lambda}_j)}(t):=\sum_{j\in\mathbb Z}V_{(x_i,L_j^i,a_j^i,\lambda_j^i)}(t),
			\end{equation}
			and
			\begin{equation}\label{standardbubbletowerEF+deformed}
				\widehat{V}_{(x_i,\boldsymbol{L},\boldsymbol{a}_j,\boldsymbol{\lambda}_j)}(t):=\sum_{j\in\mathbb N}V_{(x_i,L_j^i,a_j^i,\lambda_j^i)}(t),
			\end{equation}
			where 
			\begin{equation}\label{componentsbubbletowersEFdeformed}
				V_{(x_i,L_j^i,a_j^i,\lambda_j^i)}(t)=\cosh(-\ln|x-x_i-a_j^i|- L^i_j)^{\gamma_{\sigma}}.
			\end{equation} 		
		\end{itemize}
		These will be called the general $($half$)$ bubble tower solutions.      
	\end{definition}
	
	We also have the most basic definition of this section.
	We observe that although in the definition the solution is indexed by $(\boldsymbol{x},\boldsymbol{L},\boldsymbol{a}_j,\boldsymbol{\lambda}_j)$, one should recall that the configuration map from Definition~\ref{def:compactibleparameters} relates them  and by the perturbation map from Definition~\ref{def:compactibleparameters}, namely $(\boldsymbol{x},\boldsymbol{L})=(\boldsymbol{a}_j(\boldsymbol{q},\boldsymbol{a}_0,\boldsymbol{R}),\boldsymbol{\lambda}_j(\boldsymbol{q},\boldsymbol{a}_0,\boldsymbol{R}))$ and $(\boldsymbol{q},\boldsymbol{a}_0,\boldsymbol{R})=(\boldsymbol{q}(\boldsymbol{x},\boldsymbol{L}),\boldsymbol{a}_0(\boldsymbol{x},\boldsymbol{L}),\boldsymbol{R}(\boldsymbol{x},\boldsymbol{L}))$.
	
	\begin{definition}\label{def:approximatesolution}
		Let $\sigma \in(1,+\infty)$, $n>2\sigma$, and $N\geqslant 2$.
		For any $L\gg1$ sufficiently large and $\boldsymbol{L}=(L^1, \ldots, L^N)\in\ell^{\infty}(\mathbb R^N_+)$ and $\boldsymbol{R}=(R^1, \ldots, R^N)\in\ell^{\infty}(\mathbb R^N_+)$ such that \eqref{balancing0} holds let          
		$(\boldsymbol{a}_j,\boldsymbol{\lambda}_j)\in\ell^{\infty}(\mathbb R^{(n+1)N})$ be its associated perturbation parameters. 
		We define its associated solution $\bar{u}_{(\boldsymbol{x},\boldsymbol{L},\boldsymbol{a}_j,\boldsymbol{\lambda})}\in \mathcal{C}^{\infty}(\mathbb R^n\setminus \Sigma)$ as
		\begin{equation}\label{approximatesolution}
			\bar{u}_{(\boldsymbol{x},\boldsymbol{L},\boldsymbol{a}_j,\boldsymbol{\lambda}_j)}(x)=  \sum_{i=1}^N\bar{u}_{(x_i,\boldsymbol{L},\boldsymbol{a}_j,\boldsymbol{\lambda}_j)}(x).
		\end{equation}
		Here
		\begin{equation}\label{approximatesolutionsummands}
			\bar{u}_{(x_i,\boldsymbol{L},\boldsymbol{a}_j,\boldsymbol{\lambda}_j)}(x)=:
			\widehat{U}^+_{(x_i,\boldsymbol{L},\boldsymbol{a}_j,\boldsymbol{\lambda}_j)}(x)+\chi_i(x) \phi_{(x_i,\boldsymbol{L},\boldsymbol{a}_j,\boldsymbol{\lambda}_j)}(x),
		\end{equation}
		where $\widehat{U}^+_{(x_i,\boldsymbol{L},\boldsymbol{a}_j,\boldsymbol{\lambda}_j)}\in\mathcal{C}^{2\sigma}(\mathbb R^n\setminus\Sigma)$ is the generalized bubble tower solution given by \eqref{standardbubbletower+deformed} and
		\begin{equation}\label{errorfunctionEF}
			\phi_{(x_i,\boldsymbol{L},\boldsymbol{a}_j,\boldsymbol{\lambda}_j)}(x)=\phi_{(\boldsymbol{L},\boldsymbol{a}_j,\boldsymbol{\lambda}_j)}(x-x_i) \quad {\rm and} \quad \chi_i(x)=\chi(x-x_i)\quad {\rm for \ all} \quad i\in\{1,\dots,N\}
		\end{equation}
		with $\phi_{(\boldsymbol{L},\boldsymbol{a}_j,\boldsymbol{\lambda}_j)}$ the error function from Lemma~\ref{lm:estimatesdelaunay}.    
		We say that $\bar{u}_{(\boldsymbol{x},\boldsymbol{L},\boldsymbol{a}_j,\boldsymbol{\lambda}_j)}\in \mathcal{C}^{\infty}(\mathbb R^n\setminus \Sigma)$ is an approximate solution to \eqref{ourintegralequation}, denote by $\bar{u}_{(\boldsymbol{x},\boldsymbol{L},\boldsymbol{a}_j,\boldsymbol{\lambda}_j)}\in {\rm Apx}_{\sigma}(\Sigma)$, whenever $(\boldsymbol{a}_j,\boldsymbol{\lambda}_j)\in{\rm Adm}_{\sigma}(\Sigma)$. 
		We then define the so-called perturbation map $\Upsilon_{\rm sol}: \ell^\infty_{\tau}(\mathbb R^{(n+1)N})\rightarrow \mathcal{C}^{\infty}(\mathbb R^n\setminus \Sigma)$ such that it associates balanced configurations with sequences of admissible perturbations.
	\end{definition}
	
	\subsection{Normalized approximate kernels}          
	In this subsection, we will use the aforementioned parameters to define a family of projections on the (normalized) approximate kernels. 
	At least for low Fourier eigenmodes, this family is entirely constructed by varying the parameters in the approximate solution.
	
	\begin{definition}\label{def:kernels}  
		Let $\sigma \in(1,+\infty)$, $n>2\sigma$ and $N\geqslant 2$.
		Assume that $(\boldsymbol{a}_j,\boldsymbol{\lambda}_j)\in{\rm Adm}_{\sigma}(\Sigma)$ is an admissible configuration as in Definition~\ref{def:balancedparameters} with $\bar{u}_{(\boldsymbol{x},\boldsymbol{L},\boldsymbol{a}_j,\boldsymbol{\lambda}_j)}\in{\rm Apx}_{\sigma}(\Sigma)$ their associated approximate solution as in Definition~\ref{def:approximatesolution}.  
		\begin{enumerate}
			\item[{\rm (a)}]  Let us introduce some notation of normalized approximate kernels.
			\begin{itemize}
				\item[{\rm (i)}] If $\ell=0$, we set 
				\begin{equation*}
					Z_{j, 0}^i(\boldsymbol{a}_j,\boldsymbol{\lambda}_j)=\partial_{r_j^i} U_{(x_i,L^i_j,\lambda_j^i,a_j^i)};
				\end{equation*}
				for the zero-frequency Fourier eigenmodes.
				\item[{\rm (ii)}] If $\ell\in\{1,\dots,n\}$, we set
				\begin{equation*}
					Z_{j, \ell}^i(\boldsymbol{a}_j,\boldsymbol{\lambda}_j)=\lambda_j^i \partial_{a_{j, \ell}^i} U_{(x_i,L^i_j,\lambda_j^i,a_j^i)}=-\lambda_j^i \partial_{x_\ell} U_{(x_i,L^i_j,\lambda_j^i,a_j^i)}.
				\end{equation*}
				for the low-frequency Fourier eigenmodes.
			\end{itemize}
			We denote by $\{Z_{j, \ell}^i(\boldsymbol{a}_j,\boldsymbol{\lambda}_j)\}_{(i,j,\ell)\in\mathcal{I}_\infty}\subset \mathcal{C}^0(\mathbb R^n\setminus\Sigma)$ the family of normalized approximate kernels.
			\item[{\rm (b)}] Let us introduce some notation of normalized approximate cokernels.
			\begin{itemize}
				\item[{\rm (i)}] If $\ell=0$, we set 
				\begin{equation*}
					\overline{Z}_{j, 0}^i(\boldsymbol{a}_j,\boldsymbol{\lambda}_j)=f^{\prime}_\sigma(U_{(x_i,L^i_j,\lambda_j^i,a_j^i)})Z_{j, 0}^i(\boldsymbol{a}_j,\boldsymbol{\lambda}_j);
				\end{equation*}
				for the zero-frequency Fourier eigenmodes.
				\item[{\rm (ii)}] If $\ell\in\{1,\dots,n\}$, we set
				\begin{equation*}
					\overline{Z}_{j, \ell}^i(\boldsymbol{a}_j,\boldsymbol{\lambda}_j)=f^{\prime}_\sigma(U_{(x_i,L^i_j,\lambda_j^i,a_j^i)})Z_{j, \ell}^i(\boldsymbol{a}_j,\boldsymbol{\lambda}_j)
				\end{equation*}
				for the low-frequency Fourier eigenmodes.
			\end{itemize}
			We denote by $\{\overline{Z}_{j, \ell}^i(\boldsymbol{a}_j,\boldsymbol{\lambda}_j)\}_{(i,j,\ell)\in\mathcal{I}_\infty}\subset \mathcal{C}^0(\mathbb R^n\setminus\Sigma)$ the family of normalized approximate cokernels.
		\end{enumerate}        
	\end{definition}
	
	These normalized kernels satisfy some orthogonality conditions, which will be important in applying a finite-dimensional reduction.         
	\begin{lemma}\label{lm:orthogonalityconditions}
		Let $\sigma \in(1,+\infty)$, $n>2\sigma$ and $N\geqslant 2$.
		Assume that $(\boldsymbol{a}_j,\boldsymbol{\lambda}_j)\in{\rm Adm}_{\sigma}(\Sigma)$ is an admissible configuration as in Definition~\ref{def:balancedparameters} with $\bar{u}_{(\boldsymbol{x},\boldsymbol{L},\boldsymbol{a}_j,\boldsymbol{\lambda}_j)}\in{\rm Apx}_{\sigma}(\Sigma)$ their associated approximate solution as in Definition~\ref{def:approximatesolution}.  
		Then, one has
		\begin{itemize}
			\item[{\rm (i)}] If $\ell\in\{1,\dots,n\}$, then 
			\begin{align}\label{orthogonalitycond1}
				\int_{\mathbb{R}^n}\overline{Z}_{j, \ell}^i(\boldsymbol{a}_j,\boldsymbol{\lambda}_j)\overline{Z}_{j^{\prime}, \ell^{\prime}}^i(\boldsymbol{a}_j,\boldsymbol{\lambda}_j) \ud x=\frac{4(n-2 \sigma)^2}{n}\left(\delta_{\ell, \ell^{\prime}}+\mathrm{o}(1)\right) e^{-(\gamma_{\sigma}+1)|t_j^i-t_{j^{\prime}}^i|},
			\end{align} 
			where $\delta_{\ell, \ell^{\prime}}$ is Kronecker's delta;
			\item[{\rm (ii)}] If $\ell=0$, then 
			\begin{align}\label{orthogonalitycond2}
				\int_{\mathbb{R}^n}\overline{Z}_{j, 0}^i (\boldsymbol{a}_j,\boldsymbol{\lambda}_j)Z_{j^{\prime}, 0}^i(\boldsymbol{a}_j,\boldsymbol{\lambda}_j) \ud x=C_0(1+\mathrm{o}(1)) e^{-\gamma_{\sigma}|t_j^i-t_{j^{\prime}}^i|}
			\end{align}
			for some $C_0>0$. 
		\end{itemize}
	\end{lemma}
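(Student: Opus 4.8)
The plan is to evaluate each pairing by substituting the explicit form of the bubbles $U_{(x_i,L^i_j,\lambda_j^i,a_j^i)}$ and their Jacobi fields, and by exploiting the dilation and translation invariance of the blow-up limit equation \eqref{ourintegralequationdual1pt}. After translating $x_i$ to the origin and rescaling $x=\lambda_j^i y$, the problem reduces to integrals over $\mathbb R^n$ built from the standard bubble $W(y)=(1+|y|^2)^{-\gamma_\sigma}$, its first-mode Jacobi fields $Z_\ell=-\partial_{y_\ell}W$ (for $\ell\in\{1,\dots,n\}$), and its radial Jacobi field $Z_0=\pm(\gamma_\sigma W+y\cdot\nabla W)$ -- precisely the kernel functions isolated in Lemma~\ref{lm:nondegeneracy}. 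I would split the argument into the diagonal case $j=j'$ and the off-diagonal case $j\ne j'$, and reassemble at the end over all indices $(i,j,\ell),(i,j',\ell')\in\mathcal{I}_\infty$ sharing the first component $i$.

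On the diagonal $j=j'$ the exponential factors equal $1$, and the admissible perturbations $(a_j^i,\lambda_j^i)$ -- which after rescaling amount only to an $\mathcal{O}(e^{-t_j^i})$ centre shift and a scale factor $1+\mathcal{O}(e^{-\tau t_j^i})$, by \eqref{balancing6} -- affect the model integrals through a multiplicative $(1+\mathrm{o}(1))$ as $L\to+\infty$. One is then left with $\int_{\mathbb R^n}f'_\sigma(W)^2\,Z_\ell Z_{\ell'}\,dy$ and $\int_{\mathbb R^n}f'_\sigma(W)^2\,Z_0^2\,dy$. Since $W$ is radial, $Z_\ell$ is odd in $y_\ell$ and $Z_0$ is radial, the cross terms ($\ell\ne\ell'$) vanish by parity while the diagonal ones coincide by rotational invariance; the surviving integral equals $\tfrac1n\int_{\mathbb R^n}f'_\sigma(W)^2|\nabla W|^2\,dy$, the factor $\tfrac1n$ coming from the angular average of $y_\ell^2$ and the factor $(n-2\sigma)^2=4\gamma_\sigma^2$ from the leading coefficient of $\nabla W$, which after evaluating this rational integral gives the constant $\tfrac{4(n-2\sigma)^2}{n}$ of part (i); the parallel computation produces the positive constant $C_0=\int_{\mathbb R^n}f'_\sigma(W)^2 Z_0^2\,dy$ of part (ii).

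In the off-diagonal case I may assume $j>j'$, so $\lambda_j^i\ll\lambda_{j'}^i$ and $t_j^i-t_{j'}^i=2(j-j')L_i\gg1$. As far as the $j$-factor is concerned the integrand is concentrated in $\{|x-x_i-a_j^i|\lesssim\lambda_j^i\}$, and on that region the $j'$-factor is essentially frozen at its value near $x_i+a_{j'}^i$. It is cleanest to pass to Emden--Fowler coordinates \eqref{fowlercoordinates}: the $j$-profile and its Jacobi fields are then centred at $t=t_j^i$ and the $j'$-ones at $t=t_{j'}^i$, with the radial Jacobi field decaying like $e^{-\gamma_\sigma|t-t_j^i|}$ and the translation Jacobi fields like $e^{-(\gamma_\sigma+1)|t-t_j^i|}$ away from their centre -- the indicial roots $\pm\gamma_\sigma$ and $\pm(\gamma_\sigma+1)$ of the linearised cylindrical operator. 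The overlap across the gap is governed by the slower of the two tails, so a direct estimate -- equivalently an application of the bubble-tower interaction estimates of Appendix~\ref{sec:bubbetowerinteractionestimates} -- yields the factor $e^{-(\gamma_\sigma+1)|t_j^i-t_{j'}^i|}$ for the pairing with $\ell,\ell'\in\{1,\dots,n\}$ and $e^{-\gamma_\sigma|t_j^i-t_{j'}^i|}$ for the pairing with $\ell=0$, with the same leading constants as on the diagonal up to $(1+\mathrm{o}(1))$ and with the $\ell\ne\ell'$ terms again of lower order by parity.

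I expect the off-diagonal estimate to be the main obstacle: one must track the two distinct decay rates carefully -- in particular justify why the translation modes gain the extra unit ($\gamma_\sigma+1$ rather than $\gamma_\sigma$) and why the mixed-mode overlaps are genuinely $\mathrm{o}(1)$ relative to the main rate -- and do so with constants uniform in $L\gg1$ while absorbing the $\mathcal{O}(e^{-\tau t_j^i})$-small perturbations $(a_j^i,\lambda_j^i)$ into the errors. The diagonal part, by contrast, is just an explicit rational integral. Combining the two cases then gives \eqref{orthogonalitycond1} and \eqref{orthogonalitycond2}.
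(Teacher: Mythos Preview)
Your proposal is correct and follows essentially the same route as the paper: both arguments recentre at $x_i=0$, use the explicit pointwise form of $Z_{j,\ell}^i$ (the paper via the estimate \eqref{kernelestimate1} and the formula $Z_{j,\ell}^i=2\gamma_\sigma V^{(\gamma_\sigma+1)/\gamma_\sigma}|x-a_j^i|^{-\gamma_\sigma-1}(x-a_j^i)_\ell$, you via rescaling to the standard bubble Jacobi fields), and read off the exponential rates $\gamma_\sigma$ and $\gamma_\sigma+1$ from the Emden--Fowler tails; your diagonal/off-diagonal split is simply a more structured presentation of what the paper compresses into ``it is easy to see.'' One small correction: in part~(ii) the integrand is $\overline{Z}_{j,0}^i Z_{j',0}^i=f'_\sigma(U_j)Z_{j,0}^iZ_{j',0}^i$, so the diagonal constant is $C_0=\int_{\mathbb R^n}f'_\sigma(W)Z_0^2\,dy$ with a single power of $f'_\sigma$, not the squared one you wrote.
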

	
	\begin{proof}
		Initially, let us observe that by Lemma \ref{lm:nondegeneracy},  the set of bounded solutions to 
		\begin{equation*}
			\phi-(-\Delta)^{-\sigma}(f^{\prime}_\sigma(U_{(x_i,L^i_j,\lambda_j^i,a_j^i)})\phi)=0 \quad {\rm in} \quad \mathbb{R}^n
		\end{equation*}
		is spanned by $\{\overline{Z}_{j, 0}^i(\boldsymbol{a}_j,\boldsymbol{\lambda}_j), \dots, \overline{Z}_{j, n}^i(\boldsymbol{a}_j,\boldsymbol{\lambda}_j)\}$ for any $i\in\{1,\dots,N\}$ and $j\in\mathbb N$.
		
		Without loss of generality, assume in the following that $x_i=0$. 
		For $\ell=0$, we will repeatedly use the following estimates
		\begin{equation}\label{kernelestimate1}
			\left|Z_{j, 0}^i(\boldsymbol{a}_j,\boldsymbol{\lambda}_j)(x)\right| \lesssim
			\begin{cases}
				|x|^{-\gamma_{\sigma}}V_{(x_i,L_i,a_j^i, \lambda^i_j)}(-\ln |x|), & {\rm if} \ |x| \leqslant 1, \\ 
				|x|^{-2\gamma_{\sigma}}(\lambda_j^i)^{\gamma_{\sigma}}, & {\rm if} \ |x| \geqslant 1.
			\end{cases}
		\end{equation}
		In addition, we have also have
		\begin{equation*}
			Z_{j, \ell}^i(\boldsymbol{a}_j,\boldsymbol{\lambda}_j)(x)=2\gamma_{\sigma}V_{a_j^i, L_i, t_i}(-\ln |x|)^{\frac{\gamma_{\sigma}+1}{\gamma_{\sigma}}}|x-a_j^i-x_i|^{-\gamma_{\sigma}-1}\left(x-a_j^i-x_i\right)_\ell,
		\end{equation*}
		Then, after recentering at $x_i=0$, it is easy to see that the following orthogonality conditions \eqref{orthogonalitycond1} are in force.  
		Similar estimates also hold true for $\ell=0$; the orthogonality condition in \eqref{orthogonalitycond2} is also satisfied.  
		
		The lemma is then proved.
	\end{proof}
	
	\subsection{Weighted functional spaces}
	It is convenient to define the suitable function spaces on which we will run our perturbation technique.
	
	\begin{definition}\label{def:fucntionspaces}
		Let $\alpha\in(0,1)$ and $\zeta_1,\zeta_2\in\mathbb R$ such that $\zeta_1<0$ and $\zeta_2>0$.
		We set the weighted norm
		\begin{equation*}
			\|u\|_{\mathcal{C}^{\alpha}_{\zeta_1, \zeta_2}(\mathbb R^n\setminus\Sigma)}=\|\operatorname{dist}(x,\Sigma)^{-\zeta_1} u\|_{\mathcal{C}^\alpha(B_1(\Sigma))}+\||x|^{-\zeta_2} u\|_{\mathcal{C}^{\alpha}(\mathbb{R}^n \setminus B_1(\Sigma))}.
		\end{equation*}
		In other words, one that $u\in\mathcal{C}^{\alpha}_{\zeta_1, \zeta_2}(\mathbb R^n\setminus\Sigma)$ if and only if 
		\begin{itemize}
			\item[{\rm (i)}] $($Near the singular set$)$ it is bounded by a constant times $|x-x_i|^{\zeta_1}$ and has its $\ell$-th order partial derivatives bounded by a constant times $|x-x_i|^{\zeta_1-\ell}$ for $\ell \leqslant \alpha$ near each singular point $x_i\in\Sigma$.
			\item[{\rm (ii)}] $($Away from the singular set$)$ it is bounded by $|x|^{\zeta_2}$ and has its $\ell$-th order partial derivatives bounded by a constant times $|x|^{\zeta_2-\ell}$ for $\ell \leqslant \alpha$.
		\end{itemize}
		Note that we are implicitly assuming that $0 \in \Sigma$, in order to simplify the notation.
	\end{definition}
	
	\begin{definition}\label{def:suitablespaces}
		Let $\sigma \in(1,+\infty)$, $n>2\sigma$ and $N\geqslant 2$. 
		We define the following weighted norms
		\begin{equation}\label{weightednorm1}
			\|u\|_{\mathcal{C}_{*,\tau}(\mathbb{R}^n\setminus\Sigma)}=\|u\|_{\mathcal{C}_{\min\{\zeta_1,-\gamma_{\sigma}+\tau\},-n-2\sigma}(\mathbb R)}
		\end{equation}
		and 
		\begin{equation}\label{weightednorm2}
			\|h\|_{\mathcal{C}_{**,\tau}(\mathbb{R}^n\setminus\Sigma)}=\|h\|_{\mathcal{C}_{n+\tau,-n+2\sigma}(\mathbb R)},
		\end{equation}
		where
		\begin{equation}\label{weightassumption}
			-\gamma_\sigma<\zeta_1<\min\left\{-\gamma_{\sigma}+2 \sigma, 0\right\}.
		\end{equation}
		Here $0<\tau\ll1$ small enough is given in the definition of the perturbation parameters \eqref{balancing5} and \eqref{balancing6}.   
		In this fashion, we denote by $\mathcal{C}_{*,\tau}(\mathbb{R}^n\setminus\Sigma)$ and $\mathcal{C}_{**,\tau}(\mathbb{R}^n\setminus\Sigma)$ the corresponding weighted Hölder spaces.         
		
	\end{definition}
	
	Let us make some observations regarding the last definition.
	\begin{remark}
		We emphasize that to simplify the notation, many times we will ignore the small perturbation and just the weight near the singular set as dist $(x, \Sigma)^{-\zeta_1}, \operatorname{dist}(x, \Sigma)^{2 \sigma-\zeta_1}$, respectively.
		The weights in Definition~\ref{def:suitablespaces} are suitably chosen to guarantee the invertibility and Fredholmness of the linearized operator around approximate solutions on weighted H\"older spaces; this will be clear in the reduction method we apply in the remaining subsections.
	\end{remark}
	
	\subsection{Perturbation of the approximate solution}
	This subsection is devoted to performing a perturbation method based on the approximated solution, which requires linearizing \eqref{ourintegralequationdual} around the approximate solution \eqref{approximatesolution} and estimating both the weighted norm of the right-inverse for the linearized operator given by and the associated remainder error.
	We emphasize that the balancing formulas and the orthogonality conditions for the normalized kernels discussed above will be building blocks of our construction.
	
	Let us explain our strategy in more detail.
	First, we consider the nonlinear operator defined
	$\mathscr{N}_\sigma: \mathcal{C}^0(\mathbb R^n\setminus\Sigma)\rightarrow \mathcal{C}^{2\sigma}(\mathbb R)$ given by 
	\begin{equation}\label{nonlinearoperator}
		\mathscr{N}_{\sigma}({u})={u}-(-\Delta)^{-\sigma}(f_\sigma\circ {u}).
	\end{equation}
	Notice that \eqref{ourintegralequationdual} can be reformulated as
	\begin{equation*}
		\mathscr{N}_{\sigma}({u})=0 \quad {\rm in} \quad \mathbb R^n\setminus\Sigma.
	\end{equation*}
	Next, by linearizing this operator around the approximate solution, we find a linear operator $\mathscr{L}_\sigma[\bar{u}_{(\boldsymbol{x},\boldsymbol{L},\boldsymbol{a}_j,\boldsymbol{\lambda}_j)}]: \mathcal{C}^{0}(\mathbb{R}^n\setminus\Sigma)\rightarrow \mathcal{C}^{2\sigma}(\mathbb{R}^n\setminus\Sigma)$ given by
	\begin{equation}
		\mathscr{L}_{\sigma}[\bar{u}_{(\boldsymbol{x},\boldsymbol{L},\boldsymbol{a}_j,\boldsymbol{\lambda}_j)}](\phi)=\phi-(-\Delta)^{-\sigma}(f^{\prime}_\sigma\circ \bar{u}_{(\boldsymbol{x},\boldsymbol{L},\boldsymbol{a}_j,\boldsymbol{\lambda}_j)})\phi.
	\end{equation}
	For the sake of simplicity, let us denote 
	\begin{equation*}
		\mathscr{L}_{\sigma}[\bar{u}_{(\boldsymbol{x},\boldsymbol{L},\boldsymbol{a}_j,\boldsymbol{\lambda}_j)}]:=\mathscr{L}_{\sigma}(\boldsymbol{x},\boldsymbol{L},\boldsymbol{a}_j,\boldsymbol{\lambda}_j).
	\end{equation*}
	
	\subsubsection{Quantitative estimates}
	Our first estimate concerns the nonlinear operator defined as \eqref{nonlinearoperator} applied to the approximate solution $\bar{u}_{(\boldsymbol{x},\boldsymbol{L},\boldsymbol{a}_j,\boldsymbol{\lambda}_j)}\in \mathcal{C}^{\infty}(\mathbb R^n\setminus \Sigma)$ given by \eqref{approximatesolution}, namely
	\begin{align*}
		\mathcal{N}_\sigma(\boldsymbol{x},\boldsymbol{L},\boldsymbol{a}_j,\boldsymbol{\lambda}_j)&:=\mathscr{N}_{\sigma}(\bar{u}_{(\boldsymbol{x},\boldsymbol{L},\boldsymbol{a}_j,\boldsymbol{\lambda}_j)})\\
		&=\bar{u}_{(\boldsymbol{x},\boldsymbol{L},\boldsymbol{a}_j,\boldsymbol{\lambda}_j)}-(-\Delta)^{-\sigma}(f_\sigma\circ \bar{u}_{(\boldsymbol{x},\boldsymbol{L},\boldsymbol{a}_j,\boldsymbol{\lambda}_j)}).
	\end{align*}            
	We emphasize that we must suitably choose the weighted norm in \eqref{weightednorm2} so that our following estimates have the correct decay.
	
	\begin{lemma}\label{lm:quantitativeestimates}
		Let $\sigma \in(1,+\infty)$, $n>2\sigma$ and $N\geqslant 2$.
		Assume that $(\boldsymbol{a}_j,\boldsymbol{\lambda}_j)\in{\rm Adm}_{\sigma}(\Sigma)$ is an admissible configuration as in Definition~\ref{def:balancedparameters} with $\bar{u}_{(\boldsymbol{x},\boldsymbol{L},\boldsymbol{a}_j,\boldsymbol{\lambda}_j)}\in{\rm Apx}_{\sigma}(\Sigma)$ their associated approximate solution as in Definition~\ref{def:approximatesolution}. 
		Then, there exists a weight $\zeta_1<0$ satisfying \eqref{weightassumption} such that
		\begin{equation}\label{estimateaproxximatesolution}
			\|\mathcal{N}_\sigma(\boldsymbol{x},\boldsymbol{L},\boldsymbol{a}_j,\boldsymbol{\lambda}_j)\|_{\mathcal{C}_{**,\tau}(\mathbb{R}^n\setminus\Sigma)} \lesssim e^{-\gamma_{\sigma}L(1+\xi)}
		\end{equation}
		for some $\xi>0$ uniformly on $L\gg1$ large.
	\end{lemma}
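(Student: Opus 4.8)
The plan is to expand the residual $\mathcal{N}_\sigma(\boldsymbol{x},\boldsymbol{L},\boldsymbol{a}_j,\boldsymbol{\lambda}_j)=\mathscr{N}_\sigma(\bar u_{(\boldsymbol{x},\boldsymbol{L},\boldsymbol{a}_j,\boldsymbol{\lambda}_j)})$ around the family of exact single-end Delaunay solutions furnished by Corollary~\ref{cor:estimatesdelaunaySph}, and to bound the resulting terms in the weighted space $\mathcal{C}_{**,\tau}(\mathbb{R}^n\setminus\Sigma)$. After the initial dilation making the balls $B_2(x_i)$ pairwise disjoint, I would localize $\mathbb{R}^n\setminus\Sigma$ into the balls $B_1(x_i)$, the gluing annuli $\{\tfrac{1}{2}\le|x-x_i|\le1\}$ where the cut-offs $\chi_i$ of \eqref{errorfunctionEF} are active, and the exterior region, and estimate $\mathscr{N}_\sigma(\bar u)$ on each. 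Using $\bar u=\sum_{i=1}^N \bar u_{(x_i,\boldsymbol{L},\boldsymbol{a}_j,\boldsymbol{\lambda}_j)}$ and the linearity of $(-\Delta)^{-\sigma}$, the first step is the splitting
\begin{equation*}
\mathscr{N}_\sigma(\bar u)=\sum_{i=1}^N\mathscr{N}_\sigma\!\big(\bar u_{(x_i,\boldsymbol{L},\boldsymbol{a}_j,\boldsymbol{\lambda}_j)}\big)-(-\Delta)^{-\sigma}\Big(f_\sigma(\bar u)-\sum_{i=1}^N f_\sigma\!\big(\bar u_{(x_i,\boldsymbol{L},\boldsymbol{a}_j,\boldsymbol{\lambda}_j)}\big)\Big),
\end{equation*}
which separates a \emph{self-interaction} part (the $\mathscr{N}_\sigma$ of each individual end) from an \emph{interaction} part (the Riesz potential of the superadditivity defect of $f_\sigma$).

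For the self-interaction part I would compare $\bar u_{(x_i,\boldsymbol{L},\boldsymbol{a}_j,\boldsymbol{\lambda}_j)}=\widehat U^+_{(x_i,\boldsymbol{L},\boldsymbol{a}_j,\boldsymbol{\lambda}_j)}+\chi_i\,\phi_{(x_i,\boldsymbol{L},\boldsymbol{a}_j,\boldsymbol{\lambda}_j)}$ from \eqref{approximatesolutionsummands} with the exact even single-end solution of Corollary~\ref{cor:estimatesdelaunaySph}, whose discrepancy has three sources: (a) the cut-off $\chi_i$, whose commutator with $(-\Delta)^{-\sigma}$ is supported where $\nabla\chi_i\neq0$, a region in which $\phi_{(x_i,\ldots)}$ is already of size $e^{-\gamma_{\sigma}L(1+\xi)}$ in $\mathcal{C}^{2\sigma}$ by \eqref{holderestimatespherical} and in which $\widehat{\mathcal{R}}_{\sigma}$ decays exponentially by \eqref{asmpkerneldual}; (b) the deformation of the tower parameters away from those of Corollary~\ref{cor:estimatesdelaunaySph}, which by the admissibility relations \eqref{balancing5}--\eqref{balancing6} are perturbed by at most $e^{-\tau t_j^i}$ on the $j$-th bubble, where $t_j^i=(1+2j)L_i$, so that after a Taylor expansion whose leading term is annihilated (the unperturbed parameter derivatives are the Jacobi fields $Z^i_{j,\ell}$ of Definition~\ref{def:kernels}, lying in the kernel of the linearization) the contribution of bubble $j$ is controlled and the geometric sum over $j\ge0$ converges; and (c) the intrinsic error of the half tower, already bounded by $\|\phi_{(x_i,\ldots)}\|_{\mathcal{C}^{2\sigma}}\lesssim e^{-\gamma_{\sigma}L(1+\xi)}$ in Corollary~\ref{cor:estimatesdelaunaySph}.

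For the interaction part I would use that on $B_1(x_i)$ each bubble of the $i'$-th tower, $i'\neq i$, is concentrated near $x_{i'}$ with mass $(\lambda_j^{i'})^{\gamma_{\sigma}}\simeq e^{-\gamma_{\sigma}(1+2j)L_{i'}}$, so the bubble-tower interaction estimates of Appendix~\ref{sec:bubbetowerinteractionestimates} yield $\sum_{i'\neq i}\bar u_{(x_{i'},\ldots)}(x)\lesssim e^{-\gamma_{\sigma}L}|x-x_{i'}|^{2\sigma-n}$ there; then writing the superadditivity defect as $f'_\sigma(\bar u_{(x_i,\ldots)})\sum_{i'\neq i}\bar u_{(x_{i'},\ldots)}$ plus a remainder controlled by the pointwise bound $|f_\sigma(a+b)-f_\sigma(a)-f'_\sigma(a)b|\lesssim |b|^{\min\{p,2\}}(|a|+|b|)^{p-\min\{p,2\}}$ with $p=\tfrac{n+2\sigma}{n-2\sigma}$, convolving against $\mathcal{R}_\sigma$, and invoking the decay $\bar u=\mathcal{O}(|x|^{2\sigma-n})$ at infinity, I would match the output against the weight in \eqref{weightednorm2}. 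The weights of Definition~\ref{def:suitablespaces} are chosen precisely so that the growth of the Delaunay ends near $\Sigma$ is compensated; the heart of the estimate is checking that after all these operations $\mathscr{N}_\sigma(\bar u)$ still decays like $\operatorname{dist}(x,\Sigma)^{n+\tau}$ near the punctures and like $|x|^{2\sigma-n}$ at infinity, with overall constant $\lesssim e^{-\gamma_{\sigma}L(1+\xi)}$.

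I expect the main obstacle to be the bookkeeping of these weighted norms against the nonlocality of $(-\Delta)^{-\sigma}$: the Riesz potential is genuinely global, so the cut-off produces a long-range tail, the self-interaction residual of one end leaks into the neighbourhoods of the others, and the infinitely many bubbles of each tower all contribute. What makes these effects summable at the rate $e^{-\gamma_{\sigma}L(1+\xi)}$ is the exponential decay of the kernels recorded in \eqref{asmpkernel1}--\eqref{asmpkerneldual}, the separation of the supports of the bubble towers centred at the distinct $x_i$, and the exponentially small admissible perturbations \eqref{balancing6}; I would organize the computation so that every one of these tails is dominated by a geometric series with ratio bounded away from $1$ uniformly in $L\gg1$.
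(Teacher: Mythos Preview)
Your organizational scheme (split into end-wise self-interaction versus cross-end interaction) is different from the paper's, which instead decomposes at the level of \emph{individual bubbles}: since each $U_{(x_i,L_j^i,0,\lambda_j^i)}$ is an exact solution of the dual equation, the paper writes $\mathscr{N}_\sigma(\bar u)=(-\Delta)^{-\sigma}\mathcal{D}_\sigma(\bar u)+(\text{cut-off terms})$ with $\mathcal{D}_\sigma(\bar u)=\sum_{i,j}f_\sigma(U_j^i)-f_\sigma(\bar u)$, estimates $\mathcal{D}_\sigma$ pointwise in three regions, and only then convolves with $\mathcal{R}_\sigma$. For the nontrivial parameters it then differentiates this identity in $r_j^i$ and $a_{j,\ell}^i$ and bounds the variation $(-\Delta)^{-\sigma}\big[(f'_\sigma(U_j^i)-f'_\sigma(\bar u))\,\partial_{\xi}U_j^i\big]$ directly. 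Your end-wise scheme is a legitimate alternative, but it does not buy you the cancellation you claim.

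The genuine gap is in your source (b). You assert that the first-order Taylor term in the parameter perturbation is ``annihilated'' because the $Z^i_{j,\ell}$ lie in the kernel of the linearization. They lie in the kernel of the linearization around the \emph{single bubble} $U_j^i$, not around $\bar u_i$ or $\bar u$. Differentiating $\mathscr{N}_\sigma(\bar u_i)$ in the $j$-th parameter gives precisely $(-\Delta)^{-\sigma}\big[(f'_\sigma(U_j^i)-f'_\sigma(\bar u_i))Z^i_{j,\ell}\big]$, which does not vanish; its smallness comes from the pointwise estimate $|f'_\sigma(U_j^i)-f'_\sigma(\bar u_i)|\lesssim |x-x_i|^{-2\sigma}\sum_{j'\neq j}V_{j'}^{4\sigma/(n-2\sigma)}$, yielding an extra factor $e^{-\gamma_\sigma L(1+\xi)}$ independent of the perturbation size $e^{-\tau t_j^i}$. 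Without this step you have no source for the $(1+\xi)$ improvement in the perturbed case. A second, smaller issue: the $\mathcal{C}_{**,\tau}$ weight near $\Sigma$ is not $\operatorname{dist}(x,\Sigma)^{n+\tau}$ (the literal entry in \eqref{weightednorm2} is a typo, as the Remark following Definition~\ref{def:suitablespaces} and all subsequent uses make clear); the residual is only required to blow up no worse than roughly $\operatorname{dist}(x,\Sigma)^{\zeta_1-\tau}$, and that is what the paper actually checks.
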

	
	\begin{proof}
		For the sake of simplicity, we shall prove the estimate in \eqref{estimateaproxximatesolution} for the $L^{\infty}-$norm.
		Namely, we need to quantitatively estimate the term $|\mathscr{N}_\sigma(\bar{u}_{(\boldsymbol{x}, \boldsymbol{L}, \boldsymbol{0},\boldsymbol{1})})|$ and then a applying a classical perturbation technique.
		
		The rest of the proof will be divided into two cases.
		
		\noindent{\bf Case 1:} $(\boldsymbol{a}_j,\boldsymbol{\lambda}_j)=(\boldsymbol{0},\boldsymbol{1})$ for all ${j\in\mathbb N}$.
		
		In this case, the approximate solution $\bar{u}_{(\boldsymbol{x}, \boldsymbol{L}, \boldsymbol{0},\boldsymbol{1})}\in \mathcal{C}^{\infty}(\mathbb R^n\setminus\Sigma)$ is given by
		\begin{equation*}
			\bar{u}_{(\boldsymbol{x}, \boldsymbol{L}, \boldsymbol{0},\boldsymbol{1})}(x)=\sum_{i=1}^N\left[\sum_{j\in\mathbb N} U_{(x_i, L_j^i,\boldsymbol{0},\lambda_j^i)}(x)+\chi_i(x) \phi_i(x-x_i)\right],
		\end{equation*}
		where
		\begin{equation*}
			U_{(x_i, L_j^i,\boldsymbol{0},\lambda_j^i)}(x):=\left(\frac{\lambda_j^i}{|\lambda_j^i|^2+|x-x_i|^2}\right)^{\gamma_{\sigma}}.
		\end{equation*} 
		
		Without loss of generality, assume $x_1=0$. Before we prove the estimate of $|\mathscr{N}_\sigma(\bar{u}_{(\boldsymbol{x}, \boldsymbol{L}, \boldsymbol{0},\boldsymbol{1})})|$, we first prove the following claim:
		
		\noindent{\bf Claim 1:} The following estimate holds
		\begin{align*}
			|\mathcal{D}_{\sigma}(\bar{u}_{(\boldsymbol{x}, \boldsymbol{L}, \boldsymbol{0},\boldsymbol{1})})|\lesssim
			\begin{cases}
				|x-x_i|^{\zeta_1-2\sigma} e^{-\gamma_\sigma L(1+\xi)}, \ &{\rm if} \ 0<\ud(x, \Sigma) <\frac{1}{2},\\
				e^{-\gamma_\sigma L(1+\xi)}, \ &{\rm if} \ \frac{1}{2} \leqslant\ud(x, \Sigma) < 1,\\
				|x|^{-(n+2 \sigma)} e^{-\gamma_\sigma L(1+\xi)}, \ &{\rm if} \ \ud(x, \Sigma) \geqslant 1,
			\end{cases}
		\end{align*}
		where
		\begin{align}\label{differenceterm}
			\mathcal{D}_{\sigma}(\bar{u}_{(\boldsymbol{x}, \boldsymbol{L}, \boldsymbol{0},\boldsymbol{1})}):=\sum_{i=1}^N \sum_{j\in\mathbb N}f_{\sigma}(U_{(x_i, L_j^i,\boldsymbol{0},\lambda_j^i)})-f_{\sigma}(\bar{u}_{(\boldsymbol{x}, \boldsymbol{L}, \boldsymbol{0},\boldsymbol{1})}).
		\end{align}
		
		\noindent As a matter of fact, we proceed by a direct estimate in terms in the asymptotic behavior of the bubble tower solution. 
		Without loss of generality, assume $x_1=0$. The proof will be divided into three steps: the exterior, transition, and interior region, respectively.
		
		\noindent{\bf Step 1:} If $\ud(x, \Sigma) \geqslant 1$, then $|\mathcal{D}_{\sigma}(\bar{u}_{(\boldsymbol{x}, \boldsymbol{L}, \boldsymbol{0},\boldsymbol{1})})|\lesssim|x|^{-(n+2 \sigma)} e^{-\gamma_\sigma L(1+\xi)}$.
		
		\noindent 
		In this region, we notice $\chi_i(x)=0$ for all $i\in\{2,\dots,N\}$ when $\ud(x,\Sigma)\geqslant 1$. 
		Next, using that
		\begin{equation*}
			|U_{(x_i, L_j^i,\boldsymbol{0},\lambda_j^i)}(x)| \sim\left(\lambda_j^i\right)^{\gamma_{\sigma}}|x|^{-(n-2\sigma)} \quad {\rm as} \quad |x|\rightarrow+\infty
		\end{equation*}
		and recalling the relation in \eqref{balancing0}, we have
		\begin{align*}
			\left|\mathcal{D}_{\sigma}(\bar{u}_{(\boldsymbol{x}, \boldsymbol{L}, \boldsymbol{0},\boldsymbol{1})})\right|&=c_{n,\sigma}\left|\left(\sum_{i=1}^N \sum_{j\in\mathbb N}U_{(x_i, L_j^i,\boldsymbol{0},\lambda_j^i)}\right)^\frac{n+2\sigma}{n-2\sigma}-\sum_{i=1}^N \sum_{j\in\mathbb N}\left(U_{(x_i, L_j^i,\boldsymbol{0},\lambda_j^i)}\right)^\frac{n+2\sigma}{n-2\sigma}\right|\\
			&\lesssim\left(e^{-\frac{(n-2 \sigma) L}{2}}|x|^{-(n-2 \sigma)}\right)^\frac{n+2\sigma}{n-2\sigma}\\
			&\lesssim e^{-\frac{(n+2 \sigma) L}{2}}|x|^{-(n+2 \sigma)},            
		\end{align*}
		which finishes the proof of the first step.
		
		\noindent{\bf Step 2:} If $\frac{1}{2} \leqslant|x| \leqslant 1$, then $|\mathcal{D}_{\sigma}(\bar{u}_{(\boldsymbol{x}, \boldsymbol{L}, \boldsymbol{0},\boldsymbol{1})})|\lesssim e^{-\gamma_\sigma L(1+\xi)}$ for some $\xi>0$.
		
		\noindent In this case, it is easy to verify the estimate 
		\begin{equation*}
			|\mathcal{D}_{\sigma}(\bar{u}_{(\boldsymbol{x}, \boldsymbol{L}, \boldsymbol{0},\boldsymbol{1})})| \lesssim e^{-\gamma_\sigma L(1+\xi)}
		\end{equation*}
		for some $\xi>0$.
		
		\noindent{\bf Step 3:} If $0<|x| \leqslant \frac{1}{2}$, then $ |\mathcal{D}_{\sigma}(\bar{u}_{(\boldsymbol{x}, \boldsymbol{L}, \boldsymbol{0},\boldsymbol{1})})|\lesssim |x-x_i|^{\zeta_1-2\sigma} e^{-\gamma_\sigma L(1+\xi)}$ for some $\xi>0$.
		
		\noindent Notice that $\chi_1(x)\equiv1$ and $\chi_i(x)0$ for $i \in\{2,\dots,N\}$. 
		By definition, it follows that 
		\begin{equation*}
			\bar{u}_{(\boldsymbol{x}, \boldsymbol{L}, \boldsymbol{0},\boldsymbol{1})}= {\widehat{U}}_{(x_1,L_j^1,0,\lambda_j^1)}-\left(1-\chi_1\right) \phi_1+\sum_{i =2}^N\left(\sum_{j\in\mathbb N} U_{(x_i, L_j^i,\boldsymbol{0},\lambda_j^i)}+\chi_i \phi_i\right)-\sum_{j\in\mathbb Z\setminus\mathbb N}U_{(x_1,L_j^1,0,\lambda_j^1)}.
		\end{equation*}
		Hence, by an easy computation, we obtain
		\begin{align}\label{quantitativest1}
			\nonumber
			\left|\mathcal{D}_{\sigma}(\bar{u}_{(\boldsymbol{x}, \boldsymbol{L}, \boldsymbol{0},\boldsymbol{1})})\right|
			&\lesssim \left|\sum_{j\in\mathbb N}(U_{(x_1,L_j^1,0,\lambda_j^1)})^\frac{n+2\sigma}{n-2\sigma}-\left(\sum_{j\in\mathbb N}U_{(x_1,L_j^1,0,\lambda_j^1)}+\mathcal{O}(e^{-\gamma_\sigma(1+\xi)})\right)^{\frac{n+2\sigma}{n-2\sigma}}\right|+\mathcal{O}(e^{-\gamma_\sigma(1+\xi)})\\
			&\lesssim \sum_{j\in\mathbb N}(U_{(x_1,L_j^1,0,\lambda_j^1)})^\frac{4\sigma}{n-2\sigma}e^{-\gamma_\sigma}+\mathcal{O}(e^{-\gamma_\sigma(1+\xi)}))\\\nonumber
			&\lesssim |x|^{-2\sigma}\sum_{j\in\mathbb N}V_{(x_1,L_j^1,0,\lambda_j^1)}^{\frac{4\sigma}{n-2\sigma}}e^{-\gamma_\sigma L}+\mathcal{O}(e^{-\gamma_\sigma(1+\xi)}),
		\end{align}
		where 
		\begin{equation*}
			V_{(x_1,L_j^1,0,\lambda_j^1)}(-\ln|x|):=V_{\rm sph}(-\ln|x|-L_1-2j L_1)
		\end{equation*}
		and we recall that the spherical solution $v_{\rm sph}$ is defined as \eqref{sphericalsolutionsEF}.
		
		\noindent Furthermore, it is straightforward to see that when $0<|x|\leqslant\frac{1}{2}$ there exists $\xi>0$ and $\zeta_1<0$ satisfying
		\begin{equation}\label{quantitativest2}
			|x|^{-\zeta_1}\left(\sum_{j\in\mathbb Z} V_{(x_1,L_j^1,0,\lambda_j^1)}(-\ln|x|)\right)^{\frac{4\sigma}{n-2\sigma}} \lesssim e^{-\xi L_1}.
		\end{equation}
		Indeed, if $-\infty<t<{L_1}$ there exists $C_1>0$ such that $|x|\leqslant C_1$ and 
		\begin{equation}\label{quantitativest3}
			\sum_{j\in\mathbb Z} V_{(x_1,L_j^1,0,\lambda_j^1)}(-\ln|x|) \leqslant C_1 e^{-\gamma_\sigma L_1}.
		\end{equation}
		Also, if ${L_1}\leqslant t < +\infty$, there exists $C_2>0$ such that $|x| \leqslant C_2 e^{-{L_1}/{2}}$ and 
		\begin{equation}\label{quantitativest4}
			\sum_{j\in\mathbb Z} V_{(x_1,L_j^1,0,\lambda_j^1)}(-\ln|x|) \leqslant C_2.
		\end{equation}
		Finally, combining \eqref{quantitativest1} and \eqref{quantitativest2} implies 
		\begin{align*}
			\left|\mathcal{D}_{\sigma}(\bar{u}_{(\boldsymbol{x}, \boldsymbol{L}, \boldsymbol{0},\boldsymbol{1})})\right|\lesssim |x|^{\zeta_1-2\sigma}e^{-\gamma_\sigma(1+\xi)},
		\end{align*}
		which finishes the proof of the first claim.
		
		We now proceed to the proof of our preliminary estimate.
		
		\noindent{\bf Claim 2:} The following estimates holds
		\begin{align*}
			|\mathscr{N}_\sigma(\bar{u}_{(\boldsymbol{x},\boldsymbol{L},\boldsymbol{a}_j,\boldsymbol{\lambda}_j)})|\lesssim
			\begin{cases}
				|x-x_i|^{\min\{\zeta_1-\tau,-\gamma_\sigma+\tau\}} e^{-\gamma_\sigma L(1+\xi)}, \ &{\rm if} \ 0<\ud(x, \Sigma) <\frac{1}{2},\\
				e^{-\gamma_\sigma L(1+\xi)}, \ &{\rm if} \ \frac{1}{2} \leqslant\ud(x, \Sigma) < 1,\\
				|x|^{2\sigma-n} e^{-\gamma_\sigma L(1+\xi)}, \ &{\rm if} \ \ud(x, \Sigma) \geqslant 1.
			\end{cases}
		\end{align*}
		
		\noindent 
		As before, the proof will be divided into three steps as follows:
		
		\noindent{\bf Step 1:} If $\ud(x, \Sigma) \geqslant 1$, then $|\mathscr{N}_\sigma(\bar{u}_{(\boldsymbol{x}, \boldsymbol{L}, \boldsymbol{0},\boldsymbol{1})})|\lesssim|x|^{2\sigma-n} e^{-\gamma_\sigma L(1+\xi)}$.
		
		\noindent  Notice that $\chi_i(x)\equiv0$ for all $i\in\{1,\dots,N\}$ and $x\in\mathbb R^n\setminus\Sigma$ such that $\ud(x, \Sigma) \geqslant 1$.
		From this, we get 
		\begin{align*}
			\mathscr{N}_{\sigma}(\bar{u}_{(\boldsymbol{x}, \boldsymbol{L}, \boldsymbol{0},\boldsymbol{1})})&= \bar{u}_{(\boldsymbol{x}, \boldsymbol{L}, \boldsymbol{0},\boldsymbol{1})}-(-\Delta)^{-\sigma}(f_\sigma(\bar{u}_{(\boldsymbol{x}, \boldsymbol{L}, \boldsymbol{0},\boldsymbol{1})}))\\
			& =\sum_{i=1}^N\sum_{j\in\mathbb N}U_{(x_i, L_j^i,\boldsymbol{0},\lambda_j^i)}(x)-(-\Delta)^{-\sigma}f_{\sigma}\left(\bar{u}_{(\boldsymbol{x}, \boldsymbol{L}, \boldsymbol{0},\boldsymbol{1})}\right)\\
			& =\int_{\mathbb R^n}\left(\sum_{i=1}^N \sum_{j\in\mathbb N}f_{\sigma}(U_{(x_i, L_j^i,\boldsymbol{0},\lambda_j^i)}(y))-f_{\sigma}(\bar{u}_{(\boldsymbol{x}, \boldsymbol{L}, \boldsymbol{0},\boldsymbol{1})}(y))\right)\mathcal{R}_\sigma(x-y)\ud y\\
			& =\int_{\mathbb R^n}\mathcal{D}_{\sigma}(\bar{u}_{(\boldsymbol{x}, \boldsymbol{L}, \boldsymbol{0},\boldsymbol{1})}(y))\mathcal{R}_\sigma(x-y)\ud y\\
			& =\left(\int_{|y|\leqslant 1}+\int_{1\leqslant |y|\leqslant |x|}+\int_{|y|\geqslant |x|}\right)\mathcal{D}_{\sigma}(\bar{u}_{(\boldsymbol{x}, \boldsymbol{L}, \boldsymbol{0},\boldsymbol{1})}(y))\mathcal{R}_\sigma(x-y)\ud y\\
			&=:I_{11}+I_{12}+I_{13}.
		\end{align*}
		where we recall that $\mathcal{D}_{\sigma}(\bar{u}_{(\boldsymbol{x}, \boldsymbol{L}, \boldsymbol{0},\boldsymbol{1})})$ is given by \eqref{differenceterm}.
		
		\noindent Applying Step 1 of Claim 1, we have
		\begin{equation*}
			|I_{11}|\lesssim e^{-\gamma_\sigma(1+\xi)}\int_{|y|\leqslant 1}{|x-y|^{2\sigma-n}}{|y|^{\zeta_1-2\sigma}}\ud y\lesssim |x|^{2\sigma-n}e^{-\gamma_\sigma(1+\xi)},
		\end{equation*}
		\begin{equation*}
			|I_{12}|\lesssim e^{-\gamma_\sigma(1+\xi)}\int_{1\leqslant |y|\leqslant |x|}{|x-y|^{n-2\sigma}}{|y|^{-(n+2\sigma)}}\ud y\lesssim |x|^{2\sigma-n}e^{-\gamma_\sigma L(1+\xi)},
		\end{equation*}
		and
		\begin{equation*}
			|I_{13}|\lesssim e^{-\gamma_\sigma(1+\xi)}\int_{|y|\geqslant |x|}{|x-y|^{2\sigma-n}}{|y|^{-(n+2\sigma)}}\ud y\lesssim |x|^{-n}e^{-\gamma_\sigma(1+\xi)}.
		\end{equation*}
		Combining the above estimates, we finish the proof of Step 1.
		
		\noindent{\bf Step 2:} If $\frac{1}{2} \leqslant|x| \leqslant 1$, then $|\mathscr{N}_\sigma(\bar{u}_{(\boldsymbol{x},\boldsymbol{L},\boldsymbol{a}_j,\boldsymbol{\lambda}_j)})|\lesssim e^{-\gamma_\sigma L(1+\xi)}$ for some $\xi>0$.
		
		\noindent In this case, it holds
		\begin{align*}
			\mathscr{N}_{\sigma}(\bar{u}_{(\boldsymbol{x}, \boldsymbol{L}, \boldsymbol{0},\boldsymbol{1})})&= \bar{u}_{(\boldsymbol{x}, \boldsymbol{L}, \boldsymbol{0},\boldsymbol{1})}-(-\Delta)^{-\sigma}(f_\sigma(\bar{u}_{(\boldsymbol{x}, \boldsymbol{L}, \boldsymbol{0},\boldsymbol{1})}))\\
			& =\sum_{i=1}^N\sum_{j\in\mathbb N} U_{(x_i, L_j^i,\boldsymbol{0},\lambda_j^i)}+\chi_1\phi_1-(-\Delta)^{-\sigma}f_{\sigma}\left(\bar{u}_{(\boldsymbol{x}, \boldsymbol{L}, \boldsymbol{0},\boldsymbol{1})}\right)\\
			& =\int_{\mathbb R^n}\mathcal{D}_{\sigma}(\bar{u}_{(\boldsymbol{x}, \boldsymbol{L}, \boldsymbol{0},\boldsymbol{1})})(y)\mathcal{R}_\sigma(x-y)\ud y+e^{-\gamma_\sigma L(1+\xi)}\\
			& =\left(\int_{|y|\leqslant \frac{|x|}{2}}+\int_{\frac{|x|}{2}\leqslant |y|\leqslant 1}+\int_{1\leqslant |y|\leqslant 2|x|}+\int_{|y|\geqslant 2|x|}\right)\mathcal{D}_{\sigma}(\bar{u}_{(\boldsymbol{x}, \boldsymbol{L}, \boldsymbol{0},\boldsymbol{1})})(y)\mathcal{R}_\sigma(x-y)\ud y+\mathcal{O}(e^{-\gamma_\sigma L(1+\xi)})\\
			&=:I_{21}+I_{22}+I_{23}+I_{24}+\mathcal{O}(e^{-\gamma_\sigma L(1+\xi)}).    
		\end{align*}
		
		\noindent Applying Step 2 of Claim 1, we get
		\begin{align*}
			|I_{21}|\lesssim e^{-\gamma_\sigma L(1+\xi)}\int_{|y|\leqslant \frac{|x|}{2}}{|x-y|^{2\sigma-n}}{|y|^{\zeta_1-2\sigma}}\ud y\lesssim |x|^{\zeta_1}e^{-\gamma_\sigma L(1+\xi)}\lesssim e^{-\gamma_\sigma L(1+\xi)},   
		\end{align*}
		\begin{align*}
			|I_{22}|
			\lesssim e^{-\gamma_\sigma L(1+\xi)}\int_{\frac{|x|}{2}\leqslant |y|\leqslant 1}{|x-y|^{2\sigma-n}}{|y|^{\zeta_1-2\sigma}}\ud y\lesssim |x|^{\zeta_1-2\sigma}e^{-\gamma_\sigma L(1+\xi)}\int_{|y-x|\leqslant \frac{3}{2}}{|x-y|^{2\sigma-n}}\ud y\lesssim e^{-\gamma_\sigma L(1+\xi)},   
		\end{align*}
		\begin{align*}
			|I_{23}|
			\lesssim e^{-\gamma_\sigma L(1+\xi)}\int_{1\leqslant |y|\leqslant 2|x|}{|x-y|^{2\sigma-n}}{|y|^{-(n+2\sigma)}}\ud y\lesssim e^{-\gamma_\sigma L(1+\xi)}\int_{|y-x|\leqslant 3|x|}{|x-y|^{2\sigma-n}}\ud y\lesssim e^{-\gamma_\sigma L(1+\xi)},   
		\end{align*}
		and
		\begin{align*}
			|I_{24}|
			\lesssim e^{-\gamma_\sigma L(1+\xi)}\int_{|y|\geqslant 2|x|}{|x-y|^{2\sigma-n}}{|y|^{-(n+2\sigma)}}\ud y\lesssim |x|^{2\sigma-n}e^{-\gamma_\sigma L(1+\xi)}\int_{|y|\geqslant 1}{|y|^{-(n+2\sigma)}}\ud y\lesssim e^{-\gamma_\sigma L(1+\xi)}.   
		\end{align*}
		Consequently, the proof of Step 2 follows.
		
		\noindent{\bf Step 3:} If $0<|x| \leqslant \frac{1}{2}$, then $|\mathscr{N}_\sigma(\bar{u}_{(\boldsymbol{x}, \boldsymbol{L}, \boldsymbol{0},\boldsymbol{1})})|\lesssim |x|^{\zeta_1-\tau} e^{-\gamma_\sigma L(1+\xi)}$.
		
		\noindent Similarly to the previous steps, we obtain
		\begin{align*}
			\mathscr{N}_{\sigma}(\bar{u}_{(\boldsymbol{x}, \boldsymbol{L}, \boldsymbol{0},\boldsymbol{1})})&= \bar{u}_{(\boldsymbol{x}, \boldsymbol{L}, \boldsymbol{0},\boldsymbol{1})}-(-\Delta)^{-\sigma}(f_\sigma(\bar{u}_{(\boldsymbol{x}, \boldsymbol{L}, \boldsymbol{0},\boldsymbol{1})}))\\
			& =\sum_{i=1}^N\sum_{j\in\mathbb N} U_{(x_i, L_j^i,\boldsymbol{0},\lambda_j^i)}+\phi_1-(-\Delta)^{-\sigma}f_{\sigma}\left(\bar{u}_{(\boldsymbol{x}, \boldsymbol{L}, \boldsymbol{0},\boldsymbol{1})}\right)\\
			& =\int_{\mathbb R^n}\mathcal{D}_{\sigma}(\bar{u}_{(\boldsymbol{x}, \boldsymbol{L}, \boldsymbol{0},\boldsymbol{1})}(y))\mathcal{R}_\sigma(x-y)\ud y+e^{-\gamma_\sigma L(1+\xi)}\\
			& =\left(\int_{|y|\leqslant \frac{|x|}{2}}+\int_{\frac{|x|}{2}\leqslant |y|\leqslant 2|x|}+\int_{2|x|\leqslant |y|\leqslant 1}+\int_{|y|\geqslant 1}\right)\mathcal{D}_{\sigma}(\bar{u}_{(\boldsymbol{x}, \boldsymbol{L}, \boldsymbol{0},\boldsymbol{1})}(y))\mathcal{R}_\sigma(x-y)\ud y+\mathcal{O}(e^{-\gamma_\sigma L(1+\xi)})\\
			&=:I_{31}+I_{32}+I_{33}+I_{34}+\mathcal{O}(e^{-\gamma_\sigma L(1+\xi)}).    
		\end{align*}
		
		\noindent Applying Step 3 of Claim 1, we get
		\begin{align*}
			|I_{31}|\lesssim e^{-\gamma_\sigma L(1+\xi)}\int_{|y|\leqslant \frac{|x|}{2}}{|x-y|^{2\sigma-n}}{|y|^{\zeta_1-2\sigma}}\ud y\lesssim |x|^{\zeta_1}e^{-\gamma_\sigma L(1+\xi)},   
		\end{align*}
		\begin{align*}
			|I_{32}|
			\lesssim e^{-\gamma_\sigma L(1+\xi)}\int_{\frac{|x|}{2}\leqslant |y|\leqslant 2|x|}{|x-y|^{2\sigma-n}}{|y|^{\zeta_1-2\sigma}}\ud y
			\lesssim |x|^{\zeta_1}e^{-\gamma_\sigma L(1+\xi)},   
		\end{align*}
		\begin{align*}
			|I_{33}|
			\lesssim e^{-\gamma_\sigma L(1+\xi)}\int_{2|x|\leqslant |y|\leqslant 1}{|x-y|^{2\sigma-n}}{|y|^{\zeta_1-2\sigma}}\ud y\lesssim e^{-\gamma_\sigma L(1+\xi)}|x|^{\zeta_1}\int_{2|x|\leqslant |y|\leqslant 1}{|y|^{-n}}\ud y\lesssim |x|^{\zeta_1-\tau}e^{-\gamma_\sigma L(1+\xi)},   
		\end{align*}
		and
		\begin{align*}
			|I_{34}|
			\lesssim e^{-\gamma_\sigma L(1+\xi)}\int_{|y|\geqslant 1}{|x-y|^{2\sigma-n}}{|y|^{-(n+2\sigma)}}\ud y\lesssim e^{-\gamma_\sigma L(1+\xi)}\int_{|y|\geqslant 1}{|y|^{-(n+2\sigma)}}\ud y\lesssim e^{-\gamma_\sigma L(1+\xi)}.   
		\end{align*}
		Therefore, for $|x|\leqslant \frac{1}{2}$, we conclude
		\begin{equation*}
			|\mathscr{N}_\sigma(\bar{u}_{(\boldsymbol{x}, \boldsymbol{L}, \boldsymbol{0},\boldsymbol{1})})|\lesssim |x|^{\zeta_1-\tau} e^{-\gamma_\sigma L(1+\xi)},
		\end{equation*}
		which gives us the desired estimate in Step 3.
		
		By combining the last three steps, the proof of the first case is concluded.
		
		Now we consider the case of a general configuration. 
		We will use a perturbation technique based on the last case in this situation.
		
		\noindent{\bf Case 2:} $(\boldsymbol{a}_j,\boldsymbol{\lambda}_j)\neq(\boldsymbol{0},\boldsymbol{1})$ for some ${j\in\mathbb N}$.
		
		Initially, we will prove the following decomposition.
		
		\noindent{\bf Claim 3:} It holds that 
		\begin{align*}
			&|\mathscr{N}_\sigma(\bar{u}_{(\boldsymbol{x},\boldsymbol{L},\boldsymbol{a}_j,\boldsymbol{\lambda}_j)})-\mathscr{N}_\sigma(\bar{u}_{(\boldsymbol{x}, \boldsymbol{L}, \boldsymbol{0},\boldsymbol{1})})|\\
			&\leqslant \sum_{i=1}^N \sum_{j\in\mathbb N} (-\Delta)^{-\sigma}\left[|f^{\prime}_\sigma(U_{(x_i, L_j^i,a_j^i,\lambda_j^i)})-f^{\prime}_\sigma(U_{(x_i, L_j^i,\boldsymbol{0},\lambda_j^i)})|\left(|\partial_{r_j^i}U_{(x_\ell, L_\ell^i,a_\ell^i,\lambda_\ell^i)}||r_j^i|
			+\sum_{\ell=1}^n|\partial_{a_{j,\ell}^i}U_{(x_\ell, L_\ell^i,a_\ell^i,\lambda_\ell^i)}||a_{j,\ell}^i|\right)\right]\\
			&=\sum_{i=1}^N \sum_{j\in\mathbb N}(-\Delta)^{-\sigma}\left[|\partial_{r_j^i}\widetilde{\mathcal{D}}^\prime_\sigma(\bar{u}_{(\boldsymbol{x}, \boldsymbol{L}, \boldsymbol{0},\boldsymbol{1})})||r_j^i|+\sum_{\ell=1}^n|\partial_{a_{j,\ell}^i}\Tilde{\mathcal{D}}^\prime_\sigma(\bar{u}_{(\boldsymbol{x}, \boldsymbol{L}, \boldsymbol{0},\boldsymbol{1})})||a_{j,\ell}^i|\right],
		\end{align*}
		where
		\begin{align}
			\widetilde{\mathcal{D}}_{\sigma}(\bar{u}_{(\boldsymbol{x}, \boldsymbol{L}, \boldsymbol{0},\boldsymbol{1})}):=\sum_{i=1}^N \sum_{j\in\mathbb N}f_{\sigma}(U_{(x_i, L_j^i,a_j^i,\lambda_j^i)})-f_{\sigma}(\bar{u}_{(\boldsymbol{x}, \boldsymbol{L}, \boldsymbol{0},\boldsymbol{1})}).
		\end{align}
		
		\noindent To prove this fact, we will differentiate $\mathscr{N}_\sigma(\bar{u}_{(\boldsymbol{x}, \boldsymbol{L}, \boldsymbol{0},\boldsymbol{1})})$ with respect to the parameters $r_j^i,a^i_{j,\ell}$. 
		Since the variation is linear in the displacements of the parameters, we vary the parameter of one point at one time. 
		First, with respect to $r_j^i$, we have
		\begin{align*}
			{\partial_{r_j^i}} \mathscr{N}_{\sigma}(\bar{u}_{(\boldsymbol{x}, \boldsymbol{L}, \boldsymbol{0},\boldsymbol{1})})&=\partial_{r_j^i}U_{(x_i, L_j^i,a_j^i,\lambda_j^i)}-(-\Delta)^{-\sigma} (f_{\sigma}^{\prime}(\bar{u}_{(\boldsymbol{x}, \boldsymbol{L}, \boldsymbol{0},\boldsymbol{1})}) {\partial_{r_j^i}}U_{(x_i, L_j^i,a_j^i,\lambda_j^i)})\\
			&=(-\Delta)^{-\sigma}[(f^{\prime}_\sigma(U_{(x_i, L_j^i,a_j^i,\lambda_j^i)})-f^{\prime}_\sigma(\bar{u}_{(\boldsymbol{x}, \boldsymbol{L}, \boldsymbol{0},\boldsymbol{1})}) {\partial_{r_j^i}}U_{(x_i, L_j^i,a_j^i,\lambda_j^i)}].
		\end{align*}
		Second, with respect to $a_{j,\ell}^i$, we obtain
		\begin{align*}
			{\partial_{a_{j,\ell}^i}} \mathscr{N}_{\sigma}(\bar{u}_{(\boldsymbol{x}, \boldsymbol{L}, \boldsymbol{0},\boldsymbol{1})})&=(-\Delta)^{-\sigma}\left[(f^{\prime}_\sigma(U_{(x_i, L_j^i,a_j^i,\lambda_j^i)})-f^{\prime}_\sigma(\bar{u}_{(\boldsymbol{x}, \boldsymbol{L}, \boldsymbol{0},\boldsymbol{1})})) \sum_{\ell=1}^n{\partial_{a_{j,\ell}^i}}U_{(x_\ell, L_\ell^i,a_\ell^i,\lambda_\ell^i)}\right].
		\end{align*}
		This fact concludes the proof of Claim 3.
		
		Next, we shall obtain $L^\infty$-estimate
		in the sense below.
		We first consider the case of the parameters $r_j^i$.
		
		\noindent{\bf Claim 4:} The following estimate holds
		\begin{align*}
			&\left|\partial_{r_j^i}\widetilde{\mathcal{D}}^\prime_\sigma(\bar{u}_{(\boldsymbol{x}, \boldsymbol{L}, \boldsymbol{0},\boldsymbol{1})})\right|\lesssim
			\begin{cases}
				\ud(x,\Sigma)^{\min\{\zeta_1,-\gamma_\sigma+\tau\}-2\sigma} e^{-\gamma_\sigma L(1+\xi)}, \ &{\rm if} \ 0<\ud(x, \Sigma)<1,\\
				|x|^{-(n+2\sigma)} e^{-\gamma_\sigma L(1+\xi)}, \ &{\rm if} \ \ud(x, \Sigma) \geqslant 1.
			\end{cases}
		\end{align*}
		
		\noindent As before, we consider two cases separately.
		
		\noindent{\bf Step 1:} If  $\ud(x, \Sigma) \geqslant 1$, then 
		\begin{equation*}
			[f^{\prime}_\sigma(U_{(x_i, L_j^i,a_j^i,\lambda_j^i)})-f^{\prime}_\sigma(\bar{u}_{(\boldsymbol{x}, \boldsymbol{L}, \boldsymbol{0},\boldsymbol{1})})] {\partial_{r_j^i}}U_{(x_i, L_j^i,a_j^i,\lambda_j^i)} \lesssim |x|^{-(n+2 \sigma)} e^{-\gamma_\sigma L(1+\xi)} e^{-\nu t_j^i},
		\end{equation*}
		for a suitable choice of $\nu>0$. 
		
		\noindent As a matter of fact, we have
		\begin{align*}
			[f^{\prime}_\sigma(U_{(x_i, L_j^i,a_j^i,\lambda_j^i)})-f^{\prime}_\sigma(\bar{u}_{(\boldsymbol{x}, \boldsymbol{L}, \boldsymbol{0},\boldsymbol{1})})] {\partial_{r_j^i}}U_{(x_i, L_j^i,a_j^i,\lambda_j^i)} & \lesssim\left(e^{-\gamma_{\sigma} L}|x|^{-(n-2 \sigma)}\right)^{\frac{4n}{n-2\sigma}} e^{-\gamma_{\sigma} L(2 j+1)}|x|^{-(n-2 \sigma)} \\
			& \lesssim |x|^{-(n+2 \sigma)} e^{-\gamma_{\sigma} L(1+\xi)} e^{-\nu t_j^i},
		\end{align*}
		which by \eqref{balancing5} and \eqref{balancing6} concludes the proof of this step.
		
		\noindent{\bf Step 2:} If  $0<\ud(x, \Sigma) < 1$, then 
		\begin{align*}
			[f^{\prime}_\sigma(U_{(x_i, L_j^i,a_j^i,\lambda_j^i)})-f^{\prime}_\sigma(\bar{u}_{(\boldsymbol{x}, \boldsymbol{L}, \boldsymbol{0},\boldsymbol{1})})] {\partial_{r_j^i}}U_{(x_i, L_j^i,a_j^i,\lambda_j^i)}\lesssim \ud(x, \Sigma)^{\min\{-\gamma_\sigma+\tau, \zeta_1\}-2 \sigma}e^{-\gamma_\sigma L(1+\xi)}
		\end{align*}
		for some $-\gamma_\sigma<\zeta_1<\min \{0,-\gamma_\sigma+2 \sigma\}$ and $0<\tau\ll 1$ small enough.
		
		\noindent In this situation, we may assume without loss of generality that $|x-x_i| \leqslant 1$ for $i\in\{2,\dots,N\}$.
		Hence, we proceed similarly to the proof of the estimates \eqref{quantitativest3} and \eqref{quantitativest4} to find
		\begin{align*}
			&[f^{\prime}_\sigma(U_{(x_i, L_j^i,a_j^i,\lambda_j^i)})-f^{\prime}_\sigma(\bar{u}_{(\boldsymbol{x}, \boldsymbol{L}, \boldsymbol{0},\boldsymbol{1})})] {\partial_{r_j^i}}U_{(x_i, L_j^i,a_j^i,\lambda_j^i)}\\
			& \lesssim |x-x_i|^{-2 \sigma}\left(\sum_{j\in\mathbb N} V_{(x_{i^\prime},L_{j^\prime}^{i^\prime}\lambda_{j^{\prime}}^{i^\prime},a_{j^{\prime}}^{i^\prime})}(-\ln |x-x_i|)\right)^{\frac{n+2\sigma}{n-2\sigma}} e^{-\gamma_\sigma L(2 j+1)} \\
			& \lesssim |x-x_i|^{\zeta_1-2 \sigma} e^{-\gamma_\sigma L(1+\xi)} e^{-\nu t_j^i}.
		\end{align*}
		for a suitable choice of $\nu>0$. 
		
		\noindent Again, we have more two cases to consider.
		If $|t-t^i_j|\geqslant L_1$, it follows  
		\begin{align*}
			&[f^{\prime}_\sigma(U_{(x_i, L_j^i,a_j^i,\lambda_j^i)})-f^{\prime}_\sigma(\bar{u}_{(\boldsymbol{x}, \boldsymbol{L}, \boldsymbol{0},\boldsymbol{1})})] {\partial_{r_j^i}}U_{(x_i, L_j^i,a_j^i,\lambda_j^i)}\\
			& \lesssim f^{\prime}_\sigma(U_{(x_i, L_j^i,a_j^i,\lambda_j^i)})\left(\sum_{\ell \neq j} f^{\prime}_\sigma(U_{(x_\ell, L_\ell^i,a_\ell^i,\lambda_\ell^i)})+e^{-\gamma_\sigma L}\right) \\
			& \lesssim |x|^{-\frac{n+2 \sigma}{2}} \sum_{\ell \neq j} V_{(x_i, L_j^i,a_j^i,\lambda_j^i)}^{\frac{4\sigma}{n-2\sigma}} V_{(x_\ell, L_\ell^i,a_\ell^i,\lambda_\ell^i)}+|x|^{-2 \sigma} V_{(x_{i^\prime},L_{j^\prime}^{i^\prime}\lambda_{j^{\prime}}^{i^\prime},a_{j^{\prime}}^{i^\prime})}^{\frac{4\sigma}{n-2\sigma}} e^{-\gamma_\sigma L} \\
			& \lesssim |x|^{-\frac{n+2 \sigma}{2}} e^{-\eta|t-t_j|} \sum_{\ell \neq j} e^{-(2 \sigma-\eta)|t-t_j^i|} e^{-\gamma_\sigma|t-t_\ell^i|}+|x|^{\zeta_1-2 \sigma}|x|^{\zeta_1} e^{-2 \sigma|t-t_j^i|} e^{-\gamma_\sigma L}\\
			& \lesssim\left(|x|^{-\frac{n+2 \sigma}{2}} e^{-\eta|t-t_j^i|}+|x|^{\zeta_1-2 \sigma} e^{-\nu t_j^i}\right) e^{-\gamma_\sigma L(1+\xi)},
		\end{align*}
		if $0<\eta<2 \sigma$ is chosen suitably.     
		Whereas, if $|t-t_\ell^i| \leqslant {L_1}$ for some $\ell \neq j$, one has
		\begin{align*}
			[f^{\prime}_\sigma(U_{(x_i, L_j^i,a_j^i,\lambda_j^i)})-f^{\prime}_\sigma(\bar{u}_{(\boldsymbol{x}, \boldsymbol{L}, \boldsymbol{0},\boldsymbol{1})})] {\partial_{r_j^i}}U_{(x_i, L_j^i,a_j^i,\lambda_j^i)}&\lesssim f^{\prime}_\sigma(U_{(x_i, L_j^i,a_j^i,\lambda_j^i)}){\partial_{r_j^i}}U_{(x_i, L_j^i,a_j^i,\lambda_j^i)} \\  
			& \lesssim |x|^{-\frac{n+2 \sigma}{2}} \sum_{\ell \neq j} V_{(x_\ell, L_\ell^i,a_\ell^i,\lambda_\ell^i)}^{\frac{4\sigma}{n-2\sigma}} V_{(x_i, L_j^i,a_j^i,\lambda_j^i)}\\
			& \lesssim |x|^{-\gamma_\sigma} e^{-\eta|t-t_j^i|} e^{\eta|t-t_j^i|} e^{-\gamma_\sigma|t-t_j|} e^{-2 \sigma|t-t_\ell^i|} \\
			& \lesssim|x|^{-\frac{n+2 \sigma}{2}} e^{-\eta|t-t_j^i|} e^{-\gamma_\sigma L(1+\xi)}
		\end{align*}
		if $0<\eta\ll\gamma_\sigma$ is chosen small enough. 
		
		\noindent In conclusion, by combining the above two estimates, we get 
		\begin{equation*}
			[f^{\prime}_\sigma(U_{(x_i, L_j^i,a_j^i,\lambda_j^i)})-f^{\prime}_\sigma(\bar{u}_{(\boldsymbol{x}, \boldsymbol{L}, \boldsymbol{0},\boldsymbol{1})})] {\partial_{r_j^i}}U_{(x_i, L_j^i,a_j^i,\lambda_j^i)} \lesssim |x|^{-\frac{n+2 \sigma}{2}} e^{-\tau t} e^{-\gamma_\sigma L(1+\xi)}+|x|^{\zeta_1-2 \sigma} e^{-\gamma_\sigma L(1+\xi)}
		\end{equation*}
		for $0<|x| \leqslant 1$, which implies
		\begin{align*}
			[f^{\prime}_\sigma(U_{(x_i, L_j^i,a_j^i,\lambda_j^i)})-f^{\prime}_\sigma(\bar{u}_{(\boldsymbol{x}, \boldsymbol{L}, \boldsymbol{0},\boldsymbol{1})})] {\partial_{r_j^i}}U_{(x_i, L_j^i,a_j^i,\lambda_j^i)}
			& \lesssim(\ud(x, \Sigma)^{-\frac{n+2 \sigma}{2}} \ud(x, \Sigma)^\tau+\ud(x, \Sigma)^{\zeta_1-2 \sigma})e^{-\gamma_\sigma(1+\xi)}\\
			& \lesssim \ud(x, \Sigma)^{\min\{-\gamma_\sigma+\tau, \zeta_1\}-2 \sigma}e^{-\gamma_\sigma(1+\xi)}.
		\end{align*}
		The proof of this step is concluded, and so is one of the claims.
		
		\noindent{\bf Claim 5:} The following estimate holds
		\begin{align*}
			&\left|\partial_{a_{j,\ell}^i}\widetilde{\mathcal{D}}^\prime_\sigma(\bar{u}_{(\boldsymbol{x}, \boldsymbol{L}, \boldsymbol{0},\boldsymbol{1})})\right|\lesssim
			\begin{cases}
				\ud(x,\Sigma)^{\min\{\zeta_1,-\gamma_\sigma+\tau\}-2\sigma} e^{-\gamma_\sigma L(1+\xi)}, \ &{\rm if} \ 0<\ud(x, \Sigma)<1,\\
				|x|^{-(n+2\sigma)} e^{-\gamma_\sigma L(1+\xi)}, \ &{\rm if} \ \ud(x, \Sigma) \geqslant 1.
			\end{cases}
		\end{align*}
		
		\noindent  The estimates are similar to the ones in the last claim, so we omit them here.
		
		As a combination of these estimates, we have our main conclusion. 
		
		\noindent{\bf Claim 6:} The following estimate holds
		\begin{align*}
			&\left|\mathscr{N}_\sigma(\bar{u}_{(\boldsymbol{x},\boldsymbol{L},\boldsymbol{a}_j,\boldsymbol{\lambda}_j)})-\mathscr{N}_\sigma(\bar{u}_{(\boldsymbol{x}, \boldsymbol{L}, \boldsymbol{0},\boldsymbol{1})})\right|\lesssim
			\begin{cases}
				\ud(x,\Sigma)^{\min\{\zeta_1-\tau,-\gamma_\sigma+\tau\}} e^{-\gamma_\sigma L(1+\xi)}, \ &{\rm if} \ 0<\ud(x, \Sigma)<1,\\
				|x|^{-(n-2\sigma)} e^{-\gamma_\sigma L(1+\xi)}, \ &{\rm if} \ \ud(x, \Sigma) \geqslant 1.
			\end{cases}
		\end{align*}
		
		\noindent To prove this claim, we plug Claims 4 and 5 into Claim 3 and proceed similarly to the proof of Claim 2.
		
		Finally, using the definitions of the weighted norms in Definition~\ref{def:suitablespaces}, it is straightforward to see that \eqref{estimateaproxximatesolution} is a direct consequence of the last claim.
		
		The lemma is finally proved.         
	\end{proof}
	
	\subsubsection{Finite-dimensional reduction}
	We apply a finite-dimensional Lyapunov--Schmidt reduction to solve an auxiliary linearized equation around an approximate solution.
	As usual in this method, we use the orthogonality properties of the normalized approximate kernels and cokernels from Lemma~\ref{lm:orthogonalityconditions}.
	
	\begin{lemma}\label{lm:fredholmtheory}
		Let $\sigma \in(1,+\infty)$, $n>2\sigma$ and $N\geqslant 2$.
		Assume that $(\boldsymbol{a}_j,\boldsymbol{\lambda}_j)\in{\rm Adm}_{\sigma}(\Sigma)$ is an admissible configuration as in Definition~\ref{def:balancedparameters} with $\bar{u}_{(\boldsymbol{x},\boldsymbol{L},\boldsymbol{a}_j,\boldsymbol{\lambda}_j)}\in{\rm Apx}_{\sigma}(\Sigma)$ their associated approximate solution as in Definition~\ref{def:approximatesolution}.  
		Then, there exists a weight $\zeta_1<0$ satisfying \eqref{weightassumption} such that for any $h\in\mathcal{C}_{**,\tau}(\mathbb{R}^n\setminus\Sigma)$, 
		there exists $\{c_{j, \ell}^i(\boldsymbol{a}_j,\boldsymbol{\lambda}_j)\}_{(i,j,\ell)\in\mathcal{I}_\infty}\subset\mathbb R$ and a unique solution            
		$\phi\in \mathcal{C}_{*,\tau}(\mathbb{R}^n\setminus\Sigma)$ to 
		the following linearized equation 
		\begin{align}\label{auxiliarysystemdual}\tag{$\mathcal{L}_{2\sigma,\boldsymbol{a},\boldsymbol{\lambda}}^{\prime}$}
			\left\{\begin{array}{l}
				\mathscr{L}_{\sigma}(\boldsymbol{x},\boldsymbol{L},\boldsymbol{a}_j,\boldsymbol{\lambda}_j)(\phi)=h+ \displaystyle\displaystyle\sum_{i=1}^N  \displaystyle\sum_{j\in\mathbb N}  \displaystyle\sum_{\ell=0}^n c_{j, \ell}^i(\boldsymbol{a}_j,\boldsymbol{\lambda}_j) {Z}_{j, \ell}^i(\boldsymbol{a}_j,\boldsymbol{\lambda}_j) \quad {\rm in} \quad \mathbb{R}^n\setminus\Sigma, \\
				\displaystyle\int_{\mathbb{R}^n} \phi\overline{Z}_{j, \ell}^i(\boldsymbol{a}_j,\boldsymbol{\lambda}_j) \ud x=0  \quad {\rm for} \quad (i,j,\ell)\in \mathcal{I}_{\infty}.
			\end{array}\right.                   
		\end{align}
		Moreover, one has the estimate
		\begin{equation*}
			\|\phi\|_{\mathcal{C}_{*,\tau}(\mathbb{R}^n\setminus\Sigma)} \lesssim \|h\|_{\mathcal{C}_{**,\tau}(\mathbb{R}^n\setminus\Sigma)}.
		\end{equation*}
		uniformly on $\lambda\ll1$ large.
		In what follows, we shall denote this error function by $\phi_{(\boldsymbol{x},\boldsymbol{L},\boldsymbol{a}_j,\boldsymbol{\lambda}_j)}$.
	\end{lemma}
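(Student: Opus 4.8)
The plan is to prove a uniform a priori estimate for solutions of \eqref{auxiliarysystemdual} and then to bootstrap it to existence and uniqueness by a Fredholm–alternative argument. Write $\mathscr{L}_{\sigma}(\boldsymbol{x},\boldsymbol{L},\boldsymbol{a}_j,\boldsymbol{\lambda}_j)(\phi)=\phi-(-\Delta)^{-\sigma}(f'_\sigma(\bar u_{(\boldsymbol{x},\boldsymbol{L},\boldsymbol{a}_j,\boldsymbol{\lambda}_j)})\phi)$ and let $X_{*,\tau}$ denote the closed subspace of $\mathcal{C}_{*,\tau}(\mathbb{R}^n\setminus\Sigma)$ of functions that are $L^2$-orthogonal to every normalized approximate cokernel $\overline{Z}_{j,\ell}^i(\boldsymbol{a}_j,\boldsymbol{\lambda}_j)$. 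The heart of the matter is the claim: \emph{there is $C>0$, independent of $L\gg1$ and of the admissible configuration, such that whenever $\phi\in X_{*,\tau}$ solves the first line of \eqref{auxiliarysystemdual} for some $h\in\mathcal{C}_{**,\tau}(\mathbb{R}^n\setminus\Sigma)$ and some coefficients $\{c^i_{j,\ell}(\boldsymbol{a}_j,\boldsymbol{\lambda}_j)\}$, then $\|\phi\|_{\mathcal{C}_{*,\tau}(\mathbb{R}^n\setminus\Sigma)}\le C\|h\|_{\mathcal{C}_{**,\tau}(\mathbb{R}^n\setminus\Sigma)}$.} Uniqueness in the lemma is immediate from this claim (the difference of two solutions lies in $X_{*,\tau}$ and solves the homogeneous problem, hence vanishes), and the claim is precisely the quantitative bound asserted.

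I would prove the a priori estimate by contradiction and a concentration–compactness (blow-up) analysis. Suppose there exist $L_k\to+\infty$, admissible configurations, $\phi_k\in X_{*,\tau}$ with $\|\phi_k\|_{\mathcal{C}_{*,\tau}}=1$ and $h_k$ with $\|h_k\|_{\mathcal{C}_{**,\tau}}\to 0$. First, testing the equation against the cokernels, using self-adjointness of $(-\Delta)^{-\sigma}$, the near-solution property $(-\Delta)^{-\sigma}(f'_\sigma(U^i_j)Z^i_{j,\ell})=Z^i_{j,\ell}+(\text{interaction})$, and the orthogonality relations of Lemma~\ref{lm:orthogonalityconditions} (which make the Gram matrix $(\int Z^{i'}_{j',\ell'}\overline{Z}^i_{j,\ell})$ diagonally dominant with geometrically decaying off-diagonal entries in $|t^i_j-t^{i'}_{j'}|$, hence boundedly invertible on the weighted sequence space $\ell^\infty_\tau$), yields that the $c^i_{j,\ell}(\boldsymbol{a}_k,\boldsymbol{\lambda}_k)\to 0$ in $\ell^\infty_\tau$. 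Next, rescaling $\phi_k$ around each bubble $U_{(x_i,L^i_{j,k},a^i_{j,k},\lambda^i_{j,k})}$ by its own scale $\lambda^i_{j,k}$ and translating its centre to the origin, and using the interior regularity of Lemma~\ref{lm:fractionalregularityhigherorder} together with the smoothing of the Riesz kernel, one gets $\mathcal{C}^{2\sigma}_{\rm loc}$ convergence to a bounded solution $\phi_\infty$ of the linearized equation \eqref{linearizedequation} about $u_{\rm sph}$; here the exponential kernel decay \eqref{asmpkerneldual} and the interaction estimates of Appendix~\ref{sec:bubbetowerinteractionestimates} are used to show the nonlocal term decouples, i.e.\ $(-\Delta)^{-\sigma}(f'_\sigma(\bar u_k)\phi_k)\to(-\Delta)^{-\sigma}(f'_\sigma(u_{\rm sph})\phi_\infty)$ with no residual contribution from the other (infinitely many) bubbles or from the cutoff error $\phi_{(0,L_j)}$ of Corollary~\ref{cor:estimatesdelaunay}. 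By the nondegeneracy Lemma~\ref{lm:nondegeneracy}, $\phi_\infty\in\vspan\{\gamma_\sigma u_{\rm sph}+x\cdot\nabla u_{\rm sph},\ \partial_{x_1}u_{\rm sph},\dots,\partial_{x_n}u_{\rm sph}\}$; but the orthogonality constraints in $X_{*,\tau}$ pass to the limit, forcing $\phi_\infty\equiv 0$, so no concentration occurs at any bubble in any tower.

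It remains to rule out that the unit mass of $\phi_k$ escapes into the neck regions between consecutive bubbles, near $\Sigma$, or near infinity. This is handled by a barrier/comparison argument exploiting the weight exponents: the admissibility constraint \eqref{weightassumption}, namely $-\gamma_\sigma<\zeta_1<\min\{-\gamma_\sigma+2\sigma,0\}$, places the weight strictly between the indicial roots of the linearized model (cylindrical) operator at each end, so that $\operatorname{dist}(x,\Sigma)^{\zeta_1}$ and $|x|^{2\sigma-n}$ are genuine super/sub-solutions away from the bubbles, while the extra $\tau$-loss built into $\mathcal{C}_{*,\tau}$, matched to the decay \eqref{balancing6} of the perturbation parameters, absorbs the bubble–bubble interactions uniformly over the tower. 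Combining the pointwise decay of $(-\Delta)^{-\sigma}$ applied to $f'_\sigma(\bar u_k)\phi_k+h_k$ (estimated in the weighted norms as in the proof of Lemma~\ref{lm:quantitativeestimates}) with the no-concentration conclusion gives $\|\phi_k\|_{\mathcal{C}_{*,\tau}}\to 0$, contradicting $\|\phi_k\|_{\mathcal{C}_{*,\tau}}=1$ and proving the claim. With the a priori estimate in hand, existence follows from the Fredholm alternative exactly as in Claim~1 of Lemma~\ref{lm:estimatesdelaunay}: the map $\phi\mapsto(-\Delta)^{-\sigma}(f'_\sigma(\bar u)\phi)$ is compact on the relevant weighted space, so $\mathscr{L}_\sigma$ is Fredholm of index zero, and adjoining the finite-rank terms $\sum_{i,j,\ell}c^i_{j,\ell}Z^i_{j,\ell}$ together with the orthogonality constraints against the cokernels turns the augmented operator into an isomorphism from $X_{*,\tau}\times\ell^\infty_\tau$ onto $\mathcal{C}_{**,\tau}(\mathbb{R}^n\setminus\Sigma)$, whose inverse is bounded uniformly in $L\gg1$ by the claim.

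The main obstacle I anticipate is the blow-up analysis for the nonlocal operator: unlike in the local setting, one cannot simply localize, and one must show — uniformly over which bubble $(i,j)$ in the infinite half-tower the (hypothetical) concentration occurs at, and simultaneously over the neck regions — that after rescaling the nonlocal term converges to the linearization about a single standard bubble with no leftover mass from the remaining bubbles or from the cutoff error. Making this quantitative with only the exponential interaction estimates of Appendix~\ref{sec:bubbetowerinteractionestimates} and the asymptotics of Corollary~\ref{cor:estimatesdelaunay}, in the weighted spaces with the sharp exponents of \eqref{weightassumption}, is the technical core of the argument.
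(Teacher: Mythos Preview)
Your proposal is correct and follows essentially the same architecture as the paper's proof: coefficient control via testing against cokernels and inverting a diagonally dominant (Toeplitz-type) Gram matrix, an a priori estimate by contradiction and blow-up around individual bubbles using the nondegeneracy Lemma~\ref{lm:nondegeneracy}, and existence via a Fredholm alternative. The paper organizes this into five explicit claims (Claims~1--3 bound $c^i_{j,\ell}$ and $\sum c^i_{j,\ell}Z^i_{j,\ell}$ in $\mathcal{C}_{**,\tau}$, Claim~4 is the contradiction/blow-up a priori estimate, Claim~5 is the Fredholm step), but the logical skeleton matches yours.

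Two minor technical differences are worth flagging. First, for the neck regions you invoke a barrier/indicial-root argument in the spirit of Mazzeo--Pacard, whereas the paper avoids this entirely and instead estimates the Riesz convolution $(-\Delta)^{-\sigma}(f'_\sigma(\bar u)\phi_k)$ directly: it splits the domain into bubble neighborhoods $\mathcal{A}^{i,k}_j=\{R^{-1}\lambda^i_j<|x-x_i|<R\lambda^i_j\}$ (where the blow-up gives $o(1)$) and their complements (where $\sum_j V_j\lesssim e^{-\gamma_\sigma R}$ provides smallness for $R\gg1$). Both approaches work, but the paper's is more elementary and sidesteps any edge-operator theory. Second, you set up the Fredholm alternative directly on weighted H\"older spaces, while the paper runs it in the Hilbert space $\mathscr{H}^\perp\subset H^\sigma(\mathbb{R}^n)$ and then upgrades to $\mathcal{C}_{*,\tau}$ via the a priori estimate; your route requires verifying compactness of $\phi\mapsto(-\Delta)^{-\sigma}(f'_\sigma(\bar u)\phi)$ on $\mathcal{C}_{*,\tau}$, which is not quite what Claim~1 of Lemma~\ref{lm:estimatesdelaunay} gives (that is on the periodic $H^\sigma_L$ model), though it follows by similar kernel estimates.
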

	
	\begin{proof}
		First, by multiplying equation \eqref{auxiliarysystemdual} by the normalized approximate cokernels 
		$\overline{Z}_{j^{\prime}, \ell^{\prime}}^{i^\prime}(\boldsymbol{a}_j,\boldsymbol{\lambda}_j)$ given by Definition~\ref{def:kernels}, and integrating over $\mathbb{R}^n$, it follows
		\begin{align}\label{linearizedtimeskernels}
			&\int_{\mathbb{R}^n}\left[\phi-(-\Delta)^{-\sigma}(f^{\prime}_\sigma(\bar{u}_{(\boldsymbol{x},\boldsymbol{L},\boldsymbol{a}_j,\boldsymbol{\lambda}_j)}) \phi)\right] f_{\sigma}^{\prime}(U_{(x_{i^\prime},L_{j^\prime}^{i^\prime}\lambda_{j^{\prime}}^{i^\prime},a_{j^{\prime}}^{i^\prime})})Z_{j^{\prime}, \ell^{\prime}}^{i^\prime}(\boldsymbol{a}_j^\prime,\boldsymbol{\lambda}_j^\prime) \ud x\\\nonumber
			&=\int_{\mathbb{R}^n} h f_{\sigma}^{\prime}(U_{(x_{i^\prime},L_{j^\prime}^{i^\prime}\lambda_{j^{\prime}}^{i^\prime},a_{j^{\prime}}^{i^\prime})})Z_{j^{\prime}, \ell^{\prime}}^{i^\prime}(\boldsymbol{a}_j^\prime,\boldsymbol{\lambda}_j^\prime) \ud x\\\nonumber
			&+\displaystyle\sum_{i=1}^N  \displaystyle\sum_{j\in\mathbb N}  \displaystyle\sum_{\ell=0}^n c_{j, \ell}^i(\boldsymbol{a}_j^\prime,\boldsymbol{\lambda}_j^\prime)\int_{\mathbb R^n} f_{\sigma}^{\prime}(U_{(x_{i^\prime},L_{j^\prime}^{i^\prime}\lambda_{j^{\prime}}^{i^\prime},a_{j^{\prime}}^{i^\prime})}){Z}_{j, \ell}^i(\boldsymbol{a}_j^\prime,\boldsymbol{\lambda}_j^\prime) Z_{j^{\prime}, \ell^{\prime}}^{i^\prime}(\boldsymbol{a}^\prime_j,\boldsymbol{\lambda}_j^\prime)\ud x.
		\end{align}
		They simplify our notation, let us set 
		\begin{align*}
			I_0&=\int_{\mathbb{R}^n}\left[\phi-(-\Delta)^{-\sigma}(f^{\prime}_\sigma(\bar{u}_{(\boldsymbol{x},\boldsymbol{L},\boldsymbol{a}_j,\boldsymbol{\lambda}_j)}) \phi)\right] f_{\sigma}^{\prime}(U_{(x_{i^\prime},L_{j^\prime}^{i^\prime}\lambda_{j^{\prime}}^{i^\prime},a_{j^{\prime}}^{i^\prime})})Z_{j^{\prime}, \ell^{\prime}}^{i^\prime}(\boldsymbol{a}_j^\prime,\boldsymbol{\lambda}_j^\prime) \ud x
		\end{align*}
		and
		\begin{align*}
			I_1&=\int_{\mathbb{R}^n} h f_{\sigma}^{\prime}(U_{(x_{i^\prime},L_{j^\prime}^{i^\prime}\lambda_{j^{\prime}}^{i^\prime},a_{j^{\prime}}^{i^\prime})})Z_{j^{\prime}, \ell^{\prime}}^{i^\prime}(\boldsymbol{a}_j^\prime,\boldsymbol{\lambda}_j^\prime) \ud x.
		\end{align*}
		In the next claims, we will estimate the two terms above based on the orthogonality conditions from Lemma~\ref{lm:orthogonalityconditions}.
		
		\noindent{\bf Claim 1:} The following estimate holds
		\begin{equation*}
			|I_0|\lesssim  \|\phi\|_{\mathcal{C}_{*,\tau}(\mathbb{R}^n\setminus\Sigma)}e^{-\gamma_\sigma L(1+\xi)}e^{-(\zeta_1-\tau+\gamma_\sigma)t_{j^\prime}^i}.
		\end{equation*}
		
		\noindent Indeed, it is not hard to check that the approximate kernel $Z_{j^{\prime}, \ell^{\prime}}^{i^{\prime}}(\boldsymbol{a}_j^\prime,\boldsymbol{\lambda}_j^\prime)$ satisfies the linearized equation below
		\begin{equation*}
			(-\Delta)^\sigma Z_{j^{\prime}, \ell^{\prime}}^{i^{\prime}}(\boldsymbol{a}_j^\prime,\boldsymbol{\lambda}_j^\prime)-f_{\sigma}^{\prime}(U_{(x_{i^\prime},L_{j^\prime}^{i^\prime}\lambda_{j^{\prime}}^{i^\prime},a_{j^{\prime}}^{i^\prime})})Z_{j^{\prime}, \ell^{\prime}}^{i^\prime}(\boldsymbol{a}_j^\prime,\boldsymbol{\lambda}_j^\prime)=0 \quad \quad \mathbb R^n\setminus\Sigma,
		\end{equation*}
		we have  
		\begin{align*}
			I_0
			&= \int_{\mathbb{R}^n}f_{\sigma}^{\prime}(U_{(x_{i^\prime},L_{j^\prime}^{i^\prime}\lambda_{j^{\prime}}^{i^\prime},a_{j^{\prime}}^{i^\prime})})\phi Z_{j^{\prime}, \ell^{\prime}}^i(\boldsymbol{a}_j,\boldsymbol{\lambda}_j)-(-\Delta)^{-\sigma}(f^{\prime}_\sigma(\bar{u}_{(\boldsymbol{x},\boldsymbol{L},\boldsymbol{a}_j,\boldsymbol{\lambda}_j)}) \phi) (-\Delta)^\sigma Z_{j^{\prime}, \ell^{\prime}}^{i^\prime}(\boldsymbol{a}_j^\prime,\boldsymbol{\lambda}_j^\prime) \ud x \\
			&= \int_{\mathbb{R}^n}\left[f_{\sigma}^{\prime}(U_{(x_{i^\prime},L_{j^\prime}^{i^\prime}\lambda_{j^{\prime}}^{i^\prime},a_{j^{\prime}}^{i^\prime})})-f^{\prime}_\sigma(\bar{u}_{(\boldsymbol{x},\boldsymbol{L},\boldsymbol{a}_j,\boldsymbol{\lambda}_j)}) \right]\phi Z_{j^{\prime}, \ell^{\prime}}^{i^\prime}(\boldsymbol{a}_j^\prime,\boldsymbol{\lambda}_j^\prime) \ud x \\
			&= {\left[\int_{B_1(x_{i^{\prime}})}+\sum_{i \neq i^{\prime}} \int_{B_1(x_i)}+\int_{\mathbb{R}^n \backslash \bigsqcup\limits_{i=1}^N B_1(x_i)}\right]\left[f_{\sigma}^{\prime}(U_{(x_{i^\prime},L_{j^\prime}^{i^\prime}\lambda_{j^{\prime}}^{i^\prime},a_{j^{\prime}}^{i^\prime})})-f^{\prime}_\sigma(\bar{u}_{(\boldsymbol{x},\boldsymbol{L},\boldsymbol{a}_j,\boldsymbol{\lambda}_j)}) \right]\phi Z_{j^{\prime}, \ell^{\prime}}^{i^\prime}(\boldsymbol{a}_j^\prime,\boldsymbol{\lambda}_j^\prime) \ud x } \\
			&=: I_{01}+ I_{02}+ I_{03} .
		\end{align*}
		
		\noindent Without loss of generality, assume that $i^\prime=1$ and $x_1=0$. First, we consider the case when $\ell^\prime=0$. Recalling the estimates for $Z_{j^{\prime}, 0}^{i^\prime}(\boldsymbol{a}_j,\boldsymbol{\lambda}_j)$ from \eqref{kernelestimate1}, we get
		\begin{align*}
			|I_{01}|
			&=\left|\int_{B_1}\left[f_{\sigma}^{\prime}(U_{(x_{i^\prime},L_{j^\prime}^{i^\prime}\lambda_{j^{\prime}}^{i^\prime},a_{j^{\prime}}^{i^\prime})})-f^{\prime}_\sigma(\bar{u}_{(\boldsymbol{x},\boldsymbol{L},\boldsymbol{a}_j,\boldsymbol{\lambda}_j)}) \right]\phi Z_{j^{\prime}, \ell^{\prime}}^{i^\prime}(\boldsymbol{a}_j^\prime,\boldsymbol{\lambda}_j^\prime) \ud x \right|\\
			&\lesssim \|\phi\|_{\mathcal{C}_{*,\tau}(\mathbb{R}^n\setminus\Sigma)}\int_{B_1}\left|f_{\sigma}^{\prime}(U_{(x_{i^\prime},L_{j^\prime}^{i^\prime}\lambda_{j^{\prime}}^{i^\prime},a_{j^{\prime}}^{i^\prime})})-f^{\prime}_\sigma(\bar{u}_{(\boldsymbol{x},\boldsymbol{L},\boldsymbol{a}_j,\boldsymbol{\lambda}_j)})\right| |x|^{\zeta_1}|Z_{j^{\prime}, \ell^{\prime}}^{i^\prime}(\boldsymbol{a}_j^\prime,\boldsymbol{\lambda}_j^\prime)| \ud x\\
			&\lesssim \|\phi\|_{\mathcal{C}_{*,\tau}(\mathbb{R}^n\setminus\Sigma)}\int_{B_1}|x|^{\zeta_1-\frac{n+2\sigma}{2}}V_{(x_{i^\prime},L_{j^\prime}^{i^\prime}\lambda_{j^{\prime}}^{i^\prime},a_{j^{\prime}}^{i^\prime})}^{\frac{4\sigma}{n-2\sigma}}\sum_{j\neq j^\prime}V_{(x_{i^\prime},L_{j^\prime}^{i^\prime}\lambda_{j^{\prime}}^{i^\prime},a_{j^{\prime}}^{i^\prime})} \ud x&\\
			&\lesssim \|\phi\|_{\mathcal{C}_{*,\tau}(\mathbb{R}^n\setminus\Sigma)}\int_{0}^{+\infty}e^{-(\zeta_1+\gamma_\sigma)t}v_{j^\prime}^{\frac{4\sigma}{n-2\sigma}}\sum_{j\neq j^\prime}V_{(x_i,L_i,a_j^i, \lambda^i_j)}\ud t\\
			&\lesssim \|\phi\|_{\mathcal{C}_{*,\tau}(\mathbb{R}^n\setminus\Sigma)}e^{-\gamma_\sigma L(1+\xi)}e^{-(\zeta_1+\gamma_\sigma)t_{j^\prime}^{i^\prime}},
		\end{align*}
		since $\zeta_1>-\gamma_\sigma$. 
		Next, it holds
		\begin{align*}
			|I_{02}|
			&=\left|\sum_{i\neq 1}\int_{B_1(x_i)}\left[f_{\sigma}^{\prime}(U_{(x_{i^\prime},L_{j^\prime}^{i^\prime}\lambda_{j^{\prime}}^{i^\prime},a_{j^{\prime}}^{i^\prime})})-f^{\prime}_\sigma(\bar{u}_{(\boldsymbol{x},\boldsymbol{L},\boldsymbol{a}_j,\boldsymbol{\lambda}_j)}) \right]\phi Z_{j^{\prime}, \ell^{\prime}}^{i^\prime}(\boldsymbol{a}_j^\prime,\boldsymbol{\lambda}_j^\prime) \ud x\right|\\
			&\lesssim \|\phi\|_{\mathcal{C}_{*,\tau}(\mathbb{R}^n\setminus\Sigma)}\sum_{i\neq 1}\int_{B_1(x_i)}\left|f_{\sigma}^{\prime}(U_{(x_{i^\prime},L_{j^\prime}^{i^\prime}\lambda_{j^{\prime}}^{i^\prime},a_{j^{\prime}}^{i^\prime})})-f^{\prime}_\sigma(\bar{u}_{(\boldsymbol{x},\boldsymbol{L},\boldsymbol{a}_j,\boldsymbol{\lambda}_j)}) \right||Z_{j^{\prime}, \ell^{\prime}}^{i^\prime}(\boldsymbol{a}_j^\prime,\boldsymbol{\lambda}_j^\prime)||x-x_i|^{\zeta_1}\ud x\\
			&\lesssim \|\phi\|_{\mathcal{C}_{*,\tau}(\mathbb{R}^n\setminus\Sigma)}\sum_{i\neq 1}\int_{B_1(x_1)} |x-x_i|^{\zeta_1-2\sigma}(\lambda^{i^\prime}_{j^\prime})^{\gamma_\sigma}\left(\sum_{j\in\mathbb N}V_{(x_i,L_i,a_j^i, \lambda^i_j)}(-\ln |x|)\right)\ud x\\
			&\lesssim \|\phi\|_{\mathcal{C}_{*,\tau}(\mathbb{R}^n\setminus\Sigma)}(\lambda^{i^\prime}_{j^\prime})^{\gamma_\sigma}e^{-(n+\zeta_1-2\sigma)L}\\
			&\lesssim \|\phi\|_{\mathcal{C}_{*,\tau}(\mathbb{R}^n\setminus\Sigma)}e^{\zeta_1 t^{i^\prime}_{j^\prime}-2\gamma_\sigma L-\zeta_1 L}e^{-(\zeta_1+\gamma_\sigma)t^{i^\prime}_{j^\prime}}\\
			&\lesssim \|\phi\|_{\mathcal{C}_{*,\tau}(\mathbb{R}^n\setminus\Sigma)}e^{-\gamma_\sigma L(1+\xi)}e^{-(\zeta_1+\gamma_\sigma)t_{j^\prime}^{i^\prime}}.
		\end{align*}
		In addition, one has
		\begin{align*}
			|I_{03}|
			&=\left|\int_{\mathbb{R}^n \backslash \bigsqcup\limits_{i=1}^N B_1(x_i)}\left[f_{\sigma}^{\prime}(U_{(x_{i^\prime},L_{j^\prime}^{i^\prime}\lambda_{j^{\prime}}^{i^\prime},a_{j^{\prime}}^{i^\prime})})-f^{\prime}_\sigma(\bar{u}_{(\boldsymbol{x},\boldsymbol{L},\boldsymbol{a}_j,\boldsymbol{\lambda}_j)}) \right]\phi Z_{j^{\prime}, \ell^{\prime}}^{i^\prime}(\boldsymbol{a}_j^\prime,\boldsymbol{\lambda}_j^\prime) \ud x \right|\\
			&\lesssim \|\phi\|_{\mathcal{C}_{*,\tau}(\mathbb{R}^n\setminus\Sigma)}\int_{\mathbb{R}^n \backslash \bigsqcup\limits_{i=1}^N B_1(x_i)} |x|^{-(n-2\sigma)}|x|^{-(n+2\sigma)}(\lambda^{i^\prime}_{j^\prime})^{\gamma_\sigma} e^{-2\sigma L} \ud x\\
			&\lesssim \|\phi\|_{\mathcal{C}_{*,\tau}(\mathbb{R}^n\setminus\Sigma)} e^{\zeta_1 t_{j^\prime}^{i^\prime}-2\sigma L}e^{-(\zeta_1+\gamma_\sigma)t_{j^\prime}^{i^\prime}}\\
			&\lesssim \|\phi\|_{\mathcal{C}_{*,\tau}(\mathbb{R}^n\setminus\Sigma)}e^{-\gamma_\sigma L(1+\xi)}e^{-(\zeta_1+\gamma_\sigma)t_{j^\prime}^{i^\prime}},
		\end{align*}
		where we have used $-\gamma_\sigma<\zeta_1<-\gamma_\sigma+2\sigma$.
		
		\noindent On the other hand, from  \eqref{kernelestimate1}, we recall  
		\begin{equation*}
			Z_{j^{\prime}, \ell^{\prime}}^{i^\prime}(\boldsymbol{a}_j^\prime,\boldsymbol{\lambda}_j^\prime)=\mathcal{O}(|x-x_{i^\prime}|^{-\gamma_\sigma})V_{(x_{i^{\prime}},L_{j^{\prime}}^{i^{\prime}},a_{j^{\prime}}^{i^{\prime}}, \lambda_{i^{\prime}})}(-\ln |x|)^{1+\frac{2}{n-2\sigma}} \quad {\rm for} \quad \ell^\prime\in \{1, \dots, n\}.
		\end{equation*}
		Using the last identity, one can get similar estimates to the ones above. 
		In conclusion, it is straightforward to check 
		\begin{align*}
			|I_0|&=\left|\int_{\mathbb{R}^n}\left[\phi-(-\Delta)^{-\sigma}(f^{\prime}_\sigma(\bar{u}_{(\boldsymbol{x},\boldsymbol{L},\boldsymbol{a}_j,\boldsymbol{\lambda}_j)}) \phi)\right] f_{\sigma}^{\prime}(U_{(x_{i^\prime},L_{j^\prime}^{i^\prime}\lambda_{j^{\prime}}^{i^\prime},a_{j^{\prime}}^{i^\prime})})Z_{j^{\prime}, \ell^{\prime}}^{i^\prime}(\boldsymbol{a}_j^\prime,\boldsymbol{\lambda}_j^\prime) \ud x\right|\\
			&\lesssim  \|\phi\|_{\mathcal{C}_{*,\tau}(\mathbb{R}^n\setminus\Sigma)}e^{-\gamma_\sigma L(1+\xi)}e^{-(\zeta_1+\gamma_\sigma)t_{j^\prime}^{i^\prime}},
		\end{align*}
		which proves the claim.
		
		\noindent{\bf Claim 2:} The following estimate holds
		\begin{equation*}
			|I_1|\lesssim \|h\|_{\mathcal{C}_{**,\tau}(\mathbb{R}^n\setminus\Sigma)}e^{-(\zeta_1+\gamma_\sigma)t_{j^\prime}^{i^\prime}}.
		\end{equation*}
		In fact, we have
		\begin{align*}
			|I_1|
			&=\left|\int_{\mathbb{R}^n} h f_{\sigma}^{\prime}(U_{(x_{i^\prime},L_{j^\prime}^{i^\prime}\lambda_{j^{\prime}}^{i^\prime},a_{j^{\prime}}^{i^\prime})})Z_{j^{\prime}, \ell^{\prime}}^{i^\prime}(\boldsymbol{a}_j^\prime,\boldsymbol{\lambda}_j^\prime) \ud x\right|\\
			&\lesssim \int_{B_1(x_{i^{\prime}}}\|h\|_{\mathcal{C}_{**,\tau}(\mathbb{R}^n\setminus\Sigma)}|x-x_{i^\prime}|^{\zeta_1-\tau} |x-x_{i^\prime}|^{-\gamma_\sigma}\left(e^{-\gamma_\sigma t_{j^\prime}^{i^\prime}}+e^{-(\gamma_\sigma+1) t_{j^\prime}^{i^\prime}}\right)\ud x\\
			&+\sum_{i \neq i^{\prime}} \int_{B_1(x_i)}\|h\|_{\mathcal{C}_{**,\tau}(\mathbb{R}^n\setminus\Sigma)}|x-x_i|^{\zeta_1-\tau}e^{-\gamma_\sigma t_{j^\prime}^{i^\prime}}\ud x\\
			&+\int_{\mathbb{R}^n \backslash \bigsqcup\limits_{i=1}^N B_1(x_i)}\|h\|_{\mathcal{C}_{**,\tau}(\mathbb{R}^n\setminus\Sigma)}|x|^{-(n-2\sigma)}|x|^{-4\sigma}|x|^{-(n-2\sigma)}e^{-\gamma_\sigma t_{j^\prime}^{i^\prime}} \ud x\\
			&\lesssim \|h\|_{\mathcal{C}_{**,\tau}(\mathbb{R}^n\setminus\Sigma)}e^{-(\zeta_1-
				\tau+\gamma_\sigma) t_{j^\prime}^{i^\prime}},
		\end{align*}
		which proves the desired estimate.
		
		\noindent{\bf Claim 3:} The following estimate holds
		\begin{equation*}
			\left\|\sum_{i=1}^N  \displaystyle\sum_{j\in\mathbb N}  \displaystyle\sum_{\ell=0}^n c^{i}_{j,\ell}(\boldsymbol{a}_j,\boldsymbol{\lambda}_j)Z^{i}_{j,\ell}(\boldsymbol{a}_j,\boldsymbol{\lambda}_j)\right\|_{\mathcal{C}_{**,\tau}(\mathbb{R}^n\setminus\Sigma)}\lesssim e^{-\gamma_\sigma L(1+\xi)}\|\phi\|_{\mathcal{C}_{*,\tau}(\mathbb{R}^n\setminus\Sigma)}+\|h\|_{\mathcal{C}_{**,\tau}(\mathbb{R}^n\setminus\Sigma)}.
		\end{equation*}
		
		\noindent As a matter of fact, we first isolate the term $c^{i}_{j,\ell}(\boldsymbol{a}_j,\boldsymbol{\lambda}_j)$ in \eqref{linearizedtimeskernels} by inverting the matrix
		\begin{equation*}
			\int_{\mathbb R^n} f_{\sigma}^{\prime}(U_{(x_{i^\prime},L_{j^\prime}^{i^\prime}\lambda_{j^{\prime}}^{i^\prime},a_{j^{\prime}}^{i^\prime})}){Z}_{j, \ell}^i(\boldsymbol{a}_j,\boldsymbol{\lambda}_j) Z_{j^{\prime}, \ell^{\prime}}^{i^\prime}(\boldsymbol{a}_j^\prime,\boldsymbol{\lambda}_j^\prime)\ud x.
		\end{equation*}
		For this, recall the orthogonality estimates from \eqref{orthogonalitycond1} and \eqref{orthogonalitycond2}, which yields
		\begin{align*}
			\int_{\mathbb{R}^n}f^{\prime}_\sigma(U_{(x_i,L^i_j,\lambda_j^i,a_j^i)})Z_{j, \ell}^i(\boldsymbol{a}_j,\boldsymbol{\lambda}_j) Z_{j, \ell^{\prime}}^i(\boldsymbol{a}_j,\boldsymbol{\lambda}_j) \ud x=C_0\delta_{\ell, \ell^{\prime}} \quad {\rm for} \quad \ell^\prime\in \{1, \dots, n\},
		\end{align*} 
		and
		\begin{align*}
			\int_{\mathbb{R}^n}f^{\prime}_\sigma(U_{(x_i,L^i_j,\lambda_j^i,a_j^i)})Z_{j, \ell}^i(\boldsymbol{a}_j,\boldsymbol{\lambda}_j) Z_{j^{\prime}, \ell^{\prime}}^i(\boldsymbol{a}_j,\boldsymbol{\lambda}_j) \ud x=\mathcal{O}(e^{-\gamma_{\sigma}|t_j^i-t_{j^{\prime}}^i|})\quad  \text{ if } \quad \ell\neq \ell^\prime,
		\end{align*} 
		plus a tiny error. Then using  in \cite[Lemma A.6]{MR2522830} for the inversion of a Toepliz-type operator, one has from \eqref{linearizedtimeskernels} that
		\begin{align*}
			|c_{j,\ell}^i(\boldsymbol{a}_j,\boldsymbol{\lambda}_j)|&\lesssim [e^{-\gamma_\sigma L(1+\xi)}\|\phi\|_{\mathcal{C}_{*,\tau}(\mathbb{R}^n\setminus\Sigma)}
			+\|h\|_{\mathcal{C}_{**,\tau}(\mathbb{R}^n\setminus\Sigma)}]e^{-(\zeta_1-\tau+\gamma_\sigma)t_j^i}\\
			&+\sum_{j'\neq j}[e^{-\gamma_\sigma L(1+\xi)}\|\phi\|_{\mathcal{C}_{*,\tau}(\mathbb{R}^n\setminus\Sigma)}
			+\|h\|_{\mathcal{C}_{**,\tau}(\mathbb{R}^n\setminus\Sigma)}]e^{-\gamma_\sigma(1+o(1))|t_j-t_{j'}|}e^{-(\zeta_1-\tau+\gamma_\sigma)t_{j'}^i}\\
			&\lesssim [e^{-\gamma_\sigma L(1+\xi)}\|\phi\|_{\mathcal{C}_{*,\tau}(\mathbb{R}^n\setminus\Sigma)}
			+\|h\|_{\mathcal{C}_{**,\tau}(\mathbb{R}^n\setminus\Sigma)}]
			e^{-\gamma_\sigma(1+o(1))|t_j-t_{j'}|}e^{-(\zeta_1-\tau+\gamma_\sigma)t_{j}^i}.
		\end{align*}
		
		\noindent Using the estimates \eqref{kernelestimate1} of $Z^{i}_{j,\ell}(\boldsymbol{a}_j,\boldsymbol{\lambda}_j)$ and its equation, we split the integrals as in Step 3 in the proof of Claim 2 in Lemma \ref{lm:quantitativeestimates} and get in $B_1(p_i)$,
		\begin{align*}
			|Z^{i}_{j,\ell}(\boldsymbol{a}_j,\boldsymbol{\lambda}_j)|&=|(-\Delta)^{-\sigma}(f_{\sigma}^{\prime}(U_{(x_i,L^i_j,\lambda_j^i,a_j^i)}) {Z}_{j, \ell}^i(\boldsymbol{a}_j,\boldsymbol{\lambda}_j))|\\
			&\lesssim |x-x_i|^{-\gamma_\sigma+2\sigma}e^{-(\gamma_\sigma+2\sigma)|t^{i}_{j^\prime}-t^{i}_{j}|}\lesssim |x-x_i|^{\zeta_1-\tau}e^{-(\gamma_\sigma+2\sigma)|t^{i}_{j^\prime}-t^{i}_{j}|}.
		\end{align*}
		
		\noindent The above two estimates yield that
		\begin{align*}
			|c_{j,\ell}^{i}(\boldsymbol{a}_j,\boldsymbol{\lambda}_j)Z_{j,\ell}^{i}(\boldsymbol{a}_j,\boldsymbol{\lambda}_j)|\lesssim |x-x_i|^{\zeta_1-\tau}[e^{-\gamma_\sigma L(1+\xi)}\|\phi\|_{\mathcal{C}_{*,\tau}(\mathbb{R}^n\setminus\Sigma)}
			+\|h\|_{\mathcal{C}_{**,\tau}(\mathbb{R}^n\setminus\Sigma)}]e^{-\zeta|t^{i}_{j^\prime}-t^{i}_{j}|}
		\end{align*}
		for some $\zeta>0$.
		
		\noindent For $x\in \mathbb{R}^n \backslash \bigsqcup\limits_{i=1}^N B_1(x_i)$, one has
		\begin{align*}
			|c_{j,\ell}^{i}(\boldsymbol{a}_j,\boldsymbol{\lambda}_j)Z_{j,\ell}^{i}(\boldsymbol{a}_j,\boldsymbol{\lambda}_j)|&\lesssim (\lambda_j^i)^{\gamma_\sigma}|x|^{-(n-2\sigma)}|c_{j,\ell}^{i}(\boldsymbol{a}_j,\boldsymbol{\lambda}_j)|\\
			&\lesssim |x|^{-(n-2\sigma)}[e^{-\gamma_\sigma L(1+\xi)}\|\phi\|_{\mathcal{C}_{*,\tau}(\mathbb{R}^n\setminus\Sigma)}
			+\|h\|_{\mathcal{C}_{**,\tau}(\mathbb{R}^n\setminus\Sigma)}]e^{-\zeta|t^{i}_{j^\prime}-t^{i}_{j}|}.
		\end{align*}
		
		\noindent Combining the above two estimates yields
		\begin{equation*}
			\left\|\sum_{i=1}^N  \displaystyle\sum_{j\in\mathbb N}  \displaystyle\sum_{\ell=0}^n c^{i}_{j,\ell}(\boldsymbol{a}_j,\boldsymbol{\lambda}_j)Z^{i}_{j,\ell}(\boldsymbol{a}_j,\boldsymbol{\lambda}_j)\right\|_{\mathcal{C}_{**,\tau}(\mathbb{R}^n\setminus\Sigma)}\lesssim e^{-\gamma_\sigma L(1+\xi)}\|\phi\|_{\mathcal{C}_{*,\tau}(\mathbb{R}^n\setminus\Sigma)}+\|h\|_{\mathcal{C}_{**,\tau}(\mathbb{R}^n\setminus\Sigma)}.
		\end{equation*}
		The proof of the claim is concluded.
		
		\noindent{\bf Claim 4:} 
		It holds that 
		\begin{equation*}
			\|\phi\|_{\mathcal{C}_{*,\tau}(\mathbb{R}^n\setminus\Sigma)} \lesssim \|\bar{h}\|_{\mathcal{C}_{**,\tau}(\mathbb{R}^n\setminus\Sigma)},
		\end{equation*}
		uniformly on $L\gg1$, where 
		\begin{equation*}
			\bar{h}=h+ \displaystyle\sum_{i=1}^N  \displaystyle\sum_{j\in\mathbb N}  \displaystyle\sum_{\ell=0}^n c_{j, \ell}^i(\boldsymbol{a}_j,\boldsymbol{\lambda}_j){Z}_{j, \ell}^i(\boldsymbol{a}_j,\boldsymbol{\lambda}_j).
		\end{equation*}
		
		\noindent We suppose by contradiction that there exist sequences of functions $\{\bar{h}_k\}_{k\in\mathbb N}\subset \mathcal{C}_{**,\tau}(\mathbb{R}^n\setminus\Sigma)$ and $\{\phi_k\}_{k\in\mathbb N}\subset \mathcal{C}_{*,\tau}(\mathbb{R}^n\setminus\Sigma)$, where $\phi_k=(\mathscr{L}_{\sigma}(\boldsymbol{a},\boldsymbol{\lambda}))^{-1}(\bar{h}_k)$ for all $k\in\mathbb N$
		such that $\|\phi_k\|_{\mathcal{C}_{*,\tau}(\mathbb{R}^n\setminus\Sigma)}=1$ and
		\begin{equation}\label{contradictionassumption}
			\|\bar{h}_k\|_{\mathcal{C}_{**,\tau}(\mathbb{R}^n\setminus\Sigma)} \rightarrow 0 \quad {\rm as} \quad k\rightarrow+\infty.
		\end{equation}
		Here we can write
		\begin{equation*}
			\bar{h}_k=h_k+ \displaystyle\sum_{i=1}^N  \displaystyle\sum_{j\in\mathbb N}  \displaystyle\sum_{\ell=0}^n c_{j, \ell}^{i,k}(\boldsymbol{a}_j,\boldsymbol{\lambda}_j){Z}_{j, \ell}^{i,k}(\boldsymbol{a}_j,\boldsymbol{\lambda}_j),
		\end{equation*}
		where $\{c_{j, \ell}^{i,k}\}_{k\in\mathbb N}\subset \mathcal{C}^\infty({\rm Adm}_\sigma(\Sigma))$, 
		$\{{h}_k\}_{k\in\mathbb N}\subset \mathcal{C}_{**,\tau}(\mathbb{R}^n\setminus\Sigma)$, and $\{\boldsymbol{L}_k\}_{k\in\mathbb N}\subset\mathbb R^N$ such that
		\begin{equation*}
			\max_{1\leqslant i \leqslant N}L^i_k =:|\boldsymbol{L}_k|\rightarrow +\infty \quad {\rm as} \quad k\rightarrow +\infty
		\end{equation*}
		is a sequence of parameters.
		
		\noindent Notice that 
		\begin{equation*}
			\phi_k=(-\Delta)^{-\sigma}(f^{\prime}_\sigma(\bar{u}_{(\boldsymbol{x},\boldsymbol{L},\boldsymbol{a}_j,\boldsymbol{\lambda}_j)}) \phi_k)+\bar{h}_k \quad {\rm in} \quad \mathbb R^n\setminus\Sigma.
		\end{equation*}
		Thus, we need to estimate the first term on the right-hand side of the last equation.
		
		\noindent{\bf Step 1:} If $\ud(x, \Sigma) \geqslant 1$, then 
		\begin{equation*}
			|(-\Delta)^{-\sigma}(f^{\prime}_\sigma(\bar{u}_{(\boldsymbol{x},\boldsymbol{L},\boldsymbol{a}_j,\boldsymbol{\lambda}_j)}) \phi_k)| \leqslant \mathrm{o}(1)\|\phi_k\|_{\mathcal{C}_{*,\tau}(\mathbb{R}^n\setminus\Sigma)}|x|^{-(n-2 \sigma)} \quad {\rm as} \quad L\rightarrow +\infty
		\end{equation*}
		
		\noindent Indeed, notice that 
		\begin{align*}
			(-\Delta)^{-\sigma}(f^{\prime}_\sigma(\bar{u}_{(\boldsymbol{x},\boldsymbol{L},\boldsymbol{a}_j,\boldsymbol{\lambda}_j)}) \phi_k)&=\left[\int_{\ud(y, \Sigma) \leqslant 1}+\int_{\ud(y, \Sigma) \geqslant 1}\right] f^{\prime}_\sigma(\bar{u}_{(\boldsymbol{x},\boldsymbol{L},\boldsymbol{a}_j,\boldsymbol{\lambda}_j)}) \phi_k \mathcal{R}_{\sigma}(x-y) \ud y\\
			&=: I_{1}+I_{2}.
		\end{align*}
		Let us start with estimating the second term on the right-hand side above.
		First, by Lemma~\ref{lm:estimatesdelaunay}, we have 
		\begin{equation*}
			\bar{u}_{(\boldsymbol{x},\boldsymbol{L},\boldsymbol{a}_j,\boldsymbol{\lambda}_j)}(y)=\mathcal{O}(e^{-\gamma_\sigma L})|y|^{-(n-2 \sigma)} \quad {\rm for} \quad \ud(y, \Sigma) \geqslant 1,
		\end{equation*}
		from which we conclude
		\begin{align}\label{estimatereduction1}
			I_{2} &\lesssim e^{-\sigma L}\|\phi_k\|_{\mathcal{C}_{*,\tau}(\mathbb{R}^n\setminus\Sigma)}\int_{\ud(y, \Sigma) \geqslant 1}|y|^{-(n+2 \sigma)} \mathcal{R}_{\sigma}(x-y) \ud y\\\nonumber
			&\lesssim \mathrm{o}(1)\|\phi_k\|_{\mathcal{C}_{*,\tau}(\mathbb{R}^n\setminus\Sigma)}|x|^{-(n-2 \sigma)}.
		\end{align}
		For the first term, we get 
		\begin{align*}\
			I_{1} & \lesssim \sum_{i=1}^N \int_{|y-x_i| \leqslant 1}|y-x_i|^{-2 \sigma}\left(\sum_{j\in\mathbb N} V_{(x_i,L_i,a_j^i, \lambda^i_j)}(-\ln |x|)\right)^{\frac{2n}{n-2\sigma}}\|\phi_k\|_{\mathcal{C}_{*,\tau}(\mathbb{R}^n\setminus\Sigma)}|y-x_i|^{\zeta_1}|x-y|^{-(n-2 \sigma)} \ud y \\
			& \lesssim \|\phi\|_{\mathcal{C}_{*,\tau}(\mathbb{R}^n\setminus\Sigma)}|x|^{-(n-2 \sigma)} \int_{|y-x_i|<1}|y-x_i|^{\zeta_1-2 \sigma}\left(\sum_{j\in\mathbb N} V_{(x_i,L_i,a_j^i, \lambda^i_j)}(-\ln |x|)\right)^{\frac{2n}{n-2\sigma}} \ud y \\
			& \lesssim \|\phi\|_{\mathcal{C}_{*,\tau}(\mathbb{R}^n\setminus\Sigma)}|x|^{-(n-2 \sigma)} \int_0^{+\infty} e^{-(n+\zeta_1-2 \sigma) t}\left(\sum_{j\in\mathbb N} V_{(x_i,L_i,a_j^i, \lambda^i_j)}(-\ln |x|)\right)^{\frac{2n}{n-2\sigma}} \ud t,
		\end{align*}
		which implies
		\begin{align}\label{estimatereduction2}
			I_{1} &\lesssim e^{-(n+\zeta_1-2 \sigma) L}|x|^{-(n-2 \sigma)}\|\phi_k\|_{\mathcal{C}_{*,\tau}(\mathbb{R}^n\setminus\Sigma)}\lesssim \mathrm{o}(1)\|\phi_k\|_{\mathcal{C}_{*,\tau}(\mathbb{R}^n\setminus\Sigma)}|x|^{-(n-2 \sigma)}.
		\end{align}     
		Since $\zeta_1>-\gamma_\sigma$, by combining estimates \eqref{estimatereduction1} and \eqref{estimatereduction2} one concludes the proof Step 1. 
		
		\noindent Subsequently, using Step 1, we also observe that by the estimates above, it holds 
		\begin{equation}
			\sup_{\ud(x, \Sigma) \geqslant 1} |x|^{n-2 \sigma}|\phi_k(x)| \lesssim \|\bar{h}_k\|_{\mathcal{C}_{**,\tau}(\mathbb{R}^n\setminus\Sigma)}+\mathrm{o}(1)\|\phi_k\|_{\mathcal{C}_{*,\tau}(\mathbb{R}^n\setminus\Sigma)} \rightarrow 0 \quad \text { as } \quad L \rightarrow +\infty,
		\end{equation}
		where we also used our contradiction assumption \eqref{contradictionassumption}. Hence, one can find $x_i\in \Sigma$ for some $i\in\{1,\dots,N\}$ such that
		\begin{equation}\label{contradictionreduction}
			\sup_{|x| \leqslant 1}|x-x_i|^{-\zeta_1} \phi_k(x) \geqslant \frac{1}{2} \quad {\rm for \ all} \quad k\in\mathbb N.
		\end{equation}
		In the next step, we prove an estimate contradicting the lower bound above.
		To simplify the notation, we assume that $x_i=0$ and so $|x|<1$.
		
		\noindent{\bf Step 2:} If $|x| \leqslant 1$, then one find $R\gg1$ large enough such that 
		\begin{equation*}
			|(-\Delta)^{-\sigma}(f^{\prime}_\sigma(\bar{u}_{(\boldsymbol{x},\boldsymbol{L},\boldsymbol{a}_j,\boldsymbol{\lambda}_j)}) \phi_k)| \lesssim \mathrm{o}(1)\|\phi_k\|_{\mathcal{C}_{*,\tau}(\mathbb{R}^n\setminus\Sigma)}|x|^{\zeta_1}+e^{-\gamma_\sigma R}+e^{-2R}|x|^{\zeta_1}\quad {\rm as} \quad L\rightarrow +\infty.
		\end{equation*}
		
		\noindent As a matter of fact, similar to before, we have 
		\begin{align*}
			|(-\Delta)^{-\sigma}(f^{\prime}_\sigma(\bar{u}_{(\boldsymbol{x},\boldsymbol{L},\boldsymbol{a}_j,\boldsymbol{\lambda}_j)}) \phi_k)|&=\left[\int_{\ud(y, \Sigma) \leqslant 1}+\int_{\ud(y, \Sigma) \geqslant 1}\right] f^{\prime}_\sigma(\bar{u}_{(\boldsymbol{x},\boldsymbol{L},\boldsymbol{a}_j,\boldsymbol{\lambda}_j)}) \phi_k \mathcal{R}_{\sigma}(x-y) \ud y=: I_{1}+I_{2}.
		\end{align*}
		In the same spirit of the estimates for $\bar{h}$ above, it holds
		\begin{align*}
			I_{1} \lesssim \int_{\ud(y, \Sigma) \geqslant 1} e^{-\sigma L}|y|^{-4 \sigma} {|x-y|^{2 \sigma-n}}\|\phi_k\|_{\mathcal{C}_{*,\tau}(\mathbb{R}^n\setminus\Sigma)}|y|^{-(n-2 \sigma)} \ud y \lesssim \mathrm{o}(1)\|\phi_k\|_{\mathcal{C}_{*,\tau}(\mathbb{R}^n\setminus\Sigma)}|x|^{\zeta_1}.
		\end{align*}
		
		\noindent For the second term, the computation is slightly more involved. 
		We proceed by performing a standard blow-up method.
		Namely, let us consider the family of rescaled functions $\widehat{\phi}_{j}^{i,k}: \mathcal{A}^{i,k}_{j}\rightarrow\mathbb R$ defined on the annular 
		region as
		\begin{equation*}
			\widehat{\phi}_{j}^{i,k}(\hat{x})=\left(\lambda_j^i\right)^{-\zeta_1} \phi_k(\lambda_j^i \hat{x}) \quad {\rm for} \quad k\in\mathbb N,
		\end{equation*}
		where 
		\begin{align*}
			\mathcal{A}^{i,k}_{j}:=\{x\in \mathbb R^n : \tilde{A}^{i,k}_j<|x|<\tilde{A}^{i,k}_{j-1}\}.
		\end{align*}            
		Here we set        
		$\tilde{A}^{i,k}_j={A^{i,k}_j}/{\lambda_j^{i,k}}$, 
		where $A^{i,k}_j=\sqrt{\lambda_{j+1}^{i,k} \lambda_j^{i,k}}$ for $k\in\mathbb N$ and observe 
		\begin{equation*}
			|\mathbb R^n_+\setminus \mathcal{A}^{i,k}_{j}|\rightarrow0 \quad {\rm as} \quad k \rightarrow +\infty.
		\end{equation*}
		Furthermore, it is not hard to check that $\widehat{\phi}_{j}^{i,k}\in\mathcal{C}^{0}(\mathcal{A}^{i,k}_{j})$ satisfy the following rescaled equation
		\begin{equation*}
			\left\{\begin{array}{c}
				\widehat{\phi}_{j}^{i,k}-c_{n, \sigma} \frac{n+2\sigma}{n-2\sigma}
				\displaystyle\int_{\mathbb R^n}\left[\frac{\widehat{\phi}_{j}^{i,k}}{(1+|\hat{x}|^2)^{2 \sigma}} \right]\mathcal{R}_{\sigma}(x-y)\ud y(1+\mathrm{o}(1))=(\lambda_j^{i,k})^{2 \sigma-\zeta_1} \bar{h}(\lambda_j^{i,k} \hat{x}) \quad \text { in } \quad \mathcal{A}^{i,k}_{j}, \\
				\displaystyle\int_{\mathbb{R}^n} \widehat{\phi}_{j}^{i,k} [f_{\sigma}^{\prime}(U_{(0,L_j^{i,k},\lambda_j^{i,k},a_j^{i,k})}) {Z}_{j, \ell}^{i,k}(\lambda_j^{i,k},a_j^{i,k})](\lambda_j^i \hat{x}) \ud \hat{x}=0  \ {\rm for} \ i\in\{1,\dots,N\}, \ j,k\in\mathbb N, \ {\rm and} \  \ell\in\{0,\dots,n\}.
			\end{array}\right.
		\end{equation*}
		
		\noindent Now, we observe the estimate below holds
		\begin{equation*}
			|h_k| \lesssim \|h_k\|_{\mathcal{C}_{**,\tau}(\mathbb{R}^n\setminus\Sigma)}|\lambda_j^{i,k} \hat{x}|^{\zeta_1-2 \sigma} \quad {\rm as} \quad k \rightarrow +\infty.
		\end{equation*}
		Then, there exists $\widehat{\phi}_{j}^{i,\infty}\in\mathcal{C}^{0}(\mathcal{A}^{i,\infty}_{j})$ solution to the following blow-up limit equation
		\begin{align*}
			\begin{cases}
				&\widehat{\phi}_{j}^{i,\infty}-c_{n, \sigma}  \displaystyle\int_{\mathbb R^n} \left[f_\sigma^\prime(U_{(0,1)}(y))\widehat{\phi}_{j}^{i,\infty}(y)\right]\mathcal{R}_{\sigma}(x-y)\ud y=0 \quad \text { in } \quad \mathcal{A}^{i,\infty}_{j}, \\
				&\displaystyle\int_{\mathbb{R}^n} \widehat{\phi}_{j}^{i,\infty} f_\sigma^\prime(U_{(0,1)}) Z_{j,\ell}^{i,\infty}(0,1) \ud x=0.
			\end{cases}
		\end{align*}
		Here $\mathcal{A}^{i,\infty}_{j}=\cup_{k\in\mathbb N}\mathcal{A}^{i,k}_{j}$ is such that $\widehat{\phi}_{j}^{i,k} \rightarrow \widehat{\phi}_{j}^{i,\infty}$ as $k\rightarrow+\infty$ in $\mathcal{A}_R$, where the annular region $\mathcal{A}_R:=\{x\in\mathbb R^n : R^{-1} \leqslant|\hat{x}| \leqslant R\}$ is such that $\mathcal{A}_R\subset\mathcal{A}^{i,\infty}_{j}$  for $R\gg1$ large enough which will be chosen suitably later, where we recall that $U_{(0,1)}=u_{\rm sph}$ is the standard bubble tower solution given by \eqref{standardbubbletower} and $Z_{j,\ell}^{i,k}(\boldsymbol{a}_j,\boldsymbol{\lambda}_j)$ for $\ell\in\{0, \ldots, n\}$, are the corresponding kernels in Definition~\ref{def:kernels}. 
		Therefore, by the non-degeneracy of the standard bubble in Lemma~\ref{lm:nondegeneracy}, we conclude that the blow-up limit is trivial, that is, $\widehat{\phi}_{j}^{i,\infty}\equiv0$ and $\widehat{\phi}_{j}^{i,k} \rightarrow 0$ as $k\rightarrow+\infty$ in $\mathcal{A}_R$.

		\noindent As a consequence, if we consider the original ${\phi}_{k}$, this is equivalent to the uniform convergence
		\begin{equation}\label{convergenceredcution}
			|x|^{-\zeta_1} {\phi}_{k}(x) \rightarrow 0 \quad \text { in } \quad \mathcal{A}^{i,k}_{\infty} \quad \text { as } \quad k \rightarrow +\infty,
		\end{equation}
		where $\mathcal{A}^{i,k}_{\infty}:=\cup_{j\in\mathbb N}\mathcal{A}^{i,k}_{j}$ and $\mathcal{A}^{i,k}_{j}:=\{{R}^{-1}\lambda_j^{i,k}<|x|<R \lambda_j^{i,k}\}$.
		Using the convergence above, we can now estimate the remaining term
		\begin{align*}
			I_{2} & =\int_{\ud(y, \Sigma) \leqslant 1} f^{\prime}_\sigma(\bar{u}_{(\boldsymbol{x},\boldsymbol{L},\boldsymbol{a}_j,\boldsymbol{\lambda}_j)}) \phi_k \mathcal{R}_{\sigma}(x-y) \ud y\\
			& \lesssim \sum_{j\in\mathbb N}\left[\int_{\mathcal{A}^{i,k}_{j}}+\int_{(\mathcal{A}^{i,k}_{j})^c}\right] f^{\prime}_\sigma(\bar{u}_{(\boldsymbol{x},\boldsymbol{L},\boldsymbol{a}_j,\boldsymbol{\lambda}_j)}) \phi_k \mathcal{R}_{\sigma}(x-y)\ud y =: I_{21}+I_{22},
		\end{align*}
		where $(\mathcal{A}^{i,k}_{j})^c:=\{y \in \mathbb R^n : 0<\ud(y, \Sigma) \leqslant 1\} \setminus \mathcal{A}^{i,k}_{j}$.
		
		\noindent First, again from Lemma~\ref{lm:estimatesdelaunay}, we know
		\begin{equation*}
			\bar{u}_{(\boldsymbol{x},\boldsymbol{L},\boldsymbol{a}_j,\boldsymbol{\lambda}_j)}(y)=|y|^{-\gamma_\sigma}\left(\sum_{j\in\mathbb N} V_{(x_i,L_i,a_j^i, \lambda^i_j)}(-\ln |x|)\right)(1+\mathrm{o}(1)) \quad {\rm for} \quad 0<\ud(y, \Sigma) < 1,
		\end{equation*}
		from which we get
		\begin{equation*}
			\sum_{j\in\mathbb N} V_{(x_i,L_i,a_j^i, \lambda^i_j)}(-\ln |x|) \lesssim e^{-\gamma_\sigma R} \quad {\rm in} \quad (\mathcal{A}^{i,k}_{\infty})^c.
		\end{equation*}        
		Hence, the summation on the left-hand side of the last equation can be made small enough by choosing $R\gg1$ large enough but uniform on $k\gg1$, which in turn implies
		\begin{equation*}
			I_{22} \lesssim e^{-2 R} \int_{(\mathcal{A}^{i,k}_{\infty})^c}|y|^{-2 \sigma}\|\phi\|_{\mathcal{C}_{*,\tau}(\mathbb{R}^n\setminus\Sigma)}|y|^{\zeta_1} {|x-y|^{n-2 \sigma}} \ud y \lesssim e^{-2 R}|x|^{\zeta_1}.
		\end{equation*}
		Additionally, from \eqref{convergenceredcution}, it is direct to see 
		\begin{align*}
			I_{21} & \lesssim \sum_{j\in\mathbb N}\int_{\mathcal{A}^{i,k}_{j}}
			|\phi_k||y|^{-\zeta_1} |y|^{\zeta_1-2 \sigma}|x-y|^{2 \sigma-n}\left(\sum_{j\in\mathbb N} V_{(x_i,L_i,a_j^i, \lambda^i_j)}(-\ln |x|)\right)^{\frac{2n}{n-2\sigma}} \ud y \\
			& \lesssim \mathrm{o}(1) \int_{\mathcal{A}^{i,k}_{j}} {|y|^{\zeta_1-2 \sigma}}{|x-y|^{2 \sigma-n}} \ud y\\
			&\lesssim \mathrm{o}(1)|x|^{\zeta_1} .
		\end{align*}
		The proof of this step is then finished.
		
		\noindent Finally, from Step 2, we must have $|x|^{-\zeta_1} \phi_k(x)=\mathrm{o}(1)$ as $k \rightarrow +\infty,$
		which is a contradiction with \eqref{contradictionreduction}. 
		This completes the proof of Claim 4.
		
		\noindent{\bf Claim 5:} For any $\bar{h}\in\mathcal{C}_{**,\tau}(\mathbb{R}^n\setminus\Sigma)$, one can find a unique solution $\phi\in \mathcal{C}_{*,\tau}(\mathbb{R}^n\setminus\Sigma)$ to \eqref{auxiliarysystemdual}.
		
		\noindent First, we consider the space
		\begin{equation*}
			\mathscr{H}^{\perp}(\mathbb{R}^n)=\left\{\phi \in H^\sigma(\mathbb{R}^n): \int_{\mathbb{R}^n} \phi\overline{Z}_{j, \ell}^i(\boldsymbol{a}_j,\boldsymbol{\lambda}_j) \ud x=0 \ {\rm for} \ (i,j,\ell)\in\mathcal{I}_{\infty}\right\}.
		\end{equation*}
		Notice that Eq. \eqref{auxiliarysystemdual} may be reformulated in terms of $\phi$ to become
		\begin{equation}\label{adjointequation}
			\phi+\mathscr{K}(\phi)=\bar{h} \quad \text { in } \quad \mathscr{H}^{\perp}(\mathbb{R}^n),
		\end{equation}
		where $\bar{h}$ is defined by duality and $\mathscr{K}: \mathscr{H}^{\perp}(\mathbb{R}^n) \rightarrow \mathscr{H}^{\perp}(\mathbb{R}^n)$ is a linear compact operator. 
		Using Fredholm alternative, showing that \eqref{auxiliarysystemdual} has a unique solution for each $\bar{h}$ is equivalent to finding a unique solution for $\bar{h}=0$ to \eqref{adjointequation}, which in turn follows from Claim 4. 
		
		The proof is now a consequence of Claims 4 and 5.
	\end{proof}
	
	
	As a consequence of the last result, we can state the last lemma 
	
	\begin{lemma}\label{lm:solutionoperator}
		Let $\sigma \in(1,+\infty)$, $n>2\sigma$ and $N\geqslant 2$.
		Assume that $(\boldsymbol{a}_j,\boldsymbol{\lambda}_j)\in       {\rm Adm}_{\sigma}(\Sigma)$ is an admissible configuration as in Definition~\ref{def:balancedparameters} with $\bar{u}_{(\boldsymbol{x},\boldsymbol{L},\boldsymbol{a}_j,\boldsymbol{\lambda}_j)}\in{\rm Apx}_{\sigma}(\Sigma)$ their associated approximate solution as in Definition~\ref{def:approximatesolution}.      
		Then, there exists a bounded right-inverse for the linearized operator
		$(\mathscr{L}_{\sigma}(\boldsymbol{a}_j,\boldsymbol{\lambda}_j))^{-1}:\mathcal{C}_{**,\tau}(\mathbb{R}^n\setminus\Sigma)\rightarrow \mathcal{C}_{*,\tau}(\mathbb{R}^n\setminus\Sigma)$.
		Moreover, the following estimate holds
		\begin{equation*}
			\|\phi\|_{\mathcal{C}_{*,\tau}(\mathbb{R}^n\setminus\Sigma)} \lesssim \|\mathscr{L}_{\sigma}(\boldsymbol{a}_j,\boldsymbol{\lambda}_j)^{-1}(\phi)\|_{\mathcal{C}_{**,\tau}(\mathbb{R}^n\setminus\Sigma)}.
		\end{equation*}
		uniformly on $L\gg1$ large.     
	\end{lemma}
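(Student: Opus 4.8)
The plan is to read this statement off from Lemma~\ref{lm:fredholmtheory}, which already carries all of the analytic weight; the remaining task is to repackage the solution $\phi$ produced there as a bounded linear operator between the weighted H\"older spaces and to record that the constants are uniform in the period. So I would fix an admissible configuration $(\boldsymbol{a}_j,\boldsymbol{\lambda}_j)\in{\rm Adm}_\sigma(\Sigma)$, the associated approximate solution $\bar{u}_{(\boldsymbol{x},\boldsymbol{L},\boldsymbol{a}_j,\boldsymbol{\lambda}_j)}\in{\rm Apx}_\sigma(\Sigma)$, and the weight $\zeta_1<0$ satisfying \eqref{weightassumption} supplied by Lemma~\ref{lm:fredholmtheory}. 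For each $h\in\mathcal{C}_{**,\tau}(\mathbb{R}^n\setminus\Sigma)$ that lemma yields a unique pair $(\phi,\{c_{j,\ell}^i(\boldsymbol{a}_j,\boldsymbol{\lambda}_j)\}_{(i,j,\ell)\in\mathcal{I}_\infty})$ with $\phi\in\mathcal{C}_{*,\tau}(\mathbb{R}^n\setminus\Sigma)\cap\mathscr{H}^{\perp}(\mathbb{R}^n)$ solving \eqref{auxiliarysystemdual} and obeying $\|\phi\|_{\mathcal{C}_{*,\tau}(\mathbb{R}^n\setminus\Sigma)}\lesssim\|h\|_{\mathcal{C}_{**,\tau}(\mathbb{R}^n\setminus\Sigma)}$ uniformly on $L\gg1$. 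I would then \emph{define} $\mathscr{L}_\sigma(\boldsymbol{a}_j,\boldsymbol{\lambda}_j)^{-1}h:=\phi$.

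First I would verify that this prescription is linear and bounded. If $h_1,h_2$ produce $(\phi_1,c^{(1)})$ and $(\phi_2,c^{(2)})$ and $\mu_1,\mu_2\in\mathbb R$, then $\mu_1\phi_1+\mu_2\phi_2$ together with $\mu_1c^{(1)}+\mu_2c^{(2)}$ solves \eqref{auxiliarysystemdual} with right-hand side $\mu_1h_1+\mu_2h_2$ and still lies in $\mathscr{H}^{\perp}(\mathbb{R}^n)$, so the uniqueness part of Lemma~\ref{lm:fredholmtheory} forces it to equal $\mathscr{L}_\sigma(\boldsymbol{a}_j,\boldsymbol{\lambda}_j)^{-1}(\mu_1h_1+\mu_2h_2)$, which gives linearity; boundedness, with constant independent of $L$, is exactly the displayed estimate of that lemma. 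To see that $\mathscr{L}_\sigma(\boldsymbol{a}_j,\boldsymbol{\lambda}_j)^{-1}$ is a right-inverse in the Lyapunov--Schmidt sense, let $Q$ denote the bounded projection of $\mathcal{C}_{**,\tau}(\mathbb{R}^n\setminus\Sigma)$ onto a complement of ${\rm span}\{Z_{j,\ell}^i(\boldsymbol{a}_j,\boldsymbol{\lambda}_j)\}_{(i,j,\ell)\in\mathcal{I}_\infty}$ determined by the cokernels $\overline{Z}_{j,\ell}^i(\boldsymbol{a}_j,\boldsymbol{\lambda}_j)$; that $Q$ is well defined and uniformly bounded in $L$ follows from the orthogonality relations \eqref{orthogonalitycond1}--\eqref{orthogonalitycond2}, which exhibit the relevant Gram matrix as a small perturbation of a boundedly invertible block Toeplitz-type operator, together with \cite[Lemma A.6]{MR2522830}. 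Applying $Q$ to the first line of \eqref{auxiliarysystemdual} annihilates the Lagrange-multiplier sum and yields $Q\mathscr{L}_\sigma(\boldsymbol{a}_j,\boldsymbol{\lambda}_j)\bigl(\mathscr{L}_\sigma(\boldsymbol{a}_j,\boldsymbol{\lambda}_j)^{-1}h\bigr)=Qh$ for all $h$; since moreover $\mathscr{L}_\sigma(\boldsymbol{a}_j,\boldsymbol{\lambda}_j)^{-1}h\in\mathscr{H}^{\perp}(\mathbb{R}^n)$, this exhibits $\mathscr{L}_\sigma(\boldsymbol{a}_j,\boldsymbol{\lambda}_j)^{-1}$ as a bounded right-inverse for $Q\mathscr{L}_\sigma(\boldsymbol{a}_j,\boldsymbol{\lambda}_j)$ restricted to $\mathscr{H}^{\perp}(\mathbb{R}^n)$, and the quantitative inequality in the statement is precisely the operator-norm bound just obtained.

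The point I would take care over, and the only genuine subtlety, is the uniformity of the implied constants in the period vector $\boldsymbol{L}$ (equivalently in the neck sizes $\boldsymbol{\varepsilon}$) and across the admissible perturbation sequences: every estimate above, and in particular the a priori bound, is inherited from Lemma~\ref{lm:fredholmtheory}, which was engineered (via the contradiction/blow-up argument in its proof, where the rescaled limit is killed by the nondegeneracy of the standard bubble in Lemma~\ref{lm:nondegeneracy}) to produce $L$-independent constants, so nothing beyond careful bookkeeping is needed. A secondary point worth stating explicitly is that $\mathscr{L}_\sigma(\boldsymbol{a}_j,\boldsymbol{\lambda}_j)$ itself is \emph{not} invertible -- it has the infinite-dimensional approximate cokernel spanned by the $\overline{Z}_{j,\ell}^i$ -- so ``inverse'' and ``right-inverse'' must always be read modulo this cokernel, as made precise by the projection $Q$; with that convention fixed, the lemma follows, and the bounded right-inverse thus produced is exactly the ingredient that will feed the contraction-mapping scheme of Section~\ref{sec:gluingtechnique}.
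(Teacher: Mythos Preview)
Your proposal is correct and matches the paper's approach: the paper states this lemma immediately after Lemma~\ref{lm:fredholmtheory} with the preamble ``As a consequence of the last result, we can state the last lemma'' and gives no further proof, so it is treated as a direct repackaging of that lemma. Your additional care about linearity, the projection $Q$, and uniformity in $L$ simply makes explicit what the paper leaves implicit.
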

	
	\subsubsection{Fixed-point argument}
	
	We prove our main result using a standard perturbation method.
	The main idea is to apply a contraction theorem for the operator $ \mathcal{N}_\sigma(\boldsymbol{x},\boldsymbol{L},\boldsymbol{a}_j,\boldsymbol{\lambda}_j)(\phi)=\mathcal{N}_\sigma(\bar{u}_{(\boldsymbol{x},\boldsymbol{L},\boldsymbol{a}_j,\boldsymbol{\lambda}_j)}+\phi)$ on the suitably weighted norms introduced in Definition~\ref{def:suitablespaces}. 
	
	\begin{proposition}\label{prop:lyapunovschmidtreduction}
		Let $\sigma \in(1,+\infty)$, $n>2\sigma$ and $N\geqslant 2$.
		Assume that $(\boldsymbol{a}_j,\boldsymbol{\lambda}_j)\in{\rm Adm}_{\sigma}(\Sigma)$ is an admissible configuration as in Definition~\ref{def:balancedparameters} with $\bar{u}_{(\boldsymbol{x},\boldsymbol{L},\boldsymbol{a}_j,\boldsymbol{\lambda}_j)}\in{\rm Apx}_{\sigma}(\Sigma)$ their associated approximate solution as in Definition~\ref{def:approximatesolution}.    
		Then, for $L\gg1$ large enough and $\zeta_1<0$ satisfying \eqref{weightassumption}, there exists $\{c_{j, \ell}^i(\boldsymbol{a}_j,\boldsymbol{\lambda}_j)\}_{(i,j,k)\in\mathcal{I}_\infty}\subset\mathbb R$ and a solution            
		$\phi\in \mathcal{C}_{*,\tau}(\mathbb{R}^n\setminus\Sigma)$ to 
		\begin{align}\tag{$\mathcal{Q}^{\prime}_{2\sigma,\boldsymbol{a},\boldsymbol{\lambda}}$}\label{ourequationprojected}
			\left\{\begin{array}{l}
				\mathcal{N}_\sigma(\boldsymbol{x},\boldsymbol{L},\boldsymbol{a}_j,\boldsymbol{\lambda}_j)(\phi)=\displaystyle\displaystyle\sum_{i=1}^N  \displaystyle\sum_{j\in\mathbb N}  \displaystyle\sum_{\ell=0}^n  c_{j, \ell}^i(\boldsymbol{a}_j,\boldsymbol{\lambda}_j)\overline{Z}_{j, \ell}^i(\boldsymbol{a}_j,\boldsymbol{\lambda}_j) \quad {\rm in} \quad \mathbb{R}^n\setminus\Sigma, \\
				\displaystyle\int_{\mathbb{R}^n} \phi\overline{Z}_{j, \ell}^i(\boldsymbol{a}_j,\boldsymbol{\lambda}_j) \ud x=0 \quad {\rm for} \quad (i,j,\ell)\in\mathcal{I}_\infty,
			\end{array}\right.                   
		\end{align}
		where $\{\overline{Z}_{j, \ell}^i(\boldsymbol{a}_j,\boldsymbol{\lambda}_j)\}_{(i,j,\ell)\in\mathcal{I}_\infty}\subset \mathcal{C}^{2\sigma}(\mathbb R^n\setminus\Sigma)$ is the family of approximate normalized corkernels given by Definition~\ref{def:kernels}.
		Moreover, one has the estimate
		\begin{equation*}
			\|\phi_{(\boldsymbol{x},\boldsymbol{L},\boldsymbol{a}_j,\boldsymbol{\lambda}_j)}\|_{\mathcal{C}_{*,\tau}(\mathbb{R}^n\setminus\Sigma)} \lesssim e^{-\gamma_\sigma L(1+\xi)}
		\end{equation*}
		for some $\xi>0$ depending uniformly on $L\gg1$ large.
	\end{proposition}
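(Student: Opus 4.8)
The plan is to recast \eqref{ourequationprojected} as a fixed-point problem in the weighted space $\mathcal{C}_{*,\tau}(\mathbb{R}^n\setminus\Sigma)$, using Lemma~\ref{lm:quantitativeestimates} to control the error of the approximate solution and Lemma~\ref{lm:solutionoperator} (together with Lemma~\ref{lm:fredholmtheory}) to invert the linearized operator. First I would write $u=\bar{u}_{(\boldsymbol{x},\boldsymbol{L},\boldsymbol{a}_j,\boldsymbol{\lambda}_j)}+\phi$ and Taylor-expand the nonlinearity to obtain the decomposition
\begin{equation*}
\mathscr{N}_{\sigma}(\bar{u}_{(\boldsymbol{x},\boldsymbol{L},\boldsymbol{a}_j,\boldsymbol{\lambda}_j)}+\phi)=\mathcal{N}_\sigma(\boldsymbol{x},\boldsymbol{L},\boldsymbol{a}_j,\boldsymbol{\lambda}_j)+\mathscr{L}_{\sigma}(\boldsymbol{x},\boldsymbol{L},\boldsymbol{a}_j,\boldsymbol{\lambda}_j)(\phi)-\mathscr{R}_{\sigma}(\phi),
\end{equation*}
where the superlinear remainder is
\begin{equation*}
\mathscr{R}_{\sigma}(\phi):=(-\Delta)^{-\sigma}\big(f_\sigma(\bar{u}_{(\boldsymbol{x},\boldsymbol{L},\boldsymbol{a}_j,\boldsymbol{\lambda}_j)}+\phi)-f_\sigma(\bar{u}_{(\boldsymbol{x},\boldsymbol{L},\boldsymbol{a}_j,\boldsymbol{\lambda}_j)})-f_\sigma^{\prime}(\bar{u}_{(\boldsymbol{x},\boldsymbol{L},\boldsymbol{a}_j,\boldsymbol{\lambda}_j)})\phi\big).
\end{equation*}
Then solving \eqref{ourequationprojected} is equivalent to finding a fixed point of
\begin{equation*}
\mathscr{T}(\phi):=\big(\mathscr{L}_{\sigma}(\boldsymbol{a}_j,\boldsymbol{\lambda}_j)\big)^{-1}\big(-\mathcal{N}_\sigma(\boldsymbol{x},\boldsymbol{L},\boldsymbol{a}_j,\boldsymbol{\lambda}_j)+\mathscr{R}_{\sigma}(\phi)\big),
\end{equation*}
where $\big(\mathscr{L}_{\sigma}(\boldsymbol{a}_j,\boldsymbol{\lambda}_j)\big)^{-1}$ is the bounded right-inverse from Lemma~\ref{lm:solutionoperator}; by Lemma~\ref{lm:fredholmtheory} it simultaneously enforces the orthogonality conditions and produces the Lagrange multipliers $\{c_{j,\ell}^i(\boldsymbol{a}_j,\boldsymbol{\lambda}_j)\}_{(i,j,\ell)\in\mathcal{I}_\infty}$ appearing in \eqref{ourequationprojected}.

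\textbf{Superlinear estimate.} The next step is to estimate $\mathscr{R}_{\sigma}(\phi)$ in the norm $\|\cdot\|_{\mathcal{C}_{**,\tau}(\mathbb{R}^n\setminus\Sigma)}$. Setting $p:=\tfrac{n+2\sigma}{n-2\sigma}>1$, I would use the elementary pointwise bounds for $f_\sigma(\xi)=c_{n,\sigma}|\xi|^{p}$, namely $|f_\sigma(a+b)-f_\sigma(a)-f_\sigma^{\prime}(a)b|\lesssim|a|^{p-2}|b|^2+|b|^p$ when $p\geqslant2$ (equivalently $n\leqslant6\sigma$), and $|f_\sigma(a+b)-f_\sigma(a)-f_\sigma^{\prime}(a)b|\lesssim|b|^p$ when $1<p<2$ (equivalently $n>6\sigma$), combined with the mapping properties of the Riesz potential $(-\Delta)^{-\sigma}$ on the weighted Hölder scale, analysed region by region (near $\Sigma$, on the transition annulus $\{\tfrac12\leqslant\operatorname{dist}(x,\Sigma)\leqslant1\}$, and on $\mathbb{R}^n\setminus B_1(\Sigma)$) exactly as in the proof of Lemma~\ref{lm:quantitativeestimates}. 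This should yield the bound
\begin{equation*}
\|\mathscr{R}_{\sigma}(\phi)\|_{\mathcal{C}_{**,\tau}(\mathbb{R}^n\setminus\Sigma)}\lesssim\|\phi\|_{\mathcal{C}_{*,\tau}(\mathbb{R}^n\setminus\Sigma)}^{\min\{2,p\}}
\end{equation*}
together with the Lipschitz estimate
\begin{equation*}
\|\mathscr{R}_{\sigma}(\phi_1)-\mathscr{R}_{\sigma}(\phi_2)\|_{\mathcal{C}_{**,\tau}(\mathbb{R}^n\setminus\Sigma)}\lesssim\big(\|\phi_1\|_{\mathcal{C}_{*,\tau}(\mathbb{R}^n\setminus\Sigma)}+\|\phi_2\|_{\mathcal{C}_{*,\tau}(\mathbb{R}^n\setminus\Sigma)}\big)^{\min\{1,p-1\}}\|\phi_1-\phi_2\|_{\mathcal{C}_{*,\tau}(\mathbb{R}^n\setminus\Sigma)},
\end{equation*}
uniformly in $L\gg1$ large; the weights fixed in Definition~\ref{def:suitablespaces} subject to \eqref{weightassumption} are precisely what make this composition close.

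\textbf{Contraction and conclusion.} Since Lemma~\ref{lm:quantitativeestimates} gives $\|\mathcal{N}_\sigma(\boldsymbol{x},\boldsymbol{L},\boldsymbol{a}_j,\boldsymbol{\lambda}_j)\|_{\mathcal{C}_{**,\tau}(\mathbb{R}^n\setminus\Sigma)}\lesssim e^{-\gamma_\sigma L(1+\xi)}$ and Lemma~\ref{lm:solutionoperator} gives $\|\mathscr{T}(\phi)\|_{\mathcal{C}_{*,\tau}}\lesssim\|\mathcal{N}_\sigma(\boldsymbol{x},\boldsymbol{L},\boldsymbol{a}_j,\boldsymbol{\lambda}_j)\|_{\mathcal{C}_{**,\tau}}+\|\mathscr{R}_{\sigma}(\phi)\|_{\mathcal{C}_{**,\tau}}$, I would check that for $L\gg1$ large the map $\mathscr{T}$ sends the ball
\begin{equation*}
\mathscr{B}_{\rho}:=\big\{\phi\in\mathcal{C}_{*,\tau}(\mathbb{R}^n\setminus\Sigma):\|\phi\|_{\mathcal{C}_{*,\tau}(\mathbb{R}^n\setminus\Sigma)}\leqslant\rho\big\},\qquad\rho:=C_{*}e^{-\gamma_\sigma L(1+\xi)},
\end{equation*}
with $C_{*}$ a large fixed constant, into itself, and is a contraction there with Lipschitz constant $\lesssim\rho^{\min\{1,p-1\}}\to0$. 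The Banach fixed-point theorem then produces a unique $\phi=\phi_{(\boldsymbol{x},\boldsymbol{L},\boldsymbol{a}_j,\boldsymbol{\lambda}_j)}\in\mathscr{B}_{\rho}$ solving \eqref{ourequationprojected} with the coefficients $c_{j,\ell}^i(\boldsymbol{a}_j,\boldsymbol{\lambda}_j)$ supplied by Lemma~\ref{lm:solutionoperator}, and the claimed estimate $\|\phi_{(\boldsymbol{x},\boldsymbol{L},\boldsymbol{a}_j,\boldsymbol{\lambda}_j)}\|_{\mathcal{C}_{*,\tau}(\mathbb{R}^n\setminus\Sigma)}\lesssim e^{-\gamma_\sigma L(1+\xi)}$ follows from $\phi\in\mathscr{B}_{\rho}$.

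\textbf{Main difficulty.} The hard part will be the superlinear estimate: bounding $\mathscr{R}_{\sigma}(\phi)$ in $\|\cdot\|_{\mathcal{C}_{**,\tau}}$ uniformly in the period $L$. One has to keep track simultaneously of the singular weight $\operatorname{dist}(x,\Sigma)^{\zeta_1}$, the bubble-tower profile of $\bar{u}_{(\boldsymbol{x},\boldsymbol{L},\boldsymbol{a}_j,\boldsymbol{\lambda}_j)}$ inside each $B_1(x_i)$, and the smoothing by $(-\Delta)^{-\sigma}$ across the three regions; in the sub-quadratic range $n>6\sigma$ the Hölder-type inequality for $|\xi|^p$ with $1<p<2$ must replace a genuine second-order Taylor estimate, and every exponential-in-$L$ factor must be accounted for so that the nonlinearity does not degrade the rate $e^{-\gamma_\sigma L(1+\xi)}$.
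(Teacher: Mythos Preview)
Your proposal is correct and follows essentially the same route as the paper: rewrite \eqref{ourequationprojected} as a fixed-point problem via the bounded right-inverse of Lemma~\ref{lm:solutionoperator}, control the approximate-solution error by Lemma~\ref{lm:quantitativeestimates}, bound the superlinear remainder $\mathscr{R}_\sigma(\phi)$ in $\mathcal{C}_{**,\tau}$ by a Taylor-type pointwise inequality followed by region-by-region Riesz-potential estimates, and close by Banach's contraction principle on a ball of radius $\simeq e^{-\gamma_\sigma L(1+\xi)}$. The only cosmetic difference is that the paper keeps track of an explicit $L$-dependent prefactor $e^{\pm\frac{(n-6\sigma)\gamma_\sigma L}{n-2\sigma}}$ on the quadratic part of the remainder (coming from $\bar u^{\,p-2}$ in the exterior region), whereas you absorb this into the uniform bound $\|\mathscr{R}_\sigma(\phi)\|_{\mathcal{C}_{**,\tau}}\lesssim\|\phi\|_{\mathcal{C}_{*,\tau}}^{\min\{2,p\}}$; both versions suffice for the contraction.
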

	
	\begin{proof}
		According to Lemma~\ref{lm:fredholmtheory}, the solution operator $(\mathscr{L}_{\sigma}(\boldsymbol{a},\boldsymbol{\lambda}))^{-1}:\mathcal{C}^0(\mathbb{R}^n\setminus\Sigma)\rightarrow\mathcal{C}^{2\sigma}(\mathbb{R}^n\setminus\Sigma)$ defined in Lemma~\ref{lm:solutionoperator} is well-defined.
		Notice that $\bar{u}_{(\boldsymbol{x},\boldsymbol{L},\boldsymbol{a}_j,\boldsymbol{\lambda}_j)}+\phi$ with $\phi\in \mathcal{C}_*(\mathbb R^n\setminus\Sigma)$ solves equation \eqref{ourequationprojected}, if and only if, it solves the fixed-point problem below
		\begin{equation*}
			\phi=\mathscr{B}_{\sigma}(\boldsymbol{a}_j,\boldsymbol{\lambda}_j)(\phi) \quad {\rm in} \quad \mathbb R^n\setminus\Sigma.
		\end{equation*}
		Here $\mathscr{B}_{\sigma}(\boldsymbol{a}_j,\boldsymbol{\lambda}_j):\mathcal{C}^0(\mathbb{R}^n\setminus\Sigma)\rightarrow\mathcal{C}^{2\sigma}(\mathbb{R}^n\setminus\Sigma)$ is given by
		\begin{equation}\label{contractionmap}
			\mathscr{B}_{\sigma}(\boldsymbol{a}_j,\boldsymbol{\lambda}_j)(\phi):=-(\mathscr{L}_{\sigma}(\boldsymbol{a},\boldsymbol{\lambda}))^{-1}(\mathscr{N}_\sigma(\bar{u}_{(\boldsymbol{x},\boldsymbol{L},\boldsymbol{a}_j,\boldsymbol{\lambda}_j)}))+(\mathscr{L}_{\sigma}(\boldsymbol{a},\boldsymbol{\lambda}))^{-1}(\mathscr{R}_{\sigma}(\boldsymbol{a}_j,\boldsymbol{\lambda}_j)(\phi))
		\end{equation}
		and $\mathscr{R}_{\sigma}(\boldsymbol{a}_j,\boldsymbol{\lambda}_j):\mathcal{C}^0(\mathbb{R}^n\setminus\Sigma)\rightarrow\mathcal{C}^{2\sigma}(\mathbb{R}^n\setminus\Sigma)$ is given by
		\begin{equation}\label{errortermfixedpoint}
			\mathscr{R}_{\sigma}(\boldsymbol{a}_j,\boldsymbol{\lambda}_j)(\phi):=(-\Delta)^{-\sigma}[\mathscr{Q}_{\sigma}(\boldsymbol{a}_j,\boldsymbol{\lambda}_j)(\phi)],
		\end{equation}
		where
		\begin{equation}\label{errortermfixedpointintegrand}
			\mathscr{Q}_{\sigma}(\boldsymbol{a}_j,\boldsymbol{\lambda}_j)(\phi):=|f_\sigma(\bar{u}_{(\boldsymbol{x},\boldsymbol{L},\boldsymbol{a}_j,\boldsymbol{\lambda}_j)}+\phi)-f_\sigma(\bar{u}_{(\boldsymbol{x},\boldsymbol{L},\boldsymbol{a}_j,\boldsymbol{\lambda}_j)})-f^{\prime}_\sigma(\bar{u}_{(\boldsymbol{x},\boldsymbol{L},\boldsymbol{a}_j,\boldsymbol{\lambda}_j)}) \phi|.
		\end{equation}
		
		First, by definition, one has
		\begin{equation*}
			\|\mathscr{B}_{\sigma}(\boldsymbol{a}_j,\boldsymbol{\lambda}_j)(\phi)\|_{\mathcal{C}_{*,\tau}(\mathbb{R}^n\setminus\Sigma)} \lesssim \|\mathscr{N}_{\sigma}(\bar{u}_{(\boldsymbol{x},\boldsymbol{L},\boldsymbol{a}_j,\boldsymbol{\lambda}_j)})\|_{\mathcal{C}_{**,\tau}(\mathbb{R}^n\setminus\Sigma)}+\|\mathscr{R}_{\sigma}(\boldsymbol{a}_j,\boldsymbol{\lambda}_j)(\phi)\|_{\mathcal{C}_{**,\tau}(\mathbb{R}^n\setminus\Sigma)}.
		\end{equation*}
		Second, fixing a large $C\gg0$, we define the set
		\begin{equation*}
			\mathcal{B}_C=\left\{\phi \in \mathcal{C}_{*,\tau}(\mathbb{R}^n\setminus\Sigma): \begin{aligned}
				\|\phi\|_{\mathcal{C}_{*,\tau}(\mathbb{R}^n\setminus\Sigma)} \lesssim e^{-\gamma_\sigma L(1+\xi)} \ &{\rm and} \ \int_{\mathbb{R}^n} \phi f_\sigma^\prime(U_{(x_i,L^i_j,\lambda_j^i,a_j^i)})Z_{j, \ell}^i(\boldsymbol{a}_j,\boldsymbol{\lambda}_j) \ud x=0 
				\\
				{\rm for \ all } \ &(i,j,\ell)\in\mathcal{I}_\infty \end{aligned}\right\}. 
		\end{equation*}
		We observe that the first term of the right-hand side of the equation above is estimated in Lemma~\ref{lm:quantitativeestimates}.    
		Hence, we are left to provide similar estimates for the remaining term.
		
		\noindent{\bf Claim 1:} The following estimate holds
		\begin{equation*}
			\|\mathscr{R}_{\sigma}(\boldsymbol{a}_j,\boldsymbol{\lambda}_j)(\phi)\|_{\mathcal{C}_{**,\tau}(\mathbb{R}^n\setminus\Sigma)} \lesssim e^{-\frac{(n-6\sigma){\gamma_\sigma L}}{n-2\sigma}}\|\phi\|_{\mathcal{C}_{*,\tau}(\mathbb{R}^n\setminus\Sigma)}^2+\|\phi\|_{\mathcal{C}_{*,\tau}(\mathbb{R}^n\setminus\Sigma)}^{\frac{n+2\sigma}{n-2\sigma}} \leqslant \mathrm{o}(1)\|\phi\|_{\mathcal{C}_{*,\tau}(\mathbb{R}^n\setminus\Sigma)}.
		\end{equation*}
		
		\noindent Now, for any $\phi \in \mathcal{B}_C$ 
		We must estimate the $L^\infty$-norm of the error term in \eqref{errortermfixedpoint}.
		\noindent We start by estimating the term \eqref{errortermfixedpointintegrand}. Indeed, it is not hard to check 
		\begin{equation*}
			|\mathscr{Q}_{\sigma}(\boldsymbol{a}_j,\boldsymbol{\lambda}_j)(\phi)| \lesssim 
			\begin{cases}
				\bar{u}_{(\boldsymbol{x},\boldsymbol{L},\boldsymbol{a}_j,\boldsymbol{\lambda}_j)}^{\frac{n-6\sigma}{n-2\sigma}} \phi^2, & \text { if } \ |\bar{u}_{(\boldsymbol{x},\boldsymbol{L},\boldsymbol{a}_j,\boldsymbol{\lambda}_j)}| \geqslant\frac{1}{4} \phi, \\ 
				\phi^{\frac{n+2\sigma}{n-2\sigma}}, & \text { if } \ |\bar{u}_{(\boldsymbol{x},\boldsymbol{L},\boldsymbol{a}_j,\boldsymbol{\lambda}_j)}| \leqslant \frac{1}{4} \phi.
			\end{cases}
		\end{equation*}
		Again, the proof will be divided into two steps.
		First, we estimate the integrand in \eqref{errortermfixedpointintegrand}.
		
		\noindent{\bf Step 1:} The estimate below holds
		\begin{align*}
			& |\mathscr{Q}_{\sigma}(\boldsymbol{a}_j,\boldsymbol{\lambda}_j)(\phi)|\lesssim
			\begin{cases}
				\sum_{i=1}^N\left(\|\phi\|_{\mathcal{C}_{*,\tau}(\mathbb{R}^n\setminus\Sigma)}^2+\|\phi\|_{\mathcal{C}_{*,\tau}(\mathbb{R}^n\setminus\Sigma)}^{\frac{n-6\sigma}{n-2\sigma}}\right)|x-x_i|^{\zeta_1-2 \sigma}, \ &{\rm if} \ 0<\ud(x, \Sigma)<1,\\
				|x|^{-(n+2 \sigma)}\left(e^{\frac{(n-6\sigma){\gamma_\sigma L}}{n-2\sigma}}\|\phi\|_{\mathcal{C}_{*,\tau}(\mathbb{R}^n\setminus\Sigma)}^2+\|\phi\|_{\mathcal{C}_{*,\tau}(\mathbb{R}^n\setminus\Sigma)}^{\frac{n+2\sigma}{n-2\sigma}}\right), \ &{\rm if} \ \ud(x, \Sigma) \geqslant 1.
			\end{cases}
		\end{align*}
		
		\noindent As a matter of fact,  by our construction, it holds:
		\begin{itemize}
			\item[(i)] If $\operatorname{dist}(x, \Sigma)<1$, then 
			\begin{align*}
				&|\mathscr{Q}_{\sigma}(\boldsymbol{a}_j,\boldsymbol{\lambda}_j)(\phi)|\\ 
				&\lesssim \sum_{i=1}^N\left(\|\phi\|_{\mathcal{C}_{*,\tau}(\mathbb{R}^n\setminus\Sigma)}^2 \bar{u}_{(\boldsymbol{x},\boldsymbol{L},\boldsymbol{a}_j,\boldsymbol{\lambda}_j)}^{-\frac{n+2\sigma}{n-2\sigma}}|x-x_i|^{2 \zeta_1}+\|\phi\|_{\mathcal{C}_{*,\tau}(\mathbb{R}^n\setminus\Sigma)}^{\frac{n+2\sigma}{n-2\sigma}}|x-x_i|^{\frac{(n+2\sigma)\zeta_1}{n-2\sigma}}\right) \\
				& \lesssim \sum_{i=1}^N\left[\|\phi\|_{\mathcal{C}_{*,\tau}(\mathbb{R}^n\setminus\Sigma)}^2|x-x_i|^{\zeta_1-2 \gamma}|x-x_i|^{\zeta_1+\gamma_\sigma}+\|\phi\|_{\mathcal{C}_{*,\tau}(\mathbb{R}^n\setminus\Sigma)}^{\frac{n+2\sigma}{n-2\sigma}}|x-x_i|^{\zeta_1-2 \sigma}|x-x_i|^{\frac{(n+2\sigma)\zeta_1}{n-2\sigma}-\zeta_1+2 \sigma}\right] \\
				& \lesssim \sum_{i=1}^N\left(\|\phi\|_{\mathcal{C}_{*,\tau}(\mathbb{R}^n\setminus\Sigma)}^2+\|\phi\|_{\mathcal{C}_{*,\tau}(\mathbb{R}^n\setminus\Sigma)}^{\frac{n+2\sigma}{n-2\sigma}}\right)|x-x_i|^{\zeta_1-2 \sigma}.
			\end{align*}
			\item[(ii)] If $\operatorname{dist}(x, \Sigma) \geqslant 1$, then
			\begin{align*}
				|\mathscr{Q}_{\sigma}(\boldsymbol{a}_j,\boldsymbol{\lambda}_j)(\phi)| & \lesssim \|\phi\|_{\mathcal{C}_{*,\tau}(\mathbb{R}^n\setminus\Sigma)}^2 \bar{u}_{(\boldsymbol{x},\boldsymbol{L},\boldsymbol{a}_j,\boldsymbol{\lambda}_j)}^{\frac{n-6\sigma}{n-2\sigma}}|x|^{-4\gamma_\sigma}+\|\phi\|_{\mathcal{C}_{*,\tau}(\mathbb{R}^n\setminus\Sigma)}^{\frac{n+2\sigma}{n-2\sigma}}|x|^{-\frac{n+2\sigma}{n-2\sigma}(n-2 \sigma)}\\
				&\lesssim |x|^{-(n+2 \sigma)}\left(e^{\frac{(n-6\sigma){\gamma_\sigma L}}{n-2\sigma}}\|\phi\|_{\mathcal{C}_{*,\tau}(\mathbb{R}^n\setminus\Sigma)}^2+\|\phi\|_{\mathcal{C}_{*,\tau}(\mathbb{R}^n\setminus\Sigma)}^{\frac{n+2\sigma}{n-2\sigma}}\right).
			\end{align*}
		\end{itemize} 
		By combining the above two estimates, the proof of the first step is concluded.
		
		\noindent Second, we can use the estimate above the handle the term \eqref{errortermfixedpoint}.

		\noindent{\bf Step 2:} The estimate below holds
		\begin{align*}
			& |\mathscr{R}_{\sigma}(\boldsymbol{a}_j,\boldsymbol{\lambda}_j)(\phi)|\lesssim
			\begin{cases}
				\sum_{i=1}^N\left(\|\phi\|_{\mathcal{C}_{*,\tau}(\mathbb{R}^n\setminus\Sigma)}^2+\|\phi\|_{\mathcal{C}_{*,\tau}(\mathbb{R}^n\setminus\Sigma)}^{\frac{n+2\sigma}{n-2\sigma}}\right)|x-x_i|^{\zeta_1-\tau}, \ &{\rm if} \ 0<\ud(x, \Sigma)<1,\\
				|x|^{-(n-2 \sigma)}\left(e^{\frac{(n-6\sigma){\gamma_\sigma L}}{n-2\sigma}}\|\phi\|_{\mathcal{C}_{*,\tau}(\mathbb{R}^n\setminus\Sigma)}^2+\|\phi\|_{\mathcal{C}_{*,\tau}(\mathbb{R}^n\setminus\Sigma)}^{\frac{n+2\sigma}{n-2\sigma}}\right), \ &{\rm if} \ \ud(x, \Sigma) \geqslant 1.
			\end{cases}
		\end{align*}
		
		\noindent Indeed we need to plug Step 1 into \eqref{errortermfixedpoint} and proceed as in Claim 2 of Lemma \ref{lm:quantitativeestimates}.
		
		\color{black}
		
		\noindent The proof follows by recalling the definition of weighted norms in \eqref{weightednorm1} and \eqref{weightednorm2}. 
		
		\noindent{\bf Claim 2:} The map $\mathscr{B}_{\sigma}(\boldsymbol{a}_j,\boldsymbol{\lambda}_j):\mathcal{B}_C\rightarrow \mathcal{B}_C$ is a contraction.
		
		\noindent Now we consider two functions $\phi_1, \phi_2 \in \mathcal{B}_C$.
		From the estimates in Step 1 combined with the ones in Lemma~\ref{lm:quantitativeestimates}, it is easy to see for $L\gg1$ large, one has 
		\begin{equation*}
			\left\|\mathscr{B}_{\sigma}(\boldsymbol{a}_j,\boldsymbol{\lambda}_j)(\phi_1)-\mathscr{B}_{\sigma}(\boldsymbol{a}_j,\boldsymbol{\lambda}_j)(\phi_2)\right\|_{\mathcal{C}_{**,\tau}(\mathbb{R}^n\setminus\Sigma)} \leqslant \mathrm{o}(1)\|\phi_1-\phi_2\|_{\mathcal{C}_{*,\tau}(\mathbb{R}^n\setminus\Sigma)}.
		\end{equation*}
		Therefore, one can be combined, and the proof of the claim is concluded.
		
		
		Based on the last step, we can use the standard Banach contracting argument to obtain the desired fixed point; this completes the proof of the proposition.
	\end{proof}
	
	\section{Estimates for the projections on the approximate null space}\label{sec:estimatesparameters}
	In this section, we provide some estimates             
	related to the coefficient functions seen as functions on the perturbation parameters, namely
	\begin{equation}\label{coefficientexplictly}
		\{c_{j, \ell}^i\}_{(i,j,\ell)\in\mathcal{I}_{\infty}}\subset \mathcal{C}^\infty(\ell^\infty_\tau(\mathbb R^{(n+1)N})),  
	\end{equation}
	which were
	obtained in Section~\ref{sec:approximatesolutions}, where we recall $\mathcal{I}_{\infty}:=\{1\dots, N\}\times \mathbb N\times \{0,\dots, n\}$ is the total index set.    
	More precisely, we notice that from Proposition~\ref{prop:lyapunovschmidtreduction}, whenever $(\boldsymbol{a}_j,\boldsymbol{\lambda}_j)\in{\rm Adm}_{\sigma}(\Sigma)$ is a set of admissible parameters in Definition~\ref{def:balancedparameters}, one can find a solution $\bar{u}_{(\boldsymbol{x},\boldsymbol{L},\boldsymbol{a}_j,\boldsymbol{\lambda}_j)}\in{\rm Apx}_{\sigma}(\Sigma)$ (or simply $\bar{u}_{(\boldsymbol{x},\boldsymbol{L})}\in{\rm Apx}_{\sigma}(\Sigma)$) to perturbed equation \eqref{ourequationprojected} as  in Definition~\ref{def:approximatesolution}.      
	Here we recall 
	\begin{equation*}
		(\boldsymbol{x},\boldsymbol{L})\in{\rm Comp}_\sigma(\Sigma)\mapsto (\boldsymbol{q},\boldsymbol{a}_0,\boldsymbol{R})\in{\rm Bal}_\sigma(\Sigma)\mapsto (\boldsymbol{a}_j,\boldsymbol{\lambda}_j)\in{\rm Adm}_\sigma(\Sigma)\mapsto \bar{u}_{(\boldsymbol{x},\boldsymbol{L})}\in{\rm Apx}_\sigma(\Sigma);
	\end{equation*}
	or, equivalently, 
	\begin{equation*}
		\bar{u}_{(\boldsymbol{x},\boldsymbol{L})}=(\Upsilon_{\rm sol}\circ \Upsilon_{\rm per}\circ \Upsilon_{\rm conf})(\boldsymbol{x},\boldsymbol{L}).
	\end{equation*}
	is the explicit construction of approximate solutions.      
	Thus, applying the Lyapunov--Schmidt reduction, one can see that finding solutions to our original problem \eqref{ourintegralequationdual} is equivalent to solving the following infinite-dimensional system 
	\begin{equation}\label{todasystem}\tag{$\mathcal{S}_{2\sigma,\Sigma}$}
		\beta_{j, \ell}^i(\boldsymbol{a}_j,\boldsymbol{\lambda}_j)=0 \quad {\rm for} \quad (i,j,\ell)\in \mathcal{I}_{\infty}.
	\end{equation}
	Here the projection functions $\{\beta_{j, \ell}^i\}_{(i,j,\ell)\in\mathcal{I}_{\infty}}\subset \mathcal{C}^\infty(\ell^\infty_\tau(\mathbb R^{(n+1)N}))$ are given by
	\begin{equation}\label{projectedfunctinsbalanced}
		\beta_{j, \ell}^i(\boldsymbol{a}_j,\boldsymbol{\lambda}_j)=\int_{\mathbb{R}^n}\mathcal{N}_\sigma(\boldsymbol{x},\boldsymbol{L},\boldsymbol{a}_j,\boldsymbol{\lambda}_j) \overline{Z}_{j, \ell}^i(\boldsymbol{a}_j,\boldsymbol{\lambda}_j) \ud x \quad {\rm for} \quad (i,j,\ell)\in \mathcal{I}_{\infty},
	\end{equation}
	where we recall that 
	\begin{equation*}
		\mathcal{N}_\sigma(\boldsymbol{x},\boldsymbol{L},\boldsymbol{a}_j,\boldsymbol{\lambda}_j)=\bar{u}_{(\boldsymbol{x},\boldsymbol{L},\boldsymbol{a}_j,\boldsymbol{\lambda}_j)}-(-\Delta)^{-\sigma}(f_\sigma\circ \bar{u}_{(\boldsymbol{x},\boldsymbol{L},\boldsymbol{a}_j,\boldsymbol{\lambda}_j)})
	\end{equation*}
	and the family of cokernels $\{\overline{Z}_{j, \ell}^i(\boldsymbol{a}_j,\boldsymbol{\lambda}_j)\}_{(i,j,\ell)\in\mathcal{I}_{\infty}}\subset \mathcal{C}^{0}(\mathbb R^n\setminus\Sigma)$ given by Definition~\ref{def:kernels}.
	The idea is to find a set of configuration parameters such that its associated sequence of perturbation parameters satisfies Syst. \eqref{todasystem}.
	Then, the balancing conditions \eqref{balancing1} and \eqref{balancing2} will allow us to perturb this special configuration to find a true solution to our problem.
	In some sense, this is a discrete version of the perturbation technique we applied to approximate Delaunay solutions by half-bubble tower solutions. 
	
	\subsection{Projection on the normalized approximate kernels}
	Initially, we prove the decay of the functions defined in \eqref{projectedfunctinsbalanced}.
	For this, we shall consider two cases.
	Namely,   when the perturbation sequence of parameters is trivial, that is, $(\boldsymbol{a}_j,\boldsymbol{\lambda}_j)=(\boldsymbol{0},\boldsymbol{1})$.
	Notice that indeed $(\boldsymbol{0},\boldsymbol{1})\in{\rm Adm}_{\sigma}(\Sigma)$ is an admissible perturbation sequence.
	
	With this definition, we have the following estimate:
	
	\begin{lemma}\label{lm:estimatesbetaspecial}
		Let $\sigma \in(1,+\infty)$, $n>2\sigma$ and $N\geqslant 2$. 
		Assume that $(\boldsymbol{a}_j,\boldsymbol{\lambda}_j)=(\boldsymbol{0},\boldsymbol{1})\in{\rm Adm}_{\sigma}(\Sigma)$ is a set of trivial perturbations.  
		Then, there exists two constants $A_2>0, A_3<0$ independent of $L\gg1$ given by \eqref{A2} and \eqref{A3} such that the following estimates hold
		\begin{itemize}
			\item[{\rm (i)}] If $j=0$ and
			\begin{itemize}
				\item[{\rm (a)}] $\ell=0$, then one has
				\begin{align*}
					& \beta_{0, 0}^i(\boldsymbol{0},\boldsymbol{1})=-c_{n, \sigma}q_i\left[A_2 \sum_{i^{\prime} \neq i}|x_{i^{\prime}}-x_i|^{-(n-2 \sigma)}(R^i R^{i^{\prime}})^{\gamma_\sigma} q_{i^{\prime}}-q_i\right] e^{-\gamma_\sigma L}(1+\mathrm{o}(1))+\mathcal{O}(e^{-\gamma_\sigma L(1+\xi)});
				\end{align*}
				\item[{\rm (b)}] $\ell\in\{1,\dots,n\}$, then one has
				\begin{align*}
					{\beta}_{0, \ell}^i(\boldsymbol{0},\boldsymbol{1})=c_{n, \sigma} \lambda_0^i\left[A_3 \sum_{i^{\prime} \neq i} \frac{\left(x_{i^{\prime}}-x_i\right)_\ell}{|x_{i^{\prime}}-x_i|^{n-2 \sigma+2}}(R^i R^{i^{\prime}})^{\gamma_\sigma} q_{i^{\prime}} q_i e^{-\gamma_\sigma L}+\mathcal{O}(e^{-\gamma_\sigma L(1+\xi)})\right].
				\end{align*}
			\end{itemize}
			\item[{\rm (ii)}] If $j \geqslant 1$ and
			\begin{itemize}
				\item[{\rm (a)}] $\ell=0$, then one has
				\begin{align*}
					{\beta}_{j, 0}^i(\boldsymbol{0},\boldsymbol{1})=\mathcal{O}\left(e^{-\gamma_\sigma(1+\xi)} e^{-\nu t_j^i}\right);
				\end{align*}
				\item[{\rm (b)}] $\ell\in\{1,\dots,n\}$, then one has
				\begin{align*}
					{\beta}_{j, \ell}^i(\boldsymbol{0},\boldsymbol{1})=\mathcal{O}\left(e^{-\gamma_\sigma L(1+\xi)} e^{-(1+\nu) t_j^i}\right),
				\end{align*}
			\end{itemize}
			where $\nu=\min \left\{\zeta_1+\gamma_{\sigma}, \frac{\gamma_{\sigma}}{2}\right\}$ independent of $L\gg1$ large and $\xi>0$.
		\end{itemize}
	\end{lemma}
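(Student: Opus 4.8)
\textbf{Proof plan for Lemma~\ref{lm:estimatesbetaspecial}.}
The plan is to compute the projections $\beta_{j,\ell}^i(\boldsymbol 0,\boldsymbol 1)=\int_{\mathbb R^n}\mathcal N_\sigma(\boldsymbol x,\boldsymbol L,\boldsymbol 0,\boldsymbol 1)\,\overline Z_{j,\ell}^i(\boldsymbol 0,\boldsymbol 1)\,\ud x$ directly, by exploiting the integral identity $(-\Delta)^\sigma Z_{j,\ell}^i=f'_\sigma(U_{(x_i,L_j^i,0,\lambda_j^i)})Z_{j,\ell}^i$ (which holds since $Z_{j,\ell}^i$ is an exact kernel element for the single bubble, by Lemma~\ref{lm:nondegeneracy}) together with the self-adjointness of $(-\Delta)^{-\sigma}$. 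Writing $\mathcal N_\sigma(\boldsymbol x,\boldsymbol L,\boldsymbol 0,\boldsymbol 1)=\bar u_{(\boldsymbol x,\boldsymbol L,\boldsymbol 0,\boldsymbol 1)}-(-\Delta)^{-\sigma}(f_\sigma\circ\bar u_{(\boldsymbol x,\boldsymbol L,\boldsymbol 0,\boldsymbol 1)})$ and integrating against $\overline Z_{j,\ell}^i=f'_\sigma(U)Z_{j,\ell}^i$, one pushes the Riesz operator onto $Z_{j,\ell}^i$ and obtains
\begin{equation*}
\beta_{j,\ell}^i(\boldsymbol 0,\boldsymbol 1)=\int_{\mathbb R^n}\Big[\sum_{i'}\sum_{j'}f_\sigma(U_{(x_{i'},L_{j'}^{i'},0,\lambda_{j'}^{i'})})-f_\sigma(\bar u_{(\boldsymbol x,\boldsymbol L,\boldsymbol 0,\boldsymbol 1)})\Big]Z_{j,\ell}^i\,\ud x+(\text{cut-off error}),
\end{equation*}
i.e.\ $\beta_{j,\ell}^i$ is (up to the $\chi_i\phi_i$ correction, which is $\mathcal O(e^{-\gamma_\sigma L(1+\xi)})$ by Corollary~\ref{cor:estimatesdelaunaySph}) the pairing of the difference term $\mathcal D_\sigma(\bar u_{(\boldsymbol x,\boldsymbol L,\boldsymbol 0,\boldsymbol 1)})$ from \eqref{differenceterm} with the normalized kernel. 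This reduces everything to estimating a single integral against $Z_{j,\ell}^i$.

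Next I would split $\mathcal D_\sigma$ into its leading bubble-interaction contributions. Near $x_i$ the dominant terms in $\mathcal D_\sigma$ are, on the one hand, the cross terms $f'_\sigma(U_{(x_i,L_j^i,0,\lambda_j^i)})\cdot U_{(x_{i'},L_{j'}^{i'},0,\lambda_{j'}^{i'})}$ coming from bubbles centered at \emph{other} punctures $x_{i'}\neq x_i$ (these are nearly constant on the support of $Z_{j,\ell}^i$, expandable as $U_{(x_{i'},\dots)}(x)\simeq (\lambda_{j'}^{i'})^{\gamma_\sigma}|x_i-x_{i'}|^{-2\gamma_\sigma}(1+\dots)$ near $x_i$), and on the other hand the cross terms between bubbles at the \emph{same} puncture but different tower levels $j'\neq j$. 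For the base level $j=0$, the inter-puncture interactions dominate: integrating $f'_\sigma(U)\,U_{(x_{i'},\dots)}$ against $Z_{0,0}^i$ produces the constant $A_2:=\int_{\mathbb R^n}f'_\sigma(u_{\rm sph})\,\partial_R U|_{R=1}\,\ud x$ (suitably normalized, cf.\ \eqref{A2}) times $\sum_{i'\neq i}|x_i-x_{i'}|^{-(n-2\sigma)}(R^iR^{i'})^{\gamma_\sigma}q_{i'}q_i e^{-\gamma_\sigma L}$, minus the self-interaction term $q_i^2 e^{-\gamma_\sigma L}$ (the $j'=\pm 1$ neighboring tower bubbles and the $-\sum_{j\in\mathbb Z\setminus\mathbb N}U$ tail), which is precisely the claimed formula in (i)(a). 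For $\ell\in\{1,\dots,n\}$, since $Z_{0,\ell}^i$ is odd in $x-x_i$, the constant part of the inter-puncture interaction integrates to zero and one must Taylor-expand $U_{(x_{i'},\dots)}(x)$ to first order in $x-x_i$; the first-order coefficient is $-2\gamma_\sigma(x_{i'}-x_i)_\ell|x_i-x_{i'}|^{-2\gamma_\sigma-2}$, and pairing with $Z_{0,\ell}^i$ yields $A_3:=-2\gamma_\sigma\int_{\mathbb R^n}f'_\sigma(u_{\rm sph})\,(x\cdot e_\ell)\,\lambda\partial_{a_\ell}U\,\ud x<0$ times $\sum_{i'\neq i}(x_{i'}-x_i)_\ell|x_i-x_{i'}|^{-(n-2\sigma+2)}(R^iR^{i'})^{\gamma_\sigma}q_{i'}q_i e^{-\gamma_\sigma L}$, which is (i)(b). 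The decay constants here come out of the explicit asymptotics \eqref{asmpkernel1}–\eqref{asmpkerneldual} and the exponential decay of $Z_{j,\ell}^i$ recorded in \eqref{kernelestimate1}.

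For $j\geqslant 1$ (part (ii)) the point is only to track the extra exponential gain $e^{-\nu t_j^i}$ with $t_j^i=(1+2j)L_i$. Here the relevant interactions are between the $j$-th tower bubble at $x_i$ and everything else; because $\lambda_j^i=R_j^i e^{-(1+2j)L_i}$ is exponentially smaller in $j$, each factor $U_{(x_i,L_j^i,\dots)}$ or $\lambda_j^i$ contributes an additional $e^{-\gamma_\sigma t_j^i}$-type smallness, and after optimizing the split of $\mathbb R^n$ into $\{|t|\le\alpha L\}\cup\{|t|\ge\alpha L\}$ (exactly as in Lemma~\ref{lm:quantitativeestimates} and Lemma~\ref{lm:estimatesdelaunay}) one extracts $\nu=\min\{\zeta_1+\gamma_\sigma,\gamma_\sigma/2\}>0$; the odd/even parity argument again gives the extra factor $e^{-t_j^i}$ in the $\ell\ne0$ case, producing $e^{-(1+\nu)t_j^i}$. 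The remainder after isolating these leading terms is $\mathcal O(e^{-\gamma_\sigma L(1+\xi)})$ by the bound on $\mathcal N_\sigma$ in Lemma~\ref{lm:quantitativeestimates} (which already controls $\mathcal D_\sigma$ through Claim~1 there). The main obstacle I anticipate is the bookkeeping in the $j=0$, $\ell=0$ case: one must show that the self-interaction genuinely contributes exactly $-q_i^2$ with the \emph{same} normalizing constant $c_{n,\sigma}$ as the $A_2$-term (so that the bracket $A_2\sum_{i'\ne i}(\dots)-q_i$ appears cleanly), which requires carefully summing the contributions of the tower tail $\sum_{j\in\mathbb Z\setminus\mathbb N}U$ and the cut-off error $(1-\chi_1)\phi_1$ and checking that the sub-leading tower levels $j'\ge 1$ at the same puncture contribute only to the $\mathcal O(e^{-\gamma_\sigma L(1+\xi)})$ remainder — this is where the precise value of $\xi$ and the relation \eqref{balancing0} must be used consistently.
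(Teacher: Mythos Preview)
Your approach is correct and is essentially the same as the paper's: the paper rewrites $\beta^{i}_{j,\ell}(\boldsymbol{0},\boldsymbol{1})=\int_{\mathbb{R}^n}[(-\Delta)^{\sigma}\bar{u}_{(\boldsymbol{x}, \boldsymbol{L}, \boldsymbol{0},\boldsymbol{1})}- f_\sigma(\bar{u}_{(\boldsymbol{x}, \boldsymbol{L}, \boldsymbol{0},\boldsymbol{1})})]\, Z_{j, \ell}^i(\boldsymbol{0},\boldsymbol{1})\,\ud x$ via the self-adjointness you describe, and then cites \cite[Lemma~4.1]{MR4104278} for the remaining bubble-interaction computation, which is exactly the expansion of $\mathcal D_\sigma$ against $Z_{j,\ell}^i$ that you outline (the inter-puncture integrals producing $A_2,A_3$ are precisely the identities recorded in Appendix~\ref{sec:bubbetowerinteractionestimates}, and the same-tower contribution is the $\Psi$-term there). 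Your anticipated bookkeeping concern in the $j=0$, $\ell=0$ case is real but routine: the ``$-q_i$'' inside the bracket comes from the single nearest-neighbor tower interaction $\Psi(2L_i)\sim e^{-\gamma_\sigma\cdot 2L_i}=q_i^2 e^{-2\gamma_\sigma L}$ (via \eqref{balancing0}), which after factoring out the overall $c_{n,\sigma}q_i e^{-\gamma_\sigma L}$ leaves exactly $-q_i$.
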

	
	\begin{proof}
		Recall the definition of the cokernel $\overline{Z}_{j, \ell}^i(\boldsymbol{0},\boldsymbol{1})$, we have
		\begin{align*}
			\beta^{i}_{j,\ell}(\boldsymbol{0},\boldsymbol{1})=\int_{\mathbb{R}^n}[(-\Delta)^{\sigma}\bar{u}_{(\boldsymbol{x}, \boldsymbol{L}, \boldsymbol{0},\boldsymbol{1})}- (f_\sigma\circ \bar{u}_{(\boldsymbol{x}, \boldsymbol{L}, \boldsymbol{0},\boldsymbol{1})})] Z_{j, \ell}^i(\boldsymbol{0},\boldsymbol{1})\ud x.
		\end{align*}
		Then The proof is the same as in \cite[Lemma~4.1]{MR4104278}, and we omit the details.
	\end{proof}
	
	It is not hard to check that only the perturbations of $(\boldsymbol{a}_j,\boldsymbol{\lambda}_j)$ will affect the numbers $\beta_{j, \ell}^i(\boldsymbol{a}_j,\boldsymbol{\lambda}_j)$, that is, we can get the same estimates for $\beta_{j, \ell}^{i^*}(\boldsymbol{0},\boldsymbol{1})$ for an admissible sequence of parameters. 
	Consequently, one can see that for any fixed $i_*\in\{1,\dots,N\}$, the corresponding $x_{i_*}\in \Sigma$ and $L_{i_*}\in\mathbb R_+$ are also fixed. 
	Hence, if we consider the approximate solution defined as 
	\begin{equation}\label{approximatesolutionfixed}
		\bar{u}^*_{(\boldsymbol{x}, \boldsymbol{L}, \boldsymbol{0},\boldsymbol{1})}:=\bar{u}_{(x_{i_*},L_{i_*},\boldsymbol{0},\boldsymbol{1})},
	\end{equation}
	then the same estimates for ${\beta}^{i_*}_{j, \ell}(\boldsymbol{0},\boldsymbol{1})$ in the above lemma are still in force.  
	
	Next, we estimate the coefficients in \eqref{coefficientexplictly} for a general admissible perturbation sequence. 
	So fixing $i_*\in\{1,\dots,n\}$, we would like to study the estimates for the variations of $\beta_{j, \ell}^{i_*}(\boldsymbol{a}_j,\boldsymbol{\lambda}_j)$. 
	Before, let us introduce some terminology.
	For any fixed $i_*\in\{1,\dots,N\}$ and $j_*\in\mathbb N$, we let $\mathrm{e}_{j_*}^{i_*}\in \mathbb{R}^n$ and let $r_{j_*}^{i_*}\in \mathbb R$ be such that 
	\begin{equation*}
		|\mathrm{e}_{j_*}^{i_*}| \lesssim (\lambda_{j_*}^{i_*})^2 \quad \text { and } \quad |r_{j_*}^{i_*}| \lesssim e^{-\tau t_{j_*}^{i_*}}.
	\end{equation*}
	In this fashion, we define the variation of the perturbation sequence as 
	\begin{equation*}
		(\boldsymbol{a}_j(t),\boldsymbol{\lambda}_j(t))=
		\begin{cases}
			(\boldsymbol{0},\boldsymbol{1}), & \ {\rm if} \ j\neq j_*\\
			t \mathrm{e}_{j_*}^{i_*}+ R^{i_*}(1+t r_{j_*}^{i_*}) & \ {\rm if} \ j= j_*.
		\end{cases}
	\end{equation*}
	Finally, we set 
	\begin{equation}
		\bar{u}^*_{(\boldsymbol{a}_j(t),\boldsymbol{\lambda}_j(t))}(x)=  \sum_{i=1}^N\left(\widehat{U}^{+,*}_{(\boldsymbol{a}_j(t),\boldsymbol{\lambda}_j(t))}+\chi_i(x-t\mathrm{e}_{j_*}^{i_*})\phi^*(\boldsymbol{a}_j(t),\boldsymbol{\lambda}_j(t))\right)(x),
	\end{equation}
	where
	\begin{equation*}
		\widehat{U}^{+,*}_{(\boldsymbol{a}_j(t),\boldsymbol{\lambda}_j(t))}(x)=\sum_{j\in\mathbb N}\widehat{U}^{*}_{(x_{i_*},L_{i_*},{a}^{i_*}_j(t),{\lambda}^{i^*}_j(t))}(x)
	\end{equation*}
	with
	\begin{equation*}
		\widehat{U}^{*}_{(x_{i_*},L_{i_*},{a}^{i_*}_j(t),{\lambda}^{i^*}_j(t))}(x)=U_{R^{i_*}(1+t r_{i_*}^{j_*})}(x-t e_{i_*}^{j_*}),
	\end{equation*}
	where $0<\tau\ll 1$ is sufficiently small.
	
	with this definition in hands, we have the following estimates
	\begin{lemma}\label{lm:estimatestechnical}
		Let $\sigma \in(1,+\infty)$, $n>2\sigma$ and $N\geqslant 2$.
		Assume that $(\boldsymbol{a}_j,\boldsymbol{\lambda}_j)\in{\rm Adm}_{\sigma}(\Sigma)$ is an admissible configuration as in Definition~\ref{def:balancedparameters} and $\Psi:\mathbb R\rightarrow\mathbb R$ be defined by \eqref{auxiliaryfunctioninteraction}.  
		Then, there exists constants $A_1<0, A_2>0, A_3<0$ independent of $L\gg1$ given by \eqref{A1}, \eqref{A2}, and \eqref{A3} such that the following estimates hold
		\begin{itemize}
			\item[{\rm (i)}] If $i=i_*$, $j_* \neq j$, and
			\begin{itemize}
				\item[{\rm (a)}] $\ell=0$, then one has
				\begin{align*}
					&\left.{\partial_t}\right|_{t=0}\int_{\mathbb{R}^n} \mathscr{N}_{\sigma}(\bar{u}^*_{(\boldsymbol{a}_j(t),\boldsymbol{\lambda}_j(t))}) Z_{j, \ell}^i(\boldsymbol{a}_j,\boldsymbol{\lambda}_j) \ud x\\
					&=-c_{n, \sigma} {\partial_t}\left[A_2 \sum_{i^{\prime} \neq i}|x_{i^{\prime}}-x_i|^{-(n-2 \sigma)}(\lambda_0^i \lambda_0^{i^{\prime}})^{\gamma_\sigma}+\Psi(|\ln {\lambda_0^i}/{\lambda_1^i}|) \frac{\ln{\lambda_1^i}/{\lambda_0^i}}{|\ln {\lambda_1^i}/{\lambda_0^i}|}\right]+\mathcal{O}(e^{-\gamma_\sigma L(1+\xi)});
				\end{align*}
				\item[{\rm (b)}] $\ell\in\{1,\dots,n\}$, then one has
				\begin{align*}
					&\left.{\partial_t}\right|_{t=0}\int_{\mathbb{R}^n} \mathscr{N}_{\sigma}(\bar{u}^*_{(\boldsymbol{a}_j(t),\boldsymbol{\lambda}_j(t))}) Z_{j, \ell}^i(\boldsymbol{a}_j,\boldsymbol{\lambda}_j) \ud x\\
					&=c_{n,\sigma} \lambda_{0}^{i}{\partial_t} \left[A_3 \sum_{i^{\prime} \neq i} \frac{x_{i^{\prime}}-x_i}{|x_{i^{\prime}}-x_i|^{n-2 \sigma+2}}(\lambda_0^i \lambda_0^{i^{\prime}})^{\gamma_\sigma}+A_1\frac{\min\{{\lambda_0^i}/{\lambda_1^i}, {\lambda_1^i}/{\lambda_0^i}\}^{\gamma_\sigma}}{|\max\{\lambda_{j^{\prime}}^i, \lambda_{j_*}^i\}|^{2}} {t \mathrm{e}_\ell}\right]+\mathcal{O}(\lambda_0^i e^{-\gamma_\sigma L(1+\xi)});
				\end{align*}
			\end{itemize}
			\item[{\rm (ii)}] If $i=i_*$, $j_*=j \geqslant 1$
			\begin{itemize}
				\item[{\rm (a)}] $\ell=0$, then one has
				\begin{align*}
					\left.{\partial_t}\right|_{t=0}\int_{\mathbb{R}^n} \mathscr{N}_{\sigma}(\bar{u}^*_{(\boldsymbol{a}_j(t),\boldsymbol{\lambda}_j(t))}) Z_{j, \ell}^i(\boldsymbol{a}_j,\boldsymbol{\lambda}_j) \ud x=-c_{n, \sigma} {\partial_t}\left[\Psi(|\ln {\lambda_0^i}/{\lambda_1^i}|) \frac{\ln{\lambda_1^i}/{\lambda_0^i}}{|\ln {\lambda_1^i}/{\lambda_0^i}|}\right]+\mathcal{O}(e^{-\gamma_\sigma L(1+\xi)});
				\end{align*}
				\item[{\rm (b)}] $\ell\in\{1,\dots,n\}$, then one has
				\begin{align*}
					\left.{\partial_t}\right|_{t=0}\int_{\mathbb{R}^n} \mathscr{N}_{\sigma}(\bar{u}^*_{(\boldsymbol{a}_j(t),\boldsymbol{\lambda}_j(t))}) Z_{j, \ell}^i(\boldsymbol{a}_j,\boldsymbol{\lambda}_j) \ud x=c_{n,\sigma} \lambda_{j}^{i}{\partial_t} \left[A_1\frac{\min\{{\lambda_0^i}/{\lambda_1^i}, {\lambda_1^i}/{\lambda_0^i}\}^{\gamma_\sigma}}{|\max\{\lambda_{j^{\prime}}^i, \lambda_{j_*}^i\}|^{2}} {t \mathrm{e}_\ell}\right]+\mathcal{O}(\lambda_j^i e^{-\gamma_\sigma L(1+\xi)}e^{-\tau t_{j_*}});
				\end{align*}
			\end{itemize}
			for some $\nu>0$ independent of $0<\tau\ll1$ small, $L\gg1$ large, and $\xi>0$.
		\end{itemize}
	\end{lemma}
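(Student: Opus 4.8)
The plan is to compute the $t$-derivative at $t=0$ of the projected quantity $\int_{\mathbb{R}^n}\mathscr{N}_\sigma(\bar u^*_{(\boldsymbol a_j(t),\boldsymbol\lambda_j(t))})Z^i_{j,\ell}(\boldsymbol a_j,\boldsymbol\lambda_j)\,\ud x$ by reducing it, via Lemma~\ref{lm:estimatesbetaspecial}, to the variation of a finite number of explicit interaction terms plus a higher-order remainder controlled by $e^{-\gamma_\sigma L(1+\xi)}$. First I would write $\bar u^*_{(\boldsymbol a_j(t),\boldsymbol\lambda_j(t))}$ as its base-level part (the $j=j_*$ bubble $U_{R^{i_*}(1+tr^{j_*}_{i_*})}(\,\cdot\,-t\mathrm e^{j_*}_{i_*})$, which carries all the $t$-dependence) plus the frozen remaining tower and the frozen error function $\phi^*$. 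Since the variation is linear in the displacement parameters $(\mathrm e^{i_*}_{j_*},r^{i_*}_{j_*})$ (as noted in the text, we vary one bubble at a time), differentiating $\mathscr{N}_\sigma$ in $t$ produces, by the chain rule, a term $(f'_\sigma(U_{(x_{i_*},\dots)})-f'_\sigma(\bar u^*_{(\boldsymbol0,\boldsymbol1)}))\partial_t U$ composed with $(-\Delta)^{-\sigma}$, exactly as in Claim~3 of Lemma~\ref{lm:quantitativeestimates}; this is the object I would pair against $Z^i_{j,\ell}$.

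Next I would split the pairing into the three regions $B_1(x_{i_*})$, $\bigsqcup_{i\neq i_*}B_1(x_i)$, and the exterior $\mathbb{R}^n\setminus\bigsqcup_i B_1(x_i)$, using the explicit kernel estimates \eqref{kernelestimate1} for $Z^i_{j,0}$ and the analogous formula for $Z^i_{j,\ell}$, $\ell\ge1$, together with the exponential decay of the bubble tower in Emden--Fowler coordinates (Proposition~\ref{prop:asymptotics}, \eqref{asmpkerneldual}). The dominant contribution comes from the interaction of the $j_*$-bubble with its immediate tower-neighbours and with the bubbles centred at the other punctures $x_{i'}$. For the inter-puncture interaction one expands $f_\sigma$ of the sum and isolates the leading cross term, which upon integration against $Z^i_{j,\ell}$ yields exactly the dipole-type quantities $A_2\sum_{i'\neq i}|x_{i'}-x_i|^{-(n-2\sigma)}(\lambda_0^i\lambda_0^{i'})^{\gamma_\sigma}$ (for $\ell=0$) and $A_3\sum_{i'\neq i}\frac{x_{i'}-x_i}{|x_{i'}-x_i|^{n-2\sigma+2}}(\lambda_0^i\lambda_0^{i'})^{\gamma_\sigma}$ (for $\ell\in\{1,\dots,n\}$), with $A_2,A_3$ the constants from \eqref{A2},\eqref{A3}. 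For the intra-tower interaction between the $j_*$-bubble and the adjacent $j=0$ (or $j=1$) bubble, one uses the two-bubble interaction estimates recalled in Appendix~\ref{sec:bubbetowerinteractionestimates}, which produce the terms built from $\Psi(|\ln\lambda_0^i/\lambda_1^i|)$ and $A_1\,\min\{\lambda_0^i/\lambda_1^i,\lambda_1^i/\lambda_0^i\}^{\gamma_\sigma}/|\max\{\lambda^i_{j'},\lambda^i_{j_*}\}|^2$; here $\Psi$ is the auxiliary function \eqref{auxiliaryfunctioninteraction} and $A_1<0$ is from \eqref{A1}. Then one takes $\partial_t|_{t=0}$ inside these explicit finite-dimensional expressions, and collects the cases $j_*\neq j$ versus $j_*=j\ge1$ (the latter simply dropping the inter-puncture dipole term, since that term involves only $\lambda_0^i$ and is $t$-independent when $j_*\ge1$), and the Fourier-mode cases $\ell=0$ versus $\ell\in\{1,\dots,n\}$ (the latter carrying the extra factor $\lambda_j^i$ from the normalization $Z^i_{j,\ell}=\lambda^i_j\partial_{a^i_{j,\ell}}U$).

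Finally, all the discarded error terms — the quadratic-and-higher parts of $f_\sigma$, the contribution of the frozen error function $\chi_i\phi^*$ (of size $e^{-\gamma_\sigma L(1+\xi)}$ by Lemma~\ref{lm:estimatesdelaunay} and Proposition~\ref{prop:lyapunovschmidtreduction}), the non-adjacent tower interactions, and the tails of the region integrals — are bounded by $\mathcal O(e^{-\gamma_\sigma L(1+\xi)})$ uniformly in $t$ near $0$, so differentiation in $t$ preserves this bound (with an extra $e^{-\tau t_{j_*}}$ decay in the case $j_*=j\ge1$ coming from the admissibility bounds \eqref{balancing6}). I expect the main obstacle to be the bookkeeping of the two-bubble interaction estimates in the \emph{nonlocal} setting: one must make sure that the integro-differential operator $(-\Delta)^{-\sigma}$ applied to the product of $f'_\sigma$ of one bubble with a derivative of another yields precisely the claimed $\Psi$- and $A_1$-terms with the correct sign and the correct min/max structure in the scales $\lambda_0^i,\lambda_1^i$, and that the remainder genuinely gains the extra factor $e^{-\xi L}$ rather than merely $e^{-\gamma_\sigma L}$; this is where the refined asymptotics of $\bar v_{(0,L_j)}$ from Lemma~\ref{lm:refinedestimatesEF} and the kernel asymptotics \eqref{asmpkernel1},\eqref{asmpkerneldual} must be used carefully, in parallel with (but not identically to) the proof of the analogous statement in \cite{MR4104278}.
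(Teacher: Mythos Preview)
Your proposal is correct and aligns with the paper's own approach: the paper does not give an independent argument here but simply records that ``the proof is the same as in \cite[Lemma~4.3]{MR4104278}'' and omits the details. What you have written is a faithful and well-organized reconstruction of precisely that omitted argument---the linear variation in the single perturbed bubble, the chain-rule identity $(-\Delta)^{-\sigma}[(f'_\sigma(U)-f'_\sigma(\bar u^*))\partial_t U]$ from Claim~3 of Lemma~\ref{lm:quantitativeestimates}, the three-region decomposition paired against the kernel bounds \eqref{kernelestimate1}, and the extraction of the leading inter-puncture and intra-tower interactions via the appendix lemmas yielding the $A_2$, $A_3$, $\Psi$, and $A_1$ terms---so there is no genuine difference in strategy to compare.
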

	
	\begin{proof}
		The proof is the same as in \cite[Lemma~4.3]{MR4104278}; thus, we omit the details.
	\end{proof}
	
	Next, we study the case of a general sequence of perturbation. 
	In this proof, it will be fundamental to use the fact that our sequence of parameters is admissible.
	
	
	\begin{lemma}\label{lm:estimatesbetageneral}
		Let $\sigma \in(1,+\infty)$, $n>2\sigma$ and $N\geqslant 2$ and $(\boldsymbol{a}_j,\boldsymbol{\lambda}_j)\in{\rm Adm}_{\sigma}(\Sigma)$ be an admissible configuration as in Definition~\ref{def:balancedparameters}.  
		There exists constants $A_1,A_2>0, A_3<0$ independent of $L\gg1$ given by \eqref{A1}, \eqref{A2} and \eqref{A3} such that the following estimates hold
		\begin{itemize}
			\item[{\rm (i)}] If $j=0$ and
			\begin{itemize}
				\item[{\rm (a)}] $\ell=0$, then one has
				\begin{align*}
					\beta_{0,0}^i(\boldsymbol{a}_j,\boldsymbol{\lambda}_j)&=-c_{n, \sigma} q_i\left[A_2 \sum_{i^{\prime} \neq i}|x_{i^{\prime}}-x_i|^{-(n-2 \sigma)}(R_0^i R_0^{i^{\prime}})^{\gamma_\sigma} q_{i^{\prime}}-\left(\frac{R_1^i}{R_0^i}\right)^{\gamma_\sigma} q_i\right] e^{-\gamma_\sigma L}(1+\mathrm{o}(1))\\
					&+\mathcal{O}(e^{-\gamma_\sigma L(1+\xi)}).
				\end{align*}
				\item[{\rm (b)}] $\ell\in\{1,\dots,n\}$, then one has
				\begin{align*}
					\beta_{0, \ell}^i(\boldsymbol{a}_j,\boldsymbol{\lambda}_j)&=c_{n, \sigma} \lambda_0^i\left[A_3 \sum_{i^{\prime} \neq i} \frac{(x_{i^{\prime}}-x_i)_\ell}{|x_{i^{\prime}}-x_i|^{n-2 \sigma+2}}(R_0^i R_0^{i^{\prime}})^{\gamma_\sigma} q_{i^{\prime}}+A_0\left(\frac{R_1^i}{R_0^i}\right)^{\gamma_\sigma} \frac{a_0^i-a_1^i}{\left(\lambda_0^i\right)^2} q_i\right] q_i e^{-\gamma_\sigma L}\\
					&+\mathcal{O}(\lambda_0^i e^{-\gamma_\sigma L(1+\xi)})  \ {\rm for} \ \ell\in\{1,\dots,n\}.
				\end{align*}
			\end{itemize}
			\item[{\rm (ii)}] If $j \geqslant 1$ and
			\begin{itemize}
				\item[{\rm (a)}] $\ell=0$, then one has
				\begin{align*}
					{\beta}_{j, 0}^i(\boldsymbol{a}_j,\boldsymbol{\lambda}_j)=\mathcal{O}\left(e^{-\gamma_\sigma L(1+\xi)} e^{-\nu t_j^i}+e^{-\gamma_\sigma L(1+\xi)} e^{-\tau t_{j-1}^i}\right). 
				\end{align*}
				\item[{\rm (b)}] $\ell\in\{1,\dots,n\}$, then one has
				\begin{align*}
					{\beta}_{j, \ell}^i(\boldsymbol{a}_j,\boldsymbol{\lambda}_j)=\mathcal{O}\left(\lambda_j^i e^{-\gamma_\sigma L(1+\xi)} e^{-\nu t_j^i}+\lambda_j^i e^{-\gamma_\sigma L} e^{-\tau t_{j-1}^i}\right) \ {\rm for} \ \ell\in\{1,\dots,n\},
				\end{align*}
			\end{itemize}
			where $\nu=\min \left\{\zeta_1+\gamma_{\sigma}, \frac{\gamma_{\sigma}}{2}\right\}$ independent of $L\gg1$ large and $\xi>0$.
		\end{itemize}
	\end{lemma}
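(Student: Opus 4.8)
The plan is to obtain Lemma~\ref{lm:estimatesbetageneral} by combining the ``base point'' estimates of Lemma~\ref{lm:estimatesbetaspecial} with the variational estimates of Lemma~\ref{lm:estimatestechnical}, exploiting the admissibility hypothesis \ref{itm:A0}--\ref{itm:A1} to control all the correction terms. First I would fix $i_*\in\{1,\dots,N\}$ and a level $j_*\in\mathbb N$, write the admissible perturbation $(\boldsymbol a_j,\boldsymbol\lambda_j)$ as a path starting at the trivial configuration $(\boldsymbol 0,\boldsymbol 1)$ in the sense of the variation $(\boldsymbol a_j(t),\boldsymbol\lambda_j(t))$ introduced before Lemma~\ref{lm:estimatestechnical}, and use the fundamental theorem of calculus:
\begin{equation*}
\beta_{j_*,\ell}^{i_*}(\boldsymbol a_j,\boldsymbol\lambda_j)=\beta_{j_*,\ell}^{i_*}(\boldsymbol 0,\boldsymbol 1)+\int_0^1\left.\partial_t\right|_{t=s}\int_{\mathbb R^n}\mathscr N_\sigma(\bar u^*_{(\boldsymbol a_j(t),\boldsymbol\lambda_j(t))})Z_{j_*,\ell}^{i_*}(\boldsymbol a_j,\boldsymbol\lambda_j)\,\ud x\,\ud s,
\end{equation*}
so that the main term comes from Lemma~\ref{lm:estimatesbetaspecial} (after the substitution $\lambda_0^i\mapsto R_0^ie^{-L_i}$, etc., which produces the factors $(R_0^iR_0^{i'})^{\gamma_\sigma}$ and the ratio $(R_1^i/R_0^i)^{\gamma_\sigma}$) and the higher-order remainder comes from integrating the derivative estimates in Lemma~\ref{lm:estimatestechnical} against the admissibility bounds $|r_j^i|,|\tilde a_j^i|\lesssim e^{-\tau t_j^i}$ from \eqref{balancing6}.

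Next I would organize the bookkeeping by the four cases exactly as in the statement. For $j=0$: the leading contribution in case (a) is the discrete ``balancing defect'' $A_2\sum_{i'\neq i}|x_{i'}-x_i|^{-(n-2\sigma)}(R_0^iR_0^{i'})^{\gamma_\sigma}q_{i'}-(R_1^i/R_0^i)^{\gamma_\sigma}q_i$, which is precisely the expression that the balanced condition \eqref{balancing1} is designed to annihilate up to the small perturbations \eqref{balancing3}; in case (b) one reads off the transversal term $A_3\sum_{i'\neq i}(x_{i'}-x_i)_\ell|x_{i'}-x_i|^{-(n-2\sigma+2)}(R_0^iR_0^{i'})^{\gamma_\sigma}q_{i'}$ together with the local self-interaction $A_0(R_1^i/R_0^i)^{\gamma_\sigma}(a_0^i-a_1^i)(\lambda_0^i)^{-2}q_i$, which matches the structure of \eqref{balancing2}. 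These terms come from the interaction integrals between bubbles at different singularities and between consecutive bubbles in the same tower, and I would cite the interaction estimates in Appendix~\ref{sec:bubbetowerinteractionestimates} together with the asymptotics of the Riesz kernel \eqref{asmpkerneldual} to identify the constants $A_1,A_2,A_3$ and the auxiliary function $\Psi$ of \eqref{auxiliaryfunctioninteraction}. For $j\geq1$ the leading terms of Lemma~\ref{lm:estimatesbetaspecial} carry the extra decay $e^{-\nu t_j^i}$ (resp. $e^{-(1+\nu)t_j^i}$), and the new feature is the coupling to the previous level $j-1$, which after using \eqref{balancing6} with exponent $\tau$ contributes the term $e^{-\gamma_\sigma L(1+\xi)}e^{-\tau t_{j-1}^i}$ appearing in (ii); here $\nu=\min\{\zeta_1+\gamma_\sigma,\gamma_\sigma/2\}$ is forced by the weight restriction \eqref{weightassumption} and the decay $e^{-\gamma_\sigma|t-t_j^i|}$ of the kernel.

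The step I expect to be the main obstacle is controlling the error produced by the cut-off functions $\chi_i$ and by the perturbation functions $\phi^*$ inside $\bar u^*_{(\boldsymbol a_j(t),\boldsymbol\lambda_j(t))}$ when one differentiates in $t$: the derivative hits $\chi_i(x-t\mathrm e_{j_*}^{i_*})$ and $\phi^*(\boldsymbol a_j(t),\boldsymbol\lambda_j(t))$, and one must show these contributions are of size $e^{-\gamma_\sigma L(1+\xi)}$ times the appropriate power of $t_j^i$ or $\lambda_j^i$, uniformly in $j$. This requires the refined H\"older bound \eqref{holderEF} on $\phi_{(0,L_j)}$, the exponential decay of $\partial_t\phi^*$ obtained by differentiating the fixed-point identity in Proposition~\ref{prop:lyapunovschmidtreduction} (so that $\|\partial_t\phi^*\|_{\mathcal C_{*,\tau}}\lesssim e^{-\gamma_\sigma L(1+\xi)}$ by the bounded right-inverse of Lemma~\ref{lm:solutionoperator}), and careful splitting of the region of integration into the interior annuli $\mathcal A_j^{i}$, the transition annuli, and the exterior as in Lemma~\ref{lm:quantitativeestimates}. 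Once those cut-off/perturbation errors are absorbed into $\mathcal O(e^{-\gamma_\sigma L(1+\xi)}e^{-\nu t_j^i})$, the remaining arithmetic of collecting the leading interaction terms is routine and the stated formulae follow; I would therefore reference \cite[Lemma~4.1 and Lemma~4.3]{MR4104278} for the parallel fractional computation and indicate only the modifications needed for the higher order and the presence of the previous-level coupling.
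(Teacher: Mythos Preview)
Your proposal is correct and follows essentially the same route as the paper, which simply remarks that the proof is identical to \cite[Lemma~4.4]{MR4104278} and omits all details. In fact your sketch (base-point value from Lemma~\ref{lm:estimatesbetaspecial} plus path integration of the derivative bounds in Lemma~\ref{lm:estimatestechnical}, controlled by the admissibility decay \eqref{balancing6}) is exactly the argument of that reference, so there is nothing to add.
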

	
	\begin{proof}
		For the same reason as in Lemma \ref{lm:estimatesbetaspecial}, The proof is the same as in \cite[Lemma~4.4]{MR4104278}, and we omit the details.
	\end{proof}
	
	\subsection{Derivative of the projection on the normalized approximate kernels}
	Here we estimate the variations of the projection functions in \eqref{projectedfunctinsbalanced} with respect to the perturbation parameters.       
	As before, for any fixed $i_*\in\{1,\dots,N\}$ one has $x_{i_*}\in \Sigma$ and $L_{i_*}\in\mathbb R_+$, we denote by $u^*_{(\boldsymbol{x}, \boldsymbol{L}, \boldsymbol{0},\boldsymbol{1})}\in \mathcal{C}^{2\sigma+\alpha}(\mathbb R^n\setminus\Sigma)$ an approximate solution to \eqref{ourintegralequation}. 
	Using Proposition~\ref{prop:lyapunovschmidtreduction}, we know that by performing the Lyapunov--Schmidt reduction method, there exists an error function 
	\begin{equation}\label{errorfunctionfixed}
		\phi^*_{(\boldsymbol{a}_j,\boldsymbol{\lambda}_j)}:=\phi_{(x_{i_*},L_{i_*}\boldsymbol{a}_j,\boldsymbol{\lambda}_j)}\in \mathcal{C}_*(\mathbb R^n\setminus\Sigma).
	\end{equation}
	In this direction, it also makes sense to define 
	\begin{equation}\label{nonlinearoperatorfixed}
		\mathscr{N}^*_{\sigma}(\boldsymbol{a}_j,\boldsymbol{\lambda}_j)(\phi):=\mathscr{N}_{\sigma}({u}^*_{(\boldsymbol{a}_j,\boldsymbol{\lambda}_j)}+\phi)=({u}^*_{(\boldsymbol{a}_j,\boldsymbol{\lambda}_j)}+\phi)-(-\Delta)^{-\sigma}[f_\sigma\circ ({u}^*_{(\boldsymbol{a}_j,\boldsymbol{\lambda}_j)}+\phi)].
	\end{equation}
	Furthermore, let us introduce the linearized operator applied to this approximate solution $\mathscr{L}^*_{\sigma}(\boldsymbol{a}_j,\boldsymbol{\lambda}_j):\mathcal{C}^{\alpha}(\mathbb R^n\setminus\Sigma)\rightarrow\mathcal{C}^{2\sigma+\alpha}(\mathbb R^n\setminus\Sigma)$ given by 
	\begin{equation*}
		\mathscr{L}^*_{\sigma}(\boldsymbol{a}_j,\boldsymbol{\lambda}_j)(\phi)=\phi-(-\Delta)^{-\sigma}(f^{\prime}_\sigma\circ \bar{u}^*_{(\boldsymbol{a}_j,\boldsymbol{\lambda}_j)})\phi.
	\end{equation*}
	From now on, it will be convenient to denote the new coordinate system as 
	\begin{equation}\label{newcoordsystemfixed}
		\xi_{j, 0}^i=r_j^i \quad {\rm and} \quad {\rm for} \quad \xi_{j, \ell}^i=a_{j, \ell}^i \quad {\rm for} \quad \ell\in\{1, \dots, n\}.
	\end{equation}
	We study the variations with respect to \eqref{newcoordsystemfixed}.
	
	We now need to study the derivative of $c^{i_*}_{j, \ell}(\boldsymbol{a}_j,\boldsymbol{\lambda}_j)$ with respect to the variations of the parameters in \eqref{newcoordsystemfixed}. 
	Initially, we consider the most straightforward model case when there is only one point singularity at $\Sigma=\{0\}$ and $\bar{u}_{(\boldsymbol{x},\boldsymbol{L})}=\bar{u}_{(0,L_j)}\in \mathcal{C}^{2\sigma}(\mathbb R^n\setminus\{0\})$ is the associated approximate solution, {\it i.e.}, the  Delaunay solution from \eqref{exactdelaunay}. 
	We recall that this solution satisfies \eqref{ourequationprojected} with $\phi_{(0,L_j)}\equiv 0$ and vanishing right-hand side. 
	We define
	\begin{equation}\label{specialprojectioncoefficients}
		\beta^{i_*}_{j, \ell}(\boldsymbol{a}_j,\boldsymbol{\lambda}_j):=\int_{\mathbb{R}^n}\mathscr{N}^*_{\sigma}(\boldsymbol{a}_j,\boldsymbol{\lambda}_j) \overline{Z}^*_{j, \ell}(\boldsymbol{a}_j,\boldsymbol{\lambda}_j) \ud x .
	\end{equation}
	In this setting, one can still perform the reduction in Proposition~\ref{prop:lyapunovschmidtreduction} to find a perturbed solution in the form $u=\bar{u}_{(0,L_j)}+{\phi}$ of the following equation
	\begin{align}
		\label{ourequationprojectedfixed}
		\left\{\begin{array}{l}
			\mathcal{N}^*_\sigma(\boldsymbol{a}_j,\boldsymbol{\lambda}_j)(\phi)= \displaystyle\sum_{j\in\mathbb N}  \displaystyle\sum_{\ell=0}^n  c_{j, \ell}^{i_*}(\boldsymbol{a}_j,\boldsymbol{\lambda}_j)\overline{Z}_{j, \ell}^{i_*}(\boldsymbol{a}_j,\boldsymbol{\lambda}_j) \quad {\rm in} \quad \mathbb{R}^n\setminus\Sigma, \\
			\displaystyle\int_{\mathbb{R}^n} \phi\overline{Z}_{j, \ell}^{i_*}(\boldsymbol{a}_j,\boldsymbol{\lambda}_j) \ud x=0 \quad {\rm for} \quad (\ell,j)\in \{0,\dots,n\}\times\mathbb N.
		\end{array}\right.                   
	\end{align}
	In conclusion, for any fixed $i_*\in\{1,\dots,N\}$. 
	Let us denote by $\bar{u}^*_{(\boldsymbol{a}_j,\boldsymbol{\lambda}_j)}, {\phi}^*_{(\boldsymbol{a}_j,\boldsymbol{\lambda}_j)}$ the pair satisfying the infinite-dimensional reduced equation \eqref{ourequationprojectedfixed}.
	Notice that the reason to start with the trivial configuration $u_{(0,L_j)}\in \mathcal{C}^{2\sigma}(\mathbb R^n\setminus\{0\})$ is that we will have the identification $\partial_{\xi_{j, \ell}^i}\beta_{j, \ell}^i=\lim_{j\rightarrow+\infty}\partial_{\xi_{j, \ell}}\beta^{i_*}_{j, \ell}$, where we set $\xi_{j, \ell}:=\xi^{i_*}_{j, \ell}$. 
	
	Let us begin with the lemma below
	\begin{lemma}
		Let $\sigma \in(1,+\infty)$, $n>2\sigma$ and $N\geqslant 2$.
		Assume that $(\boldsymbol{a}_j,\boldsymbol{\lambda}_j)\in{\rm Adm}_{\sigma}(\Sigma)$ is an admissible configuration as in Definition~\ref{def:balancedparameters} with $\bar{u}_{(\boldsymbol{x},\boldsymbol{L},\boldsymbol{a}_j,\boldsymbol{\lambda}_j)}\in{\rm Apx}_{\sigma}(\Sigma)$ their associated approximate solution as in Definition~\ref{def:approximatesolution}.  
		Then, for $L\gg1$ is sufficiently large, one has
		\begin{equation*}
			|\partial_{\xi_{j, \ell}^i}{\phi}_{(\boldsymbol{x},\boldsymbol{L},\boldsymbol{a}_j,\boldsymbol{\lambda}_j)}(x)| \lesssim 
			\begin{cases} e^{-\gamma_\sigma L(1+\xi)}|x-x_i|^{-\gamma_\sigma} e^{-\nu|t^i-t_j^i|}, & {\rm if} \ \ell=0, \\ {(\lambda_j^i)^{-1}} e^{-\gamma_\sigma L(1+\xi)}|x-x_i|^{-\gamma_\sigma} e^{-\sigma|t^i-t_j^i|}, & {\rm if} \ \ell\in\{1\dots,n\},\end{cases}
			\quad {\rm in} \quad B_1(x_i),
		\end{equation*}
		where $\nu=\min \left\{\zeta_1+\gamma_{\sigma}, \frac{\gamma_{\sigma}}{2}\right\}$ independent of $L\gg1$ large and $\xi>0$.
	\end{lemma}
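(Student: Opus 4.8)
The plan is to differentiate the reduced equation \eqref{ourequationprojected} with respect to the parameter $\xi_{j,\ell}^i$, recast the result as a linearized problem of the same type for $\dot\phi:=\partial_{\xi_{j,\ell}^i}\phi_{(\boldsymbol{x},\boldsymbol{L},\boldsymbol{a}_j,\boldsymbol{\lambda}_j)}$, and then invert the linearized operator via Lemma~\ref{lm:solutionoperator} in a weighted space adapted to the $j$-th bubble. Concretely, writing $\phi:=\phi_{(\boldsymbol{x},\boldsymbol{L},\boldsymbol{a}_j,\boldsymbol{\lambda}_j)}$, Proposition~\ref{prop:lyapunovschmidtreduction} gives $\mathscr{L}_{\sigma}(\boldsymbol{x},\boldsymbol{L},\boldsymbol{a}_j,\boldsymbol{\lambda}_j)(\phi)=-\mathscr{N}_{\sigma}(\bar u_{(\boldsymbol{x},\boldsymbol{L},\boldsymbol{a}_j,\boldsymbol{\lambda}_j)})+\mathscr{R}_{\sigma}(\boldsymbol{a}_j,\boldsymbol{\lambda}_j)(\phi)+\sum_{(i',j',\ell')} c_{j',\ell'}^{i'}\overline{Z}_{j',\ell'}^{i'}$ together with $\int_{\mathbb R^n}\phi\,\overline{Z}_{j',\ell'}^{i'}\,\ud x=0$. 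Applying $\partial_{\xi_{j,\ell}^i}$ produces $\mathscr{L}_{\sigma}(\boldsymbol{x},\boldsymbol{L},\boldsymbol{a}_j,\boldsymbol{\lambda}_j)(\dot\phi)=h+\sum_{(i',j',\ell')} \dot c_{j',\ell'}^{i'}\overline{Z}_{j',\ell'}^{i'}$, where $h$ gathers $-\partial_{\xi_{j,\ell}^i}\mathscr{N}_{\sigma}(\bar u)$, the derivative of the quadratic remainder $\mathscr{R}_{\sigma}$, the term $-\big(\partial_{\xi_{j,\ell}^i}(-\Delta)^{-\sigma}(f'_\sigma\circ \bar u)\big)\phi$, and the pieces $\sum_{(i',j',\ell')} c_{j',\ell'}^{i'}\,\partial_{\xi_{j,\ell}^i}\overline{Z}_{j',\ell'}^{i'}$; the differentiated orthogonality relation becomes $\int_{\mathbb R^n}\dot\phi\,\overline{Z}_{j',\ell'}^{i'}\,\ud x=-\int_{\mathbb R^n}\phi\,\partial_{\xi_{j,\ell}^i}\overline{Z}_{j',\ell'}^{i'}\,\ud x$.

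Since the right-hand side of this last identity is exponentially small, subtracting from $\dot\phi$ a small explicit combination $\sum_{(i',j',\ell')} d_{j',\ell'}^{i'}Z_{j',\ell'}^{i'}$ of normalized kernels restores exact orthogonality, so Lemma~\ref{lm:solutionoperator} applies to the corrected function. The terms building $h$ are then controlled by tools already in place: $\partial_{\xi_{j,\ell}^i}\mathscr{N}_{\sigma}(\bar u)$ is precisely what Claims 3--6 of Lemma~\ref{lm:quantitativeestimates} estimate; the quadratic-remainder derivative is of size $\|\phi\|_{\mathcal{C}_{*,\tau}(\mathbb R^n\setminus\Sigma)}\|\dot\phi\|_{\mathcal{C}_{*,\tau}(\mathbb R^n\setminus\Sigma)}$ plus higher order, hence absorbable for $L\gg1$; the coefficients $c_{j',\ell'}^{i'}$ and their derivatives obey the bounds of Claim 3 of Lemma~\ref{lm:fredholmtheory}; and $\partial_{\xi_{j,\ell}^i}\overline{Z}_{j',\ell'}^{i'}$ is read off Definition~\ref{def:kernels}. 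Combined with $\|\phi\|_{\mathcal{C}_{*,\tau}(\mathbb R^n\setminus\Sigma)}\lesssim e^{-\gamma_\sigma L(1+\xi)}$ from Proposition~\ref{prop:lyapunovschmidtreduction}, this already yields $\|\dot\phi\|_{\mathcal{C}_{*,\tau}(\mathbb R^n\setminus\Sigma)}\lesssim e^{-\gamma_\sigma L(1+\xi)}$, with an additional factor $(\lambda_j^i)^{-1}$ when $\ell\in\{1,\dots,n\}$ because $\partial_{a_{j,\ell}^i}U_{(x_i,L^i_j,\lambda_j^i,a_j^i)}=(\lambda_j^i)^{-1}Z_{j,\ell}^i$.

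The step doing the real work is the additional localization factor $e^{-\nu|t^i-t_j^i|}$ (respectively $e^{-\sigma|t^i-t_j^i|}$ for $\ell\ge1$), where $t^i=-\ln|x-x_i|$, which records that perturbing the $j$-th bubble only affects $\phi$ near the $j$-th neck. For this I would introduce, for each fixed $(i,j)$, the shifted weighted Hölder spaces obtained from $\mathcal{C}_{*,\tau}(\mathbb R^n\setminus\Sigma)$ and $\mathcal{C}_{**,\tau}(\mathbb R^n\setminus\Sigma)$ by inserting near $x_i$ the bounded multiplicative weight $e^{\nu|(-\ln|x-x_i|)-t_j^i|}$, and re-run the Fredholm argument of Lemma~\ref{lm:fredholmtheory} in these spaces. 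The argument persists with constants uniform in $j$: the blow-up/non-degeneracy step (Lemma~\ref{lm:nondegeneracy}) is untouched by a bounded weight, the cylindrical kernels translate, and the bubble-tower building blocks decay exponentially away from their own centers, so the weight $e^{\nu|t-t_j^i|}$ is beaten by the decay \eqref{kerneldecay}, \eqref{asmpkerneldual} and the interaction estimates of Appendix~\ref{sec:bubbetowerinteractionestimates} as long as $\nu$ is small; moreover the derivative estimates for $\mathscr{N}_{\sigma}(\bar u)$ in Lemma~\ref{lm:quantitativeestimates} already carry the gain $e^{-\nu t_j^i}$ coming from \eqref{balancing5}--\eqref{balancing6}. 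Unwinding the weighted norm over $B_1(x_i)$ then converts the bound on $\dot\phi$ in the localized space into the claimed pointwise estimate.

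I expect the main obstacle to be exactly this uniform-in-$j$ invertibility of $\mathscr{L}_{\sigma}$ in the shifted weighted norm: one must check that the constant in Lemma~\ref{lm:fredholmtheory} does not degenerate as $t_j^i\to+\infty$, i.e.\ that the exponential decay of the Riesz kernel and of the bubble interactions dominates the growth $e^{\nu|t-t_j^i|}$ of the weight uniformly in the neck index. The threshold $\nu=\min\{\zeta_1+\gamma_\sigma,\gamma_\sigma/2\}$ — and the rate $\sigma$ in the $\ell\ge1$ case — is precisely the one dictated by this competition, which is why the estimate takes the stated form.
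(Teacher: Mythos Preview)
Your approach is correct and is precisely the strategy of the cited reference: the paper itself omits the proof entirely, writing only ``The proof is the same as in \cite[Lemma~5.1]{MR4104278}; thus, we omit the details.'' The scheme you outline---differentiate the reduced equation \eqref{ourequationprojected} in $\xi_{j,\ell}^i$, restore orthogonality by subtracting a small kernel combination, and invert the linearized operator in a weighted norm carrying the extra factor $e^{\nu|t^i-t_j^i|}$ near $x_i$---is exactly the method of that reference, and the paper confirms this immediately afterward by introducing the localized spaces $\mathcal{C}_{*,\nu}(\mathbb{R}^n\setminus\Sigma)$ and $\mathcal{C}_{**,\nu}(\mathbb{R}^n\setminus\Sigma)$ with precisely that weight. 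Your identification of the main technical point (uniform-in-$j$ invertibility in the shifted norm, with the threshold $\nu=\min\{\zeta_1+\gamma_\sigma,\gamma_\sigma/2\}$ dictated by the competition between the weight growth and the bubble/kernel decay) is on the mark.
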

	
	\begin{proof}
		The proof is the same as in \cite[Lemma~5.1]{MR4104278}; thus, we omit the details.
	\end{proof}
	
	We remark that an estimate similar to this will hold for the pair $\bar{u}^*_{(\boldsymbol{a}_j,\boldsymbol{\lambda}_j)}, {\phi}^*_{(\boldsymbol{a}_j,\boldsymbol{\lambda}_j)}$. 
	The last lemma shows the suitable weighted H\"older spaces for this setting. 
	\begin{definition}
		Let $\sigma \in(1,+\infty)$, $n>2\sigma$ and $N\geqslant 2$. 
		For any $\alpha\in(0,1)$, let us introduce two new weighted norms
		\begin{align*}
			\|\phi\|_{\mathcal{C}_{*,\nu}(\mathbb{R}^n\setminus\Sigma)}&=\||x-x_i|^{\gamma_\sigma} e^{\nu|t^i-t_j^i|} \phi\|_{\mathcal{C}^{2 \sigma+\alpha}(B_1(x_i))}+\sum_{i^{\prime} \neq i}\||x-x_{i^{\prime}}|^{\gamma_\sigma} \phi\|_{\mathcal{C}^{2 \sigma+\alpha}(B_1(x_{i^\prime}))}\\
			&+\||x|^{n-2 \sigma} \phi\|_{\mathcal{C}^{2 \sigma+\alpha}(\mathbb{R}^n \setminus \cup_{i^{\prime}} B_1(x_{i^{\prime}}))}
		\end{align*}
		and 
		\begin{align*}
			\|\phi\|_{\mathcal{C}_{**,\nu}(\mathbb{R}^n\setminus\Sigma)}&=\||x-x_i|^{\gamma_\sigma^\prime} e^{\nu|t^i-t_j^i|} \phi\|_{\mathcal{C}^{2 \sigma+\alpha}(B_1(x_i))}+\sum_{i^{\prime} \neq i}\||x-x_{i^{\prime}}|^{\gamma_\sigma^\prime} \phi\|_{\mathcal{C}^{2 \sigma+\alpha}(B_1(x_{i^\prime}))}\\
			&+\||x|^{n+2 \sigma} \phi\|_{\mathcal{C}^{2 \sigma+\alpha}(\mathbb{R}^n \setminus \cup_{i^{\prime}} B_1(x_{i^{\prime}}))}
		\end{align*}
		where we recall that $t^i=-\ln|x-x_i|$ and $0<\nu\ll1$ is a small positive constant to be determined later.
		We also denote by $\mathcal{C}_{*,\nu}(\mathbb{R}^n\setminus\Sigma)$ and $\mathcal{C}_{**,\nu}(\mathbb{R}^n\setminus\Sigma)$ the corresponding weighted Hölder spaces. 
	\end{definition}
	
	In the light of Lemma~\ref{lm:estimatestechnical}, one can prove the estimate below
	\begin{lemma}\label{lm:derivateprojeestimates1}
		Let $\sigma \in(1,+\infty)$, $n>2\sigma$ and $N\geqslant 2$. 
		Assume that $(\boldsymbol{a}_j,\boldsymbol{\lambda}_j)\in{\rm Adm}_{\sigma}(\Sigma)$ is an admissible configuration as in Definition~\ref{def:balancedparameters} and $\Psi:\mathbb R\rightarrow\mathbb R$ be defined as \eqref{auxiliaryfunctioninteraction}.  
		Then, there exists constants $A_1<0, A_2>0, A_3<0$ independent of $L\gg1$ given by \eqref{A1}, \eqref{A2}, and \eqref{A3} and $\xi>0$ such that the following estimates hold:
		\begin{itemize}
			\item[{\rm (a)}] If $\ell=0$, then one has
			\begin{align*}
				\left.\partial_{r_j}\beta_{j, 0}(\boldsymbol{a}_j,\boldsymbol{\lambda}_j)\right|_{(\boldsymbol{a}_j,\boldsymbol{\lambda}_j)=(\boldsymbol{0},\boldsymbol{1})} & =-2 c_{n, \sigma} \Psi^{\prime}(L)+\mathcal{O}(e^{-\gamma_\sigma L(1+\xi)}),\\
				\left.\partial_{r_j}\beta_{j-1, 0}(\boldsymbol{a}_j,\boldsymbol{\lambda}_j)\right|_{(\boldsymbol{a}_j,\boldsymbol{\lambda}_j)=(\boldsymbol{0},\boldsymbol{1})} & =c_{n, \sigma} \Psi^{\prime}(L)+\mathcal{O}(e^{-\gamma_\sigma L(1+\xi)}), \\
				\left.\partial_{r_j}\beta_{j+1, 0}(\boldsymbol{a}_j,\boldsymbol{\lambda}_j)\right|_{(\boldsymbol{a}_j,\boldsymbol{\lambda}_j)=(\boldsymbol{0},\boldsymbol{1})}  & =c_{n, \sigma} \Psi^{\prime}(L)+\mathcal{O}(e^{-\gamma_\sigma L(1+\xi)}),\\
				\left.\partial_{r_j}\beta_{j_*, 0}(\boldsymbol{a}_j,\boldsymbol{\lambda}_j)\right|_{(\boldsymbol{a}_j,\boldsymbol{\lambda}_j)=(\boldsymbol{0},\boldsymbol{1})} &=\mathcal{O}(e^{-\gamma_\sigma(1+\xi)} e^{-\sigma|t_{j_*}-t_j|}) \quad  {\rm for}\quad |j_{*}-j| \geqslant 2.
			\end{align*}
			\item[{\rm (b)}] If $\ell\in\{1,\dots,n\}$, then one has
			\begin{align*}
				\left.\partial_{a_{j, \ell}}\beta^{i_*}_{j, \ell}(\boldsymbol{a}_j,\boldsymbol{\lambda}_j)\right|_{(\boldsymbol{a}_j,\boldsymbol{\lambda}_j)=(\boldsymbol{0},\boldsymbol{1})}=c_{n, \sigma} \lambda_j \sum_{j^{\prime} \neq j} \frac{\min \{{\lambda_{j^{\prime}}}/{\lambda_j},{\lambda_j}/{\lambda_{j^{\prime}}}\}^{\gamma_\sigma}}{\max \{\lambda_{j^{\prime}}^2, \lambda_j^2\}}+\mathcal{O}(e^{-\gamma_\sigma L(1+\xi)}) \quad {\rm if} \quad j \neq j_*
			\end{align*}
			and 
			\begin{align*}
				\left.\partial_{a_{j, \ell}}\beta^{i_*}_{j, \ell}(\boldsymbol{a}_j,\boldsymbol{\lambda}_j)\right|_{(\boldsymbol{a}_j,\boldsymbol{\lambda}_j)=(\boldsymbol{0},\boldsymbol{1})}=c_{n, \sigma} \lambda_j \frac{\min \{{\lambda_{j_*}}/{\lambda_j},{\lambda_j}/{\lambda_{j_*}}\}^{\gamma_\sigma}}{\max \{\lambda_{j_*}^2, \lambda_j^2\}}+\mathcal{O}(e^{-\gamma_\sigma L(1+\xi)}).
			\end{align*}
			In addition, it follows
			\begin{equation*}
				\left.\partial_{\xi_{j, \ell^{\prime}}}\beta^{i_*}_{j_*, \ell}(\boldsymbol{a}_j,\boldsymbol{\lambda}_j)\right|_{(\boldsymbol{a}_j,\boldsymbol{\lambda}_j)=(\boldsymbol{0},\boldsymbol{1})}=0 \quad {\rm if } \quad \ell \neq \ell^{\prime}.
			\end{equation*}
		\end{itemize}
	\end{lemma}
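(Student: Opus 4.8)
\textbf{Proof strategy for Lemma~\ref{lm:derivateprojeestimates1}.}
The plan is to differentiate the identity \eqref{specialprojectioncoefficients} defining $\beta_{j,\ell}^{i_*}(\boldsymbol{a}_j,\boldsymbol{\lambda}_j)$ with respect to the perturbation coordinates $\xi_{j,\ell}^{i_*}$ introduced in \eqref{newcoordsystemfixed}, evaluate at the trivial configuration $(\boldsymbol{0},\boldsymbol{1})$, and extract the leading interaction term from the bubble-tower structure. Concretely, writing $\beta_{j,\ell}^{i_*}(\boldsymbol{a}_j,\boldsymbol{\lambda}_j)=\int_{\mathbb R^n}\mathscr{N}^*_\sigma(\boldsymbol{a}_j,\boldsymbol{\lambda}_j)\,\overline{Z}^*_{j,\ell}(\boldsymbol{a}_j,\boldsymbol{\lambda}_j)\,\ud x$, the derivative splits by the product rule into a term where $\partial_{\xi}$ hits $\mathscr{N}^*_\sigma$ and a term where it hits the cokernel $\overline{Z}^*_{j,\ell}$. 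First I would use the fact that at the trivial configuration the Delaunay solution $\bar u_{(0,L_j)}$ solves \eqref{ourequationprojectedfixed} exactly with $\phi^*\equiv0$, so $\mathscr{N}^*_\sigma(\boldsymbol{0},\boldsymbol{1})=0$; hence in the term where $\partial_\xi$ hits $\overline{Z}^*_{j,\ell}$ the factor $\mathscr{N}^*_\sigma$ vanishes and that term contributes nothing at $(\boldsymbol{0},\boldsymbol{1})$. The remaining contribution is $\int_{\mathbb R^n}\bigl(\partial_{\xi_{j,\ell}^{i_*}}\mathscr{N}^*_\sigma(\boldsymbol{a}_j,\boldsymbol{\lambda}_j)\bigr)\big|_{(\boldsymbol{0},\boldsymbol{1})}\,\overline{Z}^*_{j,\ell}(\boldsymbol{0},\boldsymbol{1})\,\ud x$, and $\partial_{\xi}\mathscr{N}^*_\sigma$ at the trivial configuration is essentially the linearized operator $\mathscr{L}^*_\sigma(\boldsymbol{0},\boldsymbol{1})$ applied to the corresponding normalized kernel $Z^*_{j,\ell}$, up to the controlled error from $\partial_\xi\phi^*$ estimated in the preceding lemma.

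Next I would compute $\mathscr{L}^*_\sigma(\boldsymbol{0},\boldsymbol{1})(Z^*_{j,\ell})$. Since each individual bubble $U_{(x_{i_*},L^{i_*}_j,\lambda^{i_*}_j,a^{i_*}_j)}$ is an exact solution and $Z^*_{j,\ell}$ is a kernel of the linearization at that single bubble, $(-\Delta)^\sigma Z^*_{j,\ell}-f'_\sigma(U_{(\cdot)})Z^*_{j,\ell}=0$; therefore $\mathscr{L}^*_\sigma(\boldsymbol{0},\boldsymbol{1})(Z^*_{j,\ell})=(-\Delta)^{-\sigma}\bigl[(f'_\sigma(U_{(\cdot)})-f'_\sigma(\widehat U^{+,*}))Z^*_{j,\ell}\bigr]$, i.e.\ it measures the difference between the potential of the single bubble and that of the full half-tower. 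Pairing against $\overline{Z}^*_{j,\ell}=f'_\sigma(U_{(\cdot)})Z^*_{j,\ell}$ and integrating, the dominant contribution comes from the two nearest neighbors $j\pm1$ in the tower; using the exponential decay of the spherical solution in Emden--Fowler coordinates (Proposition~\ref{prop:asymptotics}) and the interaction estimates recalled in Appendix~\ref{sec:bubbetowerinteractionestimates}, this nearest-neighbor interaction integral evaluates, for $\ell=0$, to $c_{n,\sigma}\Psi'(L)$ with the correct signs $-2,+1,+1$ for $j,j-1,j+1$ respectively (the $-2$ reflecting that bubble $j$ interacts with both neighbors), and decays like $e^{-\sigma|t_{j_*}-t_j|}$ for $|j_*-j|\ge2$; for $\ell\in\{1,\dots,n\}$ the translation kernels produce the factor $\lambda_j\sum_{j'\ne j}\min\{\lambda_{j'}/\lambda_j,\lambda_j/\lambda_{j'}\}^{\gamma_\sigma}/\max\{\lambda_{j'}^2,\lambda_j^2\}$ from the off-diagonal part of \eqref{orthogonalitycond1}, with the $j=j_*$ case isolating the single $j_*$-term. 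Finally the cross-derivative $\partial_{\xi_{j,\ell'}}\beta^{i_*}_{j_*,\ell}$ for $\ell\ne\ell'$ vanishes because the relevant integrand is odd in the $(x-x_{i_*})_{\ell'}$ variable against an even weight, killing the angular integral.

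The error terms $\mathcal{O}(e^{-\gamma_\sigma L(1+\xi)})$ are controlled exactly as in Lemma~\ref{lm:quantitativeestimates} and Lemma~\ref{lm:estimatesbetageneral}: the contributions of the perturbation error $\phi^*$, of the cutoff $\chi_i$, and of bubbles at the other punctures $i'\ne i_*$ are all of strictly higher exponential order in $L$, and the remainder in approximating a Delaunay solution by the half-tower was shown to be $\lesssim e^{-\gamma_\sigma L(1+\xi)}$ in Lemma~\ref{lm:estimatesdelaunay}. The main obstacle, as in \cite{MR4104278}, is bookkeeping: one must carefully split $\mathbb R^n$ into the annular regions $\mathcal{A}^{i,k}_j$ around each bubble scale, show that only adjacent scales contribute to leading order, and verify that the Emden--Fowler interaction integral $\int_{-\infty}^{+\infty} v_{\rm sph}(t)^{4\sigma/(n-2\sigma)}v_{\rm sph}(t-s)\,\ud t$ reproduces the function $\Psi$ and its derivative $\Psi'$ with the claimed constants; this is where the identification $\partial_{\xi_{j,\ell}^i}\beta_{j,\ell}^i=\lim_{j\to+\infty}\partial_{\xi_{j,\ell}}\beta^{i_*}_{j,\ell}$ is used, since it lets us reduce the general multi-puncture case to the single-puncture Delaunay model. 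Given that the algebra is routine once the reductions above are in place, and matches line-by-line the argument of \cite[Lemma~5.2]{MR4104278}, I would state the result with these reductions and refer to that source for the remaining computations.
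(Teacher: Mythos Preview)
Your proposal is correct and aligns with the paper's approach: the paper's own proof consists solely of the sentence ``The proof is the same as in \cite[Lemma~5.2]{MR4104278}; thus, we omit the details,'' and your final paragraph explicitly reduces to and cites that same source. The outline you give (product-rule differentiation of \eqref{specialprojectioncoefficients}, vanishing of $\mathscr{N}^*_\sigma$ at the trivial Delaunay configuration, identification of $\mathscr{L}^*_\sigma(\boldsymbol{0},\boldsymbol{1})(Z^*_{j,\ell})$ as a potential-difference term, and evaluation of the nearest-neighbor interaction via Appendix~\ref{sec:bubbetowerinteractionestimates}) is precisely the content of that referenced lemma, so there is nothing to add.
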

	
	\begin{proof}
		The proof is the same as in \cite[Lemma~5.2]{MR4104278}; thus, we omit the details.
	\end{proof}
	
	The strategy to proof the desired estimates for the case of a generally admissible perturbation sequence $(\boldsymbol{a}_j,\boldsymbol{\lambda}_j)\in{\rm Adm}_{\sigma}(\Sigma)$ is first to study the trivial case $(\boldsymbol{a}_j,\boldsymbol{\lambda}_j)=(\boldsymbol{0},\boldsymbol{1})$ and then perform a by-now standard perturbation of parameters method.
	
	For simplicity, we only state the latter case:
	
	\begin{lemma}\label{lm:estimatesbetaderivativesspecial}
		Let $\sigma \in(1,+\infty)$, $n>2\sigma$ and $N\geqslant 2$.
		Assume that $(\boldsymbol{a}_j,\boldsymbol{\lambda}_j)\in{\rm Adm}_{\sigma}(\Sigma)$ is an admissible configuration as in Definition~\ref{def:balancedparameters} with $\bar{u}_{(\boldsymbol{x},\boldsymbol{L},\boldsymbol{a}_j,\boldsymbol{\lambda}_j)}\in{\rm Apx}_{\sigma}(\Sigma)$ their associated approximate solution as in Definition~\ref{def:approximatesolution}.  
		Then, for any $i_*\in\{1,\dots,N\}$ fixed and $j\in\mathbb N$, we have the following estimate
		\begin{equation*}
			\left\|\partial_{\xi_{j, \ell}^i}\left(\phi_{(\boldsymbol{x},\boldsymbol{L},\boldsymbol{a}_j,\boldsymbol{\lambda}_j)}-\phi^*_{(\boldsymbol{a}_j,\boldsymbol{\lambda}_j)}\right)\right\|_{\mathcal{C}_{*,\nu}(\mathbb{R}^n\setminus\Sigma)} \lesssim 
			\begin{cases}
				e^{-\gamma_\sigma L(1+\xi)} e^{-\nu t_j^i} \quad {\rm for } \quad \ell=0\\ 
				{(\lambda_j^i)^{-1}} e^{-\gamma_\sigma L(1+\xi)} e^{-\nu t_j^i} \quad  {\rm for } \quad \ell\in\{1,\dots,n\}.
			\end{cases}
		\end{equation*}
		In particular, it follows
		\begin{equation*}
			\left|\partial_{\xi_{j, \ell}^i}\left(\beta_{j^{\prime}, \ell^{\prime}}^i(\boldsymbol{a}_j,\boldsymbol{\lambda}_j)-\beta_{j^{\prime}, \ell^{\prime}}(\boldsymbol{a}_j,\boldsymbol{\lambda}_j)\right)\right| \lesssim 
			\begin{cases}
				e^{-\gamma_\sigma L(1+\xi)} e^{-\nu t_j^i} e^{-\nu|t_j^i-t_{j^{\prime}}^i|} \quad {\rm for } \quad \ell=0\\
				{(\lambda_j^i)^{-1}} e^{-\gamma_\sigma(1+\xi)} e^{-\nu t_j^i} e^{-\nu|t_j^i-t_{j^{\prime}}^i|}\quad  {\rm for } \quad \ell\in\{1,\dots,n\},
			\end{cases}
		\end{equation*}
		where $0<\tau\ll\nu$ small enough with $\nu=\min \left\{\zeta_1+\gamma_{\sigma}, \frac{\gamma_{\sigma}}{2}\right\}$ independent of $L\gg1$ large and $\xi>0$.
	\end{lemma}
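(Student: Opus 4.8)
The plan is to run a perturbation-of-parameters argument in the spirit of \cite[Lemma~5.3]{MR4104278}, carried out entirely in the dual (integral) formulation. The starting point is the fixed-point characterization from Proposition~\ref{prop:lyapunovschmidtreduction}: the error function $\phi_{(\boldsymbol{x},\boldsymbol{L},\boldsymbol{a}_j,\boldsymbol{\lambda}_j)}$ solves $\phi=\mathscr{B}_{\sigma}(\boldsymbol{a}_j,\boldsymbol{\lambda}_j)(\phi)$ with $\mathscr{B}_\sigma$ as in \eqref{contractionmap}, and $\phi^*_{(\boldsymbol{a}_j,\boldsymbol{\lambda}_j)}$ solves the analogous fixed-point equation built from the single-singularity data $\bar{u}^*_{(\boldsymbol{a}_j,\boldsymbol{\lambda}_j)}$, $\mathscr{N}^*_\sigma$ and the right-inverse $(\mathscr{L}^*_\sigma(\boldsymbol{a}_j,\boldsymbol{\lambda}_j))^{-1}$. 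First I would differentiate both identities in the parameter $\xi_{j,\ell}^i$ (in the coordinates \eqref{newcoordsystemfixed}), being careful about the parameter-dependence of the right-inverses: differentiating $\mathscr{L}_\sigma(\mathscr{L}_\sigma^{-1}h)=h$ expresses $\partial_{\xi_{j,\ell}^i}(\mathscr{L}_\sigma^{-1})$ through $\mathscr{L}_\sigma^{-1}$ and $\partial_{\xi_{j,\ell}^i}\mathscr{L}_\sigma$, the latter being multiplication by $\partial_{\xi_{j,\ell}^i}f'_\sigma(\bar{u}_{(\boldsymbol{x},\boldsymbol{L},\boldsymbol{a}_j,\boldsymbol{\lambda}_j)})$ followed by $(-\Delta)^{-\sigma}$.

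The heart of the matter is then to subtract the two differentiated equations and control $w:=\partial_{\xi_{j,\ell}^i}\phi_{(\boldsymbol{x},\boldsymbol{L},\boldsymbol{a}_j,\boldsymbol{\lambda}_j)}-\partial_{\xi_{j,\ell}^i}\phi^*_{(\boldsymbol{a}_j,\boldsymbol{\lambda}_j)}$. This quantity satisfies a linear problem of the shape $\mathscr{L}_{\sigma}(\boldsymbol{x},\boldsymbol{L},\boldsymbol{a}_j,\boldsymbol{\lambda}_j)(w)=\mathrm{RHS}+\sum c_{j',\ell'}^i Z_{j',\ell'}^i$ together with the orthogonality constraints against the cokernels, where the right-hand side collects (a) the difference $\bar{u}_{(\boldsymbol{x},\boldsymbol{L},\boldsymbol{a}_j,\boldsymbol{\lambda}_j)}-\bar{u}^*_{(\boldsymbol{a}_j,\boldsymbol{\lambda}_j)}$ and its $\xi$-derivative, (b) the difference of the nonlinear remainders $\mathscr{N}_\sigma$ and $\mathscr{N}^*_\sigma$, and (c) the difference of the normalized kernels $Z_{j',\ell'}^i-Z^{*}_{j',\ell'}$ times the coefficients already estimated in Lemmas~\ref{lm:estimatesbetaspecial}–\ref{lm:estimatesbetageneral}. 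The key input is that inside $B_1(x_{i_*})$ the two approximate solutions differ only by the bubble towers centered at the other $x_{i'}$ and by the lower half $j\in\mathbb Z\setminus\mathbb N$ of the $i_*$-th tower, all of which are $\mathcal{O}(e^{-\gamma_\sigma L(1+\xi)})$ by Corollary~\ref{cor:estimatesdelaunaySph} and the exponential decay of the spherical solution (Proposition~\ref{prop:asymptotics}); the analogous bound holds for their $\xi$-derivatives. Feeding these into the uniform right-inverse estimate of Lemma~\ref{lm:solutionoperator}, now measured in the finer weighted norm $\mathcal{C}_{*,\nu}(\mathbb R^n\setminus\Sigma)$ whose factor $e^{\nu|t^i-t_j^i|}$ records that a $\xi_{j,\ell}^i$-derivative is concentrated near the $j$-th bubble of the $i$-th tower, together with the pointwise localization bounds of the preceding (unlabelled) lemma on $\partial_{\xi_{j,\ell}^i}\phi_{(\boldsymbol{x},\boldsymbol{L},\boldsymbol{a}_j,\boldsymbol{\lambda}_j)}$ and Lemma~\ref{lm:estimatestechnical}, yields the claimed bound on $\|\partial_{\xi_{j,\ell}^i}(\phi_{(\boldsymbol{x},\boldsymbol{L},\boldsymbol{a}_j,\boldsymbol{\lambda}_j)}-\phi^*_{(\boldsymbol{a}_j,\boldsymbol{\lambda}_j)})\|_{\mathcal{C}_{*,\nu}}$; the extra $(\lambda_j^i)^{-1}$ in the case $\ell\in\{1,\dots,n\}$ is exactly the normalization $Z_{j,\ell}^i=\lambda_j^i\partial_{a_{j,\ell}^i}U$.

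For the ``in particular'' clause I would start from $\beta_{j',\ell'}^i(\boldsymbol{a}_j,\boldsymbol{\lambda}_j)=\int_{\mathbb R^n}\mathcal{N}_\sigma(\boldsymbol{x},\boldsymbol{L},\boldsymbol{a}_j,\boldsymbol{\lambda}_j)\,\overline{Z}_{j',\ell'}^i(\boldsymbol{a}_j,\boldsymbol{\lambda}_j)\,\ud x$ and the same formula with the starred single-singularity data for $\beta_{j',\ell'}$. Writing $\mathcal{N}_\sigma=\mathscr{N}_\sigma(\bar{u}_{(\boldsymbol{x},\boldsymbol{L},\boldsymbol{a}_j,\boldsymbol{\lambda}_j)}+\phi_{(\boldsymbol{x},\boldsymbol{L},\boldsymbol{a}_j,\boldsymbol{\lambda}_j)})$, differentiating in $\xi_{j,\ell}^i$ and pairing against the linearized equation solved by $Z_{j',\ell'}^i$ so that the leading local terms cancel as in Lemma~\ref{lm:orthogonalityconditions}, the difference $\partial_{\xi_{j,\ell}^i}(\beta_{j',\ell'}^i-\beta_{j',\ell'})$ becomes a sum of integrals each containing either $w$ (controlled by the first part) or the differences of the nonlinear operators and of the cokernels (controlled by the $\mathcal{O}(e^{-\gamma_\sigma L(1+\xi)})$ bounds above). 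Since the first factor is localized near the $j$-th bubble while $\overline{Z}_{j',\ell'}^i$ is localized near the $j'$-th bubble, the two-spherical-solution interaction estimates of Appendix~\ref{sec:bubbetowerinteractionestimates} supply the additional factor $e^{-\nu|t_j^i-t_{j'}^i|}$, and the choice $0<\tau\ll\nu$ with $\nu=\min\{\zeta_1+\gamma_\sigma,\gamma_\sigma/2\}$ absorbs the parameter-variation corrections.

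The hard part will be the bookkeeping of the exponential weights in the two-bubble interactions: one must verify that neither the parameter-dependence of the right-inverses nor the cross terms between the model and the full problem spoil the product decay $e^{-\gamma_\sigma L(1+\xi)}\,e^{-\nu t_j^i}\,e^{-\nu|t_j^i-t_{j'}^i|}$, which forces the admissibility hypotheses \ref{itm:A0}–\ref{itm:A1} (in particular the $e^{-\tau t_j^i}$ control on $r_j^i,\tilde a_j^i$ from \eqref{balancing6}) to be used at full strength, exactly as in \cite{MR4104278}; moreover, because $(-\Delta)^{-\sigma}$ is nonlocal, every one of these estimates must be pushed through the Riesz-potential representation, splitting $\mathbb R^n$ into the interior, transition and exterior regions around each $x_i$ as in the proof of Lemma~\ref{lm:quantitativeestimates}, rather than invoking a local elliptic or Harnack-type bound.
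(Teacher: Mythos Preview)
Your proposal is correct and follows essentially the same approach as the paper: the paper's own proof consists entirely of the sentence ``The proof is the same as in \cite[Lemmas~5.5 and 5.6]{MR4104278}; thus, we omit the details,'' and your sketch is precisely a fleshed-out version of that argument (differentiating the fixed-point characterization, subtracting the model single-singularity problem, and closing via the linear right-inverse estimate in the finer $\mathcal{C}_{*,\nu}$ norm). The only minor discrepancy is that you cite \cite[Lemma~5.3]{MR4104278} while the paper cites Lemmas~5.5--5.6 of the same reference, but these are adjacent pieces of the same perturbation-of-parameters machinery and your outline covers their content.
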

	
	\begin{proof}
		The proof is the same as in \cite[Lemmas~5.5 and 5.6]{MR4104278}; thus, we omit the details.
	\end{proof}
	
	
	
	\color{black}
	
	\section{Gluing technique}\label{sec:gluingtechnique}
	In this section, we prove our main results. We keep the notation and assumptions in the previous sections. 
	The proof here is similar in spirit to the one in \cite[Theorem~1]{MR4104278}.
	Nevertheless, we include it here for the sake of completeness.
	
	\subsection{Infinite-dimensional Toda-system}
	We apply a fixed-point strategy in a weighted space of sequences.
	Before we start, we define some notation.
	\begin{definition}
		For any $\tau>0$, let us introduce the following weighted norm 
		\begin{equation*}
			|(\boldsymbol{a}_j,\boldsymbol{\lambda}_j)|_{\infty,\tau}=\sup_{j\in\mathbb N} e^{(2j+1) \tau}|(\boldsymbol{a}_j,\boldsymbol{\lambda}_j)|_{\infty}.
		\end{equation*}
		We also consider the associated Banach space given by
		\begin{equation*}
			\ell_\tau^\infty(\mathbb R^{(n+1)N})=\left\{(\boldsymbol{a}_j,\boldsymbol{\lambda}_j)\in  \ell^\infty(\mathbb R^{(n+1)N}) : |(\boldsymbol{a}_j,\boldsymbol{\lambda}_j)|_{\infty,\tau}<+\infty\right\}.
		\end{equation*}
		For any $(\boldsymbol{\bar{a}}_j,\boldsymbol{\bar{r}}_j)\in  \ell_\tau^\infty(\mathbb R^{(n+2)N})$, we define the interaction operator 
		\begin{equation*}
			\mathscr{T}_{(\boldsymbol{\bar{a}}_j,\boldsymbol{\bar{r}}_j)}: \ell_\tau^\infty(\mathbb R^{(n+1)N})\rightarrow  \ell_\tau^\infty(\mathbb R^{(n+1)N})
		\end{equation*}
		given by $\mathscr{T}_{(\boldsymbol{\bar{a}}_j,\boldsymbol{\bar{r}}_j)}=(\mathscr{T}_{(\boldsymbol{\bar{a}}_j)},\mathscr{T}_{(\boldsymbol{\bar{r}}_j)})$.
		Here 
		\begin{equation*}
			\mathscr{T}_{(\boldsymbol{\bar{a}}_j)}(\boldsymbol{\tilde a}_j)=\mathscr{T}_{(\boldsymbol{\bar{a}}_j)} \boldsymbol{\tilde a}_j^{\rm t}\quad {\rm and} \quad \mathscr{T}_{(\boldsymbol{\bar{r}}_j)}(\boldsymbol{\tilde r}_j)=\mathscr{T}_{(\boldsymbol{\bar{r}})_j} \boldsymbol{\tilde r}_j^{\rm t},
		\end{equation*}
		where $\mathscr{T}_{(\boldsymbol{\bar{a}}_j)}=(\mathscr{T}_{(\boldsymbol{\bar{a}}_j)}^1,\dots, \mathscr{T}_{(\boldsymbol{\bar{a}}_j)}^N)$ and $\mathscr{T}_{(\boldsymbol{\bar{r}})_j}=(\mathscr{T}_{(\boldsymbol{\bar{r}})_j}^1,\dots, \mathscr{T}_{(\boldsymbol{\bar{r}})_j}^N)$ with 
		\begin{equation}\label{iteractionoperator1}
			\mathscr{T}_{(\boldsymbol{\bar{a}}_j)}^i=\left(\begin{array}{ccccccc}
				-1 & 1+e^{-2 L_i} & -e^{-2 L_i} & 0 & \cdots & \cdots & 0 \\
				0 & -1 & 1+e^{-2 L_i} & -e^{-2 L_i} & 0 & \ddots & 0 \\
				0 & 0 & -1 & 1+e^{-2 L_i} & -e^{-2 L_i} & 0 & \vdots \\
				\cdots & \cdots & \cdots & \cdots & \cdots & \cdots & \cdots
			\end{array}\right)
		\end{equation}
		and 
		\begin{equation}\label{iteractionoperator2}
			\mathscr{T}_{(\boldsymbol{\bar{r}}_j)}^i=\left(\begin{array}{ccccccc}
				-1 & 2 & -1 & 0 & \cdots & \cdots & 0 \\
				0 & -1 & 2 & -1 & 0 & \ddots & 0 \\
				0 & 0 & -1 & 2 & -1 & 0 & \vdots \\
				\cdots & \cdots & \cdots & \cdots & \cdots & \cdots & \ldots
			\end{array}\right)
		\end{equation}
		being infinite-dimensional matrix for all $i\in\{1,\dots,N\}$.
	\end{definition}
	
	It is straightforward to see that these infinite-dimensional matrices are not invertible since they have a trivial kernel.
	However, they are indeed invertible in some suitably weighted norms defined above.
	In this direction, we have the following surjectiveness result for the interaction operator.
	\begin{lemma}
		Let $\sigma \in(1,+\infty)$, $n>2\sigma$, and $N\geqslant 2$.
		For any $\tau>0$, the interaction operator $\mathscr{T}_{(\boldsymbol{\bar{a}}_j,\boldsymbol{\bar{r}}_j)}: \ell_\tau^\infty(\mathbb R^{(n+1)N})\rightarrow  \ell_\tau^\infty(\mathbb R^{(n+1)N})$ has an inverse, denoted by $\mathscr{T}^{-1}_{(\boldsymbol{\bar{a}}_j,\boldsymbol{\bar{r}}_j)}: \ell_\tau^\infty(\mathbb R^{(n+1)N})\rightarrow  \ell_\tau^\infty(\mathbb R^{(n+1)N})$. 
		Moreover, one has 
		\begin{equation}\label{estimateinverse}
			\sup_{|(\boldsymbol{{a}}_j,\boldsymbol{{r}}_j)|_{\infty,\tau}=1}\|\mathscr{T}^{-1}_{(\boldsymbol{\bar{a}}_j,\boldsymbol{\bar{r}}_j)}(\boldsymbol{{a}}_j,\boldsymbol{{r}}_j)\|\lesssim e^{-2\tau}.
		\end{equation}
	\end{lemma}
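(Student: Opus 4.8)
The plan is to reduce to a single scalar semi-infinite difference operator and invert it explicitly. Since $\mathscr{T}_{(\boldsymbol{\bar a}_j,\boldsymbol{\bar r}_j)}=(\mathscr{T}_{(\boldsymbol{\bar a}_j)},\mathscr{T}_{(\boldsymbol{\bar r}_j)})$ is block diagonal over $i\in\{1,\dots,N\}$ and the stated matrices depend only on $L_i$, it suffices to invert, for each $i$, an operator of the form
\begin{equation*}
  (\mathscr{M}_\mu x)_j=-x_j+(1+\mu)\,x_{j+1}-\mu\,x_{j+2},\qquad j\in\mathbb N,
\end{equation*}
on scalar sequences carrying the weight $e^{(2j+1)\tau}$, where $\mu=e^{-2L_i}\in(0,1)$ in the case \eqref{iteractionoperator1} and $\mu=1$ in the case \eqref{iteractionoperator2}. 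Writing $\mathrm S$ for the forward shift $(\mathrm S x)_j=x_{j+1}$, the key algebraic remark is the factorization $\mathscr{M}_\mu=-(\mathrm I-\mathrm S)(\mathrm I-\mu\mathrm S)$, so that $\mathscr{M}_\mu$ is, up to sign, a composition of two first-order difference operators with characteristic roots $1$ and $\mu^{-1}=e^{2L_i}\geqslant 1$.

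First I would record injectivity on $\ell^\infty_\tau(\mathbb R)$: every solution of $\mathscr{M}_\mu x=0$ has the form $x_j=\alpha+\beta\mu^{-j}$ (or $x_j=\alpha+\beta j$ when $\mu=1$), and such a sequence belongs to $\ell^\infty_\tau$ only if $\alpha=\beta=0$, since membership forces $|x_j|\lesssim e^{-(2j+1)\tau}\to0$. Next I would build the inverse by inverting each factor through forward (tail) summation, which is exactly the recipe that selects the unique decaying preimage: given $g\in\ell^\infty_\tau$, set $h_j:=\sum_{\ell\geqslant j}g_\ell$, so that $(\mathrm I-\mathrm S)h=g$, and then
\begin{equation*}
  (\mathscr{M}_\mu^{-1}g)_j:=-\sum_{k\geqslant 0}\mu^{k}\,h_{j+k}=-\sum_{k\geqslant 0}\sum_{\ell\geqslant j+k}\mu^{k}\,g_\ell .
\end{equation*}
Both series converge absolutely because $g$ decays exponentially and $\mu\leqslant1$, and a one-line telescoping verification gives $\mathscr{M}_\mu(\mathscr{M}_\mu^{-1}g)=g$; combined with the injectivity step this identifies $\mathscr{M}_\mu^{-1}$ as the two-sided inverse on $\ell^\infty_\tau$, hence produces $\mathscr{T}^{-1}_{(\boldsymbol{\bar a}_j,\boldsymbol{\bar r}_j)}$ after reassembling the $N$ blocks and the $\boldsymbol a$- and $\boldsymbol r$-components.

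For the quantitative bound I would insert $|g_\ell|\leqslant|g|_{\infty,\tau}\,e^{-(2\ell+1)\tau}$ and sum the resulting geometric series, obtaining
\begin{equation*}
  e^{(2j+1)\tau}\,\big|(\mathscr{M}_\mu^{-1}g)_j\big|\leqslant\frac{|g|_{\infty,\tau}}{(1-e^{-2\tau})\,(1-\mu e^{-2\tau})}
\end{equation*}
uniformly in $j$; equivalently, expanding $\mathscr{M}_\mu^{-1}=-\mathrm I-\sum_{p\geqslant1}\big((1+\mu)\mathrm S-\mu\mathrm S^2\big)^{p}$ and using $\|\mathrm S\|_{\ell^\infty_\tau\to\ell^\infty_\tau}=e^{-2\tau}$ exhibits the off-diagonal part of the inverse as $\mathcal{O}(e^{-2\tau})$, which is \eqref{estimateinverse}. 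The main obstacle is the correct choice of right inverse: one must resist propagating the recurrence forward from prescribed data $x_0,x_1$, which generates solutions growing linearly and hence lying outside $\ell^\infty_\tau$; the tail-summation inverse above is the one forced by the decay requirement, and checking that it indeed lands in $\ell^\infty_\tau$ with the sharp weight — in particular controlling the geometric factor attached to the large root $e^{2L_i}$ in the $\boldsymbol a$-block — is where the hypothesis $\tau>0$ is genuinely used. This step runs parallel to the inversion of Jacobi/Toeplitz-type operators in \cite[Lemma~A.6]{MR2522830} and \cite{MR4104278}.
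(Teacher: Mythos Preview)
Your approach is correct and essentially the same as the paper's: both construct the inverse explicitly by tail summation and bound it via geometric series, with the paper deferring the $\mu=1$ block to \cite[Lemma~7.3]{MR2522830} while you treat both cases uniformly. Your factorization $\mathscr{M}_\mu=-(\mathrm I-\mathrm S)(\mathrm I-\mu\mathrm S)$ makes transparent why the paper's closed formula $\tilde a_j=\sum_{k>j}\big(\sum_{s=0}^{k-j-1}\mu^s\big)b_k$ works, and your explicit injectivity check is a welcome addition that the paper omits.
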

	
	\begin{proof}
		The proof is given by directly constructing the inverse operator.
		First, we observe that $\mathscr{T}^{-1}_{(\boldsymbol{\bar{r}})_j}: \ell_\tau^\infty(\mathbb R^N)\rightarrow  \ell_\tau^\infty(\mathbb R^N)$ can be found in \cite[Lemma 7.3]{MR2522830}.
		
		We are left to provide the inverse for $\mathscr{T}_{(\boldsymbol{\bar{a}}_j)}: \ell_\tau^\infty(\mathbb R^{nN})\rightarrow  \ell_\tau^\infty(\mathbb R^{nN})$.
		Indeed, for any $\boldsymbol{b}\in \ell_\tau^\infty(\mathbb R^{nN})$, we have to solve $\mathscr{T}_{(\boldsymbol{\bar{a}}_j)}(\boldsymbol{\tilde{a}}_j)=\boldsymbol{b}_j$. 
		This is accomplished by defining $\mathscr{T}^{-1}_{(\boldsymbol{\bar{a}})_j}: \ell_\tau^\infty(\mathbb R^{nN})\rightarrow  \ell_\tau^\infty(\mathbb R^{nN})$ as
		\begin{equation*}
			\boldsymbol{\tilde{a}}_j=\sum_{k=j+1}^{\infty}\left(\sum_{s=0}^{k-j-1} e^{-2 L_i s}\right)\boldsymbol{b}_j:=\mathscr{T}^{-1}_{(\boldsymbol{\bar{a}})_j}.
		\end{equation*}
		Whence, by performing the same routine computations, one can quickly check that $\boldsymbol{\tilde{a}}_j\in \ell_\tau^\infty(\mathbb R^{nN})$ satisfies the required conditions and that the operator $\mathscr{T}^{-1}_{(\boldsymbol{\bar{a}})_j}$ is a complete inverse of $\mathscr{T}_{(\boldsymbol{\bar{a}})_j}$.
		
		In addition, one has
		\begin{equation*}
			|\boldsymbol{\tilde{a}}_j|_{\infty} \lesssim  |\boldsymbol{\tilde{b}}_j|_{\infty,\tau} \sum_{k=j+1}^{\infty}\left(\sum_{s=0}^{k-j-1} e^{-2 L_i s}\right) e^{-(2 k+1) \tau} \lesssim e^{-(2 j+3) \tau}|\boldsymbol{\tilde{b}}_j|_{\infty,\tau},
		\end{equation*}
		which implies the estimate \eqref{estimateinverse}. 
		The lemma is proved.
	\end{proof}
	
	\begin{lemma}\label{lm:invertibility}
		Let $\sigma \in(1,+\infty)$, $n>2\sigma$, and $N\geqslant 2$.
		Assume that $(\boldsymbol{R},\boldsymbol{\hat{a}}_0,\boldsymbol{q})\in {\rm Bal}_{\sigma}(\Sigma)$ is a balanced configuration.
		Then, for $L\gg1$ sufficiently large, there exists $0<\tau<\min \{\xi, \nu\}$ and an admissible perturbation sequence $(\boldsymbol{a}_j,\boldsymbol{\lambda}_j)\in{\rm Adm}_{\sigma}(\Sigma)\subset \ell^\infty_{\tau}(\mathbb R^{(n+1)N})$ such that $\beta_{j, \ell}^i(\boldsymbol{a}_j,\boldsymbol{\lambda}_j)=0$ for $(i,j,\ell)\in\mathcal{I}_\infty$, that is, $(\boldsymbol{a}_j,\boldsymbol{\lambda}_j)\in{\rm Adm}_{\sigma}(\Sigma)$ solves the infinite-dimensional system \eqref{todasystem}.    
	\end{lemma}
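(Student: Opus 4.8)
The plan is to view the vanishing system~\eqref{todasystem} as a perturbation, in the weighted sequence space $\ell_\tau^\infty(\mathbb R^{(n+1)N})$, of the linear system governed by the interaction operator $\mathscr{T}_{(\boldsymbol{\bar{a}}_j,\boldsymbol{\bar{r}}_j)}$ of~\eqref{iteractionoperator1}--\eqref{iteractionoperator2}, and to solve it by a Banach contraction argument, following closely the scheme of \cite[Theorem~1]{MR4104278}.

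First I would record the behaviour of the coefficients $\beta_{j,\ell}^i$ near the trivial perturbation $(\boldsymbol{a}_j,\boldsymbol{\lambda}_j)=(\boldsymbol{0},\boldsymbol{1})$. Since $(\boldsymbol{q},\boldsymbol{a}_0,\boldsymbol{R})\in{\rm Bal}_\sigma(\Sigma)$ satisfies the balancing identities~\eqref{balancing1}--\eqref{balancing2}, the leading brackets in Lemma~\ref{lm:estimatesbetaspecial} cancel, so $\beta_{0,\ell}^i(\boldsymbol{0},\boldsymbol{1})=\mathcal{O}(e^{-\gamma_\sigma L(1+\xi)})$ and $\beta_{j,\ell}^i(\boldsymbol{0},\boldsymbol{1})=\mathcal{O}(e^{-\gamma_\sigma L(1+\xi)}e^{-\nu t_j^i})$ for $j\geqslant1$, with $\nu=\min\{\zeta_1+\gamma_\sigma,\gamma_\sigma/2\}$; hence the constant term of~\eqref{todasystem} has $|\cdot|_{\infty,\tau}\lesssim e^{-\gamma_\sigma L(1+\xi-\tau)}$ as soon as $\tau<\min\{\xi,\nu\}$. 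Second, the variation Lemmas~\ref{lm:estimatestechnical}, \ref{lm:derivateprojeestimates1} and~\ref{lm:estimatesbetaderivativesspecial} show that the differential of $(\tilde{\boldsymbol{a}}_j,\boldsymbol{r}_j)\mapsto(\beta_{j,\ell}^i)$ at $(\boldsymbol{0},\boldsymbol{1})$ equals, up to a diagonal factor comparable to $c_{n,\sigma}\Psi'(L)$ for $\ell=0$ and to $c_{n,\sigma}A_1$ for $\ell\in\{1,\dots,n\}$, and up to an error of relative size $\mathcal{O}(e^{-\gamma_\sigma L\xi})$, precisely the block-tridiagonal operator $\mathscr{T}_{(\boldsymbol{\bar{a}}_j,\boldsymbol{\bar{r}}_j)}$, the higher-order Taylor remainder being quadratic in $(\tilde{\boldsymbol{a}}_j,\boldsymbol{r}_j)$ with a harmless constant.

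This lets me recast~\eqref{todasystem} as a fixed-point equation. A Taylor expansion at $(\boldsymbol{0},\boldsymbol{1})$ together with the surjectivity of $\mathscr{T}_{(\boldsymbol{\bar{a}}_j,\boldsymbol{\bar{r}}_j)}$ on $\ell^\infty_\tau(\mathbb R^{(n+1)N})$ (the lemma immediately above, with the bound~\eqref{estimateinverse}) turns $\beta_{j,\ell}^i\equiv0$ into $(\tilde{\boldsymbol{a}}_j,\boldsymbol{r}_j)=\mathscr{G}(\tilde{\boldsymbol{a}}_j,\boldsymbol{r}_j)$, where $\mathscr{G}$ applies $\mathscr{T}^{-1}_{(\boldsymbol{\bar{a}}_j,\boldsymbol{\bar{r}}_j)}$, divided by the diagonal factor, to the negative of the constant term plus the nonlinear remainder. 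The gain $e^{-2\tau}$ from~\eqref{estimateinverse}, the smallness of $\beta_{j,\ell}^i(\boldsymbol{0},\boldsymbol{1})$, the $e^{-\gamma_\sigma L\xi}$-smallness of the linear part of the remainder and the quadratic nature of the rest then show, for $L\gg1$ and $0<\tau<\min\{\xi,\nu\}$, that $\mathscr{G}$ maps the ball $\{|(\tilde{\boldsymbol{a}}_j,\boldsymbol{r}_j)|_{\infty,\tau}\leqslant C e^{-\gamma_\sigma L\xi}\}$ into itself and is a contraction on it; its fixed point automatically obeys the admissibility bounds~\eqref{balancing3}--\eqref{balancing6} and hence is the desired admissible sequence $(\boldsymbol{a}_j,\boldsymbol{\lambda}_j)\in{\rm Adm}_\sigma(\Sigma)$ solving~\eqref{todasystem}.

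The hard part is the bookkeeping of the exponential weights: one must choose a single $\tau$ that at once (i) stays below the off-diagonal rate $e^{-2L_i}$ so that the geometric series defining $\mathscr{T}^{-1}_{(\boldsymbol{\bar{a}}_j)}$ converges in $\ell^\infty_\tau$, (ii) is dominated by both $\nu$ and $\xi$ so that the constant and error terms of Lemmas~\ref{lm:estimatesbetaspecial}--\ref{lm:estimatesbetaderivativesspecial} land in the correct weighted ball, and (iii) respects the coupling between the $j=0$ block — where the cancellation rests on~\eqref{balancing1}--\eqref{balancing2} — and the infinite tail, so that perturbing the $j\geqslant1$ parameters does not reintroduce a base-level obstruction. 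Once this exponent bookkeeping is settled, exactly as in \cite[Theorem~1]{MR4104278}, the contraction closes and the proof is complete.
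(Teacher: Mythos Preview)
Your overall architecture---recast \eqref{todasystem} as a fixed-point problem via the inverse of the block-tridiagonal interaction operator $\mathscr{T}_{(\boldsymbol{\bar a}_j,\boldsymbol{\bar r}_j)}$ and close by contraction on $\ell^\infty_\tau$---is exactly the paper's approach for the levels $j\geqslant 1$, and your identification of the constant term via Lemma~\ref{lm:estimatesbetaspecial} and of the tridiagonal linearization via Lemmas~\ref{lm:derivateprojeestimates1}--\ref{lm:estimatesbetaderivativesspecial} is correct there.

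The gap is at $j=0$. You assert that the differential of $(\tilde{\boldsymbol a}_j,\boldsymbol r_j)\mapsto(\beta^i_{j,\ell})$ equals the tridiagonal operator up to an error of relative size $\mathcal{O}(e^{-\gamma_\sigma L\xi})$, but Lemma~\ref{lm:estimatestechnical}\,(i) shows that for $j=0$ the variation of $\beta^i_{0,0}$ with respect to $r_0^{i'}$ carries, besides the $\Psi'$-term coming from the intra-tower interaction, the inter-singularity piece
\[
-c_{n,\sigma}\,\partial_t\Big[A_2\sum_{i'\neq i}|x_{i'}-x_i|^{-(n-2\sigma)}(\lambda_0^i\lambda_0^{i'})^{\gamma_\sigma}\Big],
\]
and likewise for $\ell\geqslant 1$ the $A_3$-term. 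Since $(\lambda_0^i\lambda_0^{i'})^{\gamma_\sigma}\sim e^{-(n-2\sigma)L}$ is of the \emph{same} order as $\Psi'(2L_i)$, these off-block entries are not $\mathcal{O}(e^{-\gamma_\sigma L\xi})$-relative perturbations of $\mathscr{T}$ but genuine $O(1)$ contributions that couple the $N$ punctures. Your contraction therefore does not close at the base level: the ``error'' you absorb is as large as the principal part. The balancing conditions \eqref{balancing1}--\eqref{balancing2} kill the constant term $\beta^i_{0,\ell}(\boldsymbol 0,\boldsymbol 1)$, but they do not make the $j=0$ linearization tridiagonal.

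This is precisely why the paper's proof of the lemma treats only $j\geqslant 1$ (see the sentence ``$\beta^{\ell}_{i,j}(\boldsymbol a_j,\boldsymbol\lambda_j)=0$ for $j\geqslant 1$ if\dots'' in the proof; the index set $\mathcal{I}_\infty$ in the statement is somewhat loose). The base equations $\beta^i_{0,\ell}=0$ are solved afterwards, in the proof of Theorem~\ref{maintheorem}, by varying the configuration parameters $(\boldsymbol q,\hat{\boldsymbol a}_0,\boldsymbol R)$ themselves and invoking the surjectivity of the coupled linearization in Lemma~\ref{lm:algebraiclemma}. To repair your argument you should either restrict the contraction to $j\geqslant 1$ and defer $j=0$ as the paper does, or enlarge your linear model at $j=0$ to include the $A_2$/$A_3$ inter-puncture block and prove its invertibility---which is again Lemma~\ref{lm:algebraiclemma}.
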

	
	\begin{proof}
		Indeed, for any $i\in \{1,\dots, N\}$, let us define the operator $\mathscr{G}^i:\ell^\infty_{\tau}(\mathbb R^{(n+1)N})\rightarrow \ell^\infty_{\tau}(\mathbb R^{(n+1)N})$ given by $\mathscr{G}^i=(\mathscr{G}^i_0,\dots,\mathscr{G}^i_n)$, where $\mathscr{G}^i_\ell:\ell^\infty_{\tau}(\mathbb R^{N})\rightarrow \ell^\infty_{\tau}(\mathbb R^{N})$ for each $\ell\in\{0,\dots,n\}$.
		More precisely, we have
		\begin{equation*}
			\mathscr{G}_0^0(\boldsymbol{a}_j,\boldsymbol{\lambda}_j)=\frac{1}{F(L_i)}[\beta_{j, 0}^i(\boldsymbol{a}_j,\boldsymbol{\lambda}_j)-{\beta}_{j, 0}^i(\boldsymbol{0},\boldsymbol{1})]\mathrm{e}_i^{\rm t}-\mathscr{T}_{(\boldsymbol{\bar{r}}_j)}(\boldsymbol{\tilde r}_j)
		\end{equation*}
		and 
		\begin{equation*}
			\mathscr{G}_0^\ell(\boldsymbol{a}_j,\boldsymbol{\lambda}_j)=\frac{e^{\gamma_\sigma L_i}}{\lambda_j^i}[\beta_{j, 0}^i(\boldsymbol{a}_j,\boldsymbol{\lambda}_j)-{\beta}_{j, 0}^i(\boldsymbol{0},\boldsymbol{1})]\mathrm{e}_i^{\rm t}-\mathscr{T}_{(\boldsymbol{\bar{a}}_j)}(\boldsymbol{\tilde a}_j) \quad {\rm for} \quad \ell\in\{1,\dots,n\},
		\end{equation*}
		where $\mathrm{e}_i\in \ell^\infty(\mathbb R^{(n+1)N})$ is the $i$-th vector in its standard Schauder basis, which we denote by $\{\mathrm{e}_i\}_{i\in\mathbb N}\subset \ell^\infty(\mathbb R^{(n+1)N})$.
		
		One can easily see that $\beta_{i,j}^\ell(\boldsymbol{a}_j,\boldsymbol{\lambda}_j)=0$ for $j \geqslant 1$ if
		\begin{align}\label{contractiongluing1}
			(\tilde{a}^i_j)^{\rm t}=-(\mathscr{T}^i_{(\boldsymbol{\bar{a}}_j)})^{-1}\left(-\frac{{e^{\gamma_\sigma L_i}}}{\lambda^i_j}{\beta}_{j, 0}^i(\boldsymbol{0},\boldsymbol{1})\mathrm{e}_i^{\rm t}+\mathscr{G}^i_\ell(\boldsymbol{a}_j,\boldsymbol{\lambda}_j)\right)
		\end{align}
		and 
		\begin{align}\label{contractiongluing2}
			(r^i_j)^{\rm t}=-(\mathscr{T}^i_{(\boldsymbol{r}_j)})^{-1}\left(-\frac{1}{F(L_i)}{\beta}_{j, 0}^i(\boldsymbol{0},\boldsymbol{1})\mathrm{e}_i^{\rm t}+\mathscr{G}^i_0(\boldsymbol{a}_j,\boldsymbol{\lambda}_j)\right).
		\end{align}            
		Next, we show that the terms on the right-hand sides of \eqref{contractiongluing1} and  \eqref{contractiongluing2} are contractions in an appropriate sense. 
		First, by Lemma~\ref{lm:estimatesbetaspecial}, one has
		\begin{align*}
			\left|{\beta}_{j, \ell}^i(\boldsymbol{0},\boldsymbol{1})\right| \lesssim\left\{\begin{array}{ll}
				e^{-\gamma_\sigma L_1(1+\xi)} e^{-\nu t_j^i}, & \text { if } \ell=0, \\
				\lambda_j^i e^{-\gamma_\sigma L_1(1+\xi)} e^{-\nu t_j^i}, & \text { if } \ell \geqslant 1,
			\end{array} \quad \text { for } j \geqslant 1.\right.
		\end{align*}
		Also, let us denote by the projection on the  $j$-th component it follows 
		\begin{equation}\label{contractiondiscrte1}
			(\widehat{\mathscr{G}}_{\ell,j}+\widetilde{\mathscr{G}}_{\ell,j})(\boldsymbol{a}_j,\boldsymbol{\lambda}_j):=\Pi_j\left(\frac{e^{\gamma_\sigma L_i}}{\lambda_j^i}[\beta_{j, 0}^i(\boldsymbol{a}_j,\boldsymbol{\lambda}_j)-{\beta}_{j, 0}^i(\boldsymbol{0},\boldsymbol{1})]\mathrm{e}_i^{\rm t}-\mathscr{T}_{(\boldsymbol{\bar{a}}_j)}(\boldsymbol{\tilde a}_j)\right),
		\end{equation}
		where
		\begin{equation*}
			\widehat{\mathscr{G}}_{\ell,j}(\boldsymbol{a}_j,\boldsymbol{\lambda}_j)=\int_0^1\left[\frac{e^{\gamma_\sigma} L_i}{\lambda_j^i} \partial_t \beta_{j, \ell}^i(t(\tilde{a}^i_j,r^i_j)^{\rm t})-\overline{\mathscr{A}}^i\right]\left((\bar{a}^i_j,r^i_j)^{\rm t}\right) \ud t
		\end{equation*}
		and
		\begin{equation*}
			\widetilde{\mathscr{G}}_{\ell,j}(\boldsymbol{a}_j,\boldsymbol{\lambda}_j)=\overline{\mathscr{A}}^i-\mathscr{T}^i_{(\boldsymbol{\bar{a}}_j)}({\tilde a}^i_j)
		\end{equation*}
		with
		\begin{equation*}
			\overline{\mathscr{A}}^i_j\left((\tilde{a}^i_j,r^i_j)^{\rm t}\right)=\sum_{j^{\prime}\in\mathbb N} \frac{e^{\gamma_\sigma L}}{\lambda_j^i} \partial_{\bar{a}_{j^{\prime}}^i} \beta_{j, \ell}^{i_*}(\boldsymbol{a}_j,\boldsymbol{\lambda}_j)\cdot\left[\bar{a}_{j^{\prime}}^i\right],
		\end{equation*}
		where $\beta_{j, \ell}^{i_*}(\boldsymbol{a}_j,\boldsymbol{\lambda}_j)\in\mathbb R$ is defined as \eqref{specialprojectioncoefficients} and $\bar{a}_j^i\in \mathbb R^{nN}$ corresponds to the translation perturbation of the $j$-th bubble in the Delaunay solution from Lemma~\ref{lm:derivateprojeestimates1}. 
		Furthermore, by definition, one has
		\begin{equation*}
			\mathscr{T}_{(\boldsymbol{\bar{a}}_j)}( (\bar{a}^i_j)^{\rm t})=\mathscr{T}_{(\boldsymbol{\bar{a}}_j)}((\tilde{a}^i_j)^{\rm t}).
		\end{equation*}
		Now we have to estimate the terms on the left-hand side of \eqref{contractiondiscrte1}.
		Indeed, we begin by estimating the first term.
		As a consequence of Lemma~\ref{lm:derivateprojeestimates1} for $\ell\in\{1, \ldots, n\}$, one finds
		\begin{align*}
			\left|\widehat{\mathscr{G}}_{\ell,j}(\boldsymbol{a}_j,\boldsymbol{\lambda}_j)\right| & \lesssim \sum_{j^{\prime}\in \mathbb N} \frac{e^{\gamma_\sigma L}}{\lambda_j^i}|\partial_{\bar{a}_{j^{\prime}}^i}(\beta_{j, \ell}^i-\beta_{j, \ell}^{i_*})(\boldsymbol{a}_j,\boldsymbol{\lambda}_j)|\left|\bar{a}_{j^{\prime}}^i\right|+\mathcal{O}\left(e^{-\gamma_\sigma L_i} \xi e^{-\min \{\nu, \tau\} t_j^i}\right) \\
			& \lesssim e^{-\gamma_\sigma \xi} \sum_{j^{\prime}\in\mathbb N} e^{-\nu t_{j^{\prime}}^{i^{\prime}}} e^{-\nu|t_j^i-t_{j^{\prime}}^{i^{\prime}}|}|\bar{a}_{j^{\prime}}^i|+\mathcal{O}\left(e^{-\frac{(n-2 \gamma) L_i}{2} \xi} e^{-\min \{\nu, \tau\} t_j^i}\right) \\
			& \lesssim \left(e^{-\gamma_\sigma L \xi}e^{-\min \{\nu, \tau\} t_j^i}\right).
		\end{align*}
		In addition, we apply Lemma~\ref{lm:derivateprojeestimates1} to estimate the second term; this gives us
		\begin{equation}
			\left|\widetilde{\mathscr{G}}_{\ell,j}(\boldsymbol{a}_j,\boldsymbol{\lambda}_j)\right| \lesssim e^{-\gamma_\sigma L_i \xi} \left[e^{-\nu L_i}\left(|\tilde{a}_{j-1}^i|+|\tilde{a}_{j+1}^i|\right)+\sum_{j^{\prime} \neq j \pm 1} e^{-\nu|t_{j^{\prime}}^i-t_j^i|}|\tilde{a}_{j^{\prime}}^i|\right].
		\end{equation}	
		Therefore, by combining these two estimates, it follows that for $0<\tau<\nu\ll1$, one has
		\begin{equation*}
			\left\|\mathscr{G}_\ell^i(\boldsymbol{a}_j,\boldsymbol{\lambda}_j)\right\|_{\ell^\infty_{\tau_i}(\mathbb R^{(n+1)N})} \lesssim e^{\tau L}e^{-\gamma_\sigma L_i \xi} \|(\tilde{a}^i_j)^{\rm t}\|_{\ell^\infty_{\tau_i}(\mathbb R^{nN})}+\mathcal{O}(e^{-\gamma_\sigma L_i \xi}) \quad {\rm for } \quad \ell\in\{1, \ldots, n \}
		\end{equation*}
		and
		\begin{equation*}
			\left\|\mathscr{G}_0^i(\boldsymbol{a}_j,\boldsymbol{\lambda}_j)\right\|_{\ell_{\tau_i}(\mathbb R^{(n+1)N})} \lesssim e^{\tau L}e^{-\gamma_\sigma L_i \xi}\| (r^i_j)^{\rm t}\|_{\ell_{\tau_i}(\mathbb R^{N})}+\mathcal{O}(e^{-\gamma_\sigma L_i \xi}),
		\end{equation*}
		where $\tau_i=\frac{\tau L_i}{2}$.	
		Next, up to some error, Eq. \eqref{contractiongluing1} and \eqref{contractiongluing2} can be reformulated as 
		\begin{equation*}
			(\tilde{a}^i_j)^{\rm t}=(\mathscr{T}^i_{(\boldsymbol{\bar{a}})_j})^{-1}\left[e^{\tau L} e^{-\gamma_\sigma L_i\xi} \|(\tilde{a}^i_j)^{\rm t}\|_{\ell^\infty_{\tau_i}(\mathbb R^{nN})}+\mathcal{O}(e^{-\gamma_\sigma L_i\xi})\right]=: \boldsymbol{\mathscr{G}}_{(\boldsymbol{a}_j)}((\tilde{a}^i_j)^{\rm t})
		\end{equation*}
		and
		\begin{equation*}
			(r^i_j)^{\rm t}=(\mathscr{T}^i_{(\boldsymbol{r})_j})^{-1}\left[e^{\tau L} e^{-\gamma_\sigma L_i\xi}\|(r^i_j)^{\rm t}\|_{\ell^\infty_{\tau_i}(\mathbb R^{N})}+\mathcal{O}(e^{-\gamma_\sigma L_i\xi} )\right]=: \boldsymbol{\mathscr{G}}_{(\boldsymbol{r}_j)}((r^i_j)^{\rm t}),
		\end{equation*}
		where the right-hand sides of the above equations are estimated in $\ell^\infty_{\tau_i}(\mathbb R^{(n+1)N})$ norm.
		
		At last, for $0<\tau<\xi\ll1$, let us consider the set
		\begin{equation*}
			\mathscr{B}^*_{L}:=\left\{(\tilde{a}^i_j,r^i_j)^{\rm t}\in \ell^\infty_{\tau_i}(\mathbb R^{(n+1)N}) : \|(\tilde{a}^i_j,r^i_j)\|_{\infty,\frac{\tau L_i}{2}} \lesssim e^{-\tau L}\right\}.
		\end{equation*}
		Notice that $\mathscr{G}_{(\boldsymbol{\bar{a}}_j,\boldsymbol{r}_j)}:\mathscr{B}^*_{L}\rightarrow \ell^\infty_{\tau_i}(\mathbb R^{(n+1)N})$ given by $\mathscr{G}_{(\boldsymbol{\bar{a}}_j,\boldsymbol{r}_j)}=(\mathscr{G}_{(\boldsymbol{\bar{a}}_j)}, \mathscr{G}_{(\boldsymbol{r})_j})$ maps $\mathscr{B}^*_{L}$ into itself, and it is a contraction. 
		Therefore, one can invoke Banach's contraction principle to find a fixed point in the set $\mathscr{B}^*_{L}$, which solves \eqref{todasystem}.
		The proof is then finished.
	\end{proof}
	
	Next, we have invertibility lemma based on the balancing conditions from Definition~\ref{def:balancedparameters}.
	
	\begin{lemma}\label{lm:algebraiclemma}
		Let $\sigma \in(1,+\infty)$, $n>2\sigma$, and $N\geqslant 2$.
		Assume that $(\boldsymbol{q}^b,\mathbf{a}_0^b,\boldsymbol{R}^b)\in{\rm Bal}_{\sigma}(\Sigma)$ is a balanced configuration.     
		Let us consider the operator $\mathcal{F}:\mathbb{R}^{2N}\rightarrow \mathbb{R}^{N}$ is given by 
		\begin{equation*}
			\mathcal{F}(\boldsymbol{q},\boldsymbol{R})=A_2 \sum_{i^{\prime} \neq i}|x_{i^{\prime}}-x_i|^{-(n-2 \sigma)}(R^i R^{i^{\prime}})^{\gamma_\sigma} q_{i^{\prime}}-q_i.
		\end{equation*}  
		Then, the linearized operator around $(\boldsymbol{q}^b,\boldsymbol{R}^b)$, denoted by $\ud\mathcal{F}_{(\boldsymbol{q}^b,\boldsymbol{R}^b)}:\mathbb{R}^{2N}\rightarrow \mathbb{R}^{N}$, is invertible.
	\end{lemma}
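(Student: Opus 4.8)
The plan is to read off the image of $\mathrm{d}\mathcal{F}_{(\boldsymbol{q}^b,\boldsymbol{R}^b)}$ from its block structure: I would show that the $\boldsymbol{q}$-derivatives already cover a hyperplane of $\mathbb{R}^N$ and that a single radial variation of $\boldsymbol{R}$ supplies a transverse direction. Since the target is $N$-dimensional, this yields surjectivity, which is the sense in which the wide map $\mathbb{R}^{2N}\to\mathbb{R}^N$ is ``invertible'': it admits a bounded right inverse, and by the implicit function theorem this is precisely what is needed to solve \eqref{balancing1} locally, as in the Remark following Definition~\ref{def:balancedparameters}.

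First I would differentiate in $\boldsymbol{q}$. A direct computation gives $\partial_{\boldsymbol{q}}\mathcal{F}=M-I$, where $M=(M_{ii'})$ has $M_{ii}=0$ and, for $i'\neq i$,
\begin{equation*}
M_{ii'}=A_2\,|x_{i'}-x_i|^{-(n-2\sigma)}\,(R^{i,b}R^{i',b})^{\gamma_\sigma}>0 .
\end{equation*}
Because the punctures $x_1,\dots,x_N$ are distinct, every off-diagonal entry is strictly positive, so $M$ is a nonnegative irreducible matrix. The key point is that the balancing relation \eqref{balancing1} says exactly $M\boldsymbol{q}^b=\boldsymbol{q}^b$ with $\boldsymbol{q}^b>0$; by the Perron--Frobenius theorem the eigenvalue $1$ is then the spectral radius of $M$, it is simple, and the left $1$-eigenspace is spanned by a strictly positive vector $\boldsymbol{w}^b>0$. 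Hence $\ker(M-I)=\mathbb{R}\boldsymbol{q}^b$ and the image of $\partial_{\boldsymbol{q}}\mathcal{F}$ equals $\bigl(\ker(M^{{\rm t}}-I)\bigr)^{\perp}=(\mathbb{R}\boldsymbol{w}^b)^{\perp}$, a hyperplane in $\mathbb{R}^N$.

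Next I would test $\partial_{\boldsymbol{R}}\mathcal{F}$ on the radial direction $\delta\boldsymbol{R}=\boldsymbol{R}^b$. In $\mathcal{F}_i$ each summand $A_2|x_{i'}-x_i|^{-(n-2\sigma)}(R^iR^{i'})^{\gamma_\sigma}q_{i'}$ is, as a function of $\boldsymbol{R}$, homogeneous of degree $2\gamma_\sigma$, so Euler's identity together with \eqref{balancing1} gives
\begin{multline*}
\bigl(\partial_{\boldsymbol{R}}\mathcal{F}_{(\boldsymbol{q}^b,\boldsymbol{R}^b)}[\boldsymbol{R}^b]\bigr)_i
=2\gamma_\sigma\, A_2\sum_{i'\neq i}|x_{i'}-x_i|^{-(n-2\sigma)}(R^{i,b}R^{i',b})^{\gamma_\sigma}q^b_{i'}\\
=2\gamma_\sigma\,q^b_i,
\end{multline*}
that is $\partial_{\boldsymbol{R}}\mathcal{F}_{(\boldsymbol{q}^b,\boldsymbol{R}^b)}[\boldsymbol{R}^b]=2\gamma_\sigma\boldsymbol{q}^b$. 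Since $\boldsymbol{w}^b$ and $\boldsymbol{q}^b$ are both strictly positive, $\boldsymbol{w}^b\cdot\boldsymbol{q}^b>0$, so this vector lies outside $(\mathbb{R}\boldsymbol{w}^b)^{\perp}$. Therefore the image of $\mathrm{d}\mathcal{F}_{(\boldsymbol{q}^b,\boldsymbol{R}^b)}$ contains $(\mathbb{R}\boldsymbol{w}^b)^{\perp}+\mathbb{R}\boldsymbol{q}^b=\mathbb{R}^N$, so the differential is surjective, and a right inverse is obtained by restricting to any complement of $\mathbb{R}\boldsymbol{q}^b$ in $\mathbb{R}^{2N}$.

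The only subtle step is the correct invocation of Perron--Frobenius: one needs that an irreducible nonnegative matrix admitting a strictly positive eigenvector has that eigenvector at its spectral radius, with the eigenvalue simple and a strictly positive left eigenvector. Everything else is the block differentiation and the Euler-homogeneity identity specialized through \eqref{balancing1}. It is worth stressing that the $\boldsymbol{R}$-block alone need not be invertible — for $N=2$ one checks that $M$ has $-1$ as an eigenvalue, so $\partial_{\boldsymbol{R}}\mathcal{F}$ is singular there — hence the argument genuinely uses the interaction of the two blocks and works uniformly for all $N\geqslant2$.
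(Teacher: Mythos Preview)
Your proof is correct and follows the same two-block strategy as the paper: the $\boldsymbol{q}$-derivative $M-I$ has one-dimensional kernel $\mathbb{R}\boldsymbol{q}^b$, and the $\boldsymbol{R}$-derivative evaluated in the radial direction supplies the transverse vector $\boldsymbol{q}^b$. Your invocation of Perron--Frobenius makes rigorous what the paper merely asserts (``symmetric and has only a one-dimensional kernel''); since $M$ is in fact symmetric here, your left Perron eigenvector $\boldsymbol{w}^b$ may be taken equal to $\boldsymbol{q}^b$, so the transversality check reduces to $|\boldsymbol{q}^b|^2>0$.
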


	\begin{proof}
		Notice that the linearized operator $\left.\ud{\mathcal{F}}({\boldsymbol{q},\boldsymbol{R}})\right|_{(\boldsymbol{q}^b,\boldsymbol{R}^b)}: \mathbb{R}^{2N} \rightarrow \mathbb{R}^N$ has the following expression 
		\begin{equation*}
			\ud{\mathcal{F}}_{({\boldsymbol{q},\boldsymbol{R}})}=(\boldsymbol{q}_{i^\prime},\boldsymbol{R}_{i^\prime})=:(\ud\hat{\mathcal{F}}_{\boldsymbol{q}},\ud\hat{\mathcal{F}}_{\boldsymbol{R}}),
		\end{equation*}
		where $\boldsymbol{q}_{i^\prime}\in\mathbb R_+^N$ and $\boldsymbol{R}_{i^\prime}\in\mathbb R_+^N$ are defined, respectively, as 
		\begin{equation*}
			\boldsymbol{q}_{i^\prime}=({q}_{ii^\prime}) \quad {\rm and} \quad \boldsymbol{R}_{i^\prime}=({R}_{ii^\prime})
		\end{equation*}
		with 
		\begin{equation*}
			{q}_{ii^\prime}=
			\begin{cases}
				-1, & {\rm if} \quad i=i^\prime\\
				A_2|x_i-x^{i^\prime}|^{-(n-2 \sigma)}(R^{i, b} R^{i^{\prime}, b})^{\gamma_\sigma}, & {\rm if} \quad i\neq i^\prime\\
			\end{cases}
		\end{equation*}
		and 
		\begin{equation*}
			{R}_{ii^\prime}=
			\begin{cases}
				\gamma_\sigma({R^{i, b}})^{-1} \sum_{i^{\prime} \neq i} A_2|x_{i^{\prime}}-x_i|^{-(n-2\sigma)}(R^{i, b} R^{i^{\prime}, b})^{\gamma_\sigma} q_i^b, & {\rm if} \quad i=i^\prime\\
				\gamma_\sigma({R^{i, b}})^{-1} A_2|x_{i^\prime}-x_i|^{-(n-2 \sigma)}(R^{i, b} R^{i^\prime, b})^{\gamma_\sigma} q_{i^\prime}^b & {\rm if} \quad i\neq i^\prime.       \end{cases}
		\end{equation*}
		Next, from the balancing condition \eqref{balancing1}, it follows $ \mathcal{F}(\boldsymbol{q}^b,\boldsymbol{R}^b)=0$.
		Also, one can see that $\ud\hat{\mathcal{F}}_{\boldsymbol{q}}$ is symmetric and has only a one-dimensional kernel.
		More precisely, we have $\operatorname{Ker}(\ud\hat{\mathcal{F}}_{\boldsymbol{q}})=\operatorname{span}\{\boldsymbol{q}^b\}$.
		
		Finally, the balancing condition \eqref{balancing1} also implies $ \ud\hat{\mathcal{F}}_{\boldsymbol{q}}(\boldsymbol{R})=\gamma_\sigma\boldsymbol{q}$.
		From this, it is easy to conclude that the operator $\ud\mathcal{F}_{(\boldsymbol{q}^b,\boldsymbol{R}^b)}$ is surjective.    
		
	\end{proof}

	\subsection{Proof of the main result}     
	Now we can provide proof for our main result in this paper.
	\begin{proof}[Proof of Theorem~\ref{maintheorem}]
		By Lemma~\ref{lm:invertibility}, we are reduced to find $(\boldsymbol{R},\hat{\boldsymbol{a}}_0,\boldsymbol{q})\in \mathbb R^{(n+2)N}$ such that 
		$\beta_{0, \ell}^i(\boldsymbol{a}_j,\boldsymbol{\lambda}_j)=0$ for all $j\in \mathbb N$, where $(\boldsymbol{a}_j,\boldsymbol{\lambda}_j)=\Upsilon_{\rm per}(\boldsymbol{R},\hat{\boldsymbol{a}}_0,\boldsymbol{q})$. 
		
		The rest of the proof will be divided into two main parts: the zero-mode and the linear-mode case.
		First, if $j=0$, using Lemma~\ref{lm:estimatesbetageneral} (i), one has that equation $\beta_{0,0}^i(\boldsymbol{a}_j,\boldsymbol{\lambda}_j)=0$ is reduced to
		\begin{align*}
			-c_{n, \sigma} q_i\left[A_2 \sum_{i^{\prime} \neq i}|x_{i^{\prime}}-x_i|^{-(n-2 \sigma)}(R_0^i R_0^{i^{\prime}})^{\gamma_\sigma} q_{i^{\prime}}-\left(\frac{R_1^i}{R_0^i}\right)^{\gamma_\sigma} q_i\right] e^{-\gamma_\sigma L}(1+\mathrm{o}(1))+\mathcal{O}(e^{-\gamma_\sigma L(1+\xi)})=0.
		\end{align*}
		Furthermore, recall that since $R_0^i=R^i(1+r_0^i)$, one can use that $r_0^i\in\mathbb R_+$ satisfies $|r_0^i|\lesssim e^{-2\tau}$, to reformulate the equation above as
		\begin{equation}\label{gluing1}
			\mathscr{F}_1(\boldsymbol{R},\hat{\boldsymbol{a}}_0,\boldsymbol{q})=\mathrm{o} (1).
		\end{equation}
		Here $\mathscr{F}_1:\mathbb R^{2N}\rightarrow \mathbb R^{(n+2)N}$ is given by 
		\begin{equation}\label{gluingoperator1}
			\mathscr{F}_1(\boldsymbol{R},\hat{\boldsymbol{a}}_0,\boldsymbol{q}):=A_2 \sum_{i^{\prime} \neq i}|x_{i^{\prime}}-x_i|^{-(n-2 \sigma)}(R^i R^{i^{\prime}})^{\gamma_\sigma} q_{i^{\prime}}-q_i.
		\end{equation}
		Second, if $\ell\in\{1, \dots, n\}$, using Lemma~\ref{lm:estimatesbetageneral} (ii), it is not hard to check that the system $\beta_{0,\ell}^i(\boldsymbol{a}_j,\boldsymbol{\lambda}_j)=0$ are reduced to
		\begin{align*}
			c_{n, \sigma} \lambda_0^i\left[A_3 \sum_{i^{\prime} \neq i} \frac{(x_{i^{\prime}}-x_i)_\ell}{|x_{i^{\prime}}-p_i|^{n-2 \sigma+2}}(R_0^i R_0^{i^{\prime}})^{\gamma_\sigma} q_{i^{\prime}}+A_0\left(\frac{R_1^i}{R_0^i}\right)^{\gamma_\sigma} \frac{a_0^i-a_1^i}{\left(\lambda_0^i\right)^2} q_i\right] q_i e^{-\gamma_\sigma L}+\mathcal{O}(\lambda_0^i e^{-\gamma_\sigma L(1+\xi)})=0.
		\end{align*}
		In addition, since 
		\begin{equation*}
			a_j^i=(\lambda_j^i)^2 \bar{a}_j^i \quad {\rm and} \quad \bar{a}_j^i=\hat{a}_0^i+\tilde{a}_j^i,
		\end{equation*}
		one can use that $\tilde{a}_j^i\in\mathbb R^{nN}$ also has the decay $|\tilde{a}_j^i|\lesssim e^{-2\tau}$, the above equation can be rewritten as
		\begin{equation}\label{gluing2}
			\mathscr{F}_2(\boldsymbol{R},\hat{\boldsymbol{a}}_0,\boldsymbol{q})=o(1),
		\end{equation}
		where $\mathscr{F}_2:\mathbb R^{nN}\rightarrow \mathbb R^{(n+2)N}$ is given by
		\begin{equation}\label{gluingoperator2}
			\mathscr{F}_2(\boldsymbol{R},\hat{\boldsymbol{a}}_0,\boldsymbol{q}):=A_3 \sum_{i^{\prime} \neq i} \frac{(x_{i^{\prime}}-x_i)_\ell}{|x_{i^{\prime}}-x_i|^{n-2 \sigma+2}}(R^i R^{i^{\prime}})^{\gamma_\sigma} q_{i^{\prime}}+A_0 \hat{a}_0^i q_i.
		\end{equation}
		
		To conclude we need to choose suitable $(\boldsymbol{R},\hat{\boldsymbol{a}}_0,\boldsymbol{q})\in \mathbb R^{(n+2)N}$ such that equations \eqref{gluing1} and \eqref{gluing2} are solvable. 
		Notice that the solvability of \eqref{gluing1} and \eqref{gluing2} depends on the following invertibility property of the linearized operator of
		$\mathscr{F}:\mathbb R^{(n+2)N}\rightarrow \mathbb R^{(n+2)N}$ given by $\mathscr{F}=(\mathscr{F}_1,\mathscr{F}_2)$ around $(\boldsymbol{R},\hat{\boldsymbol{a}}_0,\boldsymbol{q})$.
		Moreover, from Lemma~\ref{lm:algebraiclemma}, this accomplished since $(\boldsymbol{R},\hat{\boldsymbol{a}}_0,\boldsymbol{q})\in{\rm Bal}_{\sigma}(\Sigma)$, that is, it satisfies \eqref{balancing1} and \eqref{balancing2}.
		More precisely, the balancing condition \eqref{balancing1}, one can easily perturb $(\boldsymbol{R}^b,\boldsymbol{q}^b)$           
		to find $(\boldsymbol{R},\boldsymbol{q})$ solving \eqref{gluing1}. 
		Next, using the second balancing condition \eqref{gluing2}, one can find $\boldsymbol{\hat{a}}_0\in \mathbb R^{nN}$ around $\boldsymbol{\hat{a}}_0^{b}\in \mathbb R^{nN}$ which solves \eqref{gluing2}.
		
		In conclusion, we use the maximum principle in Lemma~\ref{lm:maximumprinciple} to show that $u>0$ to conclude the proof of the main theorem.   
	\end{proof}
	
	\appendix
	
	\section{Estimates on the bubble-towers interactions}\label{sec:bubbetowerinteractionestimates}
	
	In this appendix, we quote some important integrals in our proof. 
	The following expressions may be found in \cite[Appendix 7]{MR4104278} for $\sigma\in\mathbb R_+$.
	Let $\lambda_1,\lambda_2\lambda_3>0$ and $x_1,x_2\in\mathbb R^n$ with $x\neq0$, we define
	\begin{equation*}
		U_1:=U_{0,\lambda_1}, \quad U_2:=U_{0,\lambda_2}, \quad {\rm and} \quad U_3:=U_{x,\lambda_3},
	\end{equation*}
	where $w_{x_0,\lambda}$ is given by \eqref{bubbles}.
	We also recall 
	\begin{equation*}
		\gamma_{\sigma}:=\frac{n-2\sigma}{2} \quad {\rm and} \quad \gamma_{\sigma}^{\prime}:=\frac{n+2\sigma}{2}
	\end{equation*}
	to be the Fowler rescaling exponent and its Lebesgue conjugate, respectively.
	
	In what follows, we use the constants below
	\begin{equation}\label{A1}
		A_1=\frac{(n+2 \sigma)(n-2 \sigma)}{n} \int_{\mathbb{R}^n}\left({|x|^{2\gamma_{\sigma}}\left(1+|x|^2\right)^{\gamma_{\sigma}^{\prime}}}+1\right)^{-1} \ud x>0,
	\end{equation}
	\begin{equation}\label{A2}
		A_2=\frac{n+2 \sigma}{2} \int_{\mathbb{R}^n} {(|x|^2-1)}\left(1+|x|^2\right)^{-\gamma_{\sigma}-1}\ud x>0,
	\end{equation}
	and
	\begin{equation}\label{A3}
		A_3=-\frac{(n-2 \sigma)^2}{n} \int_{\mathbb{R}^n} {|x|^2}\left(1+|x|^2\right)^{-\gamma_{\sigma}-1} \ud x<0 .
	\end{equation}
	
	\begin{lemma}
		For any $\lambda_1,\lambda_2>0$.
		It holds
		\begin{equation*}
			\int_{\mathbb{R}^n} f_{\sigma}^{\prime}(U_1) U_2 {\partial_{\lambda_1} U_1} \ud x=\frac{1}{\lambda_1} \Psi\left(\left|\log \frac{\lambda_2}{\lambda_1}\right|\right) \frac{\log \frac{\lambda_2}{\lambda_1}}{\left|\log \frac{\lambda_2}{\lambda_1}\right|},
		\end{equation*}
		where
		\begin{equation*}
			\Psi(\ell)=e^{-\gamma_{\sigma} \ell}(1+\mathrm{o}(1)) \quad {\rm as} \quad \ell \rightarrow +\infty
		\end{equation*}
		with
		\begin{equation}\label{auxiliaryfunctioninteraction}
			\Psi(\ell):=\int_{\mathbb{R}} f_{\sigma}^{\prime}(v_{\rm sph}(t)) v_{\rm sph}(t+\ell) v^{\prime}(t) \ud t.
		\end{equation}
	\end{lemma}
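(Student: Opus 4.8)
The plan is to compute the integral $\int_{\mathbb{R}^n} f_{\sigma}^{\prime}(U_1) U_2\, \partial_{\lambda_1} U_1 \, \mathrm{d}x$ by reducing it to the one-dimensional cylindrical integral $\Psi$ via the Emden--Fowler change of variables $\mathfrak{F}_\sigma$ from Definition~\ref{def:fowlercoordinates}. First I would observe that both $U_1 = U_{0,\lambda_1}$ and $U_2 = U_{0,\lambda_2}$ are radial spherical (bubble) solutions centered at the origin, so in logarithmic cylindrical coordinates $t = -\ln|x|$, $\theta = x/|x|$ they become translates of the standard spherical solution $v_{\rm sph}(t) = \cosh(t)^{\gamma_\sigma}$ from \eqref{sphericalsolutionsEF}. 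Concretely, writing $\ell_i = -\ln\lambda_i$, one has $\mathfrak{F}_\sigma(U_{0,\lambda_i})(t) = c\, v_{\rm sph}(t - \ell_i)$ for an explicit constant (absorbed into the normalization of $f_\sigma$), and $\partial_{\lambda_1} U_1$ transforms, up to the factor $\tfrac{1}{\lambda_1}$ coming from $\partial_{\lambda_1} = -\tfrac{1}{\lambda_1}\partial_{\ell_1}$, into $v_{\rm sph}'(t-\ell_1)$. The radial Jacobian of the change of variables $\mathrm{d}x = \omega_{n-1} e^{-nt}\,\mathrm{d}t$ combines with the weight $e^{-\gamma_\sigma t}$-factors built into $\mathfrak{F}_\sigma$ so that the conformal weights exactly cancel: this is precisely the conformal covariance of $P_{2\sigma}$ recorded after \eqref{ourequation}, and it is what makes the resulting $t$-integral translation invariant.

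The key steps, in order, are: (1) pass to cylindrical coordinates and use the conformal weight cancellation to write $\int_{\mathbb{R}^n} f_\sigma'(U_1)U_2\,\partial_{\lambda_1}U_1\,\mathrm{d}x = \tfrac{1}{\lambda_1}\int_{\mathbb{R}} f_\sigma'(v_{\rm sph}(t))\, v_{\rm sph}(t + (\ell_1-\ell_2))\, v_{\rm sph}'(t)\,\mathrm{d}t$, noting that a sign flip $t \mapsto -t$ together with the evenness of $v_{\rm sph}$ and oddness of $v_{\rm sph}'$ lets one replace $\ell_1-\ell_2$ by $|\ell_1-\ell_2| = |\log(\lambda_2/\lambda_1)|$ and produces the factor $\tfrac{\log(\lambda_2/\lambda_1)}{|\log(\lambda_2/\lambda_1)|}$; (2) recognize the remaining integral as $\Psi(|\log(\lambda_2/\lambda_1)|)$ with $\Psi$ defined by \eqref{auxiliaryfunctioninteraction}; (3) establish the asymptotics $\Psi(\ell) = e^{-\gamma_\sigma \ell}(1+\mathrm{o}(1))$ as $\ell \to +\infty$. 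For step (3) I would use the exponential decay of the spherical solution in Emden--Fowler coordinates, namely $v_{\rm sph}(t) = \cosh(t)^{\gamma_\sigma} \sim 2^{-\gamma_\sigma} e^{\gamma_\sigma|t|}$, so that $v_{\rm sph}(t+\ell) \sim 2^{-\gamma_\sigma} e^{\gamma_\sigma(t+\ell)}$ for $t+\ell$ large and $\sim 2^{-\gamma_\sigma}e^{-\gamma_\sigma(t+\ell)}$ for $t+\ell \to -\infty$; since $f_\sigma'(v_{\rm sph}(t))v_{\rm sph}'(t)$ is integrable and concentrated near $t = 0$ (it decays like $e^{-(2\sigma+\gamma_\sigma)|t| + \gamma_\sigma|t|} = e^{-2\sigma|t|}$ — wait, more carefully $f_\sigma'(v_{\rm sph})\sim v_{\rm sph}^{4\sigma/(n-2\sigma)}$ so $f_\sigma'(v_{\rm sph}(t))v_{\rm sph}'(t) \sim e^{-\gamma_\sigma'|t|}\cdot$const), the dominant contribution to $\Psi(\ell)$ comes from the region where $t+\ell > 0$, giving a leading factor $e^{-\gamma_\sigma\ell}$ times the convergent integral $\int_{\mathbb{R}} f_\sigma'(v_{\rm sph}(t))v_{\rm sph}'(t)\,2^{-\gamma_\sigma}e^{\gamma_\sigma t}\,\mathrm{d}t$, which is the $(1+\mathrm{o}(1))$ constant.

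The main obstacle I anticipate is bookkeeping the normalization constants and the weight cancellation in step (1): one must verify that the $e^{-\gamma_\sigma t}$ weights in the definition of $\mathfrak{F}_\sigma$, the power of $\lambda_1$ extracted from $\partial_{\lambda_1}$, the factor $c_{n,\sigma}$ hidden in $f_\sigma'$, and the measure $\mathrm{d}x = \omega_{n-1}e^{-nt}\mathrm{d}t\,\mathrm{d}\theta$ conspire to leave exactly $\tfrac{1}{\lambda_1}$ times a clean $t$-integral with the correct constant — this is where the conformal covariance of $P_{2\sigma}$ (equivalently of $(-\Delta)^{-\sigma}_{\rm cyl}$) must be invoked precisely, as in \cite{MR3694655, MR4104278}. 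A secondary technical point is justifying the interchange of the $\ell \to +\infty$ limit with the $t$-integral in step (3), which follows from dominated convergence using the uniform-in-$\ell$ bound $|f_\sigma'(v_{\rm sph}(t))v_{\rm sph}(t+\ell)v_{\rm sph}'(t)| \lesssim e^{\gamma_\sigma \ell} e^{-\delta|t|}$ for some $\delta > 0$. Once these are in place, the identity and the asymptotic formula follow, and I would note that this is the same computation carried out in \cite[Appendix 7]{MR4104278}, so the argument reduces to checking that the cylindrical rewriting of $(-\Delta)^\sigma$ established in Section~\ref{sec:delaunaytypesolutions} applies verbatim.
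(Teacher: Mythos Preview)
Your outline is correct and matches the paper's approach, which is simply to cite \cite[Lemma~7.1]{MR4104278}; the Emden--Fowler reduction to a one-dimensional integral followed by the dominated-convergence asymptotics you describe is exactly that argument. One bookkeeping caveat: the formula $v_{\rm sph}(t)=\cosh(t)^{\gamma_\sigma}$ in \eqref{sphericalsolutionsEF} carries a sign typo (the exponent should be $-\gamma_\sigma$, so that $v_{\rm sph}(t)\sim 2^{\gamma_\sigma}e^{-\gamma_\sigma|t|}$ decays), and your step~(3) inherits the resulting sign confusion---with the corrected exponent one gets $f_\sigma'(v_{\rm sph}(t))v_{\rm sph}'(t)\sim e^{-\gamma_\sigma'|t|}$, which is integrable, and the leading factor $e^{-\gamma_\sigma\ell}$ drops out cleanly as you indicate.
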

	
	\begin{proof}
		See \cite[Lemma~7.1]{MR4104278}.
	\end{proof}
	
	\begin{lemma}
		If $\lambda_3=\mathcal{O}(\lambda_1)$, then the following estimates hold
		\begin{equation*}
			\int_{\mathbb{R}^n} f_{\sigma}^{\prime}(U_1) U_3 \partial_{\lambda_1} U_1 \ud x=A_2 |x_2|^{2 \sigma-n}\frac{\left(\lambda_1 \lambda_3\right)^{\gamma_{\sigma}}}{\lambda_1}\left(1+\mathcal{O}\left(\lambda_1\right)^2\right)
		\end{equation*}
		and
		\begin{equation*}
			\int_{\mathbb{R}^n} f_{\sigma}^{\prime}(U_1) U_3 \partial_{x_\ell} U_1 \ud x=A_3 {x_\ell}{|x|^{2 \sigma-n-2}}\left(\lambda_1 \lambda_3\right)^{\gamma_{\sigma}}\left(1+\mathcal{O}\left(\lambda_1^2\right)\right) \quad {\rm for} \quad \ell\in \{0,\dots,n\}.
		\end{equation*}
	\end{lemma}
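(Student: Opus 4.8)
The plan is to establish both identities by the same mechanism: expand $f_\sigma'(U_1)U_3$ and the derivative factor ($\partial_{\lambda_1}U_1$ or $\partial_{x_\ell}U_1$) in terms of the bubble $U_1=U_{0,\lambda_1}$, rescale the integral so that $U_1$ becomes the standard bubble $U_{0,1}$, and then extract the leading behaviour in $\lambda_1$ from the fact that $U_3=U_{x,\lambda_3}$ with $\lambda_3=\mathcal O(\lambda_1)$ varies slowly on the scale where the rescaled $U_1$ is concentrated. First I would recall $f_\sigma(\xi)=c_{n,\sigma}|\xi|^{(n+2\sigma)/(n-2\sigma)}$, so $f_\sigma'(U_1)=c_{n,\sigma}\tfrac{n+2\sigma}{n-2\sigma}U_1^{4\sigma/(n-2\sigma)}$, and compute $\partial_{\lambda_1}U_1$ and $\partial_{x_\ell}U_1$ explicitly from \eqref{bubbles}: writing $U_{0,\lambda_1}(y)=(2\lambda_1)^{\gamma_\sigma}(\lambda_1^2+|y|^2)^{-\gamma_\sigma}$, one gets $\partial_{\lambda_1}U_{0,\lambda_1}=\gamma_\sigma\lambda_1^{-1}U_{0,\lambda_1}\cdot\frac{|y|^2-\lambda_1^2}{|y|^2+\lambda_1^2}$. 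This already explains the factor $\lambda_1^{-1}$ and the constant $A_2$ in the first identity, since $A_2=\tfrac{n+2\sigma}{2}\int_{\mathbb R^n}(|x|^2-1)(1+|x|^2)^{-\gamma_\sigma-1}\ud x$ is exactly what survives after the rescaling.

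Second I would perform the change of variables $y=\lambda_1 z$. Under this substitution $U_1^{4\sigma/(n-2\sigma)}(\lambda_1 z)\,\partial_{\lambda_1}U_1(\lambda_1 z)\,\ud y$ collapses, up to the prefactor $\lambda_1^{-1}$, to a fixed integrable profile against $dz$, namely a multiple of $(1+|z|^2)^{-\gamma_\sigma-1}(|z|^2-1)$. Meanwhile $U_3(\lambda_1 z)=U_{x,\lambda_3}(\lambda_1 z)=(2\lambda_3)^{\gamma_\sigma}(\lambda_3^2+|\lambda_1 z-x|^2)^{-\gamma_\sigma}$; since $|x|$ is of order $1$ (it is a fixed nonzero vector, the separation between singular points) while $\lambda_1,\lambda_3\to 0$ with $\lambda_3=\mathcal O(\lambda_1)$, we have $|\lambda_1 z-x|^2=|x|^2(1+\mathcal O(\lambda_1))$ uniformly on the bulk of the profile, so $U_3(\lambda_1 z)=(2\lambda_3)^{\gamma_\sigma}|x|^{-2\gamma_\sigma}(1+\mathcal O(\lambda_1^2))$. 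Pulling this essentially constant factor out, the remaining integral is $\int_{\mathbb R^n}c_{n,\sigma}\tfrac{n+2\sigma}{n-2\sigma}(1+|z|^2)^{-\gamma_\sigma'}\cdot\gamma_\sigma(1+|z|^2)^{-\gamma_\sigma}\tfrac{|z|^2-1}{|z|^2+1}(2\lambda_1)^{\gamma_\sigma}\,\ud z$ — bookkeeping of the powers of $2$ and the normalizing constants, together with $2\gamma_\sigma=n-2\sigma$, must be arranged to produce precisely $A_2|x|^{2\sigma-n}(\lambda_1\lambda_3)^{\gamma_\sigma}/\lambda_1$. The second identity is identical except that $\partial_{x_\ell}U_{0,\lambda_1}=2\gamma_\sigma\lambda_1^{\gamma_\sigma}\cdot 2^{\gamma_\sigma} y_\ell(\lambda_1^2+|y|^2)^{-\gamma_\sigma-1}$ is odd in $y_\ell$, so after rescaling the $\mathcal O(1)$ term in the expansion of $U_3$ integrates to zero by parity and one must keep the next term, the linear-in-$\lambda_1 z$ correction $-2(\lambda_1 z)\cdot(-x)/|x|^2$ from expanding $|\lambda_1 z-x|^{-2\gamma_\sigma}$; that term carries the factor $x_\ell/|x|^2$ and the constant $A_3=-\tfrac{(n-2\sigma)^2}{n}\int_{\mathbb R^n}|x|^2(1+|x|^2)^{-\gamma_\sigma-1}\ud x$, explaining both the extra power $|x|^{2\sigma-n-2}$ and the negative sign.

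For the error control I would justify interchanging the asymptotic expansion of $U_3(\lambda_1 z)$ with the $z$-integral by splitting $\mathbb R^n$ into $\{|z|\le \delta/\lambda_1\}$ and its complement for a fixed small $\delta$: on the first region $|\lambda_1 z|\le\delta\ll|x|$ so the Taylor expansion of $U_3$ is valid with remainder $\mathcal O(\lambda_1^2|z|^2)$, and the profile $(1+|z|^2)^{-\gamma_\sigma'-\gamma_\sigma}$ decays fast enough that $\int |z|^2\cdot(\text{profile})<\infty$ (this uses $n>2\sigma$, indeed $4\gamma_\sigma+$ a bit $>n$); on the complement the profile is already $\mathcal O(\lambda_1^{n-2\sigma-\epsilon})$-small and $U_3$ there is bounded, contributing a higher-order error absorbed into $\mathcal O(\lambda_1^2)$. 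The main obstacle I anticipate is not analytic but computational: getting every normalizing constant ($c_{n,\sigma}$, the powers of $2$ from $U_{0,\lambda}=(2\lambda)^{\gamma_\sigma}(\cdots)^{-\gamma_\sigma}$, the $\tfrac{n+2\sigma}{n-2\sigma}$ from differentiating the nonlinearity, and the factor $\gamma_\sigma$ from $\partial_{\lambda_1}$ or $\partial_{x_\ell}$) to combine exactly into the stated $A_2$ and $A_3$ — a sign error or a stray $2^{\gamma_\sigma}$ would be easy to make. I would check the constants by cross-referencing with the definitions \eqref{A2}, \eqref{A3} and with \cite[Appendix 7]{MR4104278}, since the computation is the $\sigma$-independent kernel computation already recorded there.
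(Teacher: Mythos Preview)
Your proposal is correct and follows the standard rescaling-plus-Taylor-expansion argument for bubble interaction integrals; the paper itself does not supply a proof but simply refers to \cite[Lemma~7.2]{MR4104278}, where exactly this computation is carried out. Your outline is thus essentially the proof in that reference, and the only caveats are the bookkeeping issues you already flagged (signs and constants), together with the minor point that the statement in the paper writes $|x_2|$ in the first identity where $|x|$ is clearly meant.
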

	
	\begin{proof}
		See \cite[Lemma~7.2]{MR4104278}.
	\end{proof}
	
	\begin{lemma}
		Let $\lambda_1,\lambda_2>0$ and $a\in \mathbb R^n$.
		If $|a| \leqslant \max \left\{\lambda_1^2, \lambda_2^2\right\} \ll 1$ and $\min \left\{\frac{\lambda_1}{\lambda_2}, \frac{\lambda_2}{\lambda_1}\right\} \ll 1$, then the following estimates holds
		\begin{align}
			& \int_{\mathbb{R}^n} (\partial_a U_{a,\lambda_1}^{\gamma_{\sigma^{\prime}}}
			)U_{0,\lambda_2}^{\gamma_{\sigma}}
			\ud x=-A_0 c_{\lambda_1,\lambda_2}C_{\lambda_1,\lambda_2}+c_{\lambda_1,\lambda_2}\mathcal{O}\left(C_{\lambda_1,\lambda_2}^2+c_{\lambda_1,\lambda_2}^2C_{\lambda_1,\lambda_2}^2\right),
		\end{align}
		where
		\begin{equation*}
			c_{\lambda_1,\lambda_2}=\min \left\{\left(\frac{\lambda_1}{\lambda_2}\right)^{\gamma_{\sigma}},\left(\frac{\lambda_2}{\lambda_1}\right)^{\gamma_{\sigma}}\right\} \quad  {\rm and} \quad C_{\lambda_1,\lambda_2}=\frac{a}{\max \left\{\lambda_1^2, \lambda_2^2\right\}}.
		\end{equation*}
	\end{lemma}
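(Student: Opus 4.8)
The plan is to treat this as one more bubble-tower interaction integral of the type recorded in \cite[Appendix~7]{MR4104278}, and to prove it by the same localization argument used there for the two preceding lemmas, to which we refer for the routine computations. We focus on the case $\lambda_1\leqslant\lambda_2$; the case $\lambda_1>\lambda_2$ is entirely analogous, with the roles of the two bubbles interchanged. Writing $b:=a$ and rescaling the ambient variable by $\lambda_2$, one profile becomes a fixed standard bubble while the other becomes a bump concentrating on scale $\mu:=\lambda_1/\lambda_2\ll1$, the constraint $|a|\leqslant\max\{\lambda_1^2,\lambda_2^2\}$ becomes $|b|\ll1$, and, after tracking the explicit powers of $\lambda_2$ that are encoded on the right-hand side in $c_{\lambda_1,\lambda_2}$ and $C_{\lambda_1,\lambda_2}$, it suffices to expand the resulting dimensionless integral $\int_{\mathbb{R}^n}\big(\partial_b U_{b,\mu}^{\gamma_\sigma^{\prime}}\big)U_{0,1}^{\gamma_\sigma}\,\ud x$ to first order in $b$.

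First I would split $\mathbb{R}^n$ into the ball $B_{\sqrt\mu}(b)$ carrying the concentrated factor and its complement. On the complement both factors are of lower order, and the decay estimates for the bubbles, together with the boundary term produced by the cut-off at $\partial B_{\sqrt\mu}(b)$, give a contribution absorbed into the remainder $c_{\lambda_1,\lambda_2}\mathcal{O}(C_{\lambda_1,\lambda_2}^2+c_{\lambda_1,\lambda_2}^2C_{\lambda_1,\lambda_2}^2)$. On the ball I would Taylor-expand the slowly varying factor $U_{0,1}^{\gamma_\sigma}$ about the concentration point. Its constant term contributes nothing, because $\partial_b U_{b,\mu}^{\gamma_\sigma^{\prime}}$ is an odd function of $x-b$ (equivalently, because $\int_{\mathbb{R}^n}U_{b,\mu}^{\gamma_\sigma^{\prime}}\,\ud x$ does not depend on $b$); and since $\nabla U_{0,1}^{\gamma_\sigma}$ vanishes at the origin by radial symmetry, its value at the concentration point equals $c_0\,b+\mathcal{O}(|b|^3)$, where $c_0=\tfrac{1}{n}\Delta U_{0,1}^{\gamma_\sigma}(0)<0$. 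Pairing this linear term with $\partial_b U_{b,\mu}^{\gamma_\sigma^{\prime}}$ and rescaling $x=b+\mu z$ reduces the principal part to $c_0\,b$ times a fixed, absolutely convergent first moment of the profile; evaluating this moment and undoing all the rescalings produces exactly $-A_0\,c_{\lambda_1,\lambda_2}C_{\lambda_1,\lambda_2}$, with $A_0$ a constant of the same type as those in \eqref{A1}--\eqref{A3}, while the quadratic and higher Taylor terms, together with the $b$-dependence carried by the non-differentiated bubble, account for the $\mathcal{O}(C_{\lambda_1,\lambda_2}^2)$ and $\mathcal{O}(c_{\lambda_1,\lambda_2}^2C_{\lambda_1,\lambda_2}^2)$ errors.

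The main obstacle, and the only part that is not a mechanical computation, is making every remainder uniform over the whole admissible range $|a|\leqslant\max\{\lambda_1^2,\lambda_2^2\}$ and $\min\{\lambda_1/\lambda_2,\lambda_2/\lambda_1\}\ll1$: one must check that the non-integrable tail of $U_{0,\lambda_2}^{\gamma_\sigma}$ away from its core and the overlap region between the two concentration scales feed only into the $c_{\lambda_1,\lambda_2}^2C_{\lambda_1,\lambda_2}^2$ term and do not corrupt the leading order. Once this is in place, the identity follows precisely as in \cite[Appendix~7]{MR4104278}.
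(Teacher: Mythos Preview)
Your proposal is correct and follows the same approach as the paper, which simply refers to \cite[Lemma~7.3]{MR4104278} for the proof; your outline of the rescaling, localization on $B_{\sqrt\mu}(b)$, and Taylor expansion of the slowly varying bubble is exactly the computation carried out in that reference.
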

	
	\begin{proof}
		See \cite[Lemma~7.3]{MR4104278}.
	\end{proof}
	
	\section{Nondegeneracy of the bubble solution}\label{sec:complentaryproofs}
	In this section, we add the proof of the nondegeneracy of the spherical solution.
	
	\begin{proof}[Proof of Lemma~\ref{lm:nondegeneracy}]
		Let us start with $\phi\in H^{\sigma}(\mathbb R^n)$.
		Using the statement in \cite[Lemma 5.1]{MR3899029}, it suffices to know that $\phi \in L^{\infty}\left(\mathbb{R}^n\right)$. 
		We will divide the proof of this fact into three cases, which we describe as follows 
		
		\noindent{\bf Case 1:} $n>6\sigma$.
		
		Indeed, notice that, from \eqref{linearizedequation} since $f^{\prime}_\sigma(u_{\rm sph})\in L^{\infty}(\mathbb R^n)$, one can find a large constant $C\gg1$ depending only on $n, \sigma$ such that
		\begin{equation}\label{nondegeneracy1}
			|\phi(x)| \leqslant \int_{\mathbb{R}^n} {C}{|x-y|^{2\sigma-n}}\left(\frac{|\phi(y)|}{1+|y|^{4 \sigma}}+\frac{1}{1+|y|^{n-2\sigma}}\right) \mathrm{d}y \quad \text { for } \quad x \in \mathbb{R}^n.
		\end{equation}
		Also, by partitioning the Euclidean space as $\mathbb R^n=B_d(0)\cup B_d(x)\cup (B_d(0)\cup B_d(x))^c$ with $d:={|x|}/{2} \geqslant 1$, and integrating on each subpart, we obtain
		\begin{align}\label{nondegeneracy3}
			\int_{\mathbb{R}^n} \frac{|x-y|^{2\sigma-n}}{1+|y|^{n-2\sigma}}{\mathrm{d} y} & \lesssim \frac{1}{|x|^{n-4\sigma}} \quad {\rm for \ all} \quad x \in \mathbb{R}^n.
		\end{align}
		Furthermore, by substituting the last inequality into \eqref{nondegeneracy1}, one has
		\begin{align}\label{nondegeneracy4}
			|\phi(x)| \leqslant C\left[\int_{\mathbb{R}^n} \frac{|x-y|^{2\sigma-n}}{1+|y|^{4\sigma}} {|\phi(x)|}\ud y+\frac{1}{1+|x|^{n-4\sigma}}\right] \quad \text { for } \quad x \in \mathbb{R}^n.
		\end{align}	
		Next, since $n>6\sigma$, one has that $[p_0,p_*)\neq\varnothing$, where $p_0=\frac{2 n}{n-2 \sigma}$ and $p_*=\frac{n}{2 \sigma}$, which allows us to use the Hardy--Littlewood--Sobolev inequality to get
		\begin{align}\label{nondegeneracy2}
			\nonumber
			\|\phi\|_{L^{p_1}(\mathbb{R}^n)} & \lesssim\left\|\frac{|\phi(x)|}{1+|x|^{4 \sigma}} \ast {|x|^{2 \sigma-n}}\right\|_{L^{p_1}(\mathbb{R}^n)} \\\nonumber
			& \lesssim\left\|\frac{|\phi(x)|}{1+|x|^{4 \sigma}}\right\|_{L^{q_1}(\mathbb{R}^n)}\\
			&\lesssim\|\phi\|_{L^{p_0}(\mathbb{R}^n)}\left\|\frac{1}{1+|x|^{4\sigma}}\right\|_{L^{q_0}(\mathbb{R}^n)},
		\end{align}            
		for any $p \in[p_0, p_*)$ and $p_2=\frac{n p_0}{n-2\sigma p_0}$. 
		
		In what follows, we are based on the estimate \eqref{nondegeneracy2} to run the bootstrap argument below and obtain the desired $L^\infty$-estimate.
		First, notice that from \eqref{nondegeneracy2}, we have $\phi \in L^{p_1}(\mathbb R^n)$, and so $\phi\in L^{p_1}(\mathbb R^n)$ for all $p\in [p_0,p_1]$.
		Second, we check whether $p_1\geqslant p_*$ or not. 
		In the affirmative case, we apply \eqref{nondegeneracy2} with $p=p_*-\varepsilon$ for $0<\varepsilon\ll1$ small enough to obtain that $\phi\in L^{p_1}(\mathbb R^n)$ for all $p\in [p_0,+\infty)$.
		In the negative case, we use \eqref{nondegeneracy2} with $p=p_1$, which gives us that $\phi\in L^{p_2}(\mathbb R^n)$ for all $p\in [p_0,p_2]$, where 
		$p_2=\frac{np_1}{n-2\sigma p_1}$.
		Third, we repeat the same process for this new exponent.
		
		More precisely, it is not hard to check that the bootstrap sequence $\{p_\ell\}_{\ell\in\mathbb N}\subset [p_0,+\infty)$ satisfies 
		\begin{equation*}
			p_{\ell+1}=\left(1+\frac{4\sigma}{n-6\sigma}\right)p_\ell \quad {\rm for \ all} \quad \ell\in\mathbb N.
		\end{equation*}
		Hence, $\lim_{\ell\rightarrow+\infty}p_{\ell}=+\infty$, which shows that the bootstrap technique terminates in a finite step.
		
		Now, let us fix some $p \gg 1$ large enough. using the same strategy as in \eqref{nondegeneracy3}, we find
		\begin{align}\label{nondegeneracy0}
			\int_{\mathbb{R}^n}{|x-y|^{2\sigma-n}} \frac{|\phi(y)|}{1+|y|^{4\sigma}} \ud y & \lesssim\left(\int_{\mathbb{R}^n} \frac{|x-y|^{(2\sigma-n) p^{\prime}}}{1+|y|^{4\sigma p_0^{\prime}}} \ud y\right)^{\frac{1}{p^{\prime}}}\|\phi\|_{L^p(\mathbb{R}^n)} \\\nonumber
			& \lesssim \frac{1}{1+|x|^{\frac{n\left(p^{\prime}-1\right)}{p^{\prime}}+2\sigma}}\lesssim 1 \quad {\rm for \ all} \quad x\in\mathbb R^n,
		\end{align}
		where $p^{\prime}=\frac{p-1}{p}$ is the conjugate Lebesgue exponent of $p$.
		Finally, from the last estimate combined with \eqref{nondegeneracy4}, we deduce that $\phi\in L^{\infty}(\mathbb{R}^n)$; this finishes the first case.
		
		\noindent{\bf Case 2:} $n=6 \sigma$.
		
		Here we observe that since for $n=6 \sigma$, it holds that 
		$p_0=p_*=3$, one has $[3,3)=\varnothing$; thus \eqref{nondegeneracy2} does not make sense for this case.
		However, we still have \eqref{nondegeneracy4}.
		In addition, since by Sobolev embedding, we know $\phi\in H^{\sigma}(\mathbb R^n)\hookrightarrow L^3(\mathbb R^n)$, which, as before, yields
		\begin{align*}
			\|\phi\|_{L^{p_1}(\mathbb{R}^n)} & \lesssim \left\|\frac{|\phi(x)|}{1+|x|^{4\sigma}} \ast \frac{1}{|x|^{4\sigma}}\right\|_{L^{p_1}(\mathbb{R}^n)}+\left\|\frac{1}{1+|x|^{2\sigma}}\right\|_{L^{p_1}(\mathbb{R}^n)} \\
			& \lesssim \left\|\frac{|\phi(x)|}{1+|x|^{4 \sigma}}\right\|_{L^{q_1}(\mathbb{R}^n)}+1\\
			&\lesssim \|\phi\|_{L^3(\mathbb{R}^n)}\left\|\frac{1}{1+|x|^{4\sigma}}\right\|_{L^{q_0}(\mathbb{R}^n)}+1,
		\end{align*}
		where $q_0 \in(3, +\infty),\zeta_1=\frac{3 q_0}{q_0+3} \in(\frac{3}{2}, 3)$, and $p_1=\frac{3 q_1}{3-q_1} \in(3, +\infty)$. 
		
		This means that $\phi \in L^{p}(\mathbb{R}^n)$ for all $p \geqslant 3$. 
		More precisely, by taking $q_0\gg1$, one can make $p\gg 1$ large enough.
		Finally, by the same argument in the last case, we have $\phi \in L^{\infty}(\mathbb{R}^n)$, which concludes the argument for the second case.

		\noindent{\bf Case 3:} $2\sigma<n<6\sigma$.
		
		In this case, using the Hardy--Littlewood--Sobolev inequality, it follows that 
		\begin{align*}
			\|\phi\|_{L^{p_1}(\mathbb{R}^n)} & \lesssim\left\|\frac{|\phi(x)|}{1+|x|^{4 \sigma}} \ast {|x|^{n-2 \sigma}}\right\|_{L^{p_1}(\mathbb{R}^n)} \\
			& \lesssim\left\|\frac{|\phi(x)|}{1+|x|^{4 \sigma}}\right\|_{L^{q_1}(\mathbb{R}^n)}\\
			&\lesssim\|\phi\|_{L^{p_0}(\mathbb{R}^n)}\left\|\frac{1}{1+|x|^{4\sigma}}\right\|_{L^{q_0}(\mathbb{R}^n)},
		\end{align*}
		where $p_0=\frac{2 n}{n-2 \sigma}=2_\sigma^*$, $q_0 \in(\frac{n}{2\sigma}, \frac{2 n}{6\sigma-n})$, $q_1=\frac{p_0 q_0}{q_0+p_0}$, and $p_1=\frac{n q_1}{n-2\sigma q_1} \in(p_0, +\infty)$. 
		This means that $\phi \in L^{p}(\mathbb{R}^n)$ for all $p \geqslant p_0$. From \eqref{nondegeneracy0} we conclude that $\phi \in L^{\infty}(\mathbb{R}^n)$, which finishes the proof of this case.
		
		The lemma is proved.
	\end{proof}

	\begin{acknowledgement}
		This paper was finished when the first-named author held a Post-doctoral position at the University of British Columbia, whose hospitality he would like to acknowledge.
	\end{acknowledgement}

\end{document}